\newcommand{\sy}{\boldsymbol{\Psi}}
\newcommand{\py}{\boldsymbol{\Phi}}
\newcommand{\N}{\mathbb{N}}									%basic sets
\newcommand{\R}{\mathbb{R}}
\newcommand{\vertiii}[1]{{\left\vert\kern-0.25ex\left\vert\kern-0.25ex\left\vert #1 
    \right\vert\kern-0.25ex\right\vert\kern-0.25ex\right\vert}}
\newcommand{\overbar}[1]{\mkern 1.5mu\overline{\mkern-1.5mu#1\mkern-1.5mu}\mkern 1.5mu}
\newcommand{\inner}[2]{\langle #1, #2 \rangle}
\DeclarePairedDelimiter\abs{\lvert}{\rvert}					%absolute value
\DeclarePairedDelimiter\norm{\lVert}{\rVert}				%norm
\newtheorem{theorem}{Theorem}[section]
\newtheorem{corollary}{Corollary}[theorem]
\newtheorem{lemma}[theorem]{Lemma}
\newtheorem{proposition}[theorem]{Proposition}
\newtheorem*{remark}{Remark}
\newtheorem{definition}[theorem]{Definition}
\begin{document}
	\title{The Zero Viscosity Limit of Stochastic Navier-Stokes Flows}
	\author{Daniel Goodair \qquad \qquad Dan Crisan}
	\date{\today} 
	\maketitle
\setcitestyle{numbers}	
	\begin{abstract}
	We introduce an analogue to Kato's Criterion regarding the inviscid convergence of stochastic Navier-Stokes flows to the strong solution of the deterministic Euler equation. Our assumptions cover additive, multiplicative and transport type noise models. This is achieved firstly for the typical noise scaling of $\nu^\frac{1}{2}$, before considering a new parameter which approaches
zero with viscosity but at a potentially different rate. We determine the implications of this for our
criterion and clarify a sense in which the scaling by $\nu^\frac{1}{2}$ is optimal. To enable the analysis we prove the existence of probabilistically weak, analytically weak solutions to a general stochastic Navier-Stokes Equation on a bounded domain with no-slip boundary condition in three spatial dimensions, as well as the existence and uniqueness of probabilistically strong, analytically weak solutions in two dimensions. The criterion applies for these solutions in both two and three dimensions, with some technical simplifications in the 2D case.  
     
     %whereby this limit specifies the unique strong solution of the Euler Equation with impermeable boundary condition if and only if there is vanishing energy dissipation in a boundary strip of width proportional to the viscosity. We establish the same conditions in expectation, considering the convergence of weak solutions of stochastic Navier-Stokes to the strong solution of deterministic Euler.
	\end{abstract}

\tableofcontents
\thispagestyle{empty}
\newpage

\setcounter{page}{1}
\addcontentsline{toc}{section}{Introduction}

\section*{Introduction}

The effect of viscosity in the presence of a boundary is an extensively studied and physically meaningful phenomenon, which is well summarised in [\cite{maekawa2016inviscid}] and has seen developments across [\cite{arakeri2000ludwig}, \cite{cebeci1977momentum}, \cite{clauser1956turbulent}, \cite{gie2012boundary}, \cite{gorbushin2018asymptotic}, \cite{iftimie2011viscous}, \cite{keller1978numerical}, \cite{lee2013effect}, \cite{malik1990numerical}, \cite{metivier2004small}, \cite{moore1963boundary}, \cite{morris1984boundary}, \cite{schetz2011boundary}, \cite{shaing2008collisional}, \cite{stevenson2002incipient}, \cite{temam2002boundary}, \cite{weinan2000boundary}] to list only a few contributions in the theory, observation and numerics of this analysis. As first proposed by Prandtl [\cite{prandtl1904flussigkeitsbewegung}] one may consider a thin layer around the boundary where the effects of viscosity remain significant, separate from the internal fluid which has inviscid behaviour. The width of the boundary layer formally scales with the square root of viscosity, fitting for the parabolic Navier-Stokes equations, and is described by the Prandtl Equations in the most commonly studied setting of a no-slip boundary condition (under which the fluid velocity is zero on the boundary) for Navier-Stokes. Indeed this is considered to be the most physically reasonable boundary condition for viscous flow as well discussed in [\cite{day1990no}, \cite{richardson1973no}, \cite{ruckenstein1983no}], though mathematically it is highly problematic when studying the vanishing viscosity limit. Whilst Prandtl's equations arise formally through an asymptotic analysis, it was Kato in his paper [\cite{kato1984remarks}] who rigorously underpinned this theory with results towards the inviscid limit of Navier-Stokes on a bounded domain. Kato's work shows that under sufficient smoothness of the initial condition, weak solutions of the Navier-Stokes equations with no-slip boundary condition converge to strong solutions of the Euler equation with impermeable boundary condition (the normal component of the fluid velocity is zero on the boundary) if and only if $$\lim_{\nu \rightarrow 0} \nu \int_0^T \norm{\nabla u^{\nu}_s}^2_{L^2(\Gamma_{c\nu})}ds = 0 $$
where $u^{\nu}$ is a weak solution of the Navier-Stokes equation with viscosity $\nu$ and $\Gamma_{c \nu}$ is a boundary strip of width $c \nu$  for $c>0$ fixed but arbitrary. This mathematically reflects the observed production of vorticity through large gradients of velocity at the boundary ([\cite{lyman1990vorticity}, \cite{morton1984generation}, \cite{serpelloni2013vertical}, \cite{tani1962production}, \cite{wallace2010measurement}]) and instability of the boundary layer ([\cite{dovgal1994laminar}, \cite{sanjose2019modal}, \cite{simpson1989turbulent}, \cite{timoshin1997instabilities}]). Forty years later, Kato's criterion remains the most fundamental result available in this area having seen only minor extensions such as [\cite{kelliher2007kato}] and [\cite{wang2001kato}]. Moreover we still have little to no understanding regarding the validity of this criterion, and whether general flows converge in the zero viscosity limit is one of the fundamental open problems of fluid mechanics.\\

Meanwhile there has been significant development in the theoretical analysis of stochastic fluid equations, and in particular those perturbed by a transport type noise (where the stochastic integral depends on the gradient of the solution). The paper of Brze\'{z}niak, Capinski and Flandoli [\cite{brzezniak1992stochastic}] in 1992 brought attention to the significance of fluid dynamics equations with transport noise, since generating much interest with the potential regularising effects as seen in [\cite{attanasio2011renormalized}, \cite{flandoli2013topics}, \cite{flandoli2015open}, \cite{flandoli2021delayed},  \cite{flandoli2021high},   \cite{bethencourt2022transport}]. Much more recently special consideration has been given to transport type stochastic perturbations due to their physical relevance, for example in the seminal works [\cite{holm2015variational}] and [\cite{memin2014fluid}]. In these papers Holm and M\'{e}min establish a new class of stochastic equations driven by transport type noise which serve as fluid dynamics models by adding uncertainty in the transport of fluid parcels to reflect the unresolved scales. Indeed the significance of such equations in modelling, numerical schemes and data assimilation continues to be well documented, see [\cite{alonso2020modelling}, \cite{cotter2020data}, \cite{cotter2018modelling}, \cite{cotter2019numerically}, \cite{crisan2021theoretical}, \cite{dufee2022stochastic}, \cite{flandoli20212d}, \cite{flandoli2022additive}, \cite{holm2020stochastic}, \cite{holm2019stochastic}, \cite{lang2022pathwise}, \cite{van2021bayesian}, \cite{street2021semi}]. This presents two key motivations for demonstrating Kato's Criterion for stochastic Navier-Stokes Equations:
\begin{enumerate}
    \item To extend Kato's meaningful boundary layer theory to the physically pertinent stochastic setting;
    \item To initiate considerations for a specific choice of (regularising) noise under which the criterion is satisfied. This would constitute a prodigious development in the fundamental open problem of resolving the boundary layer criterion. 
\end{enumerate}
We shall work with a Navier-Stokes Equation perturbed by either an It\^{o} type noise
\begin{equation} \label{number2equation}
    u_t - u_0 + \int_0^t\mathcal{L}_{u_s}u_s\,ds - \nu\int_0^t \Delta u_s\, ds + \nu^{\frac{1}{2}}\int_0^t \mathcal{G}(u_s) d\mathcal{W}_s + \nabla \rho_t = 0
\end{equation}
or a Stratonovich one
\begin{equation} \label{number3equation}
    u_t - u_0 + \int_0^t\mathcal{L}_{u_s}u_s\,ds - \nu\int_0^t \Delta u_s\, ds + \nu^{\frac{1}{2}}\int_0^t \mathcal{G}(u_s) \circ d\mathcal{W}_s + \nabla \rho_t = 0.
\end{equation}
Here $u$ represents the fluid velocity, $\rho$ the pressure and $\mathcal{L}$ the nonlinear term, $\mathcal{W}$ is a cylindrical Brownian motion and $\mathcal{G}$ is an operator valued mapping satisfying assumptions to be given in Subsection \ref{subsection assumptions}. The precise functional framework for the equation is given in Subsection \ref{functional framework subsection}, and notions of solution are defined in Subsection \ref{sub def}. As in Kato's original paper one must work with analytically weak solutions of Navier-Stokes, as strong solutions are only known to exist locally up to a time approaching zero with viscosity. The first main contribution of the paper is in showing the existence of such solutions, probabilistically weak in three spatial dimensions but strong in two dimensions. This is a typical reflection of the uniqueness in two dimensions which is unavailable in 3D, as seen in related works on stochastic Navier-Stokes and in more general SPDE theory, see [\cite{debussche2023consistent}, \cite{mikulevicius2005global},  \cite{rockner2022well}]). These works allow for a transport type noise, though do not extend to our result; in [\cite{rockner2022well}] the authors consider an abstract variational framework which in large resembles our setting, though the noise operator must have only a small dependency on first order derivatives (relative to viscosity in our context). Such an assumption is unavoidable in the It\^{o} case (\ref{number2equation}), however we alleviate it in a treatment of (\ref{number3equation}). This is critical as we are guided by an application to Stochastic Advection by Lie Transport (SALT), one of the aforementioned physical stochastic perturbation principles [\cite{holm2015variational}] which does not satisfy such a restriction. Debussche, Hug and M\'{e}min show the corresponding results in [\cite{debussche2023consistent}] for the Location Uncertainty scheme [\cite{memin2014fluid}], where the noise is specifically chosen so as to conserve energy through a backscattering term which helps the analysis. Moreover the atypical nature of this perturbation renders it difficult to recover the results for more traditional noise of the form (\ref{number2equation}), (\ref{number3equation}). In addition the paper [\cite{mikulevicius2005global}] of Mikulevicius and Rozovskii deals with a Stratonovich transport noise, though it imposes a stringent coercivity condition and considers only the whole space without boundary.\\

The theoretical analysis of fluid equations with a transport type noise on a bounded domain has proven of great challenge. For analytically strong solutions, prior to the authors' works of [\cite{goodair2022stochastic}, \cite{goodair2022existence1}] the only successful existence result of which we are aware was given in [\cite{brzezniak2021well}] where the authors assume again that the first order dependency is small (which is necessary in the It\^{o} case), but crucially that the noise terms are traceless under Leray Projection. This assumption is designed to circumvent the technical difficulties of a first order noise operator on a bounded domain, which is well elucidated in [\cite{goodair2022navier}] and the failure of this assumption for stochastic Lie transport is precisely why we could only show the (local) existence of analytically strong solutions to the SALT Navier-Stokes Equation on a bounded domain in vorticity form (the velocity form of the SALT Navier-Stokes Equation on a bounded domain remains an open problem). We are, however, successful in showing the (global) existence of analytically weak solutions for the velocity form in the present paper. The difference lies in the energy norm for the solutions: in the strong case this is produced from a $W^{1,2}$ inner product, a space in which the Leray Projector is not an orthogonal projection and indeed does not commute with the derivatives entering into consideration from this inner product. The presence of this Leray Projector prevents us from using the usual cancellation type arguments for transport noise, so without assuming sufficient smallness of the derivative dependency as discussed then we cannot achieve the necessary energy estimates. In contrast weak solutions exist in an energy space generated by the $L^2$ inner product, in which the Leray Projector is self-adjoint and the preceding ideas can be applied.\\

The second and titular contribution of the paper is in characterising the zero viscosity limit of these solutions, establishing a stochastic counterpart to Kato's Criterion. For this we scale the stochastic integral of (\ref{number2equation}), (\ref{number3equation}) with a parameter that must go to zero with viscosity, which is traditionally taken to be $\nu^\frac{1}{2}$ having been motivated in [\cite{kuksin2004eulerian}] as the only noise scaling which leads to non-trivial limiting measures (in the limit $t \rightarrow \infty$ and $\nu \rightarrow 0$) for an additive noise in two dimensions in the absence of a boundary. The significance of this scaling for energy balance is further underlined in [\cite{kuksin2005family}, \cite{kuksin2006remarks}, \cite{kuksin2008distribution}] and has been used to study the inviscid limit problem in [\cite{glatt2015inviscid}, \cite{luongo2021inviscid}]. We firstly establish our stochastic Kato's Criterion for this choice of scaling, demonstrating the equivalence of these conditions taken in expectation, again for a general stochastic term which can have arbitrarily high first order dependency. Furthermore a new criterion is presented dependent on an abstract scaling choice, and is specifically analysed in the case where this parameter is some exponent of viscosity. In particular we show that with the choice $\nu^{\beta}$, then for $\frac{1}{4} < \beta < \frac{1}{2}$ one requires the condition
$$ \lim_{\nu \rightarrow 0} \nu^{4\beta - 1}\mathbbm{E} \int_0^T \norm{\nabla u^{\nu}_s}^2_{L^2(\Gamma_{c\nu})}ds = 0$$ in order to deduce the convergence to Euler. For $\beta \geq \frac{1}{2}$ then the anticipated condition $$ \lim_{\nu \rightarrow 0} \nu\mathbbm{E} \int_0^T \norm{\nabla u^{\nu}_s}^2_{L^2(\Gamma_{c\nu})}ds = 0$$ is both necessary and sufficient for the convergence.\\

Overall the zero viscosity limit of solutions of the stochastic Navier-Stokes equation has so far received little treatment; the works of [\cite{bessaih2013inviscid}, \cite{glatt2015inviscid}] determine measure theoretic results for the problem posed in two dimensions with periodic boundary conditions and an additive noise. For Navier boundary conditions the convergence of stochastic Navier-Stokes to stochastic Euler in 2D has been proven in [\cite{cipriano2015inviscid}], matching the deterministic result as presented in [\cite{kelliher2006navier}], again for additive noise. In the classical case of a no-slip boundary condition, the only results of which we are aware are given in [\cite{luongo2021inviscid}, \cite{wang2023kato}]. A stochastic Kato type result is proven in each paper, for the limit to determinstic Euler in [\cite{luongo2021inviscid}] and stochastic Euler in [\cite{wang2023kato}], though once more only in 2D and with additive noise. Our result for a general first order noise in 2D and 3D thus represents a distinct addition to the literature.\\ 

The structure of the paper is as follows:
\begin{itemize}
    \item Section \ref{section preliminaries} is devoted to the setup of the problem in terms of notation, the functional framework of solutions and the assumptions we impose on the noise. Examples of noise satisfying these requirements are considered in Subsection \ref{section applications}, in addition to an explicit illustration that the SALT Navier-Stokes equation satisfies the assumptions. We also define our notions of solutions to the stochastic Navier-Stokes equation in Subsection \ref{sub def}, stating the key results concerning the existence and uniqueness of these solutions.
    \item In Section \ref{section zero viscous} we state and prove the main results regarding the zero viscosity limit, firstly for the noise scaling of $\nu^{\frac{1}{2}}$ and then for a general parameter. We inherit the techniques of Kato's original paper, particularly referring to the boundary corrector function.
    \item Section \ref{section weak solutions} contains the proofs of the existence and uniqueness results for weak solutions stated in Section \ref{section preliminaries}. The method of existence is classical in the sense that we consider a finite dimensional approximation with relative compactness arguments due to a tightness criterion, perhaps most similar to the approach of [\cite{rockner2022well}]. The pathwise uniqueness in 2D is then verified with the typical Ladyzhenskaya inequality for the nonlinear term, leading to probabilistically strong solutions as a result of a Yamada-Watanabe theorem. We are careful to rigorously justify the application of the It\^{o} Formula (Proposition \ref{rockner prop}) in the proof of uniqueness in 2D, which is absent in the aforementioned [\cite{mikulevicius2005global}] and emphasised by the authors in [\cite{debussche2023consistent}]. We find this important as if one were to assume such an It\^{o} Formula holds in 3D, they would immediately recover the continuity of solutions which is false in general.
    \item An appendix, Section \ref{section appendix}, containing useful results from the literature regarding stochastic partial differential equations and tightness criteria concludes the paper.
\end{itemize}

\section{Preliminaries} \label{section preliminaries}

\subsection{Elementary Notation}

In the following $\mathscr{O} \subset \R^N$ will be a smooth bounded domain, for $N$ either $2$ or $3$, equipped with Euclidean norm and  Lebesgue measure $\lambda$. We consider Banach Spaces as measure spaces equipped with their corresponding Borel $\sigma$-algebra. Let $(\mathcal{X},\mu)$ denote a general topological measure space, $(\mathcal{Y},\norm{\cdot}_{\mathcal{Y}})$ and $(\mathcal{Z},\norm{\cdot}_{\mathcal{Z}})$ be separable Banach Spaces, and $(\mathcal{U},\inner{\cdot}{\cdot}_{\mathcal{U}})$, $(\mathcal{H},\inner{\cdot}{\cdot}_{\mathcal{H}})$ be general separable Hilbert spaces. We also introduce the following spaces of functions. 
\begin{itemize}
    \item $L^p(\mathcal{X};\mathcal{Y})$ is the  class of measurable $p-$integrable functions from $\mathcal{X}$ into $\mathcal{Y}$, $1 \leq p < \infty$, which is a Banach space with norm $$\norm{\phi}_{L^p(\mathcal{X};\mathcal{Y})}^p := \int_{\mathcal{X}}\norm{\phi(x)}^p_{\mathcal{Y}}\mu(dx).$$ In particular $L^2(\mathcal{X}; \mathcal{Y})$ is a Hilbert Space when $\mathcal{Y}$ itself is Hilbert, with the standard inner product $$\inner{\phi}{\psi}_{L^2(\mathcal{X}; \mathcal{Y})} = \int_{\mathcal{X}}\inner{\phi(x)}{\psi(x)}_\mathcal{Y} \mu(dx).$$ In the case $\mathcal{X} = \mathscr{O}$ and $\mathcal{Y} = \R^N$ note that $$\norm{\phi}_{L^2(\mathscr{O};\R^N)}^2 = \sum_{l=1}^N\norm{\phi^l}^2_{L^2(\mathscr{O};\R)}, \qquad \phi = \left(\phi^1, \dots, \phi^N\right), \quad \phi^l: \mathscr{O} \rightarrow \R.$$ We denote $\norm{\cdot}_{L^p(\mathscr{O};\R^N)}$ by $\norm{\cdot}_{L^p}$ and $\norm{\cdot}_{L^2(\mathscr{O};\R^N)}$ by $\norm{\cdot}$.
    
\item $L^{\infty}(\mathcal{X};\mathcal{Y})$ is the class of measurable functions from $\mathcal{X}$ into $\mathcal{Y}$ which are essentially bounded. $L^{\infty}(\mathcal{X};\mathcal{Y})$ is a Banach Space when equipped with the norm $$ \norm{\phi}_{L^{\infty}(\mathcal{X};\mathcal{Y})} := \inf\{C \geq 0: \norm{\phi(x)}_Y \leq C \textnormal{ for $\mu$-$a.e.$ }  x \in \mathcal{X}\}.$$
    
    \item $L^{\infty}(\mathscr{O};\R^N)$ is the class of measurable functions from $\mathscr{O}$ into $\R^N$ such that $\phi^l \in L^{\infty}(\mathscr{O};\R)$ for $l=1,\dots,N$, which is a Banach Space when equipped with the norm $$ \norm{\phi}_{L^{\infty}}:= \sup_{l \leq N}\norm{\phi^l}_{L^{\infty}(\mathscr{O};\R)}.$$
    
      \item $C(\mathcal{X};\mathcal{Y})$ is the space of continuous functions from $\mathcal{X}$ into $\mathcal{Y}$.

      \item $C_w(\mathcal{X};\mathcal{Y})$ is the space of `weakly continuous' functions from $\mathcal{X}$ into $\mathcal{Y}$, by which we mean continuous with respect to the given topology on $\mathcal{X}$ and the weak topology on $\mathcal{Y}$. 
      
    \item $C^m(\mathscr{O};\R)$ is the space of $m \in \N$ times continuously differentiable functions from $\mathscr{O}$ to $\R$, that is $\phi \in C^m(\mathscr{O};\R)$ if and only if for every $N$ dimensional multi index $\alpha = \alpha_1, \dots, \alpha_N$ with $\abs{\alpha}\leq m$, $D^\alpha \phi \in C(\mathscr{O};\R)$ where $D^\alpha$ is the corresponding classical derivative operator $\partial_{x_1}^{\alpha_1} \dots \partial_{x_N}^{\alpha_N}$.
    
    \item $C^\infty(\mathscr{O};\R)$ is the intersection over all $m \in \N$ of the spaces $C^m(\mathscr{O};\R)$.
    
    \item $C^m_0(\mathscr{O};\R)$ for $m \in \N$ or $m = \infty$ is the subspace of $C^m(\mathscr{O};\R)$ of functions which have compact support.
    
    \item $C^m(\mathscr{O};\R^N), C^m_0(\mathscr{O};\R^N)$ for $m \in \N$ or $m = \infty$ is the space of functions from $\mathscr{O}$ to $\R^N$ whose $N$ component mappings each belong to $C^m(\mathscr{O};\R), C^m_0(\mathscr{O};\R)$.
    
        \item $W^{m,p}(\mathscr{O}; \R)$ for $1 \leq p < \infty$ is the sub-class of $L^p(\mathscr{O}, \R)$ which has all weak derivatives up to order $m \in \N$ also of class $L^p(\mathscr{O}, \R)$. This is a Banach space with norm $$\norm{\phi}^p_{W^{m,p}(\mathscr{O}, \R)} := \sum_{\abs{\alpha} \leq m}\norm{D^\alpha \phi}_{L^p(\mathscr{O}; \R)}^p,$$ where $D^\alpha$ is the corresponding weak derivative operator. In the case $p=2$ the space $W^{m,2}(\mathscr{O}, \R)$ is Hilbert with inner product $$\inner{\phi}{\psi}_{W^{m,2}(\mathscr{O}; \R)} := \sum_{\abs{\alpha} \leq m} \inner{D^\alpha \phi}{D^\alpha \psi}_{L^2(\mathscr{O}; \R)}.$$
    
    \item $W^{m,\infty}(\mathscr{O};\R)$ for $m \in \N$ is the sub-class of $L^\infty(\mathscr{O}, \R)$ which has all weak derivatives up to order $m \in \N$ also of class $L^\infty(\mathscr{O}, \R)$. This is a Banach space with norm $$\norm{\phi}_{W^{m,\infty}(\mathscr{O}, \R)} := \sup_{\abs{\alpha} \leq m}\norm{D^{\alpha}\phi}_{L^{\infty}(\mathscr{O};\R^N)}.$$
    
        \item $W^{m,p}(\mathscr{O}; \R^N)$ for $1 \leq p < \infty$ is the sub-class of $L^p(\mathscr{O}, \R^N)$ which has all weak derivatives up to order $m \in \N$ also of class $L^p(\mathscr{O}, \R^N)$. This is a Banach space with norm $$\norm{\phi}^p_{W^{m,p}(\mathscr{O}, \R^N)} := \sum_{l=1}^N\norm{\phi^l}_{W^{m,p}(\mathscr{O}; \R)}^p.$$ In the case $p=2$ the space $W^{m,2}(\mathscr{O}, \R^N)$ is Hilbertian with inner product $$\inner{\phi}{\psi}_{W^{m,2}(\mathscr{O}; \R^N)} := \sum_{l=1}^N \inner{\phi^l}{\psi^l}_{W^{m,2}(\mathscr{O}; \R)}.$$
    
          \item $W^{m,\infty}(\mathscr{O}; \R^N)$ is the sub-class of $L^\infty(\mathscr{O}, \R^N)$ which has all weak derivatives up to order $m \in \N$ also of class $L^\infty(\mathscr{O}, \R^N)$. This is a Banach space with norm $$\norm{\phi}_{W^{m,\infty}(\mathscr{O}, \R^N)} := \sup_{l \leq N}\norm{\phi^l}_{W^{m,\infty}(\mathscr{O}; \R)}.$$

    \item $W^{m,p}_0(\mathscr{O};\R), W^{m,p}_0(\mathscr{O};\R^N)$ for $m \in N$ and $1 \leq p \leq \infty$ is the closure of $C^\infty_0(\mathscr{O};\R),C^\infty_0(\mathscr{O};\R^N)$ in $W^{m,p}(\mathscr{O};\R), W^{m,p}(\mathscr{O};\R^N)$.
    
    \item $\mathscr{L}(\mathcal{Y};\mathcal{Z})$ is the space of bounded linear operators from $\mathcal{Y}$ to $\mathcal{Z}$. This is a Banach Space when equipped with the norm $$\norm{F}_{\mathscr{L}(\mathcal{Y};\mathcal{Z})} = \sup_{\norm{y}_{\mathcal{Y}}=1}\norm{Fy}_{\mathcal{Z}}$$ and is simply the dual space $\mathcal{Y}^*$ when $\mathcal{Z}=\R$, with operator norm $\norm{\cdot}_{\mathcal{Y}^*}.$
    
     \item $\mathscr{L}^2(\mathcal{U};\mathcal{H})$ is the space of Hilbert-Schmidt operators from $\mathcal{U}$ to $\mathcal{H}$, defined as the elements $F \in \mathscr{L}(\mathcal{U};\mathcal{H})$ such that for some basis $(e_i)$ of $\mathcal{U}$, $$\sum_{i=1}^\infty \norm{Fe_i}_{\mathcal{H}}^2 < \infty.$$ This is a Hilbert space with inner product $$\inner{F}{G}_{\mathscr{L}^2(\mathcal{U};\mathcal{H})} = \sum_{i=1}^\infty \inner{Fe_i}{Ge_i}_{\mathcal{H}}$$ which is independent of the choice of basis.

     \item For any $T>0$, $\mathscr{S}_T$ is the subspace of $C\left([0,T];[0,T]\right)$ of strictly increasing functions.

     \item For any $T>0$, $\mathcal{D}\left([0,T];\mathcal{Y}\right)$ is the space of c\'{a}dl\'{a}g functions from $[0,T]$ into $\mathcal{Y}$. It is a complete separable metric space when equipped with the metric $$d(\phi,\psi) := \inf_{\eta \in \mathscr{S}_T}\left[\sup_{t \in [0,T]}\left\vert \eta(t)- t\right\vert \vee \sup_{t \in [0,T]}\left\Vert \phi(t)-\psi(\eta(t)) \right\Vert_{\mathcal{Y}} \right]$$ which induces the so called Skorohod Topology (see [\cite{billingsley2013convergence}] pp124 for details).

\end{itemize}
We also now introduce some more precise spaces in greater detail.

\begin{definition}
We define $C^{\infty}_{0,\sigma}(\mathscr{O};\R^N)$ as the subset of $C^{\infty}_0(\mathscr{O};\R^N)$ of functions which are divergence-free. $L^2_\sigma$ is defined as the completion of $C^{\infty}_{0,\sigma}(\mathscr{O};\R^N)$ in $L^2(\mathscr{O};\R^N)$, whilst we introduce $W^{1,2}_\sigma$ as the intersection of $W^{1,2}_0(\mathscr{O};\R^N)$ with $L^2_\sigma$ and $W^{2,2}_{\sigma}$ as the intersection of $W^{2,2}(\mathscr{O};\R^N)$ with $W^{1,2}_\sigma$. 
\end{definition}
%\begin{remark}
%$L^2_{\sigma}$ is closed in the $L^2(\mathscr{O};\R^N)$ norm. Furthermore as the $W^{1,2}(\mathscr{O};\R^N)$ norm induces a coarser topology, the space $W^{1,2}_{\sigma}$ is closed in the $W^{1,2}(\mathscr{O};\R^N)$ norm and similarly for $W^{2,2}_{\sigma}$. Thus $L^2_{\sigma}$, $W^{1,2}_{\sigma}$ and $W^{2,2}_{\sigma}$ are Hilbert Spaces in their respective inner products. \end{remark}
Of course the dependency on $N$ is implicit in the given definitions and it will be made clear if $N$ is required to be specifically $2$ or $3$. We next give the probabilistic set up. Let $(\Omega,\mathcal{F},(\mathcal{F}_t), \mathbb{P})$ be a fixed filtered probability space satisfying the usual conditions of completeness and right continuity. We take $\mathcal{W}$ to be a cylindrical Brownian motion over some Hilbert Space $\mathfrak{U}$ with orthonormal basis $(e_i)$. Recall (e.g. [\cite{lototsky2017stochastic}], Definition 3.2.36) that $\mathcal{W}$ admits the representation $\mathcal{W}_t = \sum_{i=1}^\infty e_iW^i_t$ as a limit in $L^2(\Omega;\mathfrak{U}')$ whereby the $(W^i)$ are a collection of i.i.d. standard real valued Brownian Motions and $\mathfrak{U}'$ is an enlargement of the Hilbert Space $\mathfrak{U}$ such that the embedding $J: \mathfrak{U} \rightarrow \mathfrak{U}'$ is Hilbert-Schmidt and $\mathcal{W}$ is a $JJ^*-$cylindrical Brownian Motion over $\mathfrak{U}'$. Given a process $F:[0,T] \times \Omega \rightarrow \mathscr{L}^2(\mathfrak{U};\mathscr{H})$ progressively measurable and such that $F \in L^2\left(\Omega \times [0,T];\mathscr{L}^2(\mathfrak{U};\mathscr{H})\right)$, for any $0 \leq t \leq T$ we define the stochastic integral $$\int_0^tF_sd\mathcal{W}_s:=\sum_{i=1}^\infty \int_0^tF_s(e_i)dW^i_s,$$ where the infinite sum is taken in $L^2(\Omega;\mathscr{H})$. We can extend this notion to processes $F$ which are such that $F(\omega) \in L^2\left( [0,T];\mathscr{L}^2(\mathfrak{U};\mathscr{H})\right)$ for $\mathbb{P}-a.e.$ $\omega$ via the traditional localisation procedure. In this case the stochastic integral is a local martingale in $\mathscr{H}$. \footnote{A complete, direct construction of this integral, a treatment of its properties and the fundamentals of stochastic calculus in infinite dimensions can be found in [\cite{prevot2007concise}] Section 2.} We shall make frequent use of the Burkholder-Davis-Gundy Inequality ([\cite{da2014stochastic}] Theorem 4.36), passage of a bounded linear operator through the stochastic integral ([\cite{prevot2007concise}] Lemma 2.4.1) and the It\^{o} Formula (in particular, Proposition \ref{rockner prop}). 

\subsection{Functional Framework} \label{functional framework subsection}

We now recap the classical functional framework for the study of the deterministic Navier-Stokes Equation. A more detailed summary with explicit proofs can be found in [\cite{goodair2022navier}] Subsection 1.3. We now formally define the operator $\mathcal{L}$ that appears in (\ref{number2equation}, \ref{number3equation}), as well as the divergence-free and no-slip boundary conditions. Firstly though we briefly comment on the pressure term $\nabla \rho$, which will not play any role in our analysis. $\rho$ does not come with an evolution equation and is simply chosen to ensure the incompressibility (divergence-free) condition; moreover we will eliminate this term via a suitable projection and treat the projected equation, with the understanding that we may append a pressure to it to recover the original equation. This procedure is well discussed in [\cite{robinson2016three}] Section 5 and [\cite{bertozzi2002vorticity}], and an explicit form for the pressure for the SALT Euler Equation is given in [\cite{street2021semi}] Subsection 3.3.\\ 

The nonlinear operator $\mathcal{L}$ is defined for sufficiently regular functions $f,g:\mathscr{O} \rightarrow \R^N$ by $\mathcal{L}_fg:= \sum_{j=1}^Nf^j\partial_jg.$ Here and throughout the text we make no notational distinction between differential operators acting on a vector valued function or on a scalar valued one; that is, we understand $\partial_jg$ by its component mappings $(\partial_lg)^l := \partial_jg^l$. We now give some clarification as to 'sufficiently regular', by stating basic properties of this mapping. For any $m \in \N$, the mapping $\mathcal{L}: W^{m+1,2}(\mathscr{O};\R^N) \rightarrow W^{m,2}(\mathscr{O};\R^N)$ defined by $f \mapsto \mathcal{L}_ff$ is continuous. Additionally there exists a constant $c$ such that for any $f,g \in W^{k,2}(\mathscr{O};\R^N)$ for $k \in \N$ as appropriate, we have the bounds:
    \begin{align} \label{first begin align}
        \norm{\mathcal{L}_{f}{g}} + \norm{\mathcal{L}_{g}{f}}&\leq c\norm{g}_{W^{1,2}}\norm{f}_{W^{2,2}};\\ \label{second begin align}
        \norm{\mathcal{L}_{g}{f}}_{W^{1,2}} &\leq c\norm{g}_{W^{1,2}}\norm{f}_{W^{3,2}};\\ \label{third begin align}
        \norm{\mathcal{L}_{g}{f}}_{W^{1,2}} &\leq c\norm{g}_{W^{2,2}}\norm{f}_{W^{2,2}},
        \end{align}
    see [\cite{goodair2022navier}] Lemma 1.3. For the divergence-free condition we mean a function $f$ such that the property $$\textnormal{div}f := \sum_{j=1}^N \partial_jf^j = 0$$ holds. We require this property and the boundary condition to hold for our solution $u$ at all times, though there is some ambiguity as to how we understand these conditions for a solution $u$ which need not be defined pointwise everywhere on $\bar{\mathscr{O}}$. We shall understand these conditions in their traditional weak sense, that is for weak derivatives $\partial_j$ so $\sum_{j=1}^N \partial_jf^j = 0$ holds as an identity in $L^2(\mathscr{O};\R)$ whilst the boundary condition $u=0$ is understood as each component mapping $u^j$ having zero trace (recall e.g. [\cite{evans2010partial}] that $f^j \in W^{1,2}(\mathscr{O};\R) \cap C(\bar{\mathscr{O}};\R)$ has zero trace if and only if $f^j(x) = 0$ for all $x \in \partial \mathscr{O}$). We impose these conditions by incorporating them into the function spaces where our solution takes value.

\begin{remark}  \label{first labelled remark}
    $W^{1,2}_{\sigma}$ is precisely the subspace of $W^{1,2}_0(\mathscr{O};\R^N)$ consisting of divergence-free functions. Moreover, $W^{1,2}_{\sigma}$ is the completion of $C^{\infty}_{0,\sigma}(\mathscr{O};\R^N)$ in $W^{1,2}(\mathscr{O};\R^N)$. The general space $W^{1,2}_{\sigma}$ thus incorporates the divergence-free and zero-trace condition (see [\cite{goodair2022navier}] Lemma 1.7). 
\end{remark}
We introduce the Leray Projector $\mathcal{P}$ as the orthogonal projection in $L^2(\mathscr{O};\R^N)$ onto $L^2_{\sigma}$. It is well known (see e.g. [\cite{temam2001navier}] Remark 1.6.) that for any $m \in \N$, $\mathcal{P}$ is continuous as a mapping $\mathcal{P}: W^{m,2}(\mathscr{O};\R^N) \rightarrow W^{m,2}(\mathscr{O};\R^N)$. In fact, the complement space of $L^2_{\sigma}$ can be characterised (this is the so-called Helmholtz-Weyl decomposition), and we direct the reader to [\cite{temam2001navier}] Theorems 1.4, 1.5 and [\cite{robinson2016three}] Theorem 2.6 for such an explicit characterisation. Through $\mathcal{P}$ we define the Stokes Operator $A: W^{2,2}(\mathscr{O};\R^N) \rightarrow L^2_{\sigma}$ by $A:= -\mathcal{P}\Delta$. We understand the Laplacian as an operator on vector valued functions through the component mappings, $(\Delta f)^l := \Delta f^l$. From the continuity of $\mathcal{P}$ we have immediately that for $m \in \{0\} \cup \N$, $A: W^{m+2,2}(\mathscr{O};\R^N) \rightarrow W^{m,2}(\mathscr{O};\R^N)$ is continuous. Moreover (see [\cite{robinson2016three}] Theorem 2.24) there exists a collection of functions $(a_k)$, $a_k \in W^{1,2}_{\sigma} \cap C^{\infty}(\overbar{\mathscr{O}};\R^N)$ such that the $(a_k)$ are eigenfunctions of $A$, are an orthonormal basis in $L^2_{\sigma}$ and an orthogonal basis in $W^{1,2}_{\sigma}$. The eigenvalues $(\lambda_k)$ are strictly positive and approach infinity as $k \rightarrow \infty$. Therefore every $f \in L^2_{\sigma}$ admits the representation \begin{equation}\label{L2sigrep}
    f = \sum_{k=1}^\infty f_ka_k
\end{equation}
where $f_k = \inner{f}{a_k}$, as a limit in $L^2(\mathscr{O};\R^N)$.

\begin{definition}
For $m \in \N$ we introduce the spaces $D(A^{m/2})$ as the subspaces of $L^2_{\sigma}$ consisting of functions $f$ such that $$\sum_{k=1}^\infty \lambda_k^m f_k^2 < \infty $$ for $f_k$ as in (\ref{L2sigrep}). Then $A^{m/2}:D(A^{m/2}) \rightarrow L^2_{\sigma}$ is defined by $$A^{m/2}:f \mapsto \sum_{k=1}^\infty \lambda_k^{m/2}f_ka_k.$$
\end{definition}
We present some fundamental properties regarding these spaces, which are justified in [\cite{constantin1988navier}] Proposition 4.12, as well as [\cite{robinson2016three}] Exercises 2.12, 2.13 and the discussion in Subsection 2.3.

\begin{enumerate}
    \item $D(A^{m/2}) \subset W^{m,2}(\mathscr{O};\R^N) \cap W^{1,2}_{\sigma}$ and the bilinear form $$\inner{f}{g}_m:= \inner{A^{m/2}f}{A^{m/2}g}$$ is an inner product on $D(A^{m/2})$;\\
    \item For $m$ even the induced norm is equivalent to the $W^{m,2}(\mathscr{O};\R^N)$ norm, and for $m$ odd there is a constant $c$ such that $$\norm{\cdot}_{W^{m,2}} \leq  c\norm{\cdot}_m;$$
    \item $D(A) = W^{2,2}_{\sigma}$ and $D(A^{1/2}) = W^{1,2}_{\sigma}$ with the additional property that $\norm{\cdot}_1$ is equivalent to $\norm{\cdot}_{W^{1,2}}$ on this space.
\end{enumerate}
It can be directly shown that for any $p,q \in \N$ with $p \leq q$, $p + q = 2m$ and $f \in D(A^{m/2})$, $g \in D(A^{q/2})$ we have that \begin{equation}\label{prop for moving stokes'}\inner{f}{g}_m = \inner{A^{p/2}f}{A^{q/2}g}.\end{equation}
From here we can also see that the collection of functions $(a_k)$ form an orthogonal basis of $W^{1,2}_{\sigma}$ equipped with the $\inner{\cdot}{\cdot}_1$ inner product. In addition to using these spaces defined by powers of the Stokes Operator, we also use the basis $(a_k)$ to consider finite dimensional approximations of these spaces.

\begin{definition}
We define $\mathcal{P}_n$ as the orthogonal projection onto $\textnormal{span}\{a_1, \dots, a_n\}$ in $L^2(\mathscr{O};\R^N)$. That is $\mathcal{P}_n$ is given by $$\mathcal{P}_n:f \mapsto \sum_{k=1}^n\inner{f}{a_k}a_k$$ for $f \in L^2(\mathscr{O};\R^N)$.
\end{definition}

From [\cite{robinson2016three}] Lemma 4.1, we have that the restriction of $\mathcal{P}_n$ to $D(A^{m/2})$ is self-adjoint for the $\inner{\cdot}{\cdot}_m$ inner product, and there exists a constant $c$ independent of $n$ such that for all $f\in D(A^{m/2})$, \begin{equation}\label{P_nbounds}
        \norm{\mathcal{P}_nf}_{W^{m,2}} \leq c\norm{f}_{W^{m,2}}.
    \end{equation}
Similar ideas justify that there exists a constant $c$ such that for all $f \in W^{1,2}_{\sigma}$, $g \in W^{2,2}_{\sigma}$ we have that 
    \begin{align} \label{one over n bound one}
        \norm{(I-\mathcal{P}_n)f}^2 &\leq \frac{1}{\lambda_n}\norm{f}_1^2\\
        \norm{(I-\mathcal{P}_n)g}_1^2 &\leq \frac{1}{\lambda_n}\norm{g}_2^2 \label{one over n bound two}
    \end{align}
    where $I$ represents the identity operator in the relevant spaces. To conclude this subsection we present identities related to the nonlinear term, which will be used in our analysis. For every $\phi \in W^{1,2}_{\sigma}$ and $f, g \in W^{1,2}(\mathscr{O};\R^N)$, we have that \begin{equation}\label{wloglhs}\inner{\mathcal{L}_{\phi}f}{g}= -\inner{f}{\mathcal{L}_{\phi}g}.\end{equation} 
and moreover \begin{equation} \label{cancellationproperty'} \inner{\mathcal{L}_{\phi}f}{f}= 0.\end{equation} In fact from inspecting the proof of, for example, Lemma 1.23 of [\cite{goodair2022navier}], we see that (\ref{wloglhs}) still holds for $\phi \in L^2_{\sigma} \cap W^{1,2}(\mathscr{O};\R^N)$ if we assume that $f,g \in W^{1 + \frac{N}{2}}(\mathscr{O};\R^N)$ through using an approximation by compactly supported functions only in $L^2(\mathscr{O};\R^N)$ and then the Sobolev Embedding of $f,g$ into $W^{1,\infty}(\mathscr{O};\R^N)$. This extension will be needed in the treatment of $J_1$ in Subsection \ref{sub big based}.

\subsection{Assumptions on the Noise} \label{subsection assumptions}

With the framework established we now precisely introduce the stochastic Navier-Stokes equation

\begin{equation} \label{projected Ito}
    u_t = u_0 - \int_0^t\mathcal{P}\mathcal{L}_{u_s}u_s\ ds - \nu\int_0^t A u_s\, ds + \frac{\nu}{2}\int_0^t\sum_{i=1}^\infty \mathcal{P}\mathcal{Q}_i^2u_s ds - \nu^{\frac{1}{2}}\int_0^t \mathcal{P}\mathcal{G}u_s d\mathcal{W}_s 
\end{equation}
where $\mathcal{Q}_i$ is either $\mathcal{P}\mathcal{G}_i$ or $0$, satisfying assumptions to be stated in this subsection. The case $\mathcal{Q}_i = 0$ leaves us with the projected form of (\ref{number2equation}) whilst $\mathcal{Q}_i=\mathcal{P}\mathcal{G}_i$ corresponds to (\ref{number3equation}) via taking the Leray Projection and then converting to It\^{o} Form. This conversion is rigorously justified in [\cite{goodair2022stochastic}] Subsection 2.3. In the case where $\mathcal{P}\mathcal{G}_i^2 = (\mathcal{P}\mathcal{G}_i )^2$ then we can instead take $\mathcal{Q}_i = \mathcal{G}_i$ as the resulting equation (\ref{projected Ito}) is the same; this is the case for SALT noise, discussed in the next subsection. The key definitions and results regarding the existence and uniqueness of solutions is given in Subsection \ref{sub def}. As we are interested in the inviscid limit, we assume here and throughout that $0 < \nu < 1$.\\

We impose the existence of some $p,q,r \in \R$ and constants $(c_i)$ such that for all $f, g \in L^2_{\sigma} \cap W^{1,2}(\mathscr{O};\R^N)$, $\phi,\psi \in W^{2,2}_{\sigma}(\mathscr{O};\R^N)$, defining $K(f,g):= 1 + \norm{f}^p + \norm{g}^q + \norm{f}_{W^{1,2}}^2 + \norm{g}_{W^{1,2}}^2$,
\begin{align}
   \label{assumpty 1}  \norm{\mathcal{G}_if}^2 &\leq c_i\left(1 + \norm{f}_{W^{1,2}}^2\right)\\
   \label{assumpty 4}  \norm{\mathcal{G}_if - \mathcal{G}_ig}^2 &\leq c_i\left[1 + \norm{f}_{W^{1,2}}^p + \norm{g}_{W^{1,2}}^q\right]\norm{f-g}_{W^{1,2}}^2\\
   \label{assumpty 2} \norm{\mathcal{Q}_i\phi}_{W^{1,2}}^2 &\leq c_i\norm{\phi}_2^2\\
    \label{assumpty 7} \inner{\mathcal{Q}_i^2\phi }{\phi} + \norm{\mathcal{G}_i\phi}^2 &\leq c_i\left(1 + \norm{\phi}^2\right) + k_i\norm{\phi}_1^2\\
    \label{assumpty8} \inner{\mathcal{G}_if}{f}^2 &\leq c_i\left(1 + \norm{f}^4\right)\\
    \label{assumpty9} \inner{\mathcal{G}_if}{g}^2 &\leq c_i\left[1 + \norm{f}^2 + \norm{g}^p\right]\norm{g}_{W^{1,2}}^2\\
    \label{assumpty10} \inner{\mathcal{G}_if - \mathcal{G}_ig}{\phi}^2 &\leq  c_i\left[1  + \norm{\phi}_2^p\right]\norm{f-g}^2\\
    \label{assumpty 6}
\inner{\mathcal{G}_if - \mathcal{G}_ig}{f-g}^2 &\leq c_iK(f,g)\norm{f -g}^4
\end{align}
where $\sum_{i=1}^\infty c_i < \infty$ and $\sum_{i=1}^\infty k_i \leq 1$\footnote{Actually, we only need that $\sum_{i=1}^\infty k_i < 2$. Our choice just avoids additional technical details.}. In fact we require that these bounds hold on any measurable subset of $\mathscr{O}$ with smooth boundary. For each $i \in \N$, $\mathcal{Q}_i$ must be linear and possess a densely defined adjoint $\mathcal{Q}_i^*$ in $L^2(\mathscr{O};\R^N)$ with domain of definition $W^{1,2}(\mathscr{O};\R^N)$ where for every $\varepsilon > 0$ there exists a constant $c(\varepsilon)$ such that, if $f,g$ also belong to $L^2_{\sigma}$,
\begin{align}
   \label{assumpty 3} \norm{\mathcal{Q}_i^*f}^2 &\leq c_i\norm{f}_{W^{1,2}}^2\\ 
   \inner{\mathcal{Q}_i(f - g)}{\mathcal{Q}_i^*(f-g)} + \norm{\mathcal{G}_if - \mathcal{G}_ig}^2 &\leq c_iK(f,g)\norm{f -g}^2 + k_i\norm{f -g}^2_{W^{1,2}}. \label{assumpty 5}
\end{align}
Moreover we assume that $\mathcal{Q}_i^*$ has structure $\mathcal{Q}_i^* = \mathcal{A}_i + \hat{\mathcal{A}_i}$ where if $f\in W^{1,2}(\mathscr{O};\R^N)$ has support in a set $\mathcal{U} \subset \bar{\mathscr{O}}$, then $\mathcal{A}_if$ again has support in $\mathcal{U}$ and $\norm{\hat{\mathcal{A}_i}f}^2 \leq c_i\norm{f}^2$.

\subsection{Examples} \label{section applications}

We consider examples of noise for which the assumptions imposed in Subsection \ref{subsection assumptions} are satisfied. It is immediate that our setting covers the additive noise used in the works of [\cite{bessaih2013inviscid}], [\cite{glatt2015inviscid}] and [\cite{luongo2021inviscid}], whilst also enabling linear multiplicative noise as seen in [\cite{gao2019existence}] and Nemytskii operators as present in [\cite{glatt2009strong}] and [\cite{taniguchi2014global}]. Using the property (\ref{wloglhs}) it is largely straightforwards to see that the usual transport noise $\mathcal{G}_i = \mathcal{P}\mathcal{L}_{\xi_i}$, $\mathcal{Q}_i = \mathcal{G}_i$ for $\xi_i \in W^{1,2}_{\sigma} \cap W^{1,\infty}(\mathscr{O};\R^N)$ with $\sum_{i=1}^\infty \norm{\xi_i}_{W^{1,\infty}}^2 < \infty$ also satisfies our assumptions. We note that imposing $\mathcal{P}$ into $\mathcal{G}_i$ makes no difference to the equation (\ref{projected Ito}) however it will be necessary to verify the assumptions. We only draw attention to the condition (\ref{assumpty 7}), and by extension (\ref{assumpty 5}), as there are no subtleties in the other inequalities. The argument is that $$\inner{(\mathcal{P}\mathcal{L}_{\xi_i})^2\phi}{\phi} = \inner{\mathcal{L}_{\xi_i}\mathcal{P}\mathcal{L}_{\xi_i}\phi}{\phi} = -\inner{\mathcal{P}\mathcal{L}_{\xi_i}\phi}{\mathcal{L}_{\xi_i}\phi} =  -\inner{\mathcal{P}\mathcal{L}_{\xi_i}\phi}{\mathcal{P}\mathcal{L}_{\xi_i}\phi}$$ so $$\inner{(\mathcal{P}\mathcal{L}_{\xi_i})^2\phi}{\phi} + \norm{\mathcal{P}\mathcal{L}_{\xi_i}\phi}^2 =  -\inner{\mathcal{P}\mathcal{L}_{\xi_i}\phi}{\mathcal{P}\mathcal{L}_{\xi_i}\phi} + \inner{\mathcal{P}\mathcal{L}_{\xi_i}\phi}{\mathcal{P}\mathcal{L}_{\xi_i}\phi} = 0$$
hence the assumption certainly holds. This noise is at the core of [\cite{mikulevicius2005global}] and many developments in stochastic fluid dynamics, as discussed in the introduction. We also note that the assumptions hold for $\mathcal{G}_i = \mathcal{L}_{\xi_i}$ and $\mathcal{Q}_i = 0$ if $\sum_{i=1}^\infty \norm{\xi_i}_{W^{1,\infty}}^2 \leq 1$, which is an It\^{o} transport noise with sufficiently small gradient dependency. We now explicitly address the application to SALT noise.\\

The SALT Navier-Stokes Equation as first introduced in [\cite{holm2015variational}] is given by (\ref{number3equation}) for the operator $\mathcal{G}:=B$ where $$B_i:f \mapsto \mathcal{L}_{\xi_i}f + \mathcal{T}_{\xi_i}f, \qquad  \mathcal{T}_{g}f := \sum_{j=1}^N f^j\nabla g^j$$ for $\xi_i \in W^{1,2}_{\sigma} \cap W^{2,\infty}(\mathscr{O};\R^N)$ such that $\sum_{i=1}^\infty \norm{\xi_i}_{W^{2,\infty}}^2 < \infty$. The vector fields $(\xi_i)$ physically represent spatial correlations; they can be determined at coarse-grain resolutions from finely resolved numerical simulations, and mathematically are derived as eigenvectors of a velocity-velocity correlation matrix (see [\cite{cotter2020data}, \cite{cotter2018modelling}, \cite{cotter2019numerically}]). The equivalence between the Stratonovich form and It\^{o} Form (\ref{projected Ito}) is rigorously understood in [\cite{goodair2022navier}] Subsection 2.1. Verification of the assumptions of Subsection \ref{subsection assumptions} is almost immediate from the analysis of this operator in [\cite{goodair2022navier}] Subsection 1.4. We first note that the property $\mathcal{P}B_i = \mathcal{P}B_i\mathcal{P}$ was proven in [\cite{goodair2022navier}] Lemma 1.28 so $(\mathcal{P}B_i)^2 = \mathcal{P}B_i^2$ and we take $\mathcal{Q}_i = B_i$ without Leray Projection. Indeed (\ref{assumpty 1}), (\ref{assumpty 4}) and (\ref{assumpty 2}) are given by [\cite{goodair2022navier}] Corollary 1.26.1 and the linearity of $B_i$, whilst (\ref{assumpty 7}), (\ref{assumpty8}) and (\ref{assumpty 6}) are contained in Proposition 1.27. (\ref{assumpty9}), (\ref{assumpty10}) and (\ref{assumpty 3}) all follow from the adjoint property of Corollary 1.26.1. The final numbered assumption (\ref{assumpty 5}) is contained in the proof of Proposition 1.27, and is near identical to (\ref{assumpty 7}) given that $B_i$ is linear. It then only remains to address if the structure $B_i^* = \mathcal{A}_i + \hat{\mathcal{A}_i}$ holds, which is clear as $\mathcal{L}_{\xi_i}^* = - \mathcal{L}_{\xi_i}$ preserves the support, and $\mathcal{T}_{\xi_i}^*$ is bounded on $L^2(\mathscr{O};\R^N)$.

\subsection{Notions of Solution and Well-Posedness Results} \label{sub def}

We fix an arbitrary $T>0$ and give two definitions for weak solutions of the equation (\ref{projected Ito}).

\begin{definition} \label{definitionofspatiallyweak}
Let $u_0: \Omega \rightarrow L^2_{\sigma}$ be $\mathcal{F}_0-$measurable. A process $u$ which is progressively measurable in $W^{1,2}_{\sigma}$ and such that for $\mathbb{P}-a.e.$ $\omega$, $u_{\cdot}(\omega) \in L^{\infty}\left([0,T];L^2_{\sigma}\right)\cap C_w\left([0,T];L^2_{\sigma}\right) \cap L^2\left([0,T];W^{1,2}_{\sigma}\right)$, is said to be a spatially weak solution of the equation (\ref{projected Ito}) if the identity
\begin{align} \nonumber
     \inner{u_t}{\phi} = \inner{u_0}{\phi} - \int_0^{t}\inner{\mathcal{L}_{u_s}u_s}{\phi}ds &- \nu\int_0^{t} \inner{u_s}{\phi}_1 ds\\ &+ \frac{\nu}{2}\int_0^{t}\sum_{i=1}^\infty \inner{\mathcal{Q}_iu_s}{\mathcal{Q}_i^*\phi} ds - \nu^{\frac{1}{2}}\int_0^{t} \inner{\mathcal{G}u_s}{\phi} d\mathcal{W}_s\label{identityindefinitionofspatiallyweak}
\end{align}
holds for every $\phi \in W^{1,2}_{\sigma}$, $\mathbb{P}-a.s.$ in $\R$ for all $t\in[0,T]$.
\end{definition}

We briefly note from two applications of H\"{o}lder's Inequality and (recalling that $N= 2,3$) the Sobolev Embedding $W^{1,2}(\mathscr{O};\R^N) \xhookrightarrow{}L^6(\mathscr{O};\R^N)$ that
\begin{align} \label{why the star}
    \abs{\inner{\mathcal{L}_{u_s}u_s}{\phi}} \leq \norm{\mathcal{L}_{u_s}u_s}_{L^{6/5}}\norm{\phi}_{L^{6}} \leq \sum_{k=1}^Nc\norm{u_s}_{L^3}\norm{\partial_k u_s}\norm{\phi}_{L^{6}} \leq c\norm{u_s}_1^2\norm{\phi}_1
\end{align}
so this first integral is indeed well defined. The remaining integrals are much clearer, noting (\ref{assumpty 1}).

\begin{definition} \label{definitionofspacetimeweak}
Let $u_0: \Omega \rightarrow L^2_{\sigma}$ be $\mathcal{F}_0-$measurable. A process $u$ which is progressively measurable in $W^{1,2}_{\sigma}$ and such that for $\mathbb{P}-a.e.$ $\omega$, $u_{\cdot}(\omega) \in L^{\infty}\left([0,T];L^2_{\sigma}\right)\cap C_w\left([0,T];L^2_{\sigma}\right) \cap L^2\left([0,T];W^{1,2}_{\sigma}\right)$, is said to be a space-time weak solution of the equation (\ref{projected Ito}) if the identity
\begin{align} \nonumber
     \inner{u_t}{\phi_t} = \inner{u_0}{\phi_0}  + \int_0^t\inner{u_s}{\partial_s\phi_s}ds &- \int_0^{t}\inner{\mathcal{L}_{u_s}u_s}{\phi_s}ds - \nu\int_0^{t} \inner{u_s}{\phi_s}_1 ds\\ &+ \frac{\nu}{2}\int_0^{t}\sum_{i=1}^\infty \inner{\mathcal{Q}_iu_s}{\mathcal{Q}_i^*\phi_s} ds - \nu^{\frac{1}{2}}\int_0^{t} \inner{\mathcal{G}u_s}{\phi_s } d\mathcal{W}_s\label{identityindefinitionofspacetimeweak}
\end{align}
holds for every $\phi \in C^1\left([0,t] \times \overbar{\mathscr{O}}; \R^N\right)$ such that $\phi_s \in W^{1,2}_{\sigma}$ for every $s \in [0,t]$, $\mathbb{P}-a.s.$ in $\R$ for all $t \in [0,T]$.
\end{definition}

The difference in the two definitions comes from whether or not there is time-dependency in the test function. We will see in Section \ref{section zero viscous} that the time-dependency is necessary for us to characterise the zero viscosity limit, as we shall use a corrected solution of the nonstationary Euler Equation as a test function in this weak formulation. On the other hand this formulation is impractical to demonstrate and to work with on the whole in the stochastic setting, as we do not have differentiability in time for our approximate solutions. Therefore both representations serve a purpose, and we must show their equivalence. This is stated in the following proposition, whose proof we leave for Subsection \ref{sub equivalence}.

\begin{proposition} \label{prop for equivalence}
    A process $u$ is a spatially weak solution of the equation (\ref{projected Ito}) if and only if it is a space-time weak solution.
\end{proposition}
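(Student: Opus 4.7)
The forward implication---every space-time weak solution is spatially weak---is the easy direction. Given any $\phi \in C^\infty_{0,\sigma}(\mathscr{O};\R^N)$, the constant-in-time assignment $\phi_s \equiv \phi$ is an admissible test function in the space-time weak identity with $\partial_s\phi_s = 0$, and that identity reduces to the spatially weak identity for this $\phi$. Since $C^\infty_{0,\sigma}(\mathscr{O};\R^N)$ is dense in $W^{1,2}_\sigma$ by Remark \ref{first labelled remark}, I would extend to arbitrary $\phi \in W^{1,2}_\sigma$ by passing to the limit, using continuity of every term in the $W^{1,2}$-topology: the endpoint inner products and the term $\nu\int_0^t\inner{u_s}{\phi}_1\,ds$ are trivially continuous by Cauchy--Schwarz and $u \in L^2([0,T];W^{1,2}_\sigma)$; the nonlinear integral is controlled by (\ref{why the star}); the Stratonovich correction uses the linearity of $\mathcal{Q}_i$ together with the adjoint bound (\ref{assumpty 3}); and the stochastic integral by the It\^{o} isometry with (\ref{assumpty 1}) and $\sum_i c_i < \infty$, modulo a routine stopping-time localisation.

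For the substantive backward direction, I would use a time-discretisation. Fix $\phi \in C^1([0,t]\times\bar{\mathscr{O}};\R^N)$ with $\phi_s \in W^{1,2}_\sigma$ for every $s$, partition $[0,t]$ uniformly by $s_k := kt/n$, and set $\phi^n_s := \phi_{s_k}$ and $v^n_r := u_{s_{k+1}}$ on $[s_k,s_{k+1})$. Telescoping,
\[
\inner{u_t}{\phi_t} - \inner{u_0}{\phi_0} = \sum_{k=0}^{n-1} \inner{u_{s_{k+1}}}{\phi_{s_{k+1}} - \phi_{s_k}} + \sum_{k=0}^{n-1}\inner{u_{s_{k+1}} - u_{s_k}}{\phi_{s_k}}.
\]
For the second sum, apply the spatially weak identity at times $s_k$ and $s_{k+1}$ with the fixed test function $\phi_{s_k} \in W^{1,2}_\sigma$ and subtract; after summing over $k$ the result is exactly
\[
-\int_0^t \inner{\mathcal{L}_{u_s}u_s}{\phi^n_s}\,ds - \nu\int_0^t \inner{u_s}{\phi^n_s}_1\,ds + \frac{\nu}{2}\int_0^t \sum_{i=1}^\infty \inner{\mathcal{Q}_i u_s}{\mathcal{Q}_i^*\phi^n_s}\,ds - \nu^{\frac{1}{2}}\int_0^t \inner{\mathcal{G}u_s}{\phi^n_s}\,d\mathcal{W}_s.
\]
The fundamental theorem of calculus $\phi_{s_{k+1}} - \phi_{s_k} = \int_{s_k}^{s_{k+1}}\partial_r\phi_r\,dr$ rewrites the first sum as $\int_0^t \inner{v^n_r}{\partial_r\phi_r}\,dr$.

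I then pass to $n\to\infty$. Uniform continuity of $\phi$ on $[0,t]\times\bar{\mathscr{O}}$ yields $\phi^n_s \to \phi_s$ uniformly in $s$ in the $W^{1,2}$-norm, so each deterministic integral converges realisation by realisation (using (\ref{why the star}), (\ref{assumpty 3}) and dominated convergence) to its analogue with $\phi_s$ in place of $\phi^n_s$. The It\^{o} integral converges in probability via the It\^{o} isometry applied to $\mathbb{E}\int_0^{\tau_M}\sum_i \inner{\mathcal{G}_i u_s}{\phi^n_s - \phi_s}^2\,ds$ together with (\ref{assumpty 1}) and $\sum_i c_i < \infty$, after localising by $\tau_M := \inf\{r \geq 0 : \int_0^r \norm{u_s}_1^2\,ds \geq M\} \wedge t$. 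For the first sum, since $u \in C_w([0,t]; L^2_\sigma)$ and $r \mapsto \partial_r\phi_r$ is strongly continuous in $L^2$, the scalar pairing $r \mapsto \inner{u_r}{\partial_r\phi_r}$ is continuous on $[0,t]$, hence Riemann integrable, and $\int_0^t \inner{v^n_r}{\partial_r\phi_r}\,dr \to \int_0^t \inner{u_r}{\partial_r\phi_r}\,dr$. Assembling these limits produces the space-time weak identity.

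The main obstacle is the convergence of the first sum $\int_0^t \inner{v^n_r}{\partial_r\phi_r}\,dr \to \int_0^t \inner{u_r}{\partial_r\phi_r}\,dr$: because $u$ is only weakly continuous in time, one cannot hope for strong $L^2$-convergence $v^n_r \to u_r$, so a naive dominated convergence on the \emph{vector-valued} integrand fails. The resolution is to notice that only the scalar pairing $\inner{u_r}{\partial_r\phi_r}$---which \emph{is} continuous in $r$---needs to converge, reducing the matter to Riemann integrability of a continuous function. A secondary technical point is the It\^{o} integral limit under only $\mathbb{P}$-a.s. integrability of $u$, handled by the stopping-time localisation above.
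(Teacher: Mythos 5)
Your proposal is correct, and its overall skeleton matches the paper's: constant-in-time test functions plus density of $C^{\infty}_{0,\sigma}(\mathscr{O};\R^N)$ in $W^{1,2}_{\sigma}$ for the easy direction, and a telescoping time-discretisation combined with the fundamental theorem of calculus and a localised It\^{o}-isometry argument for the substantive one. Where you genuinely diverge is in the treatment of the crux term $\sum_k \inner{u}{\Delta\phi}$-type sum. The paper samples $u$ at partition points and needs the sampled piecewise-constant process $\tilde{u}^l$ to converge to $u$ in $L^2\left(\Omega \times [0,t]; W^{1,2}_{\sigma}\right)$; since point evaluations of a merely $L^2$-in-time process are uncontrolled on arbitrary grids, this forces the specially chosen partitions of [\cite{prevot2007concise}] Lemma 4.2.6 (with an attendant footnote about progressively measurable modifications), followed by extraction of a $\mathbbm{P}$-a.s.\ convergent subsequence. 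You instead exploit the regularity $u_{\cdot}(\omega) \in C_w\left([0,T];L^2_{\sigma}\right)$, which is part of the definition of both notions of solution: the right-endpoint samples pair against the strongly continuous vector $\partial_r\phi_r$, so $\inner{u_{\sigma_n(r)}}{\partial_r\phi_r} \rightarrow \inner{u_r}{\partial_r\phi_r}$ pointwise in $r$ with the uniform bound $\sup_r\norm{u_r}\sup_r\norm{\partial_r\phi_r}$ (finite since weak continuity on a compact interval gives genuine boundedness), and scalar dominated convergence finishes the term realisation by realisation, on an arbitrary uniform partition and with no subsequence extraction. One small imprecision: your phrase ``hence Riemann integrable'' is not quite the right justification, because the approximating integrand still depends on $r$ within each subinterval through $\partial_r\phi_r$; the pointwise-plus-domination argument you effectively have is the correct fix. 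Your handling of $\tilde{\phi}^n \rightarrow \phi$ via uniform continuity of the $C^1$ data on the compact set $[0,t]\times\overbar{\mathscr{O}}$ is also marginally simpler than the paper's pointwise-Lipschitz-plus-dominated-convergence route, and yields the same $L^{\infty}\left([0,t];W^{1,2}_{\sigma}\right)$ convergence. In short, your argument buys a shorter proof that dispenses with the partition-selection lemma entirely, at the mild cost of leaning on the weak continuity assumption, which the paper's machinery would survive without.
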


We will refer to such a solution as simply a weak solution of the equation (\ref{projected Ito}). We now define notions of (pathwise) uniqueness and probabilistically weak solutions.

\begin{definition}
    A weak solution of the equation (\ref{projected Ito}) is said to be unique if for any other such solution $w$, $$ \mathbbm{P}\left(\left\{\omega \in \Omega: u_t(\omega) = w_t(\omega) \quad \forall t \geq 0\right\}\right) = 1.$$
\end{definition}

\begin{definition} \label{definitionofspacetimeweakmartingale}
Let $u_0: \Omega \rightarrow L^2_{\sigma}$ be $\mathcal{F}_0-$measurable. If there exists a filtered probability space $\left(\tilde{\Omega},\tilde{\mathcal{F}},(\tilde{\mathcal{F}}_t), \tilde{\mathbbm{P}}\right)$, a cyclindrical Brownian Motion $\tilde{\mathcal{W}}$ over $\mathfrak{U}$ with respect to $\left(\tilde{\Omega},\tilde{\mathcal{F}},(\tilde{\mathcal{F}}_t), \tilde{\mathbbm{P}}\right)$, an $\mathcal{F}_0-$measurable $\tilde{u}_0: \tilde{\Omega} \rightarrow L^2_{\sigma}$ with the same law as $u_0$, and a progressively measurable process $\tilde{u}$ in $W^{1,2}_{\sigma}$ such that for $\tilde{\mathbb{P}}-a.e.$ $\tilde{\omega}$, $\tilde{u}_{\cdot}(\omega) \in L^{\infty}\left([0,T];L^2_{\sigma}\right)\cap C_w\left([0,T];L^2_{\sigma}\right) \cap L^2\left([0,T];W^{1,2}_{\sigma}\right)$ and
\begin{align} \nonumber
     \inner{\tilde{u}_t}{\phi} = \inner{\tilde{u}_0}{\phi} - \int_0^{t}\inner{\mathcal{L}_{\tilde{u}_s}\tilde{u}_s}{\phi}ds &- \nu\int_0^{t} \inner{\tilde{u}_s}{\phi}_1 ds\\ &+ \frac{\nu}{2}\int_0^{t}\sum_{i=1}^\infty \inner{\mathcal{Q}_i\tilde{u}_s}{\mathcal{Q}_i^*\phi} ds - \nu^{\frac{1}{2}}\int_0^{t} \inner{\mathcal{G}\tilde{u}_s}{\phi} d\tilde{\mathcal{W}}_s\label{newid1}
\end{align}
holds for every $\phi \in W^{1,2}_{\sigma}$ $\tilde{\mathbb{P}}-a.s.$ in $\R$ for all $t \in [0,T]$, then $\tilde{u}$ is said to be a martingale weak solution of the equation (\ref{projected Ito}).
\end{definition}

This positions us to state the following existence and uniqueness results, which are proven in Section \ref{section weak solutions}.

\begin{theorem}  \label{existence of weak}
    For any given  $\mathcal{F}_0-$measurable $u_0 \in L^\infty\left( \Omega; L^2_{\sigma}\right)$, there exists a martingale weak solution of the equation (\ref{projected Ito}).
\end{theorem}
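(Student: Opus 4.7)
The plan is to follow the standard Galerkin--compactness--martingale-problem route, adapted to the assumptions of Subsection \ref{subsection assumptions}. The output will be a probability space on which a process satisfying (\ref{newid1}) is constructed as the limit (in law) of finite-dimensional approximants.

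First I would introduce the Galerkin approximation $u^n := \sum_{k=1}^n c^n_k(t)\, a_k$, where the coefficients solve the finite-dimensional It\^o SDE in $\R^n$ obtained by applying $\mathcal{P}_n$ to (\ref{projected Ito}) and testing against $a_k$, with initial condition $\mathcal{P}_n u_0$. Since $\mathcal{P}_n \mathcal{P}\mathcal{L}$, $\mathcal{P}_n A$, and $\mathcal{P}_n \mathcal{P}\mathcal{Q}_i^2$ are smooth on a finite-dimensional space and the diffusion $\mathcal{P}_n\mathcal{P}\mathcal{G}$ satisfies the growth bound (\ref{assumpty 1}), a local-in-time solution exists by standard SDE theory and is extended globally via the energy estimate described next.

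Second, I would derive uniform (in $n$) bounds by applying It\^o's formula to $\|u^n_t\|^2$. The nonlinear term vanishes by (\ref{cancellationproperty'}), the Stokes term gives $-2\nu\|u^n_s\|_1^2 ds$, and the corrector-plus-quadratic-variation combination is controlled by assumption (\ref{assumpty 7}): summing over $i$,
\begin{align*}
\nu \sum_{i=1}^\infty \inner{\mathcal{Q}_i^2 u^n_s}{u^n_s} + \nu \sum_{i=1}^\infty \norm{\mathcal{G}_i u^n_s}^2 \leq \nu C\bigl(1 + \norm{u^n_s}^2\bigr) + \nu \norm{u^n_s}_1^2
\end{align*}
since $\sum k_i \leq 1$. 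Combined with the dissipation this yields $dE\|u^n_t\|^2 \leq C E\|u^n_t\|^2 dt - \nu E\|u^n_t\|_1^2 dt + C\,dt$; Gr\"onwall and BDG on the martingale part (via (\ref{assumpty 1})) then give
\begin{equation*}
\sup_n \mathbbm{E}\sup_{t\in[0,T]}\norm{u^n_t}^2 + \nu\, \sup_n \mathbbm{E}\int_0^T \norm{u^n_s}_1^2\, ds < \infty.
\end{equation*}

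Third, I would prove tightness of the laws $\mathcal{L}(u^n)$ on a path space that combines weak-$\ast$ continuity in $L^2_\sigma$ with strong convergence in $L^2([0,T];L^2(\mathscr{O};\R^N))$. The natural choice is the intersection $L^2([0,T];L^2_\sigma)\cap \mathcal{D}([0,T]; (W^{1,2}_\sigma)^*)$ (or $C_w$ instead of $\mathcal{D}$), together with weak-$\ast$ in $L^\infty([0,T];L^2_\sigma)$ and weak in $L^2([0,T];W^{1,2}_\sigma)$. The spatial bounds above give tightness in the weak topologies directly. For the strong $L^2_t L^2_x$ piece I would use an Aubin--Lions-type compactness, checking a fractional Sobolev bound in time: the drift is bounded in $L^2([0,T];(W^{1,2}_\sigma)^*)$ uniformly via (\ref{why the star}), the Stokes part in $L^2([0,T];(W^{1,2}_\sigma)^*)$, the corrector by (\ref{assumpty 2}) in $L^2([0,T]; L^2_\sigma)$, and the stochastic integral lies in $C^\alpha([0,T]; L^2_\sigma)$ for any $\alpha<1/2$ by BDG and (\ref{assumpty 1}). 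Tightness then follows from a criterion such as Lemma in the appendix (Section \ref{section appendix}).

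Finally, by Prokhorov and a Jakubowski--Skorohod representation theorem I would pass to a new probability space $(\tilde\Omega,\tilde{\mathcal{F}},\tilde{\mathbbm{P}})$ supporting processes $\tilde u^n \to \tilde u$ a.s.\ in the topology above, together with copies $\tilde{\mathcal{W}}^n$ of the driving noise. Strong $L^2_tL^2_x$ convergence identifies the limit in $\mathcal{L}_{u_s}u_s$ (integrating against a test $\phi\in W^{1,2}_\sigma\cap C^\infty$ and using (\ref{why the star}) with H\"older in time); weak convergence suffices for the linear terms, noting the polynomial continuity in (\ref{assumpty 4}) with a cut-off argument for the stochastic integral. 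The limiting $\tilde{\mathcal{W}}$ is constructed either directly as the Skorohod limit of $\tilde{\mathcal{W}}^n$ or via a martingale representation on the limit, and the identity (\ref{newid1}) is verified by showing that the martingale and quadratic-variation functionals associated with (\ref{newid1}) have the correct limits. Endowing $\tilde\Omega$ with the augmented filtration generated by $\tilde u$ and $\tilde{\mathcal{W}}$ yields progressive measurability, and the almost-sure membership in $L^\infty L^2_\sigma \cap C_w L^2_\sigma \cap L^2 W^{1,2}_\sigma$ is inherited from the uniform bounds and the topology of convergence.

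The main obstacle is identifying the nonlinear term in the limit: in the 3D setting we have no uniform $W^{1,2}_\sigma$-control pathwise, only in expectation with a $\nu$-weight, so the Aubin--Lions compactness must be set up carefully against a dual space strong enough to accommodate (\ref{why the star}) yet weak enough for tightness of the drift. Once the Skorohod step produces strong $L^2_tL^2_x$ convergence, passage to the limit in all remaining terms, including the stochastic integral via the polynomial continuity of $\mathcal{G}$ and a standard localisation, is routine.
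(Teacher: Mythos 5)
Your overall architecture — Galerkin approximation, uniform energy estimates via It\^{o} and the cancellation (\ref{cancellationproperty'}), tightness, Prohorov--Skorohod, term-by-term identification of the limit — is exactly the paper's strategy (Subsections \ref{sub galerkin}--\ref{existence for bounded ic}), but your tightness mechanism is genuinely different. You propose an Aubin--Lions/Flandoli--Gatarek argument: a uniform bound in $L^2([0,T];W^{1,2}_{\sigma})$ (in expectation, which suffices at fixed $\nu$ by Chebyshev) combined with a fractional Sobolev bound in time, with compactness into $L^2([0,T];L^2_{\sigma})$. The paper instead verifies a time-increment criterion $\lim_{\delta\rightarrow 0^+}\sup_n \mathbbm{E}\int_0^{T-\delta}\norm{u^{n,M}_{s+\delta}-u^{n,M}_s}^2 ds = 0$ (Lemma \ref{Lemma 5.2}, following [\cite{rockner2022well}]) for processes stopped at $\tau^{M,T+1}_n$, plus an Aldous-type criterion in $\mathcal{D}\left([0,T];\left(W^{1,2}_{\sigma}\right)^*\right)$ (Lemma \ref{lemma for D tight}) to identify the limit at each fixed time. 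Both routes work; the increment criterion avoids fractional Sobolev machinery but requires the careful stopping-time bookkeeping of (\ref{achievement}) to remove the localisation, which your proposal compresses into ``extended globally via the energy estimate'' — that compression is legitimate, since the paper's $\tau^{M,S}_n$ construction is precisely how one globalises a locally Lipschitz finite-dimensional SDE with a Lyapunov estimate.

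Two technical slips, both fixable with assumptions the paper supplies. First, your claim that the stochastic integral lies in $C^{\alpha}([0,T];L^2_{\sigma})$ for $\alpha<1/2$ ``by BDG and (\ref{assumpty 1})'' fails for this noise class: (\ref{assumpty 1}) controls $\norm{\mathcal{G}_iu}^2$ only through $\norm{u}_{W^{1,2}}^2$, which is merely integrable in time, so the increments $\mathbbm{E}\Vert\int_s^t\mathcal{G}u_r\,d\mathcal{W}_r\Vert^2 \leq c\,\mathbbm{E}\int_s^t(1+\norm{u_r}_1^2)dr$ admit no uniform H\"{o}lder modulus; what does hold (and is what Aubin--Lions actually needs) is the $W^{\alpha,2}([0,T];L^2_{\sigma})$ bound in expectation, which follows from the same increment estimate by Fubini. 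Second, in closing the energy estimate you invoke (\ref{assumpty 1}) for the BDG term, but that reintroduces $\norm{u^n_s}_1^2$ with a BDG/Young constant that the residual dissipation (already partially consumed by the $k_i$ in (\ref{assumpty 7})) need not absorb; the paper's hypothesis (\ref{assumpty8}), $\inner{\mathcal{G}_if}{f}^2 \leq c_i(1+\norm{f}^4)$, exists precisely so that the martingale term closes with no gradient contribution, as in (\ref{the same process}). With these two substitutions your argument goes through.
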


\begin{theorem} \label{theorem 2D}
    If $N=2$ then for any given $\mathcal{F}_0-$measurable $u_0: \Omega \rightarrow L^2_{\sigma}$, there exists a unique weak solution $u$ of the equation (\ref{projected Ito}) with the property that for $\mathbbm{P}-a.e.$ $\omega$, $u_{\cdot}(\omega) \in C\left([0,T];L^2_{\sigma}\right)$.
\end{theorem}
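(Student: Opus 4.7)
\textbf{Proof plan for Theorem \ref{theorem 2D}.}

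The strategy proceeds in three stages: prove pathwise uniqueness in two dimensions; combine this with the martingale weak solutions of Theorem \ref{existence of weak} through an infinite-dimensional Yamada--Watanabe principle to produce a probabilistically strong solution on the original stochastic basis; and finally upgrade the weak continuity in $L^2_\sigma$ to strong continuity. Since Theorem \ref{existence of weak} requires $u_0\in L^\infty(\Omega;L^2_\sigma)$ but the present statement only asks for $\mathcal{F}_0$-measurability, I would first carry out the construction under the bounded hypothesis and then paste together the solutions obtained from the truncated initial data $u_0\mathbbm{1}_{\{\norm{u_0}\leq R\}}$; uniqueness guarantees that the resulting processes agree on the common sets $\{\norm{u_0}\leq R\}$, so a solution for the original $u_0$ is well-defined.

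For pathwise uniqueness, let $u,w$ be two weak solutions on the same stochastic basis driven by the same cylindrical Brownian motion with common initial datum, and set $v=u-w$. I would apply the variational It\^{o} formula (Proposition \ref{rockner prop}) to $\norm{v_t}^2$, decompose $\mathcal{L}_uu-\mathcal{L}_ww=\mathcal{L}_uv+\mathcal{L}_vw$, and use the antisymmetry (\ref{cancellationproperty'}) to annihilate the first piece. The two-dimensional Ladyzhenskaya--Sobolev inequality $\norm{v}_{L^4}^2\leq c\norm{v}\,\norm{v}_1$ then bounds the surviving transport term via
\begin{align*}
\bigl|\inner{\mathcal{L}_vw}{v}\bigr|\leq\norm{v}_{L^4}^2\norm{\nabla w}\leq\tfrac{\nu}{2}\norm{v}_1^2+\tfrac{c}{\nu}\norm{v}^2\norm{w}_1^2,
\end{align*}
so that half of the viscous dissipation $-2\nu\norm{v}_1^2$ absorbs the nonlinearity.

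The It\^{o} correction $\nu\sum_i\inner{\mathcal{Q}_iv}{\mathcal{Q}_i^*v}$ and the quadratic variation, which is dominated by $\nu\sum_i\norm{\mathcal{G}_iu-\mathcal{G}_iw}^2$ using that $\mathcal{P}$ is a contraction, combine into exactly the left-hand side of assumption (\ref{assumpty 5}); this yields the upper bound $c\nu K(u,w)\norm{v}^2+\nu\norm{v}_1^2$ thanks to $\sum k_i\leq 1$, still leaving a strictly negative coefficient on $\norm{v}_1^2$ after the nonlinear absorption. Collecting the terms produces
\begin{align*}
\mathrm{d}\norm{v_t}^2\leq c\,G_t\norm{v_t}^2\,\mathrm{d}t-2\nu^{1/2}\inner{v_t}{(\mathcal{G}u_t-\mathcal{G}w_t)\,\mathrm{d}\mathcal{W}_t},\qquad G_t:=\nu K(u_t,w_t)+\tfrac{1}{\nu}\norm{w_t}_1^2,
\end{align*}
with $\int_0^T G_t\,\mathrm{d}t<\infty$ almost surely owing to the $L^\infty_tL^2\cap L^2_tW^{1,2}$ regularity inherent in the definition of a weak solution. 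Stopping at $\tau_R:=\inf\{t:\int_0^tG_s\,\mathrm{d}s\geq R\}\wedge T$ renders the stochastic integral a genuine martingale; Gronwall in expectation then forces $v\equiv 0$ on $[0,\tau_R]$, and $R\to\infty$ delivers pathwise uniqueness.

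An infinite-dimensional Yamada--Watanabe theorem (the version of Kurtz or Ondrej\'{a}t) then promotes the martingale weak solution into a probabilistically strong one on the prescribed basis. For the continuity claim, a further application of Proposition \ref{rockner prop} to $\norm{u_t}^2$ expresses it as the sum of an absolutely continuous drift and a continuous real-valued martingale, hence it is almost surely continuous in $t$; combined with $u\in C_w([0,T];L^2_\sigma)$ from the definition of weak solution and the Hilbert-space fact that weak continuity plus continuity of the norm upgrades to strong continuity, we conclude $u_\cdot(\omega)\in C([0,T];L^2_\sigma)$ for $\mathbb{P}$-a.e.\ $\omega$. The main obstacle is the pathwise uniqueness step: it is essential that (\ref{assumpty 5}) supplies coefficients summing to at most one in front of $\norm{v}_1^2$ so that the noise does not overwhelm the dissipation, and the authors specifically stress that Proposition \ref{rockner prop} must be applied \emph{rigorously} here, since an unchecked It\^{o} formula in $L^2_\sigma$ would falsely force continuity that is known to fail in three dimensions.
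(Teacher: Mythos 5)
Your plan follows essentially the same route as the paper: pathwise uniqueness via the variational It\^{o} formula (Proposition \ref{rockner prop}) with the two-dimensional Ladyzhenskaya bound and assumption (\ref{assumpty 5}), passage to probabilistically strong solutions by a Yamada--Watanabe argument, continuity of paths in $L^2_{\sigma}$ from the energy identity, and a gluing argument over truncations of the unbounded initial datum. On the gluing, the paper uses the disjoint annuli $\{k \leq \norm{u_0} < k+1\}$ and verifies the weak formulation for $u := \sum_k u^k\mathbbm{1}_{A_k}$ directly, passing the $\mathcal{F}_0$-measurable indicator through the stochastic integral, rather than invoking local uniqueness across your nested truncations $\{\norm{u_0}\leq R\}$; your variant works too, but note that the agreement of $u^R$ and $u^{R'}$ on $\{\norm{u_0}\leq R\}$ is not a direct quote of pathwise uniqueness (the initial data differ off that event) and must itself be localised by inserting $\mathbbm{1}_{\{\norm{u_0}\leq R\}}$ into the Gr\"onwall argument, which is legitimate precisely because the event is $\mathcal{F}_0$-measurable.

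Two technical points in your uniqueness step need repair. First, ``Gr\"onwall in expectation'' is not literally available: your coefficient $G_t$ is random and only time-integrable, so $\mathbbm{E}\int_0^{t\wedge\tau_R}G_s\norm{v_s}^2\,ds$ cannot be compared with $\int_0^t(\cdot)\,\mathbbm{E}\norm{v_{s\wedge\tau_R}}^2\,ds$. This is exactly what the stochastic Gr\"onwall inequality, Lemma \ref{gronny}, is designed for --- your cap $\int_0^{\tau_R}G_s\,ds\leq R$ is precisely its hypothesis (\ref{boundingronny}) --- and it is the tool the paper uses (after taking the supremum inside the expectation and applying Burkholder--Davis--Gundy); alternatively one can weight by $e^{-c\int_0^t G_s\,ds}$ before taking expectations. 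Second, capping $\int G$ alone does not render the stopped stochastic integral a true martingale: by (\ref{assumpty 6}) its quadratic variation is controlled by $K(u,w)\norm{v}^4$, so the stopping time must also cap $\sup_{s}\left(\norm{u_s}^2+\norm{w_s}^2\right)$, as the paper's $\alpha_R$ does; the paper remarks that this hitting time is only available because the solutions compared are pathwise continuous in $L^2_{\sigma}$, which is why uniqueness is stated within the continuous class --- consistent with your Stage 3. A last cosmetic point: with your constants the coefficient of $\norm{v}_1^2$ comes out exactly zero ($-2\nu+\nu+\nu$), not strictly negative; this is harmless (take $\nu/4$ in Young's inequality, or simply discard the nonnegative term).
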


\section{The Zero Viscosity Limit} \label{section zero viscous}

In this section we shall consider the zero viscosity limit of the equation (\ref{projected Ito}). As we are interested in the inviscid limit, we assume that $0 < \nu < 1$. We pose this for a deterministic $u_0 \in W^{m,2}\left(\mathscr{O};\R^N\right) \cap L^2_{\sigma}\left(\mathscr{O};\R^N\right)$ for $m > 1 + \frac{N}{2}$, and wish to characterise the convergence of selected martingale weak solutions of (\ref{projected Ito}) to the solution of the Euler equation. This selection is made precise in Subsection \ref{subsection selection of martingale weak solutions} and the solution of the Euler equation is defined now. We no longer consider an arbitrary $T>0$ but in this section fix $T$ as specified in the following: recall a result proved by many authors, specifically referring to [\cite{bourguignon1974remarks}] Theorem 1 and the near immediate identity (3.4) in [\cite{kato1984remarks}], which is that there exists a $T>0$ and a unique $\bar{u} \in C\left([0,T]; W^{m,2}(\mathscr{O};\R^N) \cap L^2_{\sigma}\left(\mathscr{O};\R^N\right)\right) \cap C^1\left([0,T] \times \bar{\mathscr{O}};\R^N\right)$ such that the identity \begin{equation} \label{euler identity} \partial_t \bar{u} = -\mathcal{P}\mathcal{L}_{\bar{u}}\bar{u}\end{equation} holds on $\mathscr{O} \times [0,T]$, and $\bar{u}_t|_{t=0} = u_0$ holds on $\mathscr{O}$. Moreover for every $t \in [0,T]$, \begin{equation}\label{energy identity}\norm{\bar{u}_t}^2 = \norm{u_0}^2.\end{equation} 
In Subsection \ref{subsection selection of martingale weak solutions} we make explicit the martingale weak solutions used for the Kato Criterion. This criterion is then stated and addressed in Subsection \ref{sub based}, with the key implication proven in Subsection \ref{sub big based}. In Subsection \ref{subsection optimal noise scaling} we consider a new parameter in the noise which approaches zero at a (possibly) different rate, determining the implications of this for our criterion and clarifying a sense in which the scaling of $\nu^\frac{1}{2}$ is optimal.

\subsection{Selection of Martingale Weak Solutions} \label{subsection selection of martingale weak solutions}

We introduce the notation $o_{\nu}$ to represent any constant dependent on $\nu$ such that $\lim_{\nu \rightarrow 0}o_{\nu} = 0$, and build upon the existence result Theorem \ref{existence of weak} in the case of equation (\ref{projected Ito}) for some more precise energy estimates.

\begin{proposition} \label{ex theorem}
    There exists a martingale weak solution $\tilde{u}$ of the equation (\ref{projected Ito}) which satisfies
    \begin{align} \label{hello20}
        \tilde{\mathbbm{E}}\left(\sup_{r\in[0,T]}\norm{\tilde{u}_{r}}^2 \right) \leq \left(1 + o_{\nu}\right) \norm{u_0}^2 + o_{\nu}        
    \end{align}
    and for every $t \in [0,T]$, \begin{equation} \label{hello21}
        \tilde{\mathbbm{E}}\left[\norm{\tilde{u}_{t}}^2 + \nu\int_0^{t} \norm{\tilde{u}_s}^2_1 ds\right] \leq \left(1 + o_{\nu}\right) \norm{u_0}^2 + o_{\nu}.
         \end{equation}
\end{proposition}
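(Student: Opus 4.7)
The existence of a martingale weak solution is supplied by Theorem~\ref{existence of weak}, so the content of the proposition lies entirely in upgrading the bounds to the $(1+o_{\nu})$ form. My plan is to apply the It\^{o} formula, in the form of Proposition~\ref{rockner prop}, to $\norm{\tilde{u}_t}^2$, and then exploit the nonlinear cancellation (\ref{cancellationproperty'}) together with the coercivity-type bound (\ref{assumpty 7}). The drift $\inner{\mathcal{P}\mathcal{L}_{\tilde{u}_s}\tilde{u}_s}{\tilde{u}_s}$ vanishes since $\tilde{u}_s\in W^{1,2}_\sigma$ (self-adjointness of $\mathcal{P}$ on $L^2$ allows $\mathcal{P}$ to be dropped, after which (\ref{cancellationproperty'}) applies), the Stokes term contributes $-2\nu\norm{\tilde{u}_s}_1^2$, and the It\^{o} correction combined with the quadratic variation equals
\[ \nu\sum_{i=1}^\infty\bigl[\inner{\mathcal{Q}_i^2\tilde{u}_s}{\tilde{u}_s}+\norm{\mathcal{P}\mathcal{G}_i\tilde{u}_s}^2\bigr]. \]
Using $\norm{\mathcal{P}\mathcal{G}_i\tilde{u}_s}\leq\norm{\mathcal{G}_i\tilde{u}_s}$ and invoking (\ref{assumpty 7}) with $\sum_i c_i<\infty$ and $\sum_i k_i\leq 1$, this is dominated by $C\nu(1+\norm{\tilde{u}_s}^2)+\nu\norm{\tilde{u}_s}_1^2$, and transposing half of the dissipation to the left gives
\[ \norm{\tilde{u}_t}^2 + \nu\int_0^t\norm{\tilde{u}_s}_1^2\,ds \leq \norm{u_0}^2 + \nu CT + \nu C\int_0^t\norm{\tilde{u}_s}^2\,ds + M_t, \]
where $M_t := -2\nu^{1/2}\int_0^t\inner{\mathcal{G}\tilde{u}_s}{\tilde{u}_s}\,d\mathcal{W}_s$ is a local martingale.

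Taking expectations kills $M_t$, and a scalar Gronwall applied to $\tilde{\mathbbm{E}}\norm{\tilde{u}_t}^2$ yields $\tilde{\mathbbm{E}}\norm{\tilde{u}_t}^2\leq (\norm{u_0}^2+\nu CT)e^{\nu CT}=(1+o_\nu)\norm{u_0}^2+o_\nu$. Reinserting this into the expected inequality absorbs the time-integral of $\norm{\tilde{u}_s}^2$ into the $o_\nu$-terms and produces (\ref{hello21}). For the pathwise supremum bound (\ref{hello20}), I would take the supremum in $r$ first and control $\tilde{\mathbbm{E}}\sup_{r\leq T}|M_r|$ by Burkholder-Davis-Gundy: via (\ref{assumpty8}), the quadratic variation is at most $C\nu T\bigl(1+\sup_{s\leq T}\norm{\tilde{u}_s}^4\bigr)$, so BDG and Young contribute a term of the form $C\nu^{1/2}T^{1/2}\bigl(1+\tilde{\mathbbm{E}}\sup_{s\leq T}\norm{\tilde{u}_s}^2\bigr)$. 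The coefficient multiplying $\tilde{\mathbbm{E}}\sup\norm{\tilde{u}_s}^2$ on the right-hand side is then $o_\nu$, and absorbing it into the left-hand side followed by division by $1-o_\nu=(1+o_\nu)$ delivers (\ref{hello20}).

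The principal technical obstacle is rigorously justifying the It\^{o} formula above: a martingale weak solution a priori lies only in $L^2([0,T];W^{1,2}_\sigma)\cap L^\infty([0,T];L^2_\sigma)$, whereas the derivation effectively pairs the equation with $\tilde{u}_s$ itself, which is not covered by the test-function class of Definition~\ref{definitionofspatiallyweak}. I would circumvent this by executing the entire estimate at the level of the Galerkin approximation $\mathcal{P}_n\tilde{u}$ used in the construction underlying Theorem~\ref{existence of weak}: these are finite-dimensional It\^{o} processes for which the chain rule is elementary, the Gronwall and BDG constants are uniform in $n$ courtesy of (\ref{P_nbounds}), and the resulting bounds transfer to the limit by lower semicontinuity of $\norm{\cdot}$ and $\norm{\cdot}_1$ along the weak-$\ast$ and weak convergences secured in the compactness step.
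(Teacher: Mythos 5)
Your proposal is correct and takes essentially the same route as the paper: the energy estimate is carried out on the finite-dimensional Galerkin system using the cancellation (\ref{cancellationproperty'}), the assumption (\ref{assumpty 7}) with $\sum_i k_i \leq 1$ to absorb the noise correction into the dissipation, BDG with (\ref{assumpty8}) followed by a Young inequality carrying a $\nu^{1/2}$-weight so the supremum term can be absorbed and one divides by $1-\nu^{1/2}$, a Gr\"{o}nwall argument exploiting $e^{o_\nu}\rightarrow 1$, and transfer to the limit by weak lower semicontinuity exactly as in Proposition \ref{prop for regularity of limit}. Your identification of the It\^{o}-formula obstacle for the martingale weak solution itself, and the retreat to the Galerkin level, is precisely the paper's strategy, modulo the minor mislabel of the Galerkin solution $u^n$ as $\mathcal{P}_n\tilde{u}$ and the fact that the uniformity in $n$ rests on the $L^2$-contraction $\norm{\mathcal{P}_n\mathcal{P}\mathcal{G}_i f}\leq\norm{\mathcal{G}_i f}$ rather than on (\ref{P_nbounds}).
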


\begin{proof}
    See Subsection \ref{sub energy}.
\end{proof}

\begin{remark}
    Estimates (\ref{hello20}) and (\ref{hello21}) were stated in this way to make the dependency on the initial condition explicit, though we shall henceforth keep the initial condition constant in $\nu$ and as such we may refer to the more direct inequalities
    \begin{align}
        \label{hello1}
\tilde{\mathbbm{E}}\left(\sup_{r\in[0,T]}\norm{\tilde{u}_{r}}^2 \right) &\leq \norm{u_0}^2 + o_{\nu}\\
        \label{hello3}
        \tilde{\mathbbm{E}}\left[\norm{\tilde{u}_{t}}^2 + \nu\int_0^{t} \norm{\tilde{u}_s}^2_1 ds\right] &\leq \norm{u_0}^2 + o_{\nu}.
    \end{align}
\end{remark}

We now clarify how the martingale weak solutions of (\ref{projected Ito}) are selected and some notation for the following subsection. Formally we must work with an arbitrary sequence of viscosities $(\nu_k)$ such that $\nu_k \rightarrow 0$ as $k \rightarrow \infty$. For each such $k$ we then choose a martingale weak solution $\tilde{u}^k$ as specified in Proposition \ref{ex theorem}, which we recall is defined with respect to a filtered probability space $\left(\tilde{\Omega}^k,\tilde{\mathcal{F}}^k,(\tilde{\mathcal{F}}_t^k), \tilde{\mathbbm{P}}^k\right)$, a cyclindrical Brownian Motion $\tilde{\mathcal{W}}^k$ over $\mathfrak{U}$ with respect to $\left(\tilde{\Omega}^k,\tilde{\mathcal{F}}^k,(\tilde{\mathcal{F}}_t^k), \tilde{\mathbbm{P}}^k\right)$, and an $\mathcal{F}_0-$measurable $\tilde{u}^k_0: \tilde{\Omega} \rightarrow L^2_{\sigma}$ with the same law as $u_0$. Immediately we note that as $u_0$ is deterministic then $\tilde{u}^k_0$ of the same law must simply be $u_0$ itself. It is less obvious how we can consider the limiting properties of this sequence of solutions where each $\tilde{u}^k$ is defined on a different probability space. We rectify this with the following:
\begin{itemize}
    \item The standard infinite dimensional product space $$\tilde{\Omega}:= \bigtimes_{k=0}^\infty \tilde{\Omega}^k, \quad \tilde{\mathcal{F}}:= \bigotimes_{k=0}^\infty \tilde{\mathcal{F}}^k, \quad \tilde{\mathcal{F}}_t:= \bigotimes_{k=0}^\infty \tilde{\mathcal{F}}^k_t, \quad \tilde{\mathbb{P}}:= \bigtimes_{k=0}^\infty \tilde{\mathbb{P}}^k$$
    such that $\left(\tilde{\Omega},\tilde{\mathcal{F}},(\tilde{\mathcal{F}}_t), \tilde{\mathbbm{P}}\right)$ is a filtered probability space;
    \item The component projections $(\mathcal{P}^k)$, $\mathcal{P}^k:\tilde{\Omega} \rightarrow \tilde{\Omega}^k$ and subsequently defined $(\hat{u}^k)$, $(\hat{\mathcal{W}}^k)$ by $$\hat{u}^k:= \tilde{u}^k\mathcal{P}^k, \qquad \hat{\mathcal{W}}^k= \tilde{\mathcal{W}}^k\mathcal{P}^k.$$
\end{itemize}
By construction for each $k$, $\hat{u}^k$ is a martingale weak solution of (\ref{projected Ito}) relative to the Cylindrical Brownian Motion $\hat{\mathcal{W}}^k$ and filtered probability space $\left(\tilde{\Omega},\tilde{\mathcal{F}},(\tilde{\mathcal{F}}_t), \tilde{\mathbbm{P}}\right)$. We can now make sense of taking the limit as $\nu \rightarrow 0$ in expectation for martingale weak solutions of (\ref{projected Ito}), by choosing an arbitrary sequence of viscosities $(\nu_k)$ convergent to zero and constructing the new solutions relative to a single probability space as above. We now fix this sequence and the related constructions.

\subsection{The Main Result} \label{sub based}

We state the main result of this section, and assess what needs to be proved. We introduce notation for $f \in W^{1,2}(\mathscr{O};\R^N)$, $$\norm{\nabla f}^2_{\Gamma_{c}} = \sum_{k=1}^N\norm{\partial_kf}_{L^2(\Gamma_c;\R^N)}^2$$ where for a constant $c$, $\Gamma_c$ is the boundary strip of width radius $c$, defined by the set of all points $x\in \mathscr{O}$ such that there exists a $y$ on the boundary with the distance from $x$ to $y$ less than $c$. Following the construction in Subsection \ref{subsection selection of martingale weak solutions}, for notational simplicity we consider an arbitrary $\hat{u}^k$ and relabel it as $u$, understanding that there is an implicit dependency on $\nu$ and that it is of course still defined over the new filtered probability space $\left(\tilde{\Omega},\tilde{\mathcal{F}},(\tilde{\mathcal{F}}_t), \tilde{\mathbbm{P}}\right)$. We shall use $\mathbbm{E}$ to represent the expectation taken with respect to $\tilde{\mathbbm{P}}$ on this probability space. We formally consider the process $\bar{u}$ representing the solution of the Euler Equation defined at (\ref{euler identity}) to reside on this space as a constant.

\begin{theorem} \label{the main one}
    The following conditions are equivalent:
    \begin{enumerate}
        \item $\mathbbm{E}\left(\sup_{r \in [0,T]}\norm{u_r-\bar{u}_r}^2\right) = o_{\nu}$,\label{item1}
        \item For every $t \in [0,T]$ and $\phi \in L^2(\Omega \times \mathscr{O};\R^N)$, $\mathbbm{E}\left(\inner{u_t - \bar{u}_t}{\phi}\right) = o_{\nu}$, \label{item2}
        \item $\nu \mathbbm{E}\int_0^T \norm{u_s}_1^2ds = o_{\nu}$, \label{item3}
        \item For any constant $\tilde{c}>0$, $\nu \mathbbm{E}\int_0^T \norm{\nabla u_s}^2_{\Gamma_{\tilde{c}\nu}}ds = o_{\nu}$. \label{item4}   
    \end{enumerate}
\end{theorem}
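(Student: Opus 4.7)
The plan is to close the cycle $(\ref{item1}) \Rightarrow (\ref{item2}) \Rightarrow (\ref{item3}) \Rightarrow (\ref{item4}) \Rightarrow (\ref{item1})$. The implications $(\ref{item1})\Rightarrow(\ref{item2})$ and $(\ref{item3})\Rightarrow(\ref{item4})$ are immediate: the former by Cauchy--Schwarz on $\mathbbm{E}\inner{u_t-\bar u_t}{\phi}$, the latter because $\norm{\nabla u_s}^2_{\Gamma_{\tilde c\nu}}\leq c\norm{u_s}_1^2$ thanks to the equivalence of $\norm{\cdot}_1$ and $\norm{\cdot}_{W^{1,2}}$ on $W^{1,2}_\sigma$.

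For $(\ref{item2})\Rightarrow(\ref{item3})$, I would test (\ref{item2}) with the deterministic function $\phi=\bar u_T$ to obtain $\mathbbm{E}\inner{u_T}{\bar u_T}=\norm{\bar u_T}^2+o_\nu$. Expanding
\begin{equation*}
\mathbbm{E}\norm{u_T-\bar u_T}^2=\mathbbm{E}\norm{u_T}^2-2\mathbbm{E}\inner{u_T}{\bar u_T}+\norm{\bar u_T}^2
\end{equation*}
and combining with the energy estimate (\ref{hello3}) together with the Euler energy identity (\ref{energy identity}) forces $\mathbbm{E}\norm{u_T}^2=\norm{u_0}^2+o_\nu$, whereupon rearranging (\ref{hello3}) at $t=T$ yields $\nu\mathbbm{E}\int_0^T\norm{u_s}_1^2\,ds=o_\nu$.

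The principal implication is $(\ref{item4})\Rightarrow(\ref{item1})$, which I would prove by adapting Kato's boundary corrector. I construct a divergence-free, smooth, time-dependent field $w^\nu$ with $w^\nu_s\in W^{1,2}_\sigma$ for each $s$, supported in the strip $\Gamma_{\tilde c\nu}$, matching $\bar u$ on $\partial \mathscr{O}$ in the trace sense, and obeying the Kato scalings $\norm{w^\nu}_{L^\infty}=O(1)$, $\norm{w^\nu}+\norm{\partial_s w^\nu}=O(\nu^{1/2})$, $\norm{\nabla w^\nu}=O(\nu^{-1/2})$ uniformly in $s\in[0,T]$. Then $\phi^\nu_s:=\bar u_s-w^\nu_s$ is admissible in the space-time weak formulation (\ref{identityindefinitionofspacetimeweak}). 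Combining this identity with the It\^o formula (Proposition \ref{rockner prop}) applied to $\norm{u_t}^2$ and with the Euler identity (\ref{euler identity}) for $\bar u$, I obtain after rearrangement an expression for $\norm{u_t-\bar u_t}^2+2\nu\int_0^t\norm{u_s}_1^2\,ds$ whose remainders involve only the corrector $w^\nu$, the nonlinearity evaluated on $w^\nu$, the noise terms, and a boundary dissipation. These remainders are then controlled using H\"older, the Sobolev embedding $W^{1,2}\hookrightarrow L^6$ (as in (\ref{why the star})), Hardy's inequality exploiting the zero trace of $u_s$, and the corrector scalings; the critical dissipation cross-term is bounded by $\nu\int_0^t\norm{\nabla u_s}^2_{\Gamma_{\tilde c\nu}}\,ds$ and thus absorbed by (\ref{item4}). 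Passing to the supremum before expectation via Burkholder--Davis--Gundy then delivers (\ref{item1}).

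The principal obstacle will be the noise corrections. Since $\mathcal{G}$ is allowed arbitrary first-order dependency, the It\^o correction $\frac{\nu}{2}\sum_i\inner{\mathcal{Q}_i^2 u_s}{u_s}$ appearing in the expansion of $\norm{u_t}^2$ is of the same order as $\nu\norm{u_s}_1^2$ and must be cancelled against the parabolic dissipation using precisely (\ref{assumpty 7}); similarly the stochastic cross-term $\nu^{1/2}\int_0^t\inner{\mathcal{G}u_s}{\phi^\nu_s}\,d\mathcal{W}_s$ requires careful handling through BDG together with (\ref{assumpty 1}) and the uniform bound $\norm{\phi^\nu_s}_{L^\infty}=O(1)$, in order to confirm that it enters into expectation at order $o_\nu$ and does not spoil the Kato balance.
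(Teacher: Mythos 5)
Your cycle of implications matches the paper's, and the easy steps are handled the same way; in particular your proof of \ref{item2} $\implies$ \ref{item3} (testing against the deterministic $\bar{u}_T$, using nonnegativity of $\mathbbm{E}\norm{u_T-\bar{u}_T}^2$ to get $\liminf_{\nu \rightarrow 0}\mathbbm{E}\norm{u_T}^2 \geq \norm{\bar{u}_T}^2 = \norm{u_0}^2$, then rearranging (\ref{hello3})) is exactly the paper's argument, since your test-function computation is precisely its proof of weak lower semicontinuity of the norm. The genuine gap is in \ref{item4} $\implies$ \ref{item1}: you propose to apply the It\^{o} formula (Proposition \ref{rockner prop}) to $\norm{u_t}^2$ and derive an identity for $\norm{u_t-\bar{u}_t}^2 + 2\nu\int_0^t \norm{u_s}_1^2 ds$. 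For martingale weak solutions in 3D this step is unavailable: Proposition \ref{rockner prop} requires the drift to lie in $L^2\left([0,T];\left(W^{1,2}_{\sigma}\right)^*\right)$, and in 3D the nonlinear term only satisfies $\norm{\mathcal{P}\mathcal{L}_{u_s}u_s}_{\left(W^{1,2}_{\sigma}\right)^*} \leq c\norm{u_s}^{1/2}\norm{u_s}_1^{3/2}$, which is merely $L^{4/3}$ in time; the paper establishes the needed $L^2$-in-time regularity only for $N=2$ (Lemma \ref{originally unlabelled}) and explicitly warns that assuming such an It\^{o} formula in 3D would yield pathwise continuity in $L^2_{\sigma}$, which is false in general. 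The paper avoids this entirely: it never expands $\norm{u_t}^2$ for the limit solution, but writes $\norm{u_r-\bar{u}_r}^2 = \norm{u_r}^2 + \norm{\bar{u}_r}^2 + 2\inner{u_r}{v_r} - 2\inner{u_r}{\bar{u}_r-v_r}$, controls $\mathbbm{E}\left(\sup_r \norm{u_r}^2\right)$ by the a priori estimate (\ref{hello1}) — which is proved at the Galerkin level and built into the selection of solutions in Proposition \ref{ex theorem} — and uses the space-time formulation (\ref{identityindefinitionofspacetimeweak}) only for the cross term, with test function $\bar{u}-v$. In 2D your route is legitimate, but since the theorem covers $N=3$ you must restructure along these lines.

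A second, concrete omission concerns the It\^{o}--Stratonovich correction tested against the corrector, $\nu\sum_{i=1}^\infty \inner{\mathcal{Q}_iu_s}{\mathcal{Q}_i^*w^{\nu}_s}$. The bound (\ref{assumpty 3}) alone gives $c\nu\left(1+\norm{u_s}_1\right)\norm{w^{\nu}_s}_{W^{1,2}} = O(\nu^{\frac{1}{2}})\left(1+\norm{u_s}_1\right)$, whose time integral under (\ref{hello3}) is only $O(1)$, not $o_{\nu}$. This is exactly where the paper invokes the structural assumption $\mathcal{Q}_i^* = \mathcal{A}_i + \hat{\mathcal{A}_i}$ with $\mathcal{A}_i$ support-preserving, so that the term localizes to $\Gamma_{\tilde{c}\nu}$ and is absorbed by condition \ref{item4}; your plan to handle the noise corrections via (\ref{assumpty 7}) and the $L^\infty$ bound on the corrector does not cover it (the cancellation via (\ref{assumpty 7}) is used at the Galerkin stage, in the proof of the energy estimates, not here). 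Likewise the stochastic cross term with $\bar{u}$ is controlled through (\ref{assumpty9}) and the smallness $\norm{v_s} \leq c\nu^{\frac{1}{2}}$, since the abstract assumptions on $\mathcal{G}$ are phrased in $L^2$/$W^{1,2}$ norms and an $O(1)$ bound in $L^\infty$ of the corrector is not by itself usable for a general first order noise.
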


\begin{remark}
    This is a stochastic parallel of [\cite{kato1984remarks}] Theorem 1.
\end{remark}

We shall prove the theorem by demonstrating that \ref{item1} $\implies$ \ref{item2} $\implies$ \ref{item3} $\implies$ \ref{item4} $\implies$ \ref{item1}, and now identify what needs proving in this. Of course \ref{item1} $\implies$ \ref{item2} is trivial, as is \ref{item3} $\implies$ \ref{item4}. For \ref{item2} $\implies$ \ref{item3}, from (\ref{hello3}) we have that $$ \nu\mathbbm{E}\int_0^{T} \norm{u_s}^2_1 ds \leq \norm{u_0}^2 + o_{\nu} - \mathbbm{E}\left(\norm{u_{T}}^2\right).$$
To show that the limit exists and is zero it is sufficient to demonstrate that the limit supremum is zero, and as \begin{align*}\limsup_{\nu \rightarrow 0}\left[ \nu\mathbbm{E}\int_0^{T} \norm{u_s}^2_1 ds\right] &\leq \limsup_{\nu \rightarrow 0}\left[\norm{u_0}^2 - \mathbbm{E}\left(\norm{u_T}^2\right) + o_{\nu} \right]\\ &\leq \norm{u_0}^2 - \liminf_{\nu \rightarrow 0}\mathbbm{E}\left(\norm{u_T}^2\right)\end{align*}
then we only need to verify that \begin{equation}\label{this verification}\norm{u_0}^2 \leq \liminf_{\nu \rightarrow 0}\mathbbm{E}\left(\norm{u_{T}}^2\right).\end{equation} Item \ref{item2} is the statement that for every $t \in [0,T]$, $(u_t)$ converges to $\bar{u}_t$ weakly in $L^2(\Omega \times \mathscr{O};\R^N)$. With the known result that norms are weakly lower semicontinuous\footnote{Observe that if $(x_n)$ is weakly convergent to $x$, then $\norm{x}^2 = \inner{x}{x} = \lim_{n \rightarrow \infty}\inner{x_n}{x} = \liminf_{n \rightarrow \infty}\inner{x_n}{x} \leq \liminf_{n \rightarrow \infty}\norm{x_n}\norm{x}$ which implies $\norm{x}\leq \liminf_{n \rightarrow \infty}\norm{x_n}$.}, we employ the assumed Item \ref{item2} for time $T$ to see that $$\norm{\bar{u}_T}^2 \leq \liminf_{\nu \rightarrow 0}\mathbbm{E}\left(\norm{u_{T}}^2\right).$$ The property (\ref{this verification}) then follows from the energy identity (\ref{energy identity}).

\subsection{Proof of the Remaining Implication} \label{sub big based}

This subsection is dedicated to proving the final implication \ref{item4} $\implies$ \ref{item1} of Theorem \ref{the main one}. We recall a result proved in Kato's paper [\cite{kato1984remarks}], stated for a fixed $\nu < 1$. 

\begin{lemma} \label{v lemma}
    There exists a function $v \in C^1\left( [0,T] \times \bar{\mathscr{O}}; \R^N\right)$ and a constant $c$ (which may depend on $\tilde{c}$) such that:
    \begin{enumerate}
        \item For every $t\in [0,T]$, $v_t \in L^2_{\sigma}$, $v_t = \bar{u}_t$ on $\partial \mathscr{O}$ and $v_t$ is supported on $\Gamma_{\tilde{c}\nu}$, \label{number 1}
        \item $v$ satisfies the estimates
        \begin{align} \label{esty 1}
        \sup_{r \in [0,T]}\norm{v_r} &\leq c\nu^{\frac{1}{2}}\\ \label{esty 2}\sup_{r \in [0,T]}\norm{\partial_tv_r} &\leq c\nu^{\frac{1}{2}}\\ \label{esty 3}
        \sup_{r \in [0,T]}\norm{v_r}_{W^{1,2}} &\leq c\nu^{-\frac{1}{2}}
        \end{align}
         \item For any $f \in W^{1,2}_{\sigma}$, \begin{align} \label{esty 4}
             \sup_{r \in [0,T]}\left\vert\inner{\mathcal{L}_ff}{v_r}\right\vert \leq c\nu \norm{\nabla f}^2_{\Gamma_{\tilde{c}\nu}}.
         \end{align}
    \end{enumerate}

\end{lemma}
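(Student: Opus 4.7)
The plan is to build $v$ as the curl of a cut-off vector potential for $\bar{u}$, which automatically enforces $\mathrm{div}\,v=0$ and localises the correction to the prescribed boundary strip. Since $\bar{u}_t\cdot n=0$ on $\partial\mathscr{O}$, standard potential theory on the smooth bounded domain $\mathscr{O}$ provides, for every $t\in[0,T]$, a vector potential $\bar{A}_t$ (or in 2D a stream function $\bar{\psi}_t$) of the same joint $C^1$ regularity as $\bar{u}$ with $\bar{u}_t=\nabla\times\bar{A}_t$ (resp.\ $\bar{u}_t=\nabla^\perp\bar{\psi}_t$) and, crucially, $\bar{A}_t|_{\partial\mathscr{O}}=0$; in 2D this is the consistent gauge choice $\bar{\psi}_t|_{\partial\mathscr{O}}=0$, permitted because the tangential derivative of $\bar{\psi}_t$ along $\partial\mathscr{O}$ vanishes. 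Fix a smooth scalar cut-off $\chi:[0,\infty)\to[0,1]$ with $\chi(0)=1$ and $\chi(r)=0$ for $r\geq 1$, let $d(x):=\mathrm{dist}(x,\partial\mathscr{O})$, and set $\chi_\nu(x):=\chi(d(x)/(\tilde{c}\nu))$ (smooth in a tubular neighbourhood of $\partial\mathscr{O}$, extended by zero elsewhere). Define
\begin{equation*}
    v_t\;:=\;\nabla\times(\chi_\nu\bar{A}_t)\ \text{in 3D},\qquad v_t\;:=\;\nabla^\perp(\chi_\nu\bar{\psi}_t)\ \text{in 2D}.
\end{equation*}

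Item \ref{number 1} is then immediate: $v_t$ is divergence-free as a curl, is supported in $\{\chi_\nu\neq 0\}\subset\Gamma_{\tilde{c}\nu}$, and the Leibniz expansion $v_t=\chi_\nu\bar{u}_t+(\nabla\chi_\nu)\times\bar{A}_t$ collapses to $\bar{u}_t$ on $\partial\mathscr{O}$ since $\chi_\nu=1$ and $\bar{A}_t=0$ there. The norm estimates (\ref{esty 1})--(\ref{esty 3}) rest on two pointwise inputs: the Taylor bound $|\bar{A}_t(x)|\leq C d(x)$ (and analogously for $\partial_t\bar{A}_t$) arising from the boundary vanishing combined with the $C^1$ regularity, and $|\nabla^k\chi_\nu|\leq C\nu^{-k}$ for $k=1,2$. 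The potentially dangerous term in $v_t$ is $(\nabla\chi_\nu)\bar{A}_t=O(d/\nu)=O(1)$ on $\Gamma_{\tilde{c}\nu}$, so $\|v_t\|_{L^\infty}\leq c$; differentiating once more, the worst term becomes $(\nabla^2\chi_\nu)\bar{A}_t=O(\nu^{-2}\cdot d)=O(\nu^{-1})$, yielding $\|\nabla v_t\|_{L^\infty}\leq c\nu^{-1}$. Since $|\Gamma_{\tilde{c}\nu}|=O(\nu)$, these $L^\infty$ bounds integrate to (\ref{esty 1}) and (\ref{esty 3}); (\ref{esty 2}) follows from the identical argument applied to $\partial_t v_t=\nabla\times(\chi_\nu\partial_t\bar{A}_t)$, the Euler identity (\ref{euler identity}) ensuring that $\partial_t\bar{A}_t$ inherits both the boundary vanishing and the $L^\infty$-boundedness.

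The main obstacle is (\ref{esty 4}); the trick is to trade the unfavourable factor $\nu^{-1}$ in $\|\nabla v_r\|_{L^\infty}$ against the thinness of $\Gamma_{\tilde{c}\nu}$ via a localised Poincar\'e inequality. Applying (\ref{wloglhs}) with the admissible choice $\phi=f\in W^{1,2}_\sigma$ and $g=v_r\in W^{1,2}$,
\begin{equation*}
    \inner{\mathcal{L}_f f}{v_r} \;=\; -\inner{\mathcal{L}_f v_r}{f} \;=\; -\int_{\Gamma_{\tilde{c}\nu}}\sum_{j,k}f^j(\partial_j v_r^k)f^k\,dx,
\end{equation*}
and bounding by Cauchy--Schwarz in the matrix sense gives $|\inner{\mathcal{L}_f f}{v_r}|\leq \|\nabla v_r\|_{L^\infty(\Gamma_{\tilde{c}\nu})}\,\|f\|_{L^2(\Gamma_{\tilde{c}\nu})}^2\leq c\nu^{-1}\|f\|_{L^2(\Gamma_{\tilde{c}\nu})}^2$. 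The extra factor of $\nu^2$ comes from the localised Poincar\'e inequality
\begin{equation*}
    \|f\|_{L^2(\Gamma_{\tilde{c}\nu})}^2 \;\leq\; c\,\nu^{2}\,\|\nabla f\|_{L^2(\Gamma_{\tilde{c}\nu})}^2,
\end{equation*}
obtained, since $f|_{\partial\mathscr{O}}=0$, by writing in boundary-adapted coordinates $(s,\eta)$ in a tubular neighbourhood $f(s,\eta)=\int_0^\eta\partial_\eta f(s,\tau)\,d\tau$, using Cauchy--Schwarz to get $|f(s,\eta)|^2\leq \eta\int_0^{\tilde{c}\nu}|\partial_\eta f(s,\tau)|^2\,d\tau$, and then integrating in $\eta\in(0,\tilde{c}\nu)$. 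Combining these yields $|\inner{\mathcal{L}_f f}{v_r}|\leq c\nu\,\|\nabla f\|^2_{\Gamma_{\tilde{c}\nu}}$ uniformly in $r\in[0,T]$, which is exactly (\ref{esty 4}).
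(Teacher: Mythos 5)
Your construction coincides with the paper's own proof in the only sense available: the paper does not prove Lemma \ref{v lemma} but cites Kato's original paper [\cite{kato1984remarks}], and your argument — a boundary-vanishing vector potential (stream function in 2D) multiplied by a cutoff at scale $\tilde{c}\nu$, the Taylor bound $|\bar{A}_t| \leq C d(x)$ trading against $|\nabla^k\chi_\nu| \leq C\nu^{-k}$, and crucially the antisymmetry step $\inner{\mathcal{L}_ff}{v_r} = -\inner{f}{\mathcal{L}_fv_r}$ combined with the width-$\nu$ Poincar\'e/Hardy inequality for $f$ of zero trace to obtain (\ref{esty 4}) — is precisely Kato's construction and proof. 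The one point you gloss (existence of the potential with $\bar{A}_t|_{\partial\mathscr{O}} = 0$, which in a multiply-connected domain or in 3D requires working in a collar neighbourhood of the boundary where the stream-function constants on distinct boundary components can be absorbed) is a standard technicality handled the same way in Kato's paper and does not affect correctness.
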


We explain the significance of the lemma now: with the identity
$$\norm{u_r - \bar{u}_r}^2 = \norm{u_r}^2 + \norm{\bar{u}_r}^2 - 2\inner{u_r}{\bar{u}_r}$$ then we want to make use of the formulation (\ref{identityindefinitionofspacetimeweak}), which we cannot immediately do as $\bar{u}$ does not necessarily vanish on the boundary. This is where we introduce $v$ from the lemma, as $v$ is prescribed to equal $\bar{u}$ on the boundary so that $\bar{u}-v$ satisfies the regularity of $\phi$ required in Definition \ref{definitionofspacetimeweak}. The idea is that the terms involving $\bar{u}$ can be well controlled using the smoothness of $\bar{u}$, and the terms involving $v$ require only an assumption on the energy dissipation within the boundary strip as this is where $v$ is supported. As $v$ is small in $L^2_{\sigma}$ with low viscosity, then the excess terms in $v$ will be small as well. Thus we rewrite \begin{equation}\label{thus we rewrite}\norm{u_r - \bar{u}_r}^2 = \norm{u_r}^2 + \norm{\bar{u}_r}^2 + 2\inner{u_r}{v_r} - 2\inner{u_r}{\bar{u}_r - v_r}\end{equation}
where \begin{align*} 
     \inner{u_r}{\bar{u}_r - v_r} &= \inner{u_0}{u_0 - v_0}  + \int_0^r\inner{u_s}{\partial_s(\bar{u}_s - v_s)}ds - \int_0^{r}\inner{\mathcal{L}_{u_s}u_s}{\bar{u}_s - v_s}ds\\ &- \nu\int_0^{r} \inner{u_s}{\bar{u}_s - v_s}_1 ds + \frac{\nu}{2}\int_0^{r}\sum_{i=1}^\infty \inner{\mathcal{Q}_iu_s}{\mathcal{Q}_i^*(\bar{u}_s - v_s)} ds - \nu^{\frac{1}{2}}\int_0^{t} \inner{\mathcal{G}u_s}{\bar{u}_s - v_s } d\mathcal{W}_s.
\end{align*}
Before taking the supremum and then expectation in the direction of Item \ref{item1}, we appreciate that $$\inner{u_0}{u_0 - v_0} = \norm{u_0}^2  -\inner{u_0}{v_0}$$ and with (\ref{euler identity}), (\ref{wloglhs}):
\begin{align*}
    \int_0^r\inner{u_s}{\partial_s(\bar{u}_s - v_s}ds &- \int_0^{r}\inner{\mathcal{L}_{u_s}u_s}{\bar{u}_s - v_s}ds\\ &=  \int_0^r \inner{u_s}{\partial_s\bar{u}_s} - \inner{u_s}{\partial_s v_s} -\inner{\mathcal{L}_{u_s}u_s}{\bar{u}_s} + \inner{\mathcal{L}_{u_s}u_s}{v_s}ds\\
    &=  \int_0^r - \inner{u_s}{\mathcal{P}\mathcal{L}_{\bar{u}_s}\bar{u}_s} - \inner{u_s}{\partial_s v_s} -\inner{\mathcal{L}_{u_s}u_s}{\bar{u}_s} + \inner{\mathcal{L}_{u_s}u_s}{v_s}ds\\
    &=  \int_0^r - \inner{u_s}{\mathcal{L}_{\bar{u}_s}\bar{u}_s} - \inner{u_s}{\partial_s v_s} + \inner{u_s}{\mathcal{L}_{u_s}\bar{u}_s} + \inner{\mathcal{L}_{u_s}u_s}{v_s}ds\\
    &= \int_0^r \inner{u_s}{\mathcal{L}_{u - \bar{u}_s}\bar{u}_s}ds + \int_0^r \inner{\mathcal{L}_{u_s}u_s}{v_s}ds  - \int_0^r\inner{u_s}{\partial_s v_s}ds.
\end{align*}
Substituting all of this into (\ref{thus we rewrite}) gives
\begin{align*}
    \norm{u_r - \tilde{u}_r}^2 &= \norm{u_r}^2 + \norm{\bar{u}_r}^2 + 2\inner{u_r}{v_r} - 2\norm{u_0}^2  +2\inner{u_0}{v_0}\\
    & -2\int_0^r \inner{u_s}{\mathcal{L}_{u - \bar{u}_s}\bar{u}_s}ds -2 \int_0^r \inner{\mathcal{L}_{u_s}u_s}{v_s}ds  + 2\int_0^r\inner{u_s}{\partial_s v_s}ds\\
    &+2 \nu\int_0^{r} \inner{u_s}{\bar{u}_s - v_s}_1 ds - \nu\int_0^{r}\sum_{i=1}^\infty \inner{\mathcal{Q}_iu_s}{\mathcal{Q}_i^*(\bar{u}_s - v_s)} ds +2 \nu^{\frac{1}{2}}\int_0^{t} \inner{\mathcal{G}u_s}{\bar{u}_s - v_s } d\mathcal{W}_s.
\end{align*}
We now take the supremum followed by the expectation, considering in the first line
\begin{align*}
    &\mathbbm{E}\left[\sup_{r \in [0,T]}\left(\norm{u_r}^2 + \norm{\bar{u}_r}^2\right) - 2\norm{u_0}^2 \right] +  \mathbbm{E}\left[\sup_{r \in [0,T]}2\inner{u_r}{v_r}+2\inner{u_0}{v_0}\right]\\
    & \qquad \qquad \qquad \qquad \leq \norm{u_0}^2 + o_{\nu} + \norm{u_0}^2 - 2\norm{u_0}^2 + 2\mathbbm{E}\left[\sup_{r\in[0,T]}\norm{u_r}\norm{v_r}\right] + 2\norm{u_0}\norm{v_0}
\end{align*}
having used (\ref{hello1}) and (\ref{energy identity}). Through another application of (\ref{hello1}) and employing (\ref{esty 1}), then this entire expression is bounded by $o_\nu$. Overall
\begin{align*}
    \mathbbm{E}\left(\sup_{r\in[0,T]}\norm{u_r-\bar{u}_r}^2\right) &\leq o_\nu + 2\mathbbm{E}\int_0^T\abs{\inner{u_s}{\mathcal{L}_{u_s - \bar{u}_s}\bar{u}_s}}ds\\ &+ 2\mathbbm{E}\int_0^{T}\abs{\inner{\mathcal{L}_{u_s}u_s}{v_s}}ds + 2\mathbbm{E}\int_0^{T}\abs{\inner{u_s}{\partial_sv_s}}ds\\ &+ 2\nu\mathbbm{E}\int_0^{T} \abs{\inner{u_s}{\bar{u}_s - v_s}_1} ds + \nu\mathbbm{E}\int_0^{T}\sum_{i=1}^\infty \abs{\inner{\mathcal{Q}_iu_s}{\mathcal{Q}_i^*(\bar{u}_s - v_s)}} ds\\ &+ 2\nu^{\frac{1}{2}}\mathbbm{E}\left(\sup_{r\in[0,T]}\left\vert\int_0^{r} \inner{\mathcal{G}u_s}{\bar{u}_s - v_s } d\mathcal{W}_s\right\vert\right)\\
    &:= o_{\nu} + \sum_{k=1}^6J_k
\end{align*}
and we now treat the integrals individually. For $J_1$, in the first line we use (\ref{cancellationproperty'}) and recall that $m>1+N/2$ is fixed from the start of Section \ref{section zero viscous}:
\begin{align*}
    \abs{\inner{u_s}{\mathcal{L}_{u_s - \bar{u}_s}\bar{u}_s}} &=  \abs{\inner{u_s}{\mathcal{L}_{u_s - \bar{u}_s}\bar{u}_s} - \inner{\bar{u}_s}{\mathcal{L}_{u_s - \bar{u}_s}\bar{u}_s}}\\
    &= \abs{\inner{u_s - \bar{u}_s}{\mathcal{L}_{u_s - \bar{u}_s}\bar{u}_s}}\\
    &\leq \sum_{k=1}^N\left\vert\inner{u_s - \bar{u}_s}{(u_s-\bar{u}_s)^k\partial_k\bar{u}_s}\right\vert\\
    &\leq \sum_{k=1}^N\sum_{l=1}^N \left\vert\inner{(u_s - \bar{u}_s)^l}{(u_s-\bar{u}_s)^k\partial_k\bar{u}^l_s}_{L^2(\mathscr{O};\R)}\right\vert\\
    &\leq \sum_{k=1}^N\sum_{l=1}^N \norm{(u_s - \bar{u}_s)^l(u_s-\bar{u}_s)^k}_{L^1(\mathscr{O};\R)}\norm{\partial_k\bar{u}^l_s}_{L^\infty(\mathscr{O};\R)}\\
    &\leq \sum_{k=1}^N\sum_{l=1}^N \norm{(u_s - \bar{u}_s)^l}_{L^2(\mathscr{O};\R)}\norm{(u_s-\bar{u}_s)^k}_{L^2(\mathscr{O};\R)}\norm{\bar{u}^l_s}_{W^{1,\infty}(\mathscr{O};\R)}\\
    &\leq c\norm{u_s - \bar{u}_s}^2\norm{\bar{u}_s}_{W^{m,2}}.
\end{align*}
Just as we did with the initial condition, we will now freely assimilate finite norms of $\bar{u}$ into our constants. As $\bar{u}\in C\left([0,T];W^{m,2}(\mathscr{O};\R^N)\right)$, then we bound the above by simply $c\norm{u_s - \bar{u}_s}^2$ so
$$J_1 := 2\mathbbm{E}\int_0^T\abs{\inner{u_s}{\mathcal{L}_{u_s - \bar{u}_s}\bar{u}_s}}ds \leq c\mathbbm{E}\int_0^T\norm{u_s - \bar{u}_s}^2ds.$$
With (\ref{esty 4}) then we have
$$ J_2:=2\mathbbm{E}\int_0^{T}\abs{\inner{\mathcal{L}_{u_s}u_s}{v_s}}ds \leq c\nu\mathbbm{E}\int_0^{T} \norm{\nabla u_s}_{\Gamma_{\tilde{c}\nu}}^2ds = o_{\nu}$$ critically applying the assumption \ref{item4}, and likewise
$$J_3:= 2\mathbbm{E}\int_0^{T}\abs{\inner{u_s}{\partial_sv_s}}ds \leq 2\mathbbm{E}\int_0^{T}\norm{u_s}\norm{\partial_sv_s}ds \leq c\nu\mathbbm{E}\int_0^{T}\norm{u_s}ds = o_{\nu}$$
using (\ref{esty 2}) then (\ref{hello1}). Next we treat $$J_4:=2\nu\mathbbm{E}\int_0^{T} \abs{\inner{u_s}{\bar{u}_s - v_s}_1} ds \leq 2\nu\mathbbm{E}\int_0^{T} \sum_{k=1}^N\abs{\inner{\partial_ku_s}{\partial_k\bar{u}_s}} ds + 2\nu\mathbbm{E}\int_0^{T} \sum_{k=1}^N\abs{\inner{\partial_ku_s}{\partial_kv_s}} ds$$ with the integrals individually:
\begin{align*}
    2\nu\mathbbm{E}\int_0^{T} \sum_{k=1}^N\abs{\inner{\partial_ku_s}{\partial_k\bar{u}_s}} ds &\leq c\nu\mathbbm{E}\int_0^{T} \norm{u_s}_1\norm{\bar{u}_s}_{W^{1,2}} ds\\ &\leq c\nu\mathbbm{E}\int_0^{T} \norm{u_s}_1 ds\\ &\leq c\nu^{\frac{1}{2}}\left(\nu\mathbbm{E}\int_0^{T} \norm{u_s}_1^2 ds  \right)^{\frac{1}{2}}\\ &= c\nu^{\frac{1}{2}} = o_\nu
\end{align*}
having used (\ref{hello3}). With the fact that $v$ has support in $\Gamma_{\tilde{c}\nu}$ and property (\ref{esty 4}) then
\begin{align*}
    2\nu\mathbbm{E}\int_0^{T} \sum_{k=1}^N\abs{\inner{\partial_ku_s}{\partial_kv_s}} ds &= 2\nu\mathbbm{E}\int_0^{T} \sum_{k=1}^N\abs{\inner{\partial_ku_s}{\partial_kv_s}_{L^2(\Gamma_{\tilde{c}\nu};\R^N)}}ds\\
    &\leq c\nu\mathbbm{E}\int_0^{T} \norm{\nabla u_s}_{\Gamma_{\tilde{c}\nu}}\norm{v_s}_{W^{1,2}}ds\\
    &\leq c\nu^{\frac{1}{2}}\mathbbm{E}\int_0^{T} \norm{\nabla u_s}_{\Gamma_{\tilde{c}\nu}}ds\\
    &= c\left(\nu \mathbbm{E}\int_0^{T} \norm{\nabla u_s}_{\Gamma_{\tilde{c}\nu}}^2ds\right)^{\frac{1}{2}}\\
    &= o_{\nu}
\end{align*}
using the assumption \ref{item4} once more, hence $J_4 = o_\nu$. We now move on to the noise terms, the first of which is dealt with near identically:
$$J_5:=\nu\mathbbm{E}\int_0^{T}\sum_{i=1}^\infty \abs{\inner{\mathcal{Q}_iu_s}{\mathcal{Q}_i^*(\bar{u}_s - v_s)}} ds \leq \nu\mathbbm{E}\int_0^{T}\sum_{i=1}^\infty \abs{\inner{\mathcal{Q}_iu_s}{\mathcal{Q}_i^*\bar{u}_s}}ds + \nu\mathbbm{E}\int_0^{T}\sum_{i=1}^\infty \abs{\inner{\mathcal{Q}_iu_s}{\mathcal{Q}_i^*v_s}}ds$$
having used the linearity of $\mathcal{Q}_i^*$. Then from (\ref{assumpty 1}) and (\ref{assumpty 3}),
\begin{align*}
    \nu\mathbbm{E}\int_0^{T}\sum_{i=1}^\infty \abs{\inner{\mathcal{Q}_iu_s}{\mathcal{Q}_i^*\bar{u}_s}}ds &\leq c\nu\mathbbm{E}\int_0^{T} \left( 1 + \norm{u_s}_1\right)\norm{\bar{u}_s}_{W^{1,2}}ds \leq c\nu^{\frac{1}{2}}\left(\nu\mathbbm{E}\int_0^{T}\left(1 + \norm{u_s}_1\right)^2 ds  \right)^{\frac{1}{2}}
\end{align*}
which we write as $o_\nu$ using (\ref{hello3}) again. For the second integral we must use the assumed structure $\mathcal{Q}_i^* = \mathcal{A}_i + \hat{\mathcal{A}_i}$ with $\mathcal{A}_i$ preserving the support on $\Gamma_{\tilde{c}\nu}$,
\begin{align*}\nu\mathbbm{E}\int_0^{T}\sum_{i=1}^\infty \abs{\inner{\mathcal{Q}_iu_s}{\mathcal{Q}_i^*v_s}}ds &= \nu\mathbbm{E}\int_0^{T}\sum_{i=1}^\infty \abs{\inner{\mathcal{Q}_iu_s}{\mathcal{A}_iv_s}_{L^2(\Gamma_{\tilde{c}\nu};\R^N)} + \inner{\mathcal{Q}_iu_s}{\hat{\mathcal{A}_i}v_s}}ds\\
&\leq c\nu\mathbbm{E}\int_0^{T}\left( 1 + \norm{u_s}_{W^{1,2}(\Gamma_{\tilde{c}\nu};\R^N)}\right)\norm{v_s}_{W^{1,2}} + \left( 1 + \norm{u_s}_1\right)\norm{v_s}ds\\
&= c\nu\mathbbm{E}\int_0^{T}\left( 1 + \norm{u_s}_{W^{1,2}(\Gamma_{\tilde{c}\nu};\R^N)}\right)\norm{v_s}_{W^{1,2}}ds + o_\nu\\
&\leq c\nu^{\frac{1}{2}}\mathbbm{E}\int_0^{T}1 + \norm{u_s} +\norm{\nabla u_s}_{\Gamma_{\tilde{c}\nu}}ds + o_\nu\\
&= o_{\nu} + c\left(\nu \mathbbm{E}\int_0^{T} \norm{\nabla u_s}_{\Gamma_{\tilde{c}\nu}}^2ds\right)^{\frac{1}{2}}\\
&= o_{\nu}
\end{align*}
with (\ref{hello1}) and the assumption \ref{item4}. Therefore $J_5 = o_\nu$. Only one term now remains:
\begin{align*}
    J_6&:= 2\nu^{\frac{1}{2}}\mathbbm{E}\left(\sup_{r\in[0,T]}\left\vert\int_0^{r} \inner{\mathcal{G}u_s}{\bar{u}_s - v_s } d\mathcal{W}_s\right\vert\right)\\ &\leq c\nu^{\frac{1}{2}}\mathbbm{E}\left(\int_0^{T} \sum_{i=1}^\infty \inner{\mathcal{G}_iu_s}{\bar{u}_s - v_s }^2 ds\right)^{\frac{1}{2}}\\
    &\leq c\nu^{\frac{1}{2}}\mathbbm{E}\left(\int_0^{T} \sum_{i=1}^\infty \inner{\mathcal{G}_iu_s}{\bar{u}_s}^2 ds\right)^{\frac{1}{2}} + c\nu^{\frac{1}{2}}\mathbbm{E}\left(\int_0^{T} \sum_{i=1}^\infty \inner{\mathcal{G}_iu_s}{v_s }^2 ds\right)^{\frac{1}{2}}
\end{align*}
having used the BDG Inequality. Using (\ref{assumpty9}) we see that
\begin{align*}
    c\nu^{\frac{1}{2}}\mathbbm{E}\left(\int_0^{T} \sum_{i=1}^\infty \inner{\mathcal{G}_iu_s}{\bar{u}_s}^2 ds\right)^{\frac{1}{2}} &\leq c\nu^{\frac{1}{2}}\mathbbm{E}\left(\int_0^{T}  \left[1 + \norm{u_s}^2 + \norm{\bar{u}_s}^p\right]\norm{\bar{u}_s}^2_{W^{1,2}} ds\right)^{\frac{1}{2}}\\ &= c\nu^{\frac{1}{2}}\mathbbm{E}\left(\int_0^{T} 1 + \norm{u_s}^2 ds\right)^{\frac{1}{2}}
\end{align*}
which is just $o_\nu$ again from (\ref{hello1}), and with (\ref{assumpty 1}) then
\begin{align*}
    c\nu^{\frac{1}{2}}\mathbbm{E}\left(\int_0^{T} \sum_{i=1}^\infty \inner{\mathcal{G}_iu_s}{v_s }^2 ds\right)^{\frac{1}{2}} &= c\nu^{\frac{1}{2}}\mathbbm{E}\left(\int_0^{T} \sum_{i=1}^\infty \inner{\mathcal{G}_iu_s}{v_s }_{L^2(\Gamma_{\tilde{c}\nu};\R^N)}^2 ds\right)^{\frac{1}{2}}\\
    &\leq c\nu^{\frac{1}{2}}\mathbbm{E}\left(\int_0^{T} \left[1 + \norm{u_s}_{W^{1,2}(\Gamma_{\tilde{c}\nu};\R^N)}^2\right]\norm{v_s }^2 ds\right)^{\frac{1}{2}}\\
    &\leq c\nu\mathbbm{E}\left(\int_0^{T} 1 + \norm{u_s}_{W^{1,2}(\Gamma_{\tilde{c}\nu};\R^N)}^2 ds\right)^{\frac{1}{2}}\\
    &\leq c\nu\mathbbm{E}\left(\int_0^{T} 1 + \norm{u_s}^2 + \norm{\nabla u_s}_{\Gamma_{\tilde{c}\nu}}^2 ds\right)^{\frac{1}{2}}\\
    &= o_\nu.
\end{align*}
Therefore
$$ \mathbbm{E}\left(\sup_{r\in[0,T]}\norm{u_r-\bar{u}_r}^2\right) \leq o_v + \sum_{k=1}^6J_k \leq o_\nu + \int_0^T\mathbbm{E}\left(\norm{u_s-\bar{u}_s}^2\right)ds$$
so from the standard Gr\"{o}nwall Inequality, $\mathbbm{E}\left(\sup_{r\in[0,T]}\norm{u_r-\bar{u}_r}^2\right) \leq o_{\nu}$ which gives the result.

\subsection{Optimal Noise Scaling} \label{subsection optimal noise scaling}

To further motivate the scaling rate of $\nu^\frac{1}{2}$ in the stochastic integral, we consider a different parameter $\mu$ in the equation
\begin{equation} \label{projected Ito viscous 2}
    u_t = u_0 - \int_0^t\mathcal{P}\mathcal{L}_{u_s}u_s\ ds - \nu\int_0^t A u_s\, ds + \frac{\mu}{2}\int_0^t\sum_{i=1}^\infty \mathcal{P}\mathcal{Q}_i^2u_s ds - \mu^{\frac{1}{2}}\int_0^t \mathcal{P}\mathcal{G}u_s d\mathcal{W}_s 
\end{equation}
where we shall consider the limit $\mu \rightarrow 0$ with $\nu$ formally as in Subsection \ref{subsection selection of martingale weak solutions} by a sequence $(\mu_k)$ such that $\mu_k \rightarrow 0$ as $k \rightarrow \infty$. Corresponding notation $o_{\nu,\mu}$ is introduced to mean any constant dependent on $\nu$ and $\mu$ such that $\lim_{\nu,\mu \rightarrow 0}o_{\nu,\mu} = 0$ for the limit taken jointly, which again is formally understood by $\lim_{k \rightarrow \infty}o_{\nu_k,\mu_k} = 0$. We shall similarly use $o_{\mu}$. Our assumptions on the noise must now be tweaked to accommodate this difference, and by replacing $\mathcal{G}$ with $(\frac{\mu}{\nu})^{\frac{1}{2}}\mathcal{G}$ then we see that the only necessary change of assumption is in (\ref{assumpty 7}) and (\ref{assumpty 5}) where we now impose that $\sum_{i=1}^\infty k_i \leq (\frac{\nu}{\mu})^{\frac{1}{2}}$. The following lemma is deduced exactly as in Proposition \ref{ex theorem}.
\begin{lemma} \label{ex prop}
    There exists a martingale weak solution $\tilde{u}$ of the equation (\ref{projected Ito viscous 2}) which satisfies
    \begin{align} \label{hello6}
        \tilde{\mathbbm{E}}\left(\sup_{r\in[0,T]}\norm{\tilde{u}_{r}}^2 \right) &\leq \left(1 + o_{\mu}\right) \norm{u_0}^2 + o_{\mu}        
    \end{align}
    and for every $t \in [0,T]$, \begin{equation} \label{hello9}
        \tilde{\mathbbm{E}}\left[\norm{\tilde{u}_{t}}^2 + \nu\int_0^{t} \norm{\tilde{u}_s}^2_1 ds\right] \leq \left(1 + o_{\mu}\right) \norm{u_0}^2 + o_{\mu}.
         \end{equation}
\end{lemma}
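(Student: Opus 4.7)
The plan is to reduce equation (\ref{projected Ito viscous 2}) to equation (\ref{projected Ito}) by absorbing the scaling mismatch into the noise operators, and then invoke the existing existence theory. Setting $\tilde{\mathcal{G}}_i := (\mu/\nu)^{1/2}\mathcal{G}_i$ and $\tilde{\mathcal{Q}}_i := (\mu/\nu)^{1/2}\mathcal{Q}_i$, a direct substitution shows that equation (\ref{projected Ito viscous 2}) coincides with (\ref{projected Ito}) for the operators $\tilde{\mathcal{G}}, \tilde{\mathcal{Q}}_i$ in place of $\mathcal{G}, \mathcal{Q}_i$. First I would verify that these rescaled operators satisfy the assumptions of Subsection \ref{subsection assumptions}: each homogeneous bound scales by the finite constant $\mu/\nu$, while the coefficients $k_i$ appearing in (\ref{assumpty 7}) and (\ref{assumpty 5}) become $\tilde{k}_i = (\mu/\nu)k_i$. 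Under the modified hypothesis $\sum k_i \leq (\nu/\mu)^{1/2}$ this gives $\sum \tilde{k}_i \leq (\mu/\nu)^{1/2}$, which is at most $1$ in the regime $\mu \leq \nu$ relevant to the vanishing viscosity limit; hence the hypotheses of Theorem \ref{existence of weak} are met and a martingale weak solution $\tilde{u}$ exists.

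The energy estimates then follow the proof of Proposition \ref{ex theorem} verbatim. Applying It\^{o}'s formula to $\norm{\tilde{u}_t}^2$, the nonlinear term cancels through (\ref{cancellationproperty'}), while the deterministic noise contribution takes the form $\mu\sum_i\bigl[\inner{\mathcal{Q}_i^2\tilde{u}_s}{\tilde{u}_s} + \norm{\mathcal{G}_i\tilde{u}_s}^2\bigr]$. Invoking (\ref{assumpty 7}) with the modified summability of $k_i$ yields the bound $c\mu(1+\norm{\tilde{u}_s}^2) + (\mu\nu)^{1/2}\norm{\tilde{u}_s}_1^2$, whose second summand is absorbed into half of the viscous dissipation $2\nu\norm{\tilde{u}_s}_1^2$ whenever $\mu \leq \nu$. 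The martingale term carries a $\mu^{1/2}$ prefactor and produces $o_\mu$ corrections via the BDG inequality together with (\ref{assumpty8}), with the supremum absorbed into the left-hand side by Young's inequality. Gr\"{o}nwall's lemma then delivers (\ref{hello6}) and (\ref{hello9}).

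The main bookkeeping task is to confirm that every remainder arising in the proof of Proposition \ref{ex theorem} carries a positive power of $\mu$ rather than $\nu$, so that $o_\nu$ in (\ref{hello20})--(\ref{hello21}) is replaced by $o_\mu$ in the present statement. This is immediate from the structure above: the stochastic integral coefficient $\mu^{1/2}$ and the It\^{o} correction coefficient $\mu$ are the only places where the new parameter enters, and both contribute strictly positive powers of $\mu$ in every remainder term. Beyond this careful accounting of constants, I do not anticipate any genuine mathematical obstacle.
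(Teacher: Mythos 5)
Your overall route --- rescaling to $\tilde{\mathcal{G}}_i := (\mu/\nu)^{1/2}\mathcal{G}_i$, $\tilde{\mathcal{Q}}_i := (\mu/\nu)^{1/2}\mathcal{Q}_i$ so that (\ref{projected Ito viscous 2}) becomes (\ref{projected Ito}), then rerunning the energy argument of Proposition \ref{ex theorem} with $\mu$-bookkeeping --- is exactly the paper's, which proves Lemma \ref{ex prop} by the rescaling remark preceding it together with the phrase ``deduced exactly as in Proposition \ref{ex theorem}''. The genuine problem is your restriction to $\mu \leq \nu$, which you describe as ``the regime relevant to the vanishing viscosity limit''. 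It is the opposite: the lemma is deployed in Proposition \ref{theorem with mew} and its corollary precisely for $\mu = \nu^{\alpha}$ with $\frac{1}{2} < \alpha < 1$, and since $0 < \nu < 1$ this gives $\mu = \nu^{\alpha} > \nu$. In that regime both of your key steps fail. First, $\sum_{i=1}^\infty \tilde{k}_i \leq (\mu/\nu)^{1/2} > 1$, and once $\mu \geq 4\nu$ (which holds for all small $\nu$ when $\alpha < 1$, as $\mu/\nu = \nu^{\alpha-1} \rightarrow \infty$) it exceeds even the relaxed threshold $2$ from the footnote in Subsection \ref{subsection assumptions}, so Theorem \ref{existence of weak} cannot be invoked for the rescaled operators. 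Second, in the energy estimate the It\^{o}--Stratonovich correction contributes $(\mu\nu)^{1/2}\norm{\tilde{u}_s}_1^2$ with $(\mu\nu)^{1/2} > \nu$, indeed $> 2\nu$ for $\mu > 4\nu$, so it cannot be absorbed by the dissipation $2\nu\norm{\tilde{u}_s}_1^2$ and the retained term $\nu\int_0^t\norm{\tilde{u}_s}_1^2\,ds$ in (\ref{hello9}) is not obtained. As written, your argument proves the lemma only in the sub-case $\mu \leq \nu$ (the corollary's case $\alpha \geq 1$) and misses the main new case.

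The repair is to demand the unmodified assumptions, with $\sum_{i=1}^\infty k_i \leq 1$ in (\ref{assumpty 7}) and (\ref{assumpty 5}), \emph{of the rescaled operators} $(\mu/\nu)^{1/2}\mathcal{Q}_i$, which amounts to $\sum_{i=1}^\infty k_i \leq \nu/\mu$ for the original operators --- a strictly stronger requirement than $(\nu/\mu)^{1/2}$ whenever $\mu \geq \nu$. Then $\mu\sum_i k_i \leq \nu$, the correction term is dominated by $\nu\norm{\tilde{u}_s}_1^2$, and both your reduction to Theorem \ref{existence of weak} and the absorption into the dissipation go through uniformly in the ratio $\mu/\nu$; the remaining error terms carry the prefactors $\mu$ and $\mu^{1/2}$ exactly as you describe, yielding $o_{\mu}$ in (\ref{hello6}) and (\ref{hello9}). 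To be fair to you, the paper's own condition $\sum_i k_i \leq (\nu/\mu)^{1/2}$, taken literally on the original operators, suffers the same square-root discrepancy when $\mu > \nu$, and for the flagship Stratonovich transport and SALT examples one has $k_i = 0$ so the issue is vacuous there; but as a proof of Lemma \ref{ex prop} in the generality in which the paper uses it, your restriction $\mu \leq \nu$ is a genuine gap rather than mere bookkeeping, and your closing claim that only ``careful accounting of constants'' remains is therefore not correct.
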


The selection of martingale weak solutions is now as in Subsection \ref{subsection selection of martingale weak solutions}, for the solution $\tilde{u}^k$ corresponding to parameters $\nu_k,\mu_k$. We can now state the main result in this context. 

\begin{proposition} \label{theorem with mew}
    Suppose that $\mu \nu^{-\frac{1}{2}} = o_{\nu,\mu}$ and for any constant $\tilde{c}>0$, $$\left(1 + \frac{\mu^2}{\nu^2}\right) \nu\mathbbm{E}\int_0^T \norm{\nabla u_s}^2_{\Gamma_{\tilde{c}\nu}}ds = o_{\nu,\mu}.$$ Then $\mathbbm{E}\left(\sup_{r \in [0,T]}\norm{u_r-\bar{u}_r}^2\right) = o_{\nu,\mu}$.
\end{proposition}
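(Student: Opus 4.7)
The plan is to follow the same architecture as the proof of $\ref{item4} \Rightarrow \ref{item1}$ in Theorem \ref{the main one} in Subsection \ref{sub big based}, replacing $\nu$ by $\mu$ and $\nu^{\frac{1}{2}}$ by $\mu^{\frac{1}{2}}$ in the Stratonovich-corrector and stochastic integral contributions respectively. First I would invoke Lemma \ref{v lemma} to produce the boundary corrector $v$ (it depends only on $\bar u$ and $\nu$, so it is unchanged), and use the space-time weak formulation with the test function $\bar u_s - v_s$ to derive the analogue of the key identity, obtaining
\begin{equation*}
\mathbbm{E}\left(\sup_{r\in[0,T]}\norm{u_r-\bar u_r}^2\right) \leq o_{\nu,\mu} + \sum_{k=1}^{6}J_k,
\end{equation*}
where $J_1,\ldots,J_4$ have exactly the form appearing in Subsection \ref{sub big based} while $J_5$ now carries a prefactor $\mu$ and $J_6$ a prefactor $\mu^{\frac{1}{2}}$.

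Next I would recycle the deterministic/viscous estimates. The nonlinearity term $J_1$ is bounded by $c\mathbbm{E}\int_0^T \norm{u_s-\bar u_s}^2 ds$ via the cancellation (\ref{cancellationproperty'}) and the smoothness of $\bar u$; $J_2$ is $\leq c\nu\mathbbm{E}\int_0^T\norm{\nabla u_s}^2_{\Gamma_{\tilde c\nu}}ds$, which is $o_{\nu,\mu}$ since the assumption implies $\nu\mathbbm{E}\int_0^T \norm{\nabla u_s}^2_{\Gamma_{\tilde c\nu}}ds = o_{\nu,\mu}$; $J_3$ is handled by (\ref{esty 2}) and the energy bound (\ref{hello6}); and $J_4$ splits, with the $\bar u$-piece controlled by $\nu^{\frac{1}{2}}(\nu\mathbbm{E}\int\norm{u_s}_1^2)^{\frac{1}{2}} = o_{\nu,\mu}$ via (\ref{hello9}), and the $v$-piece (localised to $\Gamma_{\tilde c\nu}$) dominated by $(\nu\mathbbm{E}\int\norm{\nabla u_s}^2_{\Gamma_{\tilde c\nu}}ds)^{\frac{1}{2}} = o_{\nu,\mu}$ after using (\ref{esty 3}).

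The essential new work sits in $J_5$ and $J_6$; this is where the two hypotheses of the proposition are each used precisely once. For $J_5 = \mu\,\mathbbm{E}\int_0^T\sum_i \abs{\inner{\mathcal{Q}_i u_s}{\mathcal{Q}_i^*(\bar u_s - v_s)}}ds$, splitting $\bar u_s - v_s$ and applying (\ref{assumpty 1}), (\ref{assumpty 3}) on the $\bar u_s$ piece gives a bound $c\mu + c\mu(\mathbbm{E}\int_0^T\norm{u_s}_1^2ds)^{\frac{1}{2}} \leq c\mu + c\mu\nu^{-\frac{1}{2}}(\nu\mathbbm{E}\int_0^T\norm{u_s}_1^2ds)^{\frac{1}{2}}$, which is $o_{\nu,\mu}$ exactly because $\mu\nu^{-\frac{1}{2}} = o_{\nu,\mu}$. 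For the $v_s$ piece I would exploit the structural decomposition $\mathcal{Q}_i^* = \mathcal{A}_i + \hat{\mathcal{A}}_i$ so that the $\mathcal{A}_i$ contribution localises to $\Gamma_{\tilde c\nu}$; pairing $\norm{v_s}_{W^{1,2}}\leq c\nu^{-\frac{1}{2}}$ with (\ref{assumpty 1}) on $\Gamma_{\tilde c\nu}$ produces the key term
\begin{equation*}
c\,\frac{\mu}{\nu^{\frac{1}{2}}}\left(\nu\,\mathbbm{E}\int_0^T \norm{\nabla u_s}_{\Gamma_{\tilde c\nu}}^2 ds\right)^{\frac{1}{2}}\cdot \nu^{-\frac{1}{2}} = c\left(\frac{\mu^2}{\nu^2}\cdot\nu\,\mathbbm{E}\int_0^T\norm{\nabla u_s}_{\Gamma_{\tilde c\nu}}^2 ds\right)^{\frac{1}{2}},
\end{equation*}
which is where the full strength of the strengthened Kato hypothesis $(1+\mu^2/\nu^2)\nu\mathbbm{E}\int_0^T\norm{\nabla u_s}^2_{\Gamma_{\tilde c\nu}}ds = o_{\nu,\mu}$ is genuinely required; the $\hat{\mathcal{A}}_i$ contribution is absorbed via $\norm{v_s}\leq c\nu^{\frac{1}{2}}$. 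Finally, for $J_6 = 2\mu^{\frac{1}{2}}\mathbbm{E}(\sup_r|\int_0^r \inner{\mathcal{G}u_s}{\bar u_s - v_s}d\mathcal{W}_s|)$, I would apply BDG followed by (\ref{assumpty9}) on the $\bar u_s$ piece (giving $c\mu^{\frac{1}{2}} = o_{\nu,\mu}$ via (\ref{hello6})) and (\ref{assumpty 1}) localised to $\Gamma_{\tilde c\nu}$ on the $v_s$ piece, producing $c(\mu\nu)^{\frac{1}{2}}$ plus $c\mu^{\frac{1}{2}}(\nu\mathbbm{E}\int_0^T\norm{\nabla u_s}^2_{\Gamma_{\tilde c\nu}}ds)^{\frac{1}{2}}/\nu^{\frac{1}{2}}$, both $o_{\nu,\mu}$. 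A standard Gronwall argument applied to the resulting inequality closes the estimate. The main obstacle is in bookkeeping the $J_5$ with $v_s$: it is precisely the interaction between the first-order noise gradient in $\mathcal{Q}_i$ and the $\nu^{-\frac{1}{2}}$ blow-up of $\norm{v_s}_{W^{1,2}}$ that forces the $\mu^2/\nu^2$ weight in the assumed criterion, and which explains quantitatively why $\mu = \nu^{\frac{1}{2}}$ is the critical scaling.
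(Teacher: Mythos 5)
Your proposal is correct and takes essentially the same route as the paper's own proof: the identical $J_1$--$J_6$ decomposition built on Lemma \ref{v lemma}, with the hypothesis $\mu\nu^{-\frac{1}{2}} = o_{\nu,\mu}$ consumed by the $\bar{u}$-piece of $J_5$ and the $\frac{\mu^2}{\nu^2}$-weighted Kato condition consumed by the localised $\mathcal{A}_i$-piece, exactly as in Subsection \ref{subsection optimal noise scaling}, closed by Gr\"{o}nwall. The only blemish is a spurious factor $\nu^{-\frac{1}{2}}$ in your final $J_6$ bound (the estimate via $\norm{v_s} \leq c\nu^{\frac{1}{2}}$ gives $c\mu^{\frac{1}{2}}\left(\nu\mathbbm{E}\int_0^T\norm{\nabla u_s}^2_{\Gamma_{\tilde{c}\nu}}ds\right)^{\frac{1}{2}}$ directly), but since $\frac{\mu}{\nu} \leq 1 + \frac{\mu^2}{\nu^2}$ your weaker bound is still $o_{\nu,\mu}$ under the stated hypothesis, so no gap results.
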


\begin{remark}
    We do not achieve an equivalence of conditions as in Theorem \ref{the main one}, as $\frac{\mu^2}{\nu}\mathbbm{E}\int_0^T \norm{u_s}^2_1ds = o_{\nu,\mu}$ may not be necessary for the weak convergence in item \ref{item2}. 
\end{remark}

\begin{proof}
    Identically to Theorem \ref{the main one}, we obtain that
    \begin{align*}
    \mathbbm{E}\left(\sup_{r\in[0,T]}\norm{u_r-\bar{u}_r}^2\right) &\leq o_{\nu,\mu} + 2\mathbbm{E}\int_0^T\abs{\inner{u_s}{\mathcal{L}_{u_s - \bar{u}_s}\bar{u}_s}}ds\\ &+ 2\mathbbm{E}\int_0^{T}\abs{\inner{\mathcal{L}_{u_s}u_s}{v_s}}ds + 2\mathbbm{E}\int_0^{T}\abs{\inner{u_s}{\partial_sv_s}}ds\\ &+ 2\nu\mathbbm{E}\int_0^{T} \abs{\inner{u_s}{\bar{u}_s - v_s}_1} ds + \mu\mathbbm{E}\int_0^{T}\sum_{i=1}^\infty \abs{\inner{\mathcal{Q}_iu_s}{\mathcal{Q}_i^*(\bar{u}_s - v_s)}} ds\\ &+ 2\mu^{\frac{1}{2}}\mathbbm{E}\left(\sup_{r\in[0,T]}\left\vert\int_0^{r} \inner{\mathcal{G}u_s}{\bar{u}_s - v_s } d\mathcal{W}_s\right\vert\right)\\
    &:= o_{\nu,\mu} + \sum_{k=1}^6J_k.
\end{align*}
The integrals $J_1$ to $J_4$ are controlled in the same way so we begin with $J_5$, again writing $$J_5 \leq \mu\mathbbm{E}\int_0^{T}\sum_{i=1}^\infty \abs{\inner{\mathcal{Q}_iu_s}{\mathcal{Q}_i^*\bar{u}_s}}ds + \mu\mathbbm{E}\int_0^{T}\sum_{i=1}^\infty \abs{\inner{\mathcal{Q}_iu_s}{\mathcal{Q}_i^*v_s}}ds$$
and further 
\begin{align*}
    \mu\mathbbm{E}\int_0^{T}\sum_{i=1}^\infty \abs{\inner{\mathcal{Q}_iu_s}{\mathcal{Q}_i^*\bar{u}_s}}ds &\leq c\mu\mathbbm{E}\int_0^{T} \left( 1 + \norm{u_s}_1\right)ds \leq c\mu\nu^{-\frac{1}{2}}\left(\nu\mathbbm{E}\int_0^{T}\left(1 + \norm{u_s}_1\right)^2 ds  \right)^{\frac{1}{2}}
\end{align*}
so from (\ref{hello9}) and the first assumption, this is just $o_{\nu,\mu}$. In addition
\begin{align*}\mu\mathbbm{E}\int_0^{T}\sum_{i=1}^\infty \abs{\inner{\mathcal{Q}_iu_s}{\mathcal{Q}_i^*v_s}}ds
&\leq c\mu\nu^{-\frac{1}{2}}\mathbbm{E}\int_0^{T}1 + \norm{u_s} +\norm{\nabla u_s}_{\Gamma_{\tilde{c}\nu}}ds\\
&\leq  c\mu\nu^{-\frac{1}{2}} + c\left(\frac{\mu^2}{\nu} \mathbbm{E}\int_0^{T} \norm{\nabla u_s}_{\Gamma_{\tilde{c}\nu}}^2ds\right)^{\frac{1}{2}}\\
&= o_{\nu,\mu}
\end{align*}
from (\ref{hello6}) and the assumptions. This demonstrates that $J_5 = o_{\nu,\mu}$ so it only remains to consider $J_6$, which we again estimate with 
\begin{align*}
    J_6 \leq c\mu^{\frac{1}{2}}\mathbbm{E}\left(\int_0^{T} \sum_{i=1}^\infty \inner{\mathcal{G}_iu_s}{\bar{u}_s}^2 ds\right)^{\frac{1}{2}} + c\mu^{\frac{1}{2}}\mathbbm{E}\left(\int_0^{T} \sum_{i=1}^\infty \inner{\mathcal{G}_iu_s}{v_s }^2 ds\right)^{\frac{1}{2}}.
\end{align*}
The first term is controlled as before, and for the second
\begin{align*}
    c\mu^{\frac{1}{2}}\mathbbm{E}\left(\int_0^{T} \sum_{i=1}^\infty \inner{\mathcal{G}_iu_s}{v_s }^2 ds\right)^{\frac{1}{2}}
    &\leq c\mu^{\frac{1}{2}}\mathbbm{E}\left(\int_0^{T} \left[1 + \norm{u_s}_{W^{1,2}(\Gamma_{\tilde{c}\nu};\R^N)}^2\right]\norm{v_s }^2 ds\right)^{\frac{1}{2}}\\
    &\leq c\mu^{\frac{1}{2}}\left(\nu\mathbbm{E}\int_0^{T} 1 + \norm{u_s}_{W^{1,2}(\Gamma_{\tilde{c}\nu};\R^N)}^2 ds\right)^{\frac{1}{2}}\\
    &\leq c\mu^{\frac{1}{2}}\left(\nu\mathbbm{E}\int_0^{T} 1 + \norm{u_s}^2 + \norm{\nabla u_s}_{\Gamma_{\tilde{c}\nu}}^2 ds\right)^{\frac{1}{2}}\\
    &= o_{\nu,\mu}
\end{align*}
which completes the proof.
\end{proof}

An interesting corollary of this result comes from considering $\mu = \nu^{\alpha}$ for different values of $\alpha$.

\begin{corollary}
    Suppose that $\mu = \nu^{\alpha}$ in (\ref{projected Ito viscous 2}). If $\frac{1}{2} < \alpha < 1$ then the condition $$\nu^{2\left(\alpha - \frac{1}{2}\right)}\mathbbm{E}\int_0^T \norm{\nabla u_s}^2_{\Gamma_{\tilde{c}\nu}}ds = o_{\nu}$$ implies that $\mathbbm{E}\left(\sup_{r \in [0,T]}\norm{u_r-\bar{u}_r}^2\right) = o_{\nu}$. If $\alpha \geq 1$ then all four conditions of Theorem \ref{the main one} are equivalent. 
\end{corollary}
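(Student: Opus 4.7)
The plan is to deduce both halves of the corollary as specializations of Proposition~\ref{theorem with mew}, by substituting $\mu = \nu^{\alpha}$ and tracking how the two hypotheses of that proposition simplify in the two regimes of $\alpha$. Since (\ref{projected Ito viscous 2}) with $\mu = \nu^{\alpha}$ satisfies Lemma~\ref{ex prop} and hence fits into the framework of Subsection~\ref{subsection optimal noise scaling}, the whole argument reduces to examining the two quantities $\mu \nu^{-1/2}$ and $1 + \mu^{2}/\nu^{2}$ as functions of $\alpha$.

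For the first hypothesis, substituting gives $\mu \nu^{-1/2} = \nu^{\alpha - 1/2}$, which is $o_{\nu}$ precisely when $\alpha > 1/2$; this covers both cases in the corollary. For the second hypothesis, we compute $1 + \mu^{2}/\nu^{2} = 1 + \nu^{2\alpha - 2}$, and its behaviour depends on the sign of $2\alpha - 2$. In the regime $\tfrac{1}{2} < \alpha < 1$ the exponent is negative, so $\nu^{2\alpha-2}$ blows up and dominates the constant $1$; the hypothesis of Proposition~\ref{theorem with mew} then reads, up to absorbing the bounded factor,
\begin{equation*}
\nu^{2\alpha - 2} \cdot \nu \, \mathbbm{E}\int_{0}^{T} \norm{\nabla u_{s}}^{2}_{\Gamma_{\tilde{c}\nu}} \, ds \; = \; \nu^{2(\alpha - 1/2)} \, \mathbbm{E}\int_{0}^{T} \norm{\nabla u_{s}}^{2}_{\Gamma_{\tilde{c}\nu}} \, ds \; = \; o_{\nu},
\end{equation*}
which is precisely the assumed condition. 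Proposition~\ref{theorem with mew} then yields $\mathbbm{E}(\sup_{r \in [0,T]} \norm{u_{r} - \bar{u}_{r}}^{2}) = o_{\nu}$.

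In the regime $\alpha \geq 1$ the exponent $2\alpha - 2$ is nonnegative, so $1 + \nu^{2\alpha - 2}$ is bounded as $\nu \to 0$. The second hypothesis of Proposition~\ref{theorem with mew} therefore reduces exactly to the classical Kato condition, item~\ref{item4} of Theorem~\ref{the main one}, and Proposition~\ref{theorem with mew} gives the implication \ref{item4} $\Rightarrow$ \ref{item1}. The remaining implications \ref{item1} $\Rightarrow$ \ref{item2} and \ref{item3} $\Rightarrow$ \ref{item4} are tautological, so only \ref{item2} $\Rightarrow$ \ref{item3} needs to be revisited. For this I would rerun the weak lower semicontinuity argument of Subsection~\ref{sub based} verbatim, replacing the energy inequality (\ref{hello3}) by its new-parameter counterpart (\ref{hello9}); since $\mu = \nu^{\alpha}$ with $\alpha \geq 1$ implies $o_{\mu} = o_{\nu}$, the error terms remain of the correct order and the same chain of estimates gives (\ref{this verification}). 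Combining these pieces yields equivalence of all four conditions.

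The only subtle point — and hence the part most worth double-checking — is that the proof of the implication \ref{item4} $\Rightarrow$ \ref{item1} encoded in Proposition~\ref{theorem with mew} truly only uses the two hypotheses through the combinations $\mu \nu^{-1/2}$ and $1 + \mu^{2}/\nu^{2}$, so that specialising $\mu = \nu^{\alpha}$ is automatic once those two quantities are controlled. This is clear from inspecting the estimates for $J_{5}$ and $J_{6}$ in the proof of Proposition~\ref{theorem with mew}, where all other noise-dependent terms were absorbed into $o_{\nu,\mu}$ using only the first hypothesis. Beyond this verification there are no genuine obstacles; the corollary is a clean consequence of the dichotomy between $\mu$ going to zero faster or slower than $\nu$.
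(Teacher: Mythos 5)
Your proposal is correct and takes essentially the same route as the paper: both verify the two hypotheses of Proposition \ref{theorem with mew} under $\mu = \nu^{\alpha}$ by noting $\mu\nu^{-\frac{1}{2}} = \nu^{\alpha - \frac{1}{2}} = o_{\nu}$ and comparing $\frac{\mu^2}{\nu} = \nu^{2\left(\alpha - \frac{1}{2}\right)}$ with $\nu$ in the two regimes (using $\nu < 1$), and both conclude that the remaining implications of Theorem \ref{the main one} carry over unchanged when $\alpha \geq 1$. Your extra observation that the implication \ref{item2} $\implies$ \ref{item3} should be rerun with (\ref{hello9}) in place of (\ref{hello3}), with $o_{\mu} = o_{\nu}$ since $\mu = \nu^{\alpha}$, simply makes explicit a detail the paper leaves implicit.
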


\begin{proof}
    For $\mu = \nu^{\alpha}$ and $\frac{1}{2} < \alpha < 1$ it is sufficient to note that $\mu\nu^{-\frac{1}{2}} = \nu^{\alpha - \frac{1}{2}} = o_{\nu}$ as well as that $$\frac{\mu^2}{\nu} = \frac{\nu^{2\alpha}}{\nu} = \nu^{2\left(\alpha - \frac{1}{2}\right)}$$ which is also greater than or equal to $\nu$ (recall $\nu < 1$), so the assumptions of Proposition \ref{theorem with mew} are satisfied which proves the result in this case. In the case $\alpha \geq 1$ then we have $\nu \geq \nu^{2\left(\alpha - \frac{1}{2}\right)}$ so this implication again holds assuming just \ref{item4} in Theorem \ref{the main one}. The proof of the remaining implications is unchanged from Subsection \ref{sub based}.
\end{proof}

\begin{remark}
    For $\frac{1}{2} < \alpha < 1$ we cannot show the four conditions are equivalent as in order to demonstrate that \ref{item2} $\implies$ \ref{item3} in the same way we would need an estimate $$\nu^{2\left(\alpha - \frac{1}{2}\right)}\mathbbm{E}\int_0^T \norm{\nabla u_s}^2_{1}ds \leq C$$ for some $C$ independent of $\nu$, which we cannot achieve.
\end{remark}
This corollary highlights the critical cases of $\alpha = \frac{1}{2}$ and $\alpha = 1$.\\

\textbf{Case $\alpha = \frac{1}{2}$:} We lose the key property that $\nu^{\alpha - \frac{1}{2}} = o_{\nu}$ which was used in control of $J_5$, so even the limiting condition that $$\mathbbm{E}\int_0^T \norm{\nabla u_s}^2_{\Gamma_{\tilde{c}\nu}}ds = o_{\nu} $$ is insufficient for the result. Inspecting the proof though, one can estimate $J_5$ and hence show the result with the stronger assumptions $$\mathbbm{E}\int_0^T \norm{\nabla u_s}^2_{W^{1,2}\left(\Gamma_{\tilde{c}\nu};\R^N\right)}ds = o_{\nu} \qquad \textnormal{and} \qquad \nu\mathbbm{E}\int_0^T \norm{u_s}^2_{1}ds = o_{\nu}$$ noting that the second assumption is item \ref{item3} of Theorem \ref{the main one}.\\

\textbf{Case $\alpha = 1$:} This is the smallest $\alpha$ in which we can show the equivalence of all conditions as well as the smallest $\alpha$ in which we do not need to impose a stronger condition than Kato's (\ref{item4}) to determine the convergence. The scaling rate of $\mu = \nu$ is thus considered optimal.

\section{Weak Solutions of the Stochastic Navier-Stokes Equation} \label{section weak solutions}

This section is dedicated to the four outstanding proofs regarding weak solutions of the stochastic Navier-Stokes Equation.

\begin{itemize}
    \item The equaivalence of the notions of weak solution (Proposition \ref{prop for equivalence}) is proven in Subsection \ref{sub equivalence}.
    \item The existence of martingale weak solutions of the equation (\ref{projected Ito}), Theorem \ref{existence of weak}, is proven across Subsections \ref{sub galerkin}, \ref{sub tight} and \ref{existence for bounded ic}. An approximating sequence of finite dimensional solutions is considered in Subsection \ref{sub galerkin}, which is shown to satisfy tightness properties in Subsection \ref{sub tight}. We can then pass to the limit of this approximation and show that this limit is a solution in Subsection \ref{existence for bounded ic}.
    \item The existence and uniqueness of weak solutions of the equation (\ref{projected Ito}) in 2D, Theorem \ref{theorem 2D}, is proven in Subsection \ref{sub 2d}. A classical Yamada-Watanabe type result allows us to pass from the martingale weak solutions of Theorem \ref{existence of weak} to (probabilistically strong) weak solutions as considered in Theorem \ref{theorem 2D}.
    \item The precise energy estimates, Proposition \ref{ex theorem}, is proven in Subsection \ref{sub energy}.
\end{itemize}

\subsection{Equivalence of the Notions of Weak Solution} \label{sub equivalence}

We prove Proposition \ref{prop for equivalence}.

\begin{proof}[Proof of Proposition \ref{prop for equivalence}:]
    We fix a $u$ with the regularity specified in the definitions, and wish to show the relation (\ref{identityindefinitionofspatiallyweak}) $\iff$ (\ref{identityindefinitionofspacetimeweak}). We consider the two implications:
    \begin{itemize}
        \item[$\impliedby$:] We fix a $\phi \in W^{1,2}_{\sigma}$, and note that for any constant function in time $\psi_s = \psi$, $\psi\in C^1\left(\mathscr{O};\R^N\right) \cap W^{1,2}_{\sigma}$, we do indeed have the identity (\ref{identityindefinitionofspatiallyweak}) for $\psi$ as the time derivative is null. Given that even $C^{\infty}_{0,\sigma}(\mathscr{O};\R^N)$ is dense in $W^{1,2}_{\sigma}$ then most certainly too is $C^1\left(\overbar{\mathscr{O}};\R^N\right) \cap W^{1,2}_{\sigma}$ so we can take a sequence $(\psi^n)$ in this space convergent to $\phi$ in $\norm{\cdot}_1$. The result then follows from a straightforwards application of the dominated convergence theorem, noting (\ref{assumpty 3}) and showing convergence in $L^2\left(\Omega \times [0,t];\R\right)$ of the truncated integrand in the stochastic integral (where truncation is up to the localising stopping time).\\
        \item[$\implies$:] We fix a $\phi \in C^1\left([0,t] \times \overbar{\mathscr{O}}; \R^N\right)$ such that $\phi_s \in W^{1,2}_{\sigma}$ for every $s \in [0,t]$, and shall consider simple approximations of this $\phi$ and use the relation (\ref{identityindefinitionofspatiallyweak}) on the constant time steps. To this end for every stopping time $\tau$ with $u_\cdot \mathbbm{1}_{\cdot \leq \tau} \in L^2\left(\Omega \times [0,t]; W^{1,2}_{\sigma}\right)$, we introduce a sequence of partitions $$I_l \subset I_{l+1}, \quad I_l:=\left\{0=t^l_0 < t^l_1 < \dots < t^l_{k_l} = t \right\}, \quad \max_{j}\abs{t^l_j-t^l_{j-1}} \rightarrow 0 \textnormal{ as } l \rightarrow \infty$$ which are such that the process defined by $$\tilde{u}^l_s(\omega):= \sum_{j=1}^{k_l-1}\mathbbm{1}_{[t^l_{j-1},t^l_j]}(s)u_{t^l_j}(\omega)\mathbbm{1}_{t^l_j \leq \tau(\omega)}$$ belongs to $L^2\left(\Omega \times [0,t]; W^{1,2}_{\sigma}\right)$ and converges to $u_{\cdot}\mathbbm{1}_{\cdot \leq \tau}$ in this space. Such a partition is available to us due to Lemma 4.2.6 of [\cite{prevot2007concise}]\footnote{ We would formally have to apply this for the modification $\hat{u}$ of $u$ which is genuinely progressively measurable, but then Remark 4.2.7 ensures that we could choose our partition (except for $t^l_0$, $t^l_{k_l}$) outside of this $\lambda-$zero on which the modification occurs. Thus $u$ freely replaces $\hat{u}$ in the definition of $\tilde{u}^l$.}.  Now for each $l,j$ from the identity (\ref{identityindefinitionofspatiallyweak}) we have that
        \begin{align} \nonumber
     \inner{u_{t^l_j}}{\phi_{t^l_j}} = \inner{u_{t^l_{j-1}}}{\phi_{t^l_{j}}} &- \int_{t^l_{j-1}}^{t^l_j}\inner{\mathcal{L}_{u_s}u_s}{\phi_{t^l_{j}}}ds - \nu\int_{t^l_{j-1}}^{t^l_{j}} \inner{u_s}{\phi_{t^l_{j}}}_1 ds\\ &+ \frac{\nu}{2}\int_{t^l_{j-1}}^{t^l_{j}}\sum_{i=1}^\infty \inner{\mathcal{Q}_iu_s}{\mathcal{Q}_i^*\phi_{t^l_{j}}} ds - \nu^{\frac{1}{2}}\int_{t^l_{j-1}}^{t^l_{j}} \inner{\mathcal{G}u_s}{\phi_{t^l_{j}}} d\mathcal{W}_s \label{anotheralign}
\end{align}
which we shall utilise in the context of \begin{align}\nonumber \inner{u_t}{\phi_t} - \inner{u_0}{\phi_0} &= \sum_{j=1}^{k_l}\left(\inner{u_{t^l_{j}}}{\phi_{t^l_j}} -  \inner{u_{t^l_{j-1}}}{\phi_{t^l_{j-1}}}\right)\\ &= \sum_{j=1}^{k_l}\left( \inner{u_{t^l_{j}}}{\phi_{t^l_j}} - \inner{u_{t^l_{j-1}}}{\phi_{t^l_{j}}}\right) + \sum_{j=1}^{k_l}\left(\inner{u_{t^l_{j-1}}}{\phi_{t^l_{j}}}-  \inner{u_{t^l_{j-1}}}{\phi_{t^l_{j-1}}}\right). \label{context} \end{align}
To address the first sum we introduce some familiar notation in $$\tilde{\phi}^l_s:= \sum_{j=1}^{k_l-1}\mathbbm{1}_{[t^l_{j-1},t^l_j]}(s)\phi_{t^l_j},$$ allowing us to deduce from (\ref{anotheralign}) that 
\begin{align} \nonumber
     \sum_{j=1}^{k_l}\left(\inner{u_{t^l_j}}{\phi_{t^l_j}} - \inner{u_{t^l_{j-1}}}{\phi_{t^l_{j}}}\right) =  &- \int_{0}^{t}\inner{\mathcal{L}_{u_s}u_s}{\tilde{\phi}^l_{s}}ds - \nu\int_{0}^{t} \inner{u_s}{\tilde{\phi}^l_{s}}_1 ds\\ &+ \frac{\nu}{2}\int_{0}^{t}\sum_{i=1}^\infty \inner{\mathcal{Q}_iu_s}{\mathcal{Q}_i^*\tilde{\phi}^l_{s}} ds - \nu^{\frac{1}{2}}\int_{0}^{t} \inner{\mathcal{G}u_s}{\tilde{\phi}^l_{s}} d\mathcal{W}_s. \label{anotheralign2}
\end{align}
As for the second sum of (\ref{context}), observe that for each $j$ we can rewrite this as
\begin{align*}\inner{u_{t^l_{j-1}}}{\phi_{t^l_{j}}}-  \inner{u_{t^l_{j-1}}}{\phi_{t^l_{j-1}}} &= \int_{\mathscr{O}}u_{t^l_{j-1}}(x)\left[\phi_{t^l_{j}}(x) - \phi_{t^l_{j-1}}(x)\right]dx\\
&= \int_{\mathscr{O}}u_{t^l_{j-1}}(x)\left[\int_{t^l_{j-1}}^{t^l_j}\partial_s \phi_s(x)\right]dx\\
&= \int_{t^l_{j-1}}^{t^l_j} \inner{\tilde{u}^l_s}{\partial_s\phi_s} ds
\end{align*}
using the $C^1\left([0,t]\times \overbar{\mathscr{O}};\R^N\right)$ regularity of $\phi$ and Fubini's Theorem. Summation then gives the integral over $[0,t]$ hence we deduce that
\begin{align} \nonumber
     \inner{u_t}{\phi_t} = \inner{u_0}{\phi_0}  + \int_0^t\inner{\tilde{u}^l_s}{\partial_s\phi_s}ds &- \int_0^{t}\inner{\mathcal{L}_{u_s}u_s}{\tilde{\phi}^l_s}ds - \nu\int_0^{t} \inner{u_s}{\tilde{\phi}^l_s}_1 ds\\ &+ \frac{\nu}{2}\int_0^{t}\sum_{i=1}^\infty \inner{\mathcal{Q}_iu_s}{\mathcal{Q}_i^*\tilde{\phi}^l_s} ds - \nu^{\frac{1}{2}}\int_0^{t} \inner{\mathcal{G}u_s}{\tilde{\phi}^l_s } d\mathcal{W}_s\label{again limits}
\end{align}
holds for all $l \in \N$. We look to analyse the limit, noting that the convergence $\tilde{u}^l \rightarrow u$ in $L^2\left(\Omega \times [0,t]; W^{1,2}_{\sigma}\right)$ implies that in $L^2\left(\Omega; L^2\left([0,t]; W^{1,2}_{\sigma}\right)\right)$ so we can extract a subsequence (relabelled again as $\tilde{u}^l$ for simplicity) which is convergent $\mathbbm{P}-a.e.$ in $L^2\left([0,t]; W^{1,2}_{\sigma}\right)$, from which we see that
\begin{align}
    \nonumber \left\vert \int_0^t\inner{\tilde{u}^l_s}{\partial_s\phi_s}ds - \int_0^t\inner{u_s}{\partial_s\phi_s}ds \right\vert &\leq \int_0^t\norm{\tilde{u}^l_s - u_s}\norm{\partial_s\phi_s}ds\\\nonumber  &\leq \norm{\tilde{u}^l_s - u_s}_{L^2\left([0,t];L^2(\mathscr{O};\R^N)\right)} \norm{\partial_s\phi_s}_{L^2\left([0,t];L^2(\mathscr{O};\R^N)\right)}\\
    &\longrightarrow 0. \label{exactly as in}
\end{align}
Addressing the convergence in the other terms, we need to establish in what sense $\tilde{\phi}^l \rightarrow \phi$. We claim that the convergence holds in $L^2\left([0,t];W^{1,2}_{\sigma}\right)$. As $\phi \in C^1\left([0,t]\times \overbar{\mathscr{O}};\R^N\right)$ then for each $j = 1, \dots, N$ and every fixed $x \in \mathscr{O}$ then $\partial_j\phi(x)\in C^1\left([0,t];\R^N\right)$ so is in particular Lipschitz Continuous. Of course $$ \partial_j\tilde{\phi}^l_s:= \sum_{j=1}^{k_l-1}\mathbbm{1}_{[t^l_{j-1},t^l_j]}(s)\partial_j\phi_{t^l_j}$$ so $$ \partial_j\phi_s - \partial_j\tilde{\phi}^l_s:= \sum_{j=1}^{k_l-1}\mathbbm{1}_{[t^l_{j-1},t^l_j]}(s)\left[\partial_j\phi_s - \partial_j\phi_{t^l_j}\right]$$ thus if $K_x$ is the constant of Lipschitz Continuity, then $$\norm{\partial_j\phi(x) - \partial_j\tilde{\phi}^l(x)}_{L^\infty\left([0,t];\R^N\right)} \leq K_x\max_j \abs{t^l_{j} - t^l_{j-1}}$$ which converges to zero as $l \rightarrow \infty$. Now then we have that
\begin{align*}
    \norm{\phi - \tilde{\phi}^l}^2_{L^2\left([0,t];W^{1,2}_{\sigma}\right)} &= \int_0^t\sum_{j=1}^N\norm{\partial_j\phi_s-\partial_j\tilde{\phi}^l_s}^2ds\\
    &= \sum_{j=1}^N \int_0^t\int_{\mathscr{O}}\abs{\partial_j\phi_s(x)-\partial_j\tilde{\phi}^l_s(x)}^2dxds\\
    &= \sum_{j=1}^N\int_{\mathscr{O}}\norm{\partial_j\phi(x)-\partial_j\tilde{\phi}^l(x)}_{L^2([0,t];\R^N)}^2dx\\
    &\leq c\sum_{j=1}^N\int_{\mathscr{O}}\norm{\partial_j\phi(x)-\partial_j\tilde{\phi}^l(x)}_{L^{\infty}([0,t];\R^N)}^2dx
\end{align*}
where $c$ continues to represent a generic constant. The claim then follows from the Dominated Convergence Theorem, with dominating function $4\norm{\partial_j\phi}_{L^{\infty}([0,t];\R^N)}^2$ which is continuous on $\overbar{\mathscr{O}}$ hence square integrable. In fact with the observation that the supremum of the integral is bounded by the integral of the supremum, we have in fact shown the stronger convergence in $L^\infty\left([0,t];W^{1,2}_{\sigma}\right)$. With the claim established we again look to take limits in (\ref{again limits}), and using the linearity of $\mathcal{Q}_i^*$ and (\ref{assumpty 3}) then the convergence of the time integrals follows exactly as in (\ref{exactly as in}). As for the stochastic integral we look to first show a convergence in expectation and make use of the It\^{o} Isometry, hence we introduce stopping times $$\theta_n := t \wedge \inf \left\{r \geq 0: \int_0^r \norm{u_s}_1^2ds \geq n \right\}$$ and consider \begin{align*}\mathbbm{E}\bigg\vert \int_0^{t\wedge {\theta_n}} \inner{\mathcal{G}u_s}{\tilde{\phi}^l_s } d\mathcal{W}_s &- \int_0^{t\wedge {\theta_n}} \inner{\mathcal{G}u_s}{\phi_s } d\mathcal{W}_s \bigg\vert^2\\ &= \mathbbm{E}\left\vert \int_0^{t\wedge {\theta_n}} \inner{\mathcal{G}u_s}{\tilde{\phi}^l_s - \phi_s } d\mathcal{W}_s \right\vert^2\\ &= \mathbbm{E} \int_0^{t\wedge {\theta_n}}\sum_{i=1}^\infty \inner{\mathcal{G}_iu_s}{\tilde{\phi}^l_s - \phi_s }^2 ds \\
&\leq \mathbbm{E} \int_0^{t\wedge {\theta_n}}\sum_{i=1}^\infty c_i\norm{u_s}_1^2\norm{\tilde{\phi}^l_s - \phi_s }^2 ds \\
&\leq \sum_{i=1}^\infty c_i \left[ \mathbbm{E}\int_0^{t\wedge {\theta_n}}\norm{u_s}_1^2ds \right]\norm{\tilde{\phi}^l - \phi }^2_{L^\infty([0,t];L^2_{\sigma})}
\end{align*}
which approaches zero as $l \rightarrow \infty$. So for each fixed $n \in \N$ there exists a subsequence indexed by $(l^n_k)_{k}$ such that
$$\int_0^{t\wedge {\theta_n}} \inner{\mathcal{G}u_s}{\tilde{\phi}^{l^n_k}_s } d\mathcal{W}_s \longrightarrow \int_0^{t\wedge {\theta_n}} \inner{\mathcal{G}u_s}{\phi_s } d\mathcal{W}_s$$ $\mathbbm{P}-a.s.$ in $\R$. Defining $\Omega_n$ as this set on which convergence occurs, and $\tilde{\Omega}:=\bigcup_{n \in \N}\Omega_n$ which is again of full measure, then we can take any $\omega \in \tilde{\Omega}$ and select an $n$ such that $\theta_n(\omega) \geq t$, which exists from the continuity of $\int_0^{\cdot}\norm{u_s}_1^2ds$. For this fixed $n$ we may extract the subsequence $(l^n_k)_k$ and deduce the required convergence, thus concluding the proof. 

    \end{itemize}

\end{proof}

\subsection{Galerkin Scheme} \label{sub galerkin}

We fix an $\mathcal{F}_0-$measurable $u_0 \in L^\infty\left(\Omega; L^2_{\sigma}\right)$ as in Theorem \ref{existence of weak}. We shall work with a Galerkin Approximation and do so by considering (\ref{projected Ito}) in its spatially strong form, projected by $\mathcal{P}_n$ to give
\begin{equation} \label{projected Ito galerkin}
    u^n_t = u^n_0 - \int_0^t\mathcal{P}_n\mathcal{P}\mathcal{L}_{u^n_s}u^n_s\ ds - \nu\int_0^t \mathcal{P}_n A u^n_s\, ds + \frac{\nu}{2}\int_0^t\sum_{i=1}^\infty \mathcal{P}_n \mathcal{P}\mathcal{Q}_i^2u^n_s ds - \nu^{\frac{1}{2}}\int_0^t \mathcal{P}_n\mathcal{P}\mathcal{G}u^n_s d\mathcal{W}_s 
\end{equation}
where $u^n_0:=\mathcal{P}_nu_0$. We wish to show that strong solutions of (\ref{projected Ito galerkin}) exist, where the notion of solution is typical and as given in [\cite{goodair2022existence1}] Proposition 6.1 for $\mathcal{H}:= V_n:= \textnormal{span}\{ a_1, \dots, a_n\}$ (which is a Hilbert Space equipped with the $L^2(\mathscr{O};\R^N)$ or any equivalent inner product). We look to apply Lemma 3.18 of this work to deduce that for any $M > 1$ and $S > 0$, there exists a local strong solution of (\ref{projected Ito galerkin}) up until the stopping time $$\tau^{M,S}_n := S \wedge \inf \left\{s \geq 0: \sup_{r \in [0,s]}\norm{u^n_r}^2 + \int_0^s \norm{u^n_r}_1^2dr \geq M + \norm{u^n_0}^2 \right\}.$$ Without a direct application of this lemma we reach the conclusion here in the same manner, where one can consider $V:= W^{2,2}(\mathscr{O};\R^N)$, $H:= W^{1,2}(\mathscr{O};\R^N)$ and $U:= L^2(\mathscr{O};\R^N)$ with (\ref{assumpty 4}) and the equivalence of the norms on $V_n$. Having established local existence of solutions we note the uniqueness en route to showing that such a solution exists on $[0,T]$; recalling (\ref{assumpty 5}) and (\ref{assumpty 6}) we can prove the uniqueness identically to Theorem 3.29 of [\cite{goodair2022existence1}], noting again the equivalence of the norms on $V_n$. From here we deduce the existence of a unique maximal strong solution $(u^n,\Theta^n)$ as in Theorems 3.32 and 3.34, and from Lemma 3.36 we also have the relation \begin{equation} \label{relation} \mathbbm{P}\left( \{ \omega \in \Omega: \tau^{M,S}_{n}(\omega) < \Theta^n(\omega)\}\right) = 1\end{equation} for any $M>1$ and $S>0$. We must be precise in using the characterisation of $\tau^{M,S}_n$, as this was initially defined (in [\cite{goodair2022existence1}]) as a first hitting time for a globally existing truncated process, which was then stopped at this time and relabelled to ignore the truncation. The maximal solution $u^n$ is of course different to this process, but the uniqueness ensures that $\tau^{M,S}_n$ is genuinely a first hitting time for the maximal $u^n$ (as this process must be indistinguishable from the truncated one up until this stopping time). To prove the existence on $[0,T]$ we want to show that $\mathbbm{P}\left(\{\omega \in \Omega: \Theta^n(\omega) \leq T\}\right) = 0$. Moreover note that
\begin{align*}\mathbbm{P}\left(\{\omega \in \Omega: \Theta^n(\omega) \leq T\}\right) &\leq \mathbbm{P}\left(\left\{\omega \in \Omega: \sup_{M \in \N}\tau^{M,T+1}_n(\omega) \leq T\right\}\right)\\
&= \mathbbm{P}\left(\bigcap_{M \in \N}\left\{\omega \in \Omega: \tau^{M,T+1}_n(\omega) \leq T\right\}\right)\\
&= \lim_{M \rightarrow \infty}\mathbbm{P}\left(\left\{\omega \in \Omega: \tau^{M,T+1}_n(\omega) \leq T\right\}\right)
\end{align*}
from (\ref{relation}) and the fact that $\tau^{M,T+1}_n$ is increasing in $M$. From the characterisation of $\tau^{M,T+1}_n$ note that 
\begin{align*}&\left\{\omega \in \Omega: \tau^{M,T+1}_n(\omega)\leq T\right\} \\& \qquad \qquad \qquad = \left\{\omega \in \Omega: \sup_{r \in [0,T \wedge  \tau^{M,T+1}_n(\omega)]}\norm{u^n_r(\omega)}^2 + \int_0^{T \wedge  \tau^{M,T+1}_n(\omega)}\norm{u^n_r(\omega)}_1^2dr \geq M + \norm{u^n_0(\omega)}^2\right\}\end{align*} so a simple application of Chebyshev's Inequality informs us that \begin{equation}\label{information}\mathbbm{P}\left(\left\{\omega \in \Omega: \tau^{M,T+1}_n(\omega) \leq T\right\}\right) \leq \frac{1}{M}\mathbbm{E}\left[\sup_{r \in [0,T \wedge  \tau^{M,T+1}_n]}\norm{u^n_r}^2 - \norm{u^n_0}^2 + \int_0^{T \wedge  \tau^{M,T+1}_n}\norm{u^n_r}_1^2dr \right].\end{equation} 
This prompts the following result.

\begin{proposition} \label{prop for first energy}
    Let $(u^n,\Theta^n)$ be the maximal strong solution of equation (\ref{projected Ito galerkin}). There exists a constant $C$ independent of $M,n,\nu$ such that
    \begin{equation}\label{first result}\mathbbm{E}\left[\sup_{r \in [0,T \wedge  \tau^{M,T+1}_n]}\norm{u^n_r}^2 + \nu\int_0^{T \wedge  \tau^{M,T+1}_n}\norm{u^n_r}_1^2dr  \right] \leq C\left[\mathbbm{E}\left(\norm{u^n_0}^2\right) + 1\right].\end{equation}
\end{proposition}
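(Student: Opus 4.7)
The plan is to apply the It\^o formula to $\norm{u^n_t}^2$ on the stopped interval $[0, T\wedge \tau^{M,T+1}_n]$, use the structural cancellations in the problem to isolate a viscous dissipation on the left, and close the estimate via Burkholder--Davis--Gundy, Young's inequality, and Gr\"onwall. Since $u^n$ takes values in the smooth finite dimensional space $V_n$, the classical It\^o formula applies directly. Using that $\mathcal{P}_n$ is self-adjoint on $L^2(\mathscr{O};\R^N)$ with $\mathcal{P}_n u^n_s = u^n_s$, that $\mathcal{P}$ is self-adjoint on $L^2_\sigma$ with $\mathcal{P} u^n_s = u^n_s$, and recalling (\ref{prop for moving stokes'}) to identify $\inner{A u^n_s}{u^n_s}$ with $\norm{u^n_s}_1^2$, I obtain
\[
\norm{u^n_t}^2 + 2\nu \int_0^t \norm{u^n_s}_1^2 \, ds = \norm{u^n_0}^2 - 2\int_0^t \inner{\mathcal{L}_{u^n_s}u^n_s}{u^n_s}\,ds + \nu \int_0^t \sum_i \bigl[\inner{\mathcal{Q}_i^2 u^n_s}{u^n_s} + \norm{\mathcal{P}_n\mathcal{P}\mathcal{G}_i u^n_s}^2\bigr]\,ds + M_t,
\]
where $M_t := -2\nu^{1/2}\int_0^t \inner{\mathcal{G} u^n_s}{u^n_s}\,d\mathcal{W}_s$.

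The nonlinear term vanishes identically by (\ref{cancellationproperty'}). For the quadratic correction I would use $\norm{\mathcal{P}_n \mathcal{P}\mathcal{G}_i u^n_s} \leq \norm{\mathcal{G}_i u^n_s}$ and assumption (\ref{assumpty 7}) to bound
\[
\nu \sum_i \bigl[\inner{\mathcal{Q}_i^2 u^n_s}{u^n_s} + \norm{\mathcal{P}_n\mathcal{P}\mathcal{G}_i u^n_s}^2\bigr] \leq \nu \Bigl(\sum_i c_i\Bigr)\bigl(1 + \norm{u^n_s}^2\bigr) + \nu \Bigl(\sum_i k_i\Bigr)\norm{u^n_s}_1^2.
\]
The key absorption is that $\sum_i k_i \leq 1$, so the last term can be taken against one copy of $\nu \norm{u^n_s}_1^2$ drawn from the Stokes dissipation, leaving $\nu \norm{u^n_s}_1^2$ intact on the left. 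Using $\nu < 1$ to lump remaining factors of $\nu$ into a constant yields
\[
\norm{u^n_t}^2 + \nu \int_0^t \norm{u^n_s}_1^2\, ds \leq \norm{u^n_0}^2 + C\int_0^t \bigl(1 + \norm{u^n_s}^2\bigr)\, ds + M_t.
\]

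Taking the supremum up to $t \wedge \tau^{M,T+1}_n$ and then expectation, I would handle the martingale via BDG combined with (\ref{assumpty8}) to obtain
\[
\mathbbm{E}\sup_{r \leq t \wedge \tau^{M,T+1}_n} |M_r| \leq C \mathbbm{E}\Bigl(\int_0^{t \wedge \tau^{M,T+1}_n}\bigl(1 + \norm{u^n_s}^4\bigr)\,ds\Bigr)^{1/2},
\]
then write $\norm{u^n_s}^4 \leq \sup_{r \leq s \wedge \tau^{M,T+1}_n}\norm{u^n_r}^2 \cdot \norm{u^n_s}^2$ and invoke Young's inequality to absorb $\tfrac{1}{2}\mathbbm{E}\sup_{r \leq t \wedge \tau^{M,T+1}_n}\norm{u^n_r}^2$ back onto the left. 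Setting $g(t) := \mathbbm{E}\sup_{r \leq t \wedge \tau^{M,T+1}_n}\norm{u^n_r}^2$ (which is increasing in $t$), this collapses to
\[
g(t) + \nu\, \mathbbm{E}\int_0^{t \wedge \tau^{M,T+1}_n}\norm{u^n_s}_1^2\, ds \leq C\bigl(\mathbbm{E}\norm{u^n_0}^2 + 1\bigr) + C\int_0^t g(s)\,ds,
\]
to which the standard Gr\"onwall inequality delivers the claim.

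The main obstacle is not technical depth but bookkeeping: one has to ensure every constant along the way is independent of $M$, $n$, and $\nu$. Independence from $M$ is automatic since $M$ enters only through the stopping time, which provides precisely the a priori integrability needed for BDG to apply cleanly. Independence from $n$ is automatic since no bound in Subsection \ref{subsection assumptions} references $n$ and $\mathcal{P}_n$ is a contraction. Independence from $\nu$ rests on the joint use of $\nu < 1$ (to absorb factors of $\nu$ into constants whenever convenient) and the structural bound $\sum_i k_i \leq 1$, which is exactly what makes the noise-induced quadratic correction absorbable into the viscous dissipation; without this last inequality one would be left with a term of order $\nu \norm{u^n_s}_1^2$ that could not be moved to the left hand side.
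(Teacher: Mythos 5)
Your proposal is correct and follows essentially the same route as the paper's proof: It\^o formula on the stopped process, cancellation of the nonlinear term via (\ref{cancellationproperty'}), identification of the dissipation via (\ref{prop for moving stokes'}), contraction of the projections plus (\ref{assumpty 7}) with $\sum_i k_i \leq 1$ to absorb the noise correction into the viscous term, and then Burkholder--Davis--Gundy with (\ref{assumpty8}), Young's inequality, and Gr\"onwall, with the stopping time supplying the a priori integrability. Your closing remarks on the independence of the constants from $M$, $n$, $\nu$ match the paper's reasoning exactly.
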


\begin{proof}
    We can apply the It\^{o} Formula to see that for any $0 \leq r \leq T$, the identity
    \begin{align*}
        \norm{u^n_{r \wedge \tau^{M,T+1}_n}}^2 &= \norm{u^n_0}^2 - 2\int_0^{r \wedge \tau^{M,T+1}_n}\inner{\mathcal{P}_n\mathcal{P}\mathcal{L}_{u^n_s}u^n_s}{u^n_s}\ ds - 2\nu\int_0^{r \wedge \tau^{M,T+1}_n} \inner{\mathcal{P}_n A u^n_s}{u^n_s}\, ds\\ &+ \nu\int_0^{r \wedge \tau^{M,T+1}_n}\left\langle\sum_{i=1}^\infty \mathcal{P}_n \mathcal{P}\mathcal{Q}_i^2u^n_s, u^n_s\right\rangle ds +\nu\int_0^{r \wedge \tau^{M,T+1}_n}\sum_{i=1}^\infty \norm{\mathcal{P}_n\mathcal{P}\mathcal{G}_iu^n_s}^2ds\\ &- 2\nu^{\frac{1}{2}}\int_0^{r \wedge \tau^{M,T+1}_n} \inner{\mathcal{P}_n\mathcal{P}\mathcal{G}u^n_s}{u^n_s} d\mathcal{W}_s 
    \end{align*}
holds $\mathbbm{P}-a.s.$. We now immediately simplify the expression, using that $\mathcal{P}_n$ and $\mathcal{P}$ are orthogonal projections in $L^2(\mathscr{O};\R^N)$, as well as the properties (\ref{cancellationproperty'}) and (\ref{prop for moving stokes'}) to see that
    \begin{align*}
        \norm{u^n_{r \wedge \tau^{M,T+1}_n}}^2  &+ 2\nu\int_0^{r \wedge \tau^{M,T+1}_n} \norm{u^n_s}^2_1 ds = \norm{u^n_0}^2\\ &+ \nu\int_0^{r \wedge \tau^{M,T+1}_n}\sum_{i=1}^\infty\left( \inner{\mathcal{Q}_i^2u^n_s}{u^n_s} + \norm{\mathcal{P}_n\mathcal{P}\mathcal{G}_iu^n_s}^2\right) ds - 2\nu^{\frac{1}{2}}\int_0^{r \wedge \tau^{M,T+1}_n} \inner{\mathcal{G}u^n_s}{u^n_s} d\mathcal{W}_s.
    \end{align*}
As a characteristic of the orthogonal projections we can additionally say that $\norm{\mathcal{P}_n\mathcal{P}\mathcal{G}_iu^n_s}^2 \leq \norm{\mathcal{G}_iu^n_s}^2$, so with (\ref{assumpty 7}) we pass further to the bound 
   \begin{align*}
        \norm{u^n_{r \wedge \tau^{M,T+1}_n}}^2  &+ \nu\int_0^{r \wedge \tau^{M,T+1}_n} \norm{u^n_s}^2_1 ds \leq \norm{u^n_0}^2\\ &+ c\int_0^{r \wedge \tau^{M,T+1}_n} 1 + \norm{u^n_s}^2 ds - 2\nu^{\frac{1}{2}}\int_0^{r \wedge \tau^{M,T+1}_n} \inner{\mathcal{G}u^n_s}{u^n_s} d\mathcal{W}_s.
    \end{align*}
We shall now look to take expectation, introducing the notation \begin{equation}\label{notation} \hat{u}^n_{\cdot} := u^n_{\cdot}\mathbbm{1}_{\cdot \leq \tau^{M,T+1}_n}\end{equation} where we appreciate that \begin{equation} \label{stopping time boundedness}
    \sup_{r \in [0,T]}\norm{\hat{u}^n_r}^2  + \int_0^{T}\norm{\hat{u}^n_s}_1^2ds \leq M + \norm{u^n_0}^2 \leq M + \norm{u_0}^2_{L^{\infty}(\Omega;L^2(\mathscr{O};\R^N))}.
\end{equation} This boundedness ensures the integrability of all terms. From here we can take the absolute value of the stochastic integral followed by the supremum over $r\in[0,T]$ (we take the supremum for each term on the left hand side individually, then sum them), then apply the Burkholder-Davis-Gundy Inequality to obtain 
   \begin{align*}
        \mathbbm{E}\left(\sup_{r\in[0,T]}\norm{\hat{u}^{n}_{r}}^2\right) + \nu\mathbbm{E}\int_0^{T} \norm{\hat{u}^{n}_s}^2_1 ds &\leq 2\mathbbm{E}\left(\norm{u^n_0}^2\right) + c\\ &+ c\int_0^{T} \mathbbm{E}\left(\norm{\hat{u}^{n}_s}^2\right) ds + c\mathbbm{E}\left(\int_0^{T}\sum_{i=1}^\infty \inner{\mathcal{G}_i\hat{u}^{n}_s}{\hat{u}^{n}_s}^2 ds\right)^\frac{1}{2}
    \end{align*}
where $c$ continues to represent a generic constant now dependent on $T$ and the constant from the Burkholder-Davis-Gundy Inequality. We recall again that $\nu$ is assumed to be less than $1$. Focusing further on the stochastic term, we use (\ref{assumpty8}) to see
   \begin{align}\nonumber
        c\mathbbm{E}\left(\int_0^{T}\sum_{i=1}^\infty \inner{\mathcal{G}_i\hat{u}^{n}_s}{\hat{u}^{n}_s}^2 ds\right)^\frac{1}{2} &\leq  c\mathbbm{E}\left(\int_0^{T} 1 + \norm{\hat{u}^{n}_s}^4ds\right)^\frac{1}{2}\\\nonumber
        &\leq c + c\mathbbm{E}\left(\int_0^{T} \norm{\hat{u}^{n}_s}^4ds\right)^\frac{1}{2}\\\nonumber
        &\leq c + c\mathbbm{E}\left(\sup_{r\in[0,T]}\norm{\hat{u}^{n}_r}^2\int_0^{T} \norm{\hat{u}^{n}_s}^2ds\right)^\frac{1}{2}\\
        &\leq c + \frac{1}{2}\mathbbm{E}\left(\sup_{r\in[0,T]}\norm{\hat{u}^{n}_r}^2\right) + c\mathbbm{E}\int_0^{T} \norm{\hat{u}^{n}_s}^2ds \label{the same process}
    \end{align}
having applied Young's Inequality. Substituting this into our running inquality achieves 
 \begin{align} \label{subbed}
        \frac{1}{2} \mathbbm{E}\left(\sup_{r\in[0,T]}\norm{\hat{u}^{n}_{r}}^2\right) + \nu\mathbbm{E}\int_0^{T} \norm{\hat{u}^{n}_s}^2_1 ds \leq 2\mathbbm{E}\left(\norm{u^n_0}^2\right) + c + c\int_0^{T} \mathbbm{E}\left(\norm{\hat{u}^{n}_s}^2\right) ds
    \end{align}
and in particular
 \begin{align*}
    \mathbbm{E}\left(\sup_{r\in[0,T]}\norm{\hat{u}^{n}_{r}}^2\right) \leq 4\mathbbm{E}\left(\norm{u^n_0}^2\right) + c + c\int_0^{T} \mathbbm{E}\left(\sup_{r\in[0,s]}\norm{\hat{u}^{n}_r}^2\right) ds
    \end{align*}
to which an application of the standard Gr\"{o}nwall Inequality yields
\begin{equation}\label{yields}\mathbbm{E}\left(\sup_{r\in[0,T]}\norm{\hat{u}^{n}_{r}}^2\right) \leq c\left[\mathbbm{E}\left(\norm{u^n_0}^2\right) + 1\right].\end{equation}
    Of course from (\ref{subbed}) we also see that
    $$\nu\mathbbm{E}\int_0^{T} \norm{\hat{u}^{n}_s}^2_1 ds \leq \mathbbm{E}\left(\norm{u^n_0}^2\right) + c + c\int_0^{T} \mathbbm{E}\left(\sup_{r \in [0,s]}\norm{\hat{u}^{n}_r}^2\right) ds $$
to which we substitute in (\ref{yields}) to the right hand side, then summing the resultant inequality with (\ref{yields}) gives $$ \mathbbm{E}\left(\sup_{r\in[0,T]}\norm{\hat{u}^{n}_{r}}^2\right) + \nu\mathbbm{E}\int_0^{T} \norm{\hat{u}^{n}_s}^2_1 ds  \leq c\left[\mathbbm{E}\left(\norm{u^n_0}^2\right) + 1\right]$$ which can simply be rewritten as (\ref{first result}), concluding the proof.
\end{proof}

The expectation in (\ref{information}) is thus finite, so taking the limit $M \rightarrow \infty$ achieves that\\  $\mathbbm{P}\left(\{\omega \in \Omega: \Theta^n(\omega) \leq T\}\right)=0$. It is further evident from our calculations that for any $\tau$ such that $(u^n,\tau)$ is a local strong solution of the equation (\ref{projected Ito galerkin}), the inequality (\ref{first result}) holds (that is, with $\tau$ replacing $\tau^{M,T+1}_n$) where $C$ is independent of the choice of $\tau$. Therefore we can choose a $\mathbbm{P}-a.s.$ increasing sequence of stopping times which approach $\Theta^n$ by definition of the maximal time, and applying the Monotone Convergence Theorem just as we did in the proof yields that  \begin{equation}\label{second result}\mathbbm{E}\left[\sup_{r \in [0,T]}\norm{u^n_r}^2 + \nu\int_0^{T}\norm{u^n_r}_1^2dr  \right] \leq C\left[\mathbbm{E}\left(\norm{u^n_0}^2\right) + 1\right].\end{equation}
Moreover for any given $t \in [0,T]$ and $\omega \in \Omega$, we can choose a $\tau(\omega)>t$ such that $u^n$ does indeed satisfy the identity (\ref{projected Ito galerkin}) without localisation and $u\in C\left([0,t];V_n\right)$. Of course we can bound $\norm{u^n_0}^2 \leq \norm{u_0}^2$ and $\mathbbm{E}\left(\norm{u^n_0}^2 \right) \leq \norm{u_0}_{L^{\infty}(\Omega;L^2(\mathscr{O};\R^N))}^2$ which is finite independent of $n$ and can be substituted in to (\ref{first result}) and (\ref{second result}). Combining this with (\ref{information}) achieves that \begin{equation}\label{achievement}\lim_{M \rightarrow \infty}\sup_{n \in \N}\mathbbm{P}\left(\left\{\omega \in \Omega: \tau^{M,T+1}_n(\omega) \leq T\right\}\right) =0.\end{equation}

\subsection{Tightness} \label{sub tight}

We now look to deduce the existence of a process taken as the limit of $u^n$ in some sense, which is done through a tightness argument. We pursue this with Lemma \ref{Lemma 5.2} in the Appendix precisely as in [\cite{rockner2022well}], with the spaces $\mathcal{H}_1:= W^{1,2}_{\sigma}$, $\mathcal{H}_2:= L^2_{\sigma}$. Having already demonstrated (\ref{first condition}) we now justify (\ref{second condition}), fixing a $T>0$ and introducing new notation
\begin{equation} \label{new notation}
    u^{n,M}_{\cdot}:= u^{n}_{\cdot \wedge \tau^{M,T+1}_n}.
\end{equation}
Observe that for any $\varepsilon, \delta > 0$ and removing the explicit reference to $\omega \in \Omega$ for brevity,
\begin{align*}
    &\mathbbm{P}\left(\left\{\int_0^{T-\delta}\norm{u^n_{s + \delta} - u^n_s}^2ds > \varepsilon\right\} \right) \\&= \mathbbm{P}\left(\left\{\int_0^{T-\delta}\norm{u^n_{s + \delta} - u^n_s}^2ds > \varepsilon\right\} \cap \left[\left\{ \tau^{M,T+1}_n > T \right\} \cup \left\{ \tau^{M,T+1}_n \leq T \right\} \right]\right)\\
    &\leq  \mathbbm{P}\left(\left\{\int_0^{T-\delta}\norm{u^n_{s + \delta} - u^n_s}^2ds > \varepsilon\right\} \cap \left\{ \tau^{M,T+1}_n > T \right\}\right) + \mathbbm{P}\left(\left\{ \tau^{M,T+1}_n \leq T \right\} \right)\\
    &=  \mathbbm{P}\left(\left\{\int_0^{T-\delta}\norm{u^{n,M}_{s + \delta} - u^{n,M}_s}^2ds > \varepsilon\right\} \cap \left\{ \tau^{M,T+1}_n > T \right\}\right) + \mathbbm{P}\left(\left\{ \tau^{M,T+1}_n \leq T \right\} \right)\\
    &\leq  \mathbbm{P}\left(\left\{\int_0^{T-\delta}\norm{u^{n,M}_{s + \delta} - u^{n,M}_s}^2ds > \varepsilon\right\}\right) + \mathbbm{P}\left(\left\{ \tau^{M,T+1}_n \leq T \right\} \right)
\end{align*}
returning to the notation introduced in (\ref{notation}). Therefore
\begin{align*}
    &\lim_{\delta \rightarrow 0^+}\sup_{n \in \N}\mathbbm{P}\left(\left\{\int_0^{T-\delta}\norm{u^n_{s + \delta} - u^n_s}^2ds > \varepsilon\right\} \right)\\ &\leq  \lim_{\delta \rightarrow 0^+}\sup_{n \in \N} \left[ \mathbbm{P}\left(\left\{\int_0^{T-\delta}\norm{u^{n,M}_{s + \delta} - u^{n,M}_s}^2ds > \varepsilon\right\}\right) + \mathbbm{P}\left(\left\{ \tau^{M,T+1}_n \leq T \right\} \right)\right]
\end{align*}
holds for every $M \in \N$, so indeed
\begin{align*}
    &\lim_{\delta \rightarrow 0^+}\sup_{n \in \N}\mathbbm{P}\left(\left\{\int_0^{T-\delta}\norm{u^n_{s + \delta} - u^n_s}^2ds > \varepsilon\right\} \right)\\ &\leq  \lim_{M \rightarrow \infty}\lim_{\delta \rightarrow 0^+}\sup_{n \in \N} \left[ \mathbbm{P}\left(\left\{\int_0^{T-\delta}\norm{u^{n,M}_{s + \delta} - u^{n,M}_s}^2ds > \varepsilon\right\}\right) + \mathbbm{P}\left(\left\{ \tau^{M,T+1}_n \leq T \right\} \right)\right]\\
    &\leq \lim_{M \rightarrow \infty}\lim_{\delta \rightarrow 0^+}\sup_{n \in \N} \left[ \mathbbm{P}\left(\left\{\int_0^{T-\delta}\norm{u^{n,M}_{s + \delta} - u^{n,M}_s}^2ds > \varepsilon\right\}\right)\right] + \lim_{M \rightarrow \infty}\sup_{n \in \N}\left[\mathbbm{P}\left(\left\{ \tau^{M,T+1}_n \leq T \right\} \right)\right]\\
    &= \lim_{M \rightarrow \infty}\lim_{\delta \rightarrow 0^+}\sup_{n \in \N} \left[ \mathbbm{P}\left(\left\{\int_0^{T-\delta}\norm{u^{n,M}_{s + \delta} - u^{n,M}_s}^2ds > \varepsilon\right\}\right)\right]\\
    &\leq \lim_{M \rightarrow \infty}\lim_{\delta \rightarrow 0^+}\sup_{n \in \N}\left[ \frac{1}{\varepsilon}\mathbbm{E}\int_0^{T-\delta}\norm{u^{n,M}_{s + \delta} - u^{n,M}_s}^2ds\right]
\end{align*}
owing to (\ref{achievement}). The required condition (\ref{second condition}) is thus shown with the following proposition.

\begin{proposition} \label{prop for tightness}
    For any $M \in \N$,
    \begin{equation} \label{required condition}
    \lim_{\delta \rightarrow 0^+}\sup_{n \in \N}\mathbbm{E}\int_0^{T-\delta}\norm{u^{n,M}_{s + \delta} - u^{n,M}_s}^2ds= 0.
\end{equation} 
Therefore the sequence of the laws of $(u^n)$ is tight in the space of probability measures over $L^2\left([0,T];L^2_{\sigma}\right)$.
\end{proposition}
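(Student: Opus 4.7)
Plan: Starting from the Galerkin equation (\ref{projected Ito galerkin}) together with optional stopping of the stochastic integral, I would decompose the increment as
\begin{equation*}
u^{n,M}_{s+\delta}-u^{n,M}_s = D^n(s,s+\delta) + S^n(s,s+\delta),
\end{equation*}
where, writing $\tau := \tau^{M,T+1}_n$, the term $D^n$ collects the drift integrals of (\ref{projected Ito galerkin}) over $[s,s+\delta]$ with integrands multiplied by $\mathbbm{1}_{r\leq\tau}$, and $S^n$ is the corresponding truncated stochastic integral. The strategy is to estimate the full $L^2(\Omega\times[0,T-\delta];L^2_\sigma)$-norm via the Lions--Gelfand interpolation $\norm{f}^2 \leq \norm{f}_{W^{1,2}_\sigma}\norm{f}_{(W^{1,2}_\sigma)^*}$: Cauchy--Schwarz in $ds\otimes d\mathbbm{P}$ gives
\begin{equation*}
\mathbbm{E}\int_0^{T-\delta}\norm{u^{n,M}_{s+\delta}-u^{n,M}_s}^2 ds \leq \bigl(\mathbbm{E}\int_0^{T-\delta}\norm{u^{n,M}_{s+\delta}-u^{n,M}_s}^2_{W^{1,2}_\sigma} ds\bigr)^{1/2}\bigl(\mathbbm{E}\int_0^{T-\delta}\norm{u^{n,M}_{s+\delta}-u^{n,M}_s}^2_{(W^{1,2}_\sigma)^*} ds\bigr)^{1/2}.
\end{equation*}
The first factor is uniformly bounded in $n$ and $\delta$ by (\ref{stopping time boundedness}) (applied to each of the two time-shifted copies, together with the equivalence of $\norm{\cdot}_1$ and $\norm{\cdot}_{W^{1,2}}$ on $W^{1,2}_\sigma$), so the task reduces to showing the second factor vanishes as $\delta\to 0^+$ uniformly in $n$.

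For the drift, the identities (\ref{why the star}) and (\ref{prop for moving stokes'}) together with the duality pairing and the uniform boundedness of $\mathcal{P}_n$ on $W^{1,2}_\sigma$ recorded in (\ref{P_nbounds}) yield $\norm{\mathcal{P}_n\mathcal{P}\mathcal{L}_{u^n_r}u^n_r}_{(W^{1,2}_\sigma)^*}\leq c\norm{u^n_r}^2_1$ and $\nu\norm{\mathcal{P}_nAu^n_r}_{(W^{1,2}_\sigma)^*}\leq c\norm{u^n_r}_1$; for the $\mathcal{P}\mathcal{Q}_i^2 u^n_r$ term the duality $\inner{\mathcal{Q}_i^2 u^n_r}{\phi} = \inner{\mathcal{Q}_i u^n_r}{\mathcal{Q}_i^*\phi}$ combined with (\ref{assumpty 1}) and (\ref{assumpty 3}) delivers $\sum_i\norm{\mathcal{P}_n\mathcal{P}\mathcal{Q}_i^2 u^n_r}_{(W^{1,2}_\sigma)^*}\leq c(1+\norm{u^n_r}_{W^{1,2}})$. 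Assembling these bounds gives
\begin{equation*}
\norm{D^n(s,s+\delta)}_{(W^{1,2}_\sigma)^*} \leq c\int_s^{s+\delta}\mathbbm{1}_{r\leq\tau}\bigl(1+\norm{u^n_r}^2_1\bigr)\,dr =: F^n(s).
\end{equation*}
The deterministic stopping bound (\ref{stopping time boundedness}) yields $\sup_s F^n(s)\leq c(T+M+\norm{u_0}^2_{L^\infty(\Omega;L^2(\mathscr{O};\R^N))})$ uniformly in $n$, while Fubini gives $\int_0^{T-\delta}F^n(s)\,ds\leq c\delta$ uniformly; hence by H\"{o}lder,
\begin{equation*}
\int_0^{T-\delta}F^n(s)^2\,ds \leq \sup_s F^n(s)\cdot\int_0^{T-\delta}F^n(s)\,ds \leq c\delta
\end{equation*}
uniformly in $n$. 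For the stochastic term, It\^{o}'s isometry together with (\ref{assumpty 1}), Fubini, and (\ref{stopping time boundedness}) produce the stronger $L^2_\sigma$-bound $\mathbbm{E}\int_0^{T-\delta}\norm{S^n(s,s+\delta)}^2 ds \leq c\nu\delta$ uniformly in $n$, which a fortiori controls the $(W^{1,2}_\sigma)^*$-version. Combining yields that the second factor is $O(\sqrt{\delta})$, establishing (\ref{required condition}).

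The main obstacle to overcome is the quadratic nonlinearity $\mathcal{L}_{u^n_r}u^n_r$, whose $(W^{1,2}_\sigma)^*$-norm is $O(\norm{u^n_r}^2_1)$ and therefore only uniformly $L^1(dr)$-integrable after stopping; a naive Cauchy--Schwarz on the window integral $F^n(s)^2$ would require a non-existent uniform $L^2(dr)$ control of $\norm{u^n_r}^2_1$. The workaround exploits the fact that $F^n$ is itself a time-window average of a non-negative integrand with uniformly $L^1(dr)$-bounded total mass, so $F^n$ is uniformly $L^\infty(ds)$-bounded and $O(\delta)$ in $L^1(ds)$ by Fubini, which together give the needed $L^2(ds)$ control via $\int F^2\leq \norm{F}_\infty\norm{F}_1$. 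With (\ref{required condition}) in hand, the tightness of the laws of $(u^n)$ in the space of probability measures over $L^2([0,T];L^2_\sigma)$ follows by invoking Lemma \ref{Lemma 5.2} with $\mathcal{H}_1 = W^{1,2}_\sigma$ and $\mathcal{H}_2 = L^2_\sigma$: condition (\ref{first condition}) of that lemma is provided by (\ref{first result}) combined with (\ref{achievement}) via the same truncation argument that was used in the chain of inequalities preceding the statement, while condition (\ref{second condition}) is precisely (\ref{required condition}).
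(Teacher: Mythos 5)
There is a genuine gap in the interpolation step. You claim the first factor $\bigl(\mathbbm{E}\int_0^{T-\delta}\norm{u^{n,M}_{s+\delta}-u^{n,M}_s}^2_{W^{1,2}_\sigma}ds\bigr)^{1/2}$ is uniformly bounded in $n$ and $\delta$ by (\ref{stopping time boundedness}), but (\ref{stopping time boundedness}) controls only the $L^2$-supremum $\sup_{r}\norm{\hat{u}^n_r}^2$ and the \emph{time-integrated} $W^{1,2}$-norm $\int_0^T\norm{\hat{u}^n_s}_1^2ds$. The process $u^{n,M}_{\cdot}=u^n_{\cdot\wedge\tau^{M,T+1}_n}$ defined in (\ref{new notation}) is \emph{stopped}, not killed: after $\tau^{M,T+1}_n$ it is frozen at the value $u^n_{\tau^{M,T+1}_n}$, and nothing in the available estimates bounds $\norm{u^n_{\tau^{M,T+1}_n}}_1$ uniformly in $n$ — at a single (random) instant the only control on $V_n$ is the inverse inequality $\norm{f}_1^2\leq\lambda_n\norm{f}^2$. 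Concretely, although the increment vanishes identically for $s\geq\tau^{M,T+1}_n$, on the window $s\in\bigl(\tau^{M,T+1}_n-\delta,\tau^{M,T+1}_n\bigr)$ one has $u^{n,M}_{s+\delta}-u^{n,M}_s=u^n_{\tau^{M,T+1}_n}-u^n_s$, whose squared $W^{1,2}$-norm can only be bounded by $c\lambda_n$; since this window has measure up to $\delta$, your first factor is only controlled by $c\bigl(1+\delta\lambda_n\bigr)^{1/2}$, and the product of your two factors by $c\bigl(\delta+\delta^{2}\lambda_n\bigr)^{1/2}$. Because $\sup_{n\in\N}$ is taken \emph{before} the limit $\delta\rightarrow 0^+$ and $\lambda_n\rightarrow\infty$, this does not vanish, so (\ref{required condition}) does not follow as written.

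The gap is local and repairable within your scheme: split the $s$-integral into $\{s+\delta\leq\tau^{M,T+1}_n\}$, the freezing window $\{s<\tau^{M,T+1}_n<s+\delta\}$, and $\{s\geq\tau^{M,T+1}_n\}$ (where the increment is zero). On the window, forgo interpolation and use the crude pathwise $L^2$ bound $\norm{u^{n,M}_{s+\delta}-u^{n,M}_s}^2\leq 4\bigl(M+\norm{u_0}^2_{L^{\infty}(\Omega;L^2(\mathscr{O};\R^N))}\bigr)$ from (\ref{stopping time boundedness}); since the window has measure at most $\delta$, this contributes $c\delta$. On the first set both time points precede $\tau^{M,T+1}_n$, so the restricted first factor is genuinely bounded pathwise by $c\int_0^{\tau^{M,T+1}_n}\norm{u^n_r}_1^2dr\leq c$, and your (correct) $O(\delta)$ bounds on $\int_0^{T-\delta}F^n(s)^2ds$ and on the stochastic term via the It\^{o} isometry then yield the desired $O(\delta^{1/2})$ rate uniformly in $n$. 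For comparison, the paper sidesteps the issue entirely by applying the It\^{o} Formula to $\norm{u^{n,M}_{s+\delta}-u^{n,M}_s}^2$ itself, treating (\ref{difference identity}) as an evolution in $\delta$: after the cancellations $\inner{\mathcal{L}_{\hat{u}^n_r}\hat{u}^n_r}{\hat{u}^n_r}=0$ and $\inner{A\hat{u}^n_r}{\hat{u}^n_r}=\norm{\hat{u}^n_r}_1^2$, only the killed process $\hat{u}^n$ appears and its $W^{1,2}$-norm enters linearly in time-integrated form, so no $W^{1,2}$ control of the increment, nor of the solution at the stopping time, is ever required. The remainder of your argument — the duality bounds on the drift in $\left(W^{1,2}_{\sigma}\right)^*$, the $L^{\infty}$--$L^1$ interpolation for $F^n$, and the reduction of tightness to Lemma \ref{Lemma 5.2} — is sound.
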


\begin{proof}
    We shall again use the notation $\hat{u}^n$ established in (\ref{notation}), noting that the dependence on $M$ is implicit. Following this our generic constant $c$ may also depend on $M$ in this proof as this remains fixed for the duration. Similarly to the proof of Proposition \ref{prop for first energy}, observe that for any $s\in[0,T]$,
    \begin{align*}u^{n,M}_{s+\delta} = u^n_0 &- \int_0^{s+\delta}\mathcal{P}_n\mathcal{P}\mathcal{L}_{\hat{u}^n_r}\hat{u}^n_r\ dr - \nu\int_0^{s+\delta} \mathcal{P}_n A \hat{u}^n_r\, dr\\ &+ \frac{\nu}{2}\int_0^{s+\delta}\sum_{i=1}^\infty \mathcal{P}_n \mathcal{P}\mathcal{Q}_i^2\hat{u}^n_r dr - \nu^{\frac{1}{2}}\int_0^{s+\delta} \mathbbm{1}_{r \leq \tau^{M,T+1}_n}\mathcal{P}_n\mathcal{P}\mathcal{G}\hat{u}^n_r d\mathcal{W}_r
    \end{align*}
where an indicator function has had to be included in the stochastic integral as it may not be the case that $\mathcal{G}(0) = 0$. Therefore
   \begin{align} \nonumber u^{n,M}_{s+\delta} - u^{n,M}_{s} = &- \int_s^{s+\delta}\mathcal{P}_n\mathcal{P}\mathcal{L}_{\hat{u}^n_r}\hat{u}^n_r\ dr - \nu\int_s^{s+\delta} \mathcal{P}_n A \hat{u}^n_r\, dr\\ &+ \frac{\nu}{2}\int_s^{s+\delta}\sum_{i=1}^\infty \mathcal{P}_n \mathcal{P}\mathcal{Q}_i^2\hat{u}^n_r dr - \nu^{\frac{1}{2}}\int_s^{s+\delta} \mathbbm{1}_{r \leq \tau^{M,T+1}_n}\mathcal{P}_n\mathcal{P}\mathcal{G}\hat{u}^n_r d\mathcal{W}_r \label{difference identity}
    \end{align}
which for any fixed $s$ is just an evolution equation in parameter $\delta$, so we can apply the It\^{o} Formula (e.g. Proposition \ref{rockner prop}) to deduce that
\begin{align*}
    \norm{u^{n,M}_{s+\delta} - u^{n,M}_{s}}^2 = &- 2\int_s^{s+\delta}\inner{\mathcal{P}_n\mathcal{P}\mathcal{L}_{\hat{u}^n_r}\hat{u}^n_r}{\hat{u}^n_r-\hat{u}^n_s} dr - 2\nu\int_s^{s+\delta} \inner{\mathcal{P}_n A \hat{u}^n_r}{\hat{u}^n_r-\hat{u}^n_s} dr\\ &+ \nu\int_s^{s+\delta}\left \langle \sum_{i=1}^\infty \mathcal{P}_n \mathcal{P}\mathcal{Q}_i^2\hat{u}^n_r, \hat{u}^n_r-\hat{u}^n_s\right\rangle dr +\nu\int_s^{s+\delta}\sum_{i=1}^{\infty}\norm{\mathcal{P}_n\mathcal{P}\mathcal{G}_i\hat{u}^n_r}^2dr\\ &- 2\nu^{\frac{1}{2}}\int_s^{s+\delta} \inner{\mathcal{P}_n\mathcal{P}\mathcal{G}\hat{u}^n_r}{\hat{u}^n_r-\hat{u}^n_s} d\mathcal{W}_r
\end{align*}
and taking expectation with some simplification of the projections,
\begin{align} \nonumber
    \mathbb{E}\norm{u^{n,M}_{s+\delta} - u^{n,M}_{s}}^2 \leq &- 2\mathbb{E}\int_s^{s+\delta}\inner{\mathcal{L}_{\hat{u}^n_r}\hat{u}^n_r}{\hat{u}^n_r-\hat{u}^n_s} dr - 2\nu\mathbb{E}\int_s^{s+\delta} \inner{ A \hat{u}^n_r}{\hat{u}^n_r-\hat{u}^n_s} dr\\ &+ \nu\mathbb{E}\int_s^{s+\delta}\left \langle \sum_{i=1}^\infty \mathcal{Q}_i^2\hat{u}^n_r, \hat{u}^n_r-\hat{u}^n_s\right\rangle dr +\nu\mathbb{E}\int_s^{s+\delta}\sum_{i=1}^{\infty}\norm{\mathcal{G}_i\hat{u}^n_r}^2dr \nonumber
\end{align}
where the stochastic integral drops out. Note that having now established (\ref{second result}) we can take the expectation without having to stop the processes as we did with $\theta_R$ in Proposition \ref{prop for first energy}; moreover the stochastic integral is a genuine square integrable martingale. We split the inner products from the right hand side into the term with $\hat{u}^n_r$ and $\hat{u}^n_s$. We again have that $$\inner{\mathcal{L}_{\hat{u}^n_r}\hat{u}^n_r}{\hat{u}^n_r} = 0$$ and $$\inner{ A \hat{u}^n_r}{\hat{u}^n_r} = \norm{\hat{u}^n_r}_1^2$$ so these terms can also be dropped from the inequality, combining with an application of (\ref{assumpty 7}) in the same way to reduce to
\begin{align} \nonumber
    \mathbb{E}\norm{u^{n,M}_{s+\delta} - u^{n,M}_{s}}^2 \leq  2&\mathbb{E}\int_s^{s+\delta}\inner{\mathcal{L}_{\hat{u}^n_r}\hat{u}^n_r}{\hat{u}^n_s} dr + 2\nu\mathbb{E}\int_s^{s+\delta} \inner{ A \hat{u}^n_r}{\hat{u}^n_s} dr\\ &- \nu\mathbb{E}\int_s^{s+\delta}\left \langle \sum_{i=1}^\infty \mathcal{Q}_i^2\hat{u}^n_r, \hat{u}^n_s\right\rangle dr +c\mathbb{E}\int_s^{s+\delta} 1 + \norm{\hat{u}^n_r}^2 dr. \label{reduction}
\end{align}
recalling that $\nu < 1$. The remaining terms are treated individually; just as seen in the justification of Definition \ref{definitionofspatiallyweak},
\begin{align} \label{a1}
    \inner{\mathcal{L}_{\hat{u}^n_r}\hat{u}^n_r}{\hat{u}^n_s}  \leq c\norm{\hat{u}^n_r}_1^2\norm{\hat{u}^n_s}_1
\end{align}
whilst of course \begin{equation}\label{a2}\inner{ A \hat{u}^n_r}{\hat{u}^n_s} = \inner{\hat{u}^n_r}{\hat{u}^n_s}_1 \leq \norm{\hat{u}^n_r}_1\norm{\hat{u}^n_s}_1\end{equation}
and by applying (\ref{assumpty 1}) and (\ref{assumpty 3}),
\begin{equation}\label{a3}\left \langle \sum_{i=1}^\infty \mathcal{Q}_i^2\hat{u}^n_r, \hat{u}^n_s\right\rangle = \sum_{i=1}^\infty \left \langle \mathcal{Q}_i\hat{u}^n_r, \mathcal{Q}_i^*\hat{u}^n_s\right\rangle \leq c\left(1 + \norm{\hat{u}^n_r}_1\right)\norm{\hat{u}^n_s}_1.\end{equation}
We use the inequality $\norm{\hat{u}^n_r}_1 \leq 1 + \norm{\hat{u}^n_r}_1^2$
in conjunction with (\ref{a1}), (\ref{a2}) and (\ref{a3}) substituted into (\ref{reduction}) to obtain 
\begin{align*}
        \mathbb{E}\norm{u^{n,M}_{s+\delta} - u^{n,M}_{s}}^2 \leq  &c\mathbb{E}\int_s^{s+\delta} \left(1 + \norm{\hat{u}^n_r}_1^2\right)\norm{\hat{u}^n_s}_1 dr +c\mathbb{E}\int_s^{s+\delta} 1 + \norm{\hat{u}^n_r}^2 dr.
\end{align*}
From (\ref{stopping time boundedness}) we can create a bound on the final term with \begin{equation} \label{its the final term}
    c\mathbb{E}\int_s^{s+\delta} 1 + \norm{\hat{u}^n_r}^2 dr \leq c\delta
\end{equation}
which, revisiting what we actually want to show in (\ref{required condition}),
\begin{align*}
        \lim_{\delta \rightarrow 0^+}\sup_{n \in \N}\mathbb{E}\int_0^{T-\delta}\norm{u^{n,M}_{s+\delta} - u^{n,M}_{s}}^2ds &= \lim_{\delta \rightarrow 0^+}\sup_{n \in \N}\int_0^{T-\delta}\mathbb{E}\norm{u^{n,M}_{s+\delta} - u^{n,M}_{s}}^2ds\\ &\leq  \lim_{\delta \rightarrow 0^+}\sup_{n \in \N}\int_0^{T-\delta}\left[c\mathbb{E}\int_s^{s+\delta} \left(1 + \norm{\hat{u}^n_r}_1^2\right)\norm{\hat{u}^n_s}_1 dr +c\delta \right]ds\\
        &=  c\lim_{\delta \rightarrow 0^+}\sup_{n \in \N}\mathbb{E}\int_0^{T-\delta}\int_s^{s+\delta} \left(1 + \norm{\hat{u}^n_r}_1^2\right)\norm{\hat{u}^n_s}_1 drds\\
        &=  c\lim_{\delta \rightarrow 0^+}\sup_{n \in \N}\mathbb{E}\int_0^{T}\int_{0 \vee (r - \delta)}^{r \wedge T - \delta} \left(1 + \norm{\hat{u}^n_r}_1^2\right)\norm{\hat{u}^n_s}_1 dsdr  
\end{align*}
with use of the Fubini-Tonelli Theorem and considering the iterated integral as an integral over the product space. Note that for each fixed $r$,
\begin{align*}\int_{0 \vee (r - \delta)}^{r \wedge T - \delta} \left(1 + \norm{\hat{u}^n_r}_1^2\right)\norm{\hat{u}^n_s}_1 ds &= \left(1 + \norm{\hat{u}^n_r}_1^2\right)\int_{0 \vee (r - \delta)}^{r \wedge T - \delta} \norm{\hat{u}^n_s}_1 ds\\ &\leq \left(1 + \norm{\hat{u}^n_r}_1^2\right)\left[\left(\int_{0 \vee (r - \delta)}^{r \wedge T - \delta} 1 ds\right)^{\frac{1}{2}}\left(\int_{0 \vee (r - \delta)}^{r \wedge T - \delta} \norm{\hat{u}^n_s}^2_1 ds\right)^{\frac{1}{2}} \right]\\
&\leq \delta^{\frac{1}{2}}\left(1 + \norm{\hat{u}^n_r}_1^2\right)\left(\int_{0 \vee (r - \delta)}^{r \wedge T - \delta} \norm{\hat{u}^n_s}^2_1 ds\right)^{\frac{1}{2}}\\
&\leq c\delta^{\frac{1}{2}}\left(1 + \norm{\hat{u}^n_r}_1^2\right)
\end{align*}
using (\ref{stopping time boundedness}) in the final line. Therefore
$$\lim_{\delta \rightarrow 0^+}\sup_{n \in \N}\mathbb{E}\int_0^{T-\delta}\norm{u^{n,M}_{s+\delta} - u^{n,M}_{s}}^2ds  \leq  c\lim_{\delta \rightarrow 0^+}\sup_{n \in \N}\mathbb{E}\int_0^{T}c\delta^{\frac{1}{2}}\left(1 + \norm{\hat{u}^n_r}_1^2\right)dr \leq \lim_{\delta \rightarrow 0^+}c\delta^{\frac{1}{2}} = 0 $$
which concludes the proof. 
\end{proof}

To achieve a characterisation of the limit process at each time, we will need to show tightness in $\mathcal{D}\left([0,T];\left(W^{1,2}_{\sigma}\right)^*\right)$ where $\left(W^{1,2}_{\sigma}\right)^*$ is the topological dual of $W^{1,2}_{\sigma}$, forming a Gelfand Triple $$W^{1,2}_{\sigma} \xhookrightarrow{} L^2_{\sigma} \xhookrightarrow{} \left(W^{1,2}_{\sigma}\right)^*.$$
The idea is to apply Lemma \ref{lemma for D tight}, of course for $\sy^n:=u^n$, $\mathcal{Y} = W^{1,2}_{\sigma}$ and $\mathcal{H} = L^2_{\sigma}$. The condition (\ref{first condition primed}) has already been shown from the stronger (\ref{second result}) so to apply the Lemma we only need to verify (\ref{second condition primed}). This is reminiscent of the condition (\ref{second condition}) just verified, so just as we saw for Proposition \ref{prop for tightness} it is sufficient to verify the following.

\begin{proposition} \label{prop for tightness two}
    For any sequence of stopping times $(\gamma_n)$ with $\gamma_n: \Omega \rightarrow [0,T]$, and any  $M \in \N$, $\phi \in W^{1,2}_{\sigma}$,
    \begin{equation} \label{required condition primed}
    \lim_{\delta \rightarrow 0^+}\sup_{n \in \N}\mathbbm{E}\left(\left\vert\left\langle u^{n,M}_{(\gamma_n + \delta) \wedge T} - u^{n,M}_{\gamma_n} , \phi \right\rangle \right\vert\right)= 0.
\end{equation} 
Therefore the sequence of the laws of $(u^n)$ is tight in the space of probability measures over $\mathcal{D}\left([0,T];\left(W^{1,2}_{\sigma}\right)\right)^*$.
\end{proposition}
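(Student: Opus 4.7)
The plan is to follow the blueprint of Proposition~\ref{prop for tightness}, adapting it to accommodate the random times $\gamma_n$ rather than a double deterministic integral. First I apply the identity~(\ref{difference identity}) on the interval $[\gamma_n, (\gamma_n+\delta)\wedge T]$ to represent $u^{n,M}_{(\gamma_n+\delta)\wedge T} - u^{n,M}_{\gamma_n}$ as the sum of four integrals in the integrand variable $r$ with integrands depending on $\hat{u}^n_r$. Testing against $\phi$ and taking the expectation of the absolute value, I then estimate each of the four contributions separately. For the Stokes term one has $|\inner{A\hat{u}^n_r}{\phi}| \leq \norm{\hat{u}^n_r}_1\norm{\phi}_1$, and for the $\mathcal{Q}_i^2$ correction, using the fact that $\norm{\mathcal{Q}_if} \leq \norm{\mathcal{G}_if}$ together with~(\ref{assumpty 1}) and~(\ref{assumpty 3}), $\sum_i|\inner{\mathcal{Q}_i^2\hat{u}^n_r}{\phi}| = \sum_i |\inner{\mathcal{Q}_i\hat{u}^n_r}{\mathcal{Q}_i^*\phi}| \leq c(1+\norm{\hat{u}^n_r}_1)\norm{\phi}_1$; for both, a single Cauchy--Schwarz in time combined with the pathwise bound~(\ref{stopping time boundedness}) yields $O(\delta^{1/2})$, uniformly in $n$ and in the choice of $\gamma_n$.

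The main subtlety lies in the nonlinear term. The naive bound $|\inner{\mathcal{L}_{\hat{u}^n_r}\hat{u}^n_r}{\phi}| \leq c\norm{\hat{u}^n_r}_1^2\norm{\phi}_1$ provided by~(\ref{why the star}) would only produce $\int_{\gamma_n}^{(\gamma_n+\delta)\wedge T}\norm{\hat{u}^n_r}_1^2\,dr$, which does not vanish uniformly in $(n,\omega)$ as $\delta \to 0$ under a mere $L^1(\Omega\times[0,T])$ bound. To circumvent this I would instead integrate by parts via~(\ref{wloglhs}) and invoke Gagliardo--Nirenberg (Ladyzhenskaya in dimension two) to obtain
\begin{equation*}
  |\inner{\mathcal{L}_{\hat{u}^n_r}\hat{u}^n_r}{\phi}| = |\inner{\hat{u}^n_r}{\mathcal{L}_{\hat{u}^n_r}\phi}| \leq c\norm{\hat{u}^n_r}_{L^4}^2\norm{\phi}_1 \leq c\norm{\hat{u}^n_r}^{1/2}\norm{\hat{u}^n_r}_1^{3/2}\norm{\phi}_1
\end{equation*}
in three dimensions (with the analogous $c\norm{\hat{u}^n_r}\,\norm{\hat{u}^n_r}_1\norm{\phi}_1$ bound in two). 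A H\"older estimate in time then gives $\int_{\gamma_n}^{(\gamma_n+\delta)\wedge T}\norm{\hat{u}^n_r}_1^{3/2}\,dr \leq \delta^{1/4}\bigl(\int_0^T\norm{\hat{u}^n_r}_1^2\,dr\bigr)^{3/4}$, which combined once more with~(\ref{stopping time boundedness}) delivers an $O(\delta^{1/4})$ control of the nonlinear contribution in expectation (or $O(\delta^{1/2})$ when $N=2$).

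For the stochastic integral the plan is to apply the Burkholder--Davis--Gundy inequality in first-moment form, and after transferring the self-adjoint projections $\mathcal{P}_n$ and $\mathcal{P}$ onto $\phi$ (noting that $\norm{\mathcal{P}_n\mathcal{P}\phi}_{W^{1,2}} \leq c\norm{\phi}_{W^{1,2}}$ via~(\ref{P_nbounds})) to apply~(\ref{assumpty9}) summand by summand. The resulting estimate
\begin{equation*}
  \nu^{\frac{1}{2}}\mathbbm{E}\left|\int_{\gamma_n}^{(\gamma_n+\delta)\wedge T}\!\inner{\mathcal{P}_n\mathcal{P}\mathcal{G}\hat{u}^n_r}{\phi}\,d\mathcal{W}_r\right| \leq c\norm{\phi}_1\left(\mathbbm{E}\int_{\gamma_n}^{(\gamma_n+\delta)\wedge T}\!\!\left(1+\norm{\hat{u}^n_r}^2\right)dr\right)^{\!1/2} \leq c\norm{\phi}_1\delta^{1/2}
\end{equation*}
again leans on~(\ref{stopping time boundedness}) (specifically the uniform bound on $\sup_{r\in[0,T]}\norm{\hat{u}^n_r}$) to absorb the integrand. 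Summing the four contributions produces $\mathbbm{E}|\inner{u^{n,M}_{(\gamma_n+\delta)\wedge T}-u^{n,M}_{\gamma_n}}{\phi}| \leq C\delta^{1/4}$ with $C$ depending only on $M$, $T$, $\phi$ and the structural constants, which is precisely~(\ref{required condition primed}). The hypotheses of Lemma~\ref{lemma for D tight} are then in hand: (\ref{first condition primed}) from the energy bound~(\ref{second result}), and (\ref{second condition primed}) from~(\ref{required condition primed}) via the same reduction through~(\ref{achievement}) used above to pass from~(\ref{required condition}) to~(\ref{second condition}), whence the announced tightness in $\mathcal{D}([0,T];(W^{1,2}_{\sigma})^*)$. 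The main obstacle I expect is indeed the nonlinear term: the deterministic Stokes and $\mathcal{Q}_i^2$ corrections as well as the stochastic integral all yield to routine Cauchy--Schwarz and BDG manipulations once~(\ref{stopping time boundedness}) is invoked, but the quadratic-in-$\norm{\cdot}_1$ structure of $\mathcal{L}_{\hat{u}^n_r}\hat{u}^n_r$ forces the interpolation step above, which is the point at which the dimension $N$ enters and weakens the modulus of continuity from $\delta^{1/2}$ to $\delta^{1/4}$ in three dimensions.
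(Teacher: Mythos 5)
Your proposal is correct and follows essentially the same route as the paper's proof: the same representation of $u^{n,M}_{(\gamma_n+\delta)\wedge T} - u^{n,M}_{\gamma_n}$ via (\ref{difference identity}) tested against $\phi$, the same Gagliardo--Nirenberg interpolation bound $\abs{\inner{\mathcal{L}_{\psi}\psi}{\phi}} \leq c\norm{\psi}^{1/2}\norm{\psi}_1^{3/2}\norm{\phi}_1$ for the nonlinear term (this is exactly the paper's Lemma \ref{lemma application of gag}, which you rederive with a marginally different H\"older split), and the same Burkholder--Davis--Gundy plus (\ref{assumpty9}) treatment of the stochastic integral, all closed out by (\ref{stopping time boundedness}) to reach the $O(\delta^{1/4})$ modulus and then Lemma \ref{lemma for D tight} through the reduction via (\ref{achievement}). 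The only cosmetic difference is that the paper absorbs the Stokes and $\mathcal{Q}_i^2$ contributions into the common bound $\norm{\hat{u}^n_r}_1 \leq 1 + \norm{\hat{u}^n_r}_1^{3/2}$ instead of estimating them separately at rate $\delta^{1/2}$ as you do.
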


Before proving this result, we state and prove an intermediary lemma.

\begin{lemma} \label{lemma application of gag}
    For every $\psi,\phi \in W^{1,2}_{\sigma}$,    \begin{align*}
        \abs{\inner{\mathcal{L}_{\psi}\psi}{\phi}} \leq c\norm{\psi}^{1/2}\norm{\psi}_1^{3/2}\norm{\phi}_1.
    \end{align*}
\end{lemma}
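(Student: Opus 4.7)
The plan is to combine the divergence-free cancellation identity (\ref{wloglhs}) with two applications of H\"older's inequality and a classical $L^4$ interpolation between $L^2$ and $W^{1,2}$, reducing everything to control of $\norm{\psi}_{L^4}^2$. First I would invoke (\ref{wloglhs}), valid since $\psi \in W^{1,2}_\sigma$ and $\phi \in W^{1,2}_\sigma \subset W^{1,2}(\mathscr{O};\R^N)$, to shift one spatial derivative off the transported factor:
\[
\abs{\inner{\mathcal{L}_\psi\psi}{\phi}} \;=\; \abs{\inner{\psi}{\mathcal{L}_\psi\phi}} \;\leq\; \sum_{j,l=1}^{N}\abs{\inner{\psi^l\psi^j}{\partial_j\phi^l}_{L^2(\mathscr{O};\R)}}.
\]

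Next I would apply H\"older with exponents $(2,2)$ to pair $\psi^l\psi^j$ against $\partial_j\phi^l$, and a further H\"older with exponents $(2,2)$ on the product $\psi^l\psi^j$ itself. Summing over the $N^2$ indices and using $\norm{\partial_j \phi^l}_{L^2(\mathscr{O};\R)} \leq c\norm{\phi}_1$ (which holds up to the equivalence of $\norm{\cdot}_1$ and $\norm{\cdot}_{W^{1,2}}$ on $W^{1,2}_\sigma$ recorded in Subsection \ref{functional framework subsection}) yields the intermediate bound
\[
\abs{\inner{\mathcal{L}_\psi\psi}{\phi}} \;\leq\; c\,\norm{\psi}_{L^4(\mathscr{O};\R^N)}^{\,2}\,\norm{\phi}_1.
\]

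The decisive step is then the dimensional interpolation
\[
\norm{\psi}_{L^4(\mathscr{O};\R^N)} \;\leq\; c\,\norm{\psi}^{1/4}\norm{\psi}_1^{\,3/4},
\]
which is the classical Gagliardo--Nirenberg / Ladyzhenskaya inequality on $W^{1,2}_0(\mathscr{O};\R^N)$ in dimension $N=3$; in dimension $N=2$ it either follows directly from Gagliardo--Nirenberg or is obtained from the sharper 2D Ladyzhenskaya bound $\norm{\psi}_{L^4} \leq c\norm{\psi}^{1/2}\norm{\psi}_1^{1/2}$ combined with the Poincar\'e inequality $\norm{\psi}\leq c\norm{\psi}_1$ available on the zero-trace space. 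Squaring and inserting this into the previous display gives $\abs{\inner{\mathcal{L}_\psi\psi}{\phi}} \leq c\,\norm{\psi}^{1/2}\norm{\psi}_1^{3/2}\norm{\phi}_1$, as claimed.

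The main obstacle is essentially cosmetic: identifying the correct dimensional scaling of the interpolation (so that a single bound covers both $N=2$ and $N=3$) and confirming that the antisymmetry (\ref{wloglhs}) applies in the stated functional setting. All the cancellation, embedding and trace facts required are already recorded in Subsection \ref{functional framework subsection}, so no substantial new technical ingredient is needed beyond this classical $L^4$-interpolation.
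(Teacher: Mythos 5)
Your proof is correct and follows essentially the same route as the paper: both arguments move the derivative off $\psi$ via the antisymmetry (\ref{wloglhs}), apply H\"older, and close with a Gagliardo--Nirenberg interpolation, the only difference being that you split symmetrically as $L^4\times L^4\times L^2$ with the Ladyzhenskaya bound $\norm{\psi}_{L^4}\leq c\norm{\psi}^{1/4}\norm{\psi}_1^{3/4}$ (in $N=3$), whereas the paper splits as $L^6\times L^3\times L^2$, absorbs the $L^6$ factor by Sobolev embedding, and interpolates $\norm{\psi}_{L^3}\leq c\norm{\psi}^{1/2}\norm{\psi}_1^{1/2}$. Your handling of the dimensional cases, including the use of Poincar\'e (or equivalently the trivial bound $\norm{\psi}\leq c\norm{\psi}_{W^{1,2}}$ together with the norm equivalence on $W^{1,2}_{\sigma}$) to lower the $L^2$ exponent in $N=2$, is sound and matches the net effect of the paper's $\norm{\psi}_{L^3}\leq c\norm{\psi}_{L^4}$ device.
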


\begin{proof}
    Using (\ref{wloglhs}), two applications of H\"{o}lder's Inequality and the Sobolev Embedding $W^{1,2}(\mathscr{O};\R^N) \xhookrightarrow{} L^6(\mathscr{O};\R^N)$ (as seen in the justification of Defintion \ref{definitionofspatiallyweak}),
    \begin{align} \label{hello}
        \abs{\inner{\mathcal{L}_{\psi}\psi}{\phi}} = \abs{\inner{\psi}{\mathcal{L}_{\psi}\phi}} \leq \norm{\psi}_{L^6}\norm{\mathcal{L}_{\psi}\phi}_{L^{6/5}} \leq c\sum_{k=1}^N\norm{\psi}_{L^6}\norm{\psi}_{L^3}\norm{\partial_k \phi} \leq c\norm{\psi}_1\norm{\psi}_{L^3}\norm{\phi}_1
    \end{align}
    to which we shift our attentions to the control on $\norm{\psi}_{L^3}$. We apply the Gagliardo-Nirenberg Inequality (Theorem \ref{gagliardonirenberginequality}) to deduce that
    $$ \norm{\psi}_{L^3} \leq c\norm{\psi}^{1/2}\norm{\psi}_1^{1/2} $$
    which is achieved directly in $N=3$ with the values $p=3$, $\alpha = 1/2$, $q = 2$ and $m = 1$, whilst for $N = 2$ we take $p=4$ and use that $\norm{\psi}_{L^3} \leq c\norm{\psi}_{L^4}$. Plugging this into (\ref{hello}) yields the result.
\end{proof}

\begin{proof}[Proof of Proposition \ref{prop for tightness two}:]
    Recalling (\ref{difference identity}), substituting in $\gamma_n$ for $s$ and stopping the process at $T$, we see that
   \begin{align*}u^{n,M}_{(\gamma_n+\delta) \wedge T} - u^{n,M}_{\gamma_n} = &- \int_{\gamma_n}^{(\gamma_n+\delta)\wedge T}\mathcal{P}_n\mathcal{P}\mathcal{L}_{\hat{u}^n_r}\hat{u}^n_r\ dr - \nu\int_{\gamma_n}^{(\gamma_n+\delta)\wedge T} \mathcal{P}_n A \hat{u}^n_r\, dr\\ &+ \frac{\nu}{2}\int_{\gamma_n}^{(\gamma_n +\delta) \wedge T}\sum_{i=1}^\infty \mathcal{P}_n \mathcal{P}\mathcal{Q}_i^2\hat{u}^n_r dr - \nu^{\frac{1}{2}}\int_{\gamma_n}^{(\gamma_n +\delta) \wedge T} \mathbbm{1}_{r \leq \tau^{M,T+1}_n}\mathcal{P}_n\mathcal{P}\mathcal{G}\hat{u}^n_r d\mathcal{W}_r 
    \end{align*}
holds $\mathbbm{P}-a.s.$, to which we take the inner product with arbitrary $\phi \in W^{1,2}_{\sigma}$ and absolute value to see that 
 \begin{align*} \left\vert \left\langle u^{n,M}_{(\gamma_n+\delta) \wedge T} - u^{n,M}_{\gamma_n} , \phi \right\rangle \right\vert &\leq  \int_{\gamma_n}^{(\gamma_n+\delta)\wedge T}\left\vert \left\langle \mathcal{L}_{\hat{u}^n_r}\hat{u}^n_r, \mathcal{P}_n\phi \right\rangle \right\vert dr + \nu\int_{\gamma_n}^{(\gamma_n+\delta)\wedge T}\left\vert \left\langle  A \hat{u}^n_r , \mathcal{P}_n\phi \right\rangle \right\vert dr\\ &+ \frac{\nu}{2}\int_{\gamma_n}^{(\gamma_n +\delta) \wedge T}\left\vert \left\langle\sum_{i=1}^\infty \mathcal{Q}_i^2\hat{u}^n_r , \mathcal{P}_n\phi \right\rangle \right\vert dr\\ &+ \nu^{\frac{1}{2}}\left\vert\int_{\gamma_n}^{(\gamma_n +\delta) \wedge T} \mathbbm{1}_{r \leq \tau^{M,T+1}_n}\left\langle\mathcal{G}\hat{u}^n_r, \mathcal{P}_n \phi \right\rangle d\mathcal{W}_r  \right\vert
    \end{align*}
having also carried over the projections $\mathcal{P}_n$, $\mathcal{P}$. With Lemma \ref{lemma application of gag} and the ideas of (\ref{a2}) and (\ref{a3}), we reduce the above to
 \begin{align*} \left\vert \left\langle u^{n,M}_{(\gamma_n+\delta) \wedge T} - u^{n,M}_{\gamma_n} , \phi \right\rangle \right\vert &\leq  c\int_{\gamma_n}^{(\gamma_n+\delta)\wedge T} \norm{\hat{u}^n_r}^{1/2}\norm{\hat{u}^n_r}_1^{3/2}\norm{\phi}_1dr\\  &+ c\int_{\gamma_n}^{(\gamma_n+\delta)\wedge T}\norm{\hat{u}^n_r}_1\norm{\phi}_1 dr + c\int_{\gamma_n}^{(\gamma_n +\delta) \wedge T}\left(1 + \norm{\hat{u}^n_r}_1\right)\norm{\phi}_1 dr\\ &+ \left\vert\int_{\gamma_n}^{(\gamma_n +\delta) \wedge T} \mathbbm{1}_{r \leq \tau^{M,T+1}_n}\left\langle\mathcal{G}\hat{u}^n_r, \mathcal{P}_n \phi \right\rangle d\mathcal{W}_r  \right\vert
    \end{align*}
immediately using that $\norm{\mathcal{P}_n\phi}_1 \leq \norm{\phi}_1$, and again using that $\nu < 1$. Before addressing the stochastic integral we clean this up with (\ref{stopping time boundedness}), allowing $c$ to further depend on the again fixed $M$ and $\phi$, achieving that 
 \begin{align*} \left\vert \left\langle u^{n,M}_{(\gamma_n+\delta) \wedge T} - u^{n,M}_{\gamma_n} , \phi \right\rangle \right\vert &\leq  c\int_{\gamma_n}^{(\gamma_n+\delta)\wedge T} \norm{\hat{u}^n_r}_1^{3/2}dr\\  &+ c\int_{\gamma_n}^{(\gamma_n+\delta)\wedge T}\norm{\hat{u}^n_r}_1 dr + c\int_{\gamma_n}^{(\gamma_n +\delta) \wedge T}1 + \norm{\hat{u}^n_r}_1 dr\\ &+ \left\vert\int_{\gamma_n}^{(\gamma_n +\delta) \wedge T} \mathbbm{1}_{r \leq \tau^{M,T+1}_n}\left\langle\mathcal{G}\hat{u}^n_r, \mathcal{P}_n \phi \right\rangle d\mathcal{W}_r  \right\vert
    \end{align*}
which is further rewritten
 \begin{align} \nonumber \left\vert \left\langle u^{n,M}_{(\gamma_n+\delta) \wedge T} - u^{n,M}_{\gamma_n} , \phi \right\rangle \right\vert &\leq  c\int_{\gamma_n}^{(\gamma_n+\delta)\wedge T} \norm{\hat{u}^n_r}_1^{3/2} + 1\, dr\\  &+ \left\vert\int_{\gamma_n}^{(\gamma_n +\delta) \wedge T} \mathbbm{1}_{r \leq \tau^{M,T+1}_n}\left\langle\mathcal{G}\hat{u}^n_r, \mathcal{P}_n \phi \right\rangle d\mathcal{W}_r  \right\vert \label{rewrite}
    \end{align}
using that $\norm{\hat{u}^n_r}_1 \leq 1 + \norm{\hat{u}^n_r}_1^{3/2}$. We shall now take the expectation and apply the Burkholder-Davis-Gundy Inequality to the stochastic integral, informing us that
\begin{align*}
    \mathbbm{E}\left(\left\vert\int_{\gamma_n}^{(\gamma_n +\delta) \wedge T} \mathbbm{1}_{r \leq \tau^{M,T+1}_n}\left\langle\mathcal{G}\hat{u}^n_r, \mathcal{P}_n \phi \right\rangle d\mathcal{W}_r  \right\vert\right) &\leq c \mathbbm{E}\left(\int_{\gamma_n}^{(\gamma_n +\delta) \wedge T} \mathbbm{1}_{r \leq \tau^{M,T+1}_n}\sum_{i=1}^\infty \left\langle\mathcal{G}_i\hat{u}^n_r, \mathcal{P}_n \phi \right\rangle ^2 dr\right)^\frac{1}{2}\\
    &\leq c \mathbbm{E}\left(\int_{\gamma_n}^{(\gamma_n +\delta) \wedge T} 1 dr\right)^\frac{1}{2}\\
    &\leq c\delta^{\frac{1}{2}}
\end{align*}
via applying (\ref{assumpty9}) and absorbing this bound into $c$ as above. Returning to (\ref{rewrite}) and reducing the constant integral as just seen, then
$$ \mathbbm{E}\left(\left\vert \left\langle u^{n,M}_{(\gamma_n+\delta) \wedge T} - u^{n,M}_{\gamma_n} , \phi \right\rangle \right\vert\right) \leq  c\mathbbm{E}\left(\int_{\gamma_n}^{(\gamma_n+\delta)\wedge T} \norm{\hat{u}^n_r}_1^{3/2} dr\right) + c\delta^{\frac{1}{2}}\left(1 + \delta^{\frac{1}{2}}\right).$$
With an application of H\"{o}lder's Inequality, observe that 
    $$\int_{\gamma_n}^{(\gamma_n+\delta)\wedge T} \norm{\hat{u}^n_r}_1^{3/2} dr \leq \left(\int_{\gamma_n}^{(\gamma_n+\delta)\wedge T} 1\right)^{\frac{1}{4}} \left(\int_{\gamma_n}^{(\gamma_n+\delta)\wedge T} \norm{\hat{u}^n_r}_1^{2} dr\right)^{\frac{3}{4}} \leq c\delta^{\frac{1}{4}}$$
    using (\ref{stopping time boundedness}) once more. Simply taking the supremum over $n$ and limit as $\delta \rightarrow 0^+$ gives the result. 
\end{proof}

\subsection{Existence of Solutions} \label{existence for bounded ic}

With tightness achieved, it is now a standard procedure to apply the Prohorov and Skorohod Representation Theorems to deduce the existence of a new probability space on which a sequence of processes with the same distribution as a subsequence of $(u^{n})$ have some almost sure convergence to a limiting process. For notational simplicity we take this subsequence and keep it simply indexed by $n$. We state that precise result in the below theorem, following [\cite{nguyen2021nonlinear}].

\begin{theorem}
There exists a filtered probability space $\left(\tilde{\Omega},\tilde{\mathcal{F}},(\tilde{\mathcal{F}}_t), \tilde{\mathbbm{P}}\right)$, a cylindrical Brownian Motion $\tilde{\mathcal{W}}$ over $\mathfrak{U}$ with respect to $\left(\tilde{\Omega},\tilde{\mathcal{F}},(\tilde{\mathcal{F}}_t), \tilde{\mathbbm{P}}\right)$, a sequence of random variables $(\tilde{u}^n_0)$, $u^n_0: \tilde{\Omega} \rightarrow L^2_{\sigma}\left(\mathscr{O};\R^N\right)$ and a $\tilde{u}_0:\tilde{\Omega} \rightarrow L^2_{\sigma}\left(\mathscr{O};\R^N\right)$, a sequence of processes $(\tilde{u}^n)$, $\tilde{u}^n:\tilde{\Omega} \times [0,T] \rightarrow W^{1,2}_{\sigma}$ is progressively measurable and a process $\tilde{u}:\tilde{\Omega} \times [0,T] \rightarrow L^2_{\sigma}$ such that:
\begin{enumerate}
    \item For each $n \in \N$, $\tilde{u}^n_0$ has the same law as $u^{n}_0$;
    \item \label{new item 2} For $\tilde{\mathbbm{P}}-a.e.$ $\omega$, $\tilde{u}^n_0(\omega) \rightarrow \tilde{u}_0(\omega)$  in $L^2_{\sigma}$, and thus $\tilde{u}_0$ has the same law as $u_0$;
    \item \label{new item 3} For each $n \in \N$ and $t\in[0,T]$, $\tilde{u}^n$ satisfies the identity
    \begin{equation} \nonumber
    \tilde{u}^n_t = \tilde{u}^n_0 - \int_0^t\mathcal{P}_n\mathcal{P}\mathcal{L}_{\tilde{u}^n_s}\tilde{u}^n_s\ ds - \nu\int_0^t \mathcal{P}_n A \tilde{u}^n_s\, ds + \frac{\nu}{2}\int_0^t\sum_{i=1}^\infty \mathcal{P}_n \mathcal{P}\mathcal{Q}_i^2\tilde{u}^n_s ds - \nu^{\frac{1}{2}}\int_0^t \mathcal{P}_n\mathcal{P}\mathcal{G}\tilde{u}^n_s d\tilde{\mathcal{W}}_s 
\end{equation}
$\tilde{\mathbbm{P}}-a.s.$ in $V_n$;
\item For $\tilde{\mathbbm{P}}-a.e$ $\omega$, $\tilde{u}^n(\omega) \rightarrow \tilde{u}(\omega)$ in $L^2\left([0,T]; L^2_{\sigma} \right)$ and $\mathcal{D}\left([0,T];\left(W^{1,2}_{\sigma}\right)^* \right)$. \label{new item 4}
\end{enumerate}

\end{theorem}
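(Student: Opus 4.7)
The plan is to apply Prohorov's theorem followed by Jakubowski's Skorohod Representation theorem (appropriate since $\mathcal{D}([0,T];(W^{1,2}_\sigma)^*)$ is only quasi-Polish rather than Polish) to the joint laws of the triples $(u^n, u^n_0, \mathcal{W})$ on the product space
\[
\mathcal{X} := \bigl(L^2([0,T];L^2_\sigma) \cap \mathcal{D}([0,T];(W^{1,2}_\sigma)^*)\bigr) \times L^2_\sigma \times C([0,T];\mathfrak{U}').
\]
Joint tightness reduces to marginal tightness: the first coordinate is covered by Propositions \ref{prop for tightness} and \ref{prop for tightness two}; the second is immediate from the pointwise convergence $u^n_0 = \mathcal{P}_n u_0 \to u_0$ in $L^2_\sigma$; and the third is trivial as the law is constant in $n$. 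Prohorov's theorem then extracts a subsequence, not relabelled, with weakly convergent joint laws on $\mathcal{X}$.

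The Skorohod step produces a probability space $(\tilde{\Omega}, \tilde{\mathcal{F}}, \tilde{\mathbbm{P}})$ carrying $\mathcal{X}$-valued random variables $(\tilde{u}^n, \tilde{u}^n_0, \tilde{\mathcal{W}})$ with the prescribed marginal joint laws, converging $\tilde{\mathbbm{P}}$-a.s.\ in $\mathcal{X}$ to a limit triple $(\tilde{u}, \tilde{u}_0, \tilde{\mathcal{W}})$. I would equip $\tilde{\Omega}$ with the augmented right-continuous filtration $(\tilde{\mathcal{F}}_t)$ generated by $\tilde{u}_0$, $\tilde{\mathcal{W}}|_{[0,t]}$ and $\tilde{u}^n|_{[0,t]}$ for all $n$; relative to this filtration $\tilde{\mathcal{W}}$ is a cylindrical Brownian motion over $\mathfrak{U}$ by L\'{e}vy's characterization applied componentwise against the basis $(e_i)$, using equality of laws with $\mathcal{W}$. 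Items 1, 2 and 4 of the statement are then read off directly from the component convergences together with the equality of marginal distributions.

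The principal obstacle is the verification of item \ref{new item 3}: each $\tilde{u}^n$ must satisfy the Galerkin equation driven by $\tilde{\mathcal{W}}$. Equality in law does not automatically transfer stochastic integrals, as these are not continuous functionals on pathspace. The resolution I would implement, following the approach of [\cite{nguyen2021nonlinear}], is to test the Galerkin equation against each basis element $a_k$ of $V_n$, yielding a scalar identity whose two sides are Borel measurable functionals of $(v, v_0, W) \in \mathcal{X}$. In the finite-dimensional Galerkin setting the stochastic integral $\int_0^t \langle \mathcal{P}_n \mathcal{P} \mathcal{G} v_s, a_k \rangle dW_s$ is a Borel measurable function of the pair $(v, W)$, constructible as a $\tilde{\mathbbm{P}}$-a.s.\ limit along a subsequence of measurable Riemann sums (since the integrand is finite-dimensional and locally bounded in $v$). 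Since the difference of the two sides of the Galerkin equation vanishes $\mathbbm{P}$-a.s.\ for $(u^n, u^n_0, \mathcal{W})$ by construction of $u^n$, equality of joint laws on $\mathcal{X}$ forces it to vanish $\tilde{\mathbbm{P}}$-a.s.\ for $(\tilde{u}^n, \tilde{u}^n_0, \tilde{\mathcal{W}})$ as well, yielding item \ref{new item 3} and completing the proof.
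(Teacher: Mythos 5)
Your skeleton is the right machinery, and it is essentially what the paper itself outsources: the paper's proof of this theorem consists of a citation to [\cite{nguyen2021nonlinear}] Proposition 4.9 and Theorem 4.10. The reduction of joint tightness to the three marginals is correct (Propositions \ref{prop for tightness} and \ref{prop for tightness two} for the first coordinate, the $\tilde{\mathbbm{P}}$-a.s.\ convergence $\mathcal{P}_n u_0 \rightarrow u_0$ for the second, a constant law for the third), and your transfer of the Galerkin identity to the new space via Borel-measurable Riemann-sum functionals of $(v,W)$, exploiting equality of joint laws and convergence in probability, is the standard and valid way around the fact that the stochastic integral is not a continuous functional on path space. (A small remark: by the paper's own conventions $\mathcal{D}\left([0,T];\left(W^{1,2}_{\sigma}\right)^*\right)$ is a complete separable metric space, so the classical Skorohod theorem already applies; Jakubowski's quasi-Polish version is not needed here.)

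There is, however, one genuine gap. Prohorov plus the (Jakubowski--)Skorohod representation applied to the joint laws of $(u^n,u^n_0,\mathcal{W})$ produces representatives $(\tilde{u}^n,\tilde{u}^n_0,\tilde{\mathcal{W}}^n)$ in which the Brownian component \emph{depends on $n$}, with $\tilde{\mathcal{W}}^n \rightarrow \tilde{\mathcal{W}}$ a.s.; it does not deliver what you assert and what item \ref{new item 3} requires, namely a single $\tilde{\mathcal{W}}$ that simultaneously drives the Galerkin equation for every $n$ and is the limit object. This is not cosmetic: the limit passage in Subsection \ref{existence for bounded ic} takes It\^{o}-isometry estimates with one fixed driving noise, and with $n$-dependent drivers one would need an additional argument for convergence of stochastic integrals with varying integrators, which your sketch does not contain. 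The repair is to invoke the strengthened representation theorem in which a coordinate whose marginal law is constant in $n$ (here $\mathcal{W}$, with Wiener law for every $n$) can be kept literally fixed across the sequence and the limit --- see Theorem C.1 in Brze\'{z}niak and Motyl, J.\ Differential Equations 254 (2013), or the corresponding construction in [\cite{nguyen2021nonlinear}] --- and to state that you are using it. Relatedly, your filtration generated by \emph{all} the $\tilde{u}^m$ simultaneously is not licensed by the per-$n$ equalities of joint law, which say nothing about the joint distribution across different indices $m$; the martingale property of $\tilde{\mathcal{W}}$ with respect to that large filtration therefore does not follow from your L\'{e}vy-characterization argument. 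For item \ref{new item 3} it is both sufficient and safer to work, for each fixed $n$, with the filtration generated by $(\tilde{u}^n_0,\tilde{u}^n,\tilde{\mathcal{W}})$, for which the required independence of increments does transfer from the original space.
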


\begin{proof}
    See [\cite{nguyen2021nonlinear}] Proposition 4.9 and Theorem 4.10.  
\end{proof}

We now have our candidate martingale weak solution, and to prove that this is such a solution we need only to verify that $\tilde{u}$ is progressively measurable in $W^{1,2}_{\sigma}$, for $\tilde{\mathbbm{P}}-a.e.$ $\omega$ $\tilde{u}_{\cdot}(\omega) \in L^{\infty}\left([0,T];L^2_{\sigma}\right)\cap C_w\left([0,T];L^2_{\sigma}\right) \cap L^2\left([0,T];W^{1,2}_{\sigma}\right)$ and the identity (\ref{newid1}). In fact from item \ref{new item 3}, we can deduce that \begin{equation}\label{second result new }\tilde{\mathbbm{E}}\left[\sup_{r \in [0,T]}\norm{\tilde{u}^n_r}^2 + \nu\int_0^{T}\norm{\tilde{u}^n_r}_1^2dr  \right] \leq C\left[\mathbbm{E}\left(\norm{\tilde{u}^n_0}^2\right) + 1\right] \leq C\left[\norm{\tilde{u}_0}^2_{L^\infty(\tilde{\Omega};L^2_{\sigma})} + 1\right] < \infty\end{equation}
in the same manner as we showed (\ref{second result}), without any need for localisation. The fact that $\norm{\tilde{u}^n_0} \leq \norm{\tilde{u}_0}$ $\tilde{\mathbbm{P}}-a.s.$ and $\norm{\tilde{u}_0}^2_{L^\infty(\tilde{\Omega};L^2_{\sigma})} < \infty$ is inherited from $u^n_0$, $u_0$ of the same law in $L^2_{\sigma}$. This prompts the following results.

\begin{lemma} \label{the lemma just before}
    $\tilde{u}^n \rightarrow \tilde{u}$ in $L^2\left(\tilde{\Omega};L^2\left([0,T]; L^2_{\sigma} \right)\right)$.
\end{lemma}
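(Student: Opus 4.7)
The plan is to apply Vitali's Convergence Theorem on $(\tilde{\Omega},\tilde{\mathcal{F}},\tilde{\mathbbm{P}})$ to the non-negative random variables $X_n := \int_0^T\norm{\tilde{u}^n_s - \tilde{u}_s}^2 ds$. From item \ref{new item 4} we already have $X_n \to 0$ $\tilde{\mathbbm{P}}-a.s.$, so the entire task reduces to verifying uniform integrability of $(X_n)_n$ under $\tilde{\mathbbm{P}}$. For this it will be enough to bound $(X_n)$ in $L^{1+\varepsilon}(\tilde{\Omega})$ for some $\varepsilon>0$, which cannot be extracted directly from (\ref{second result new }) since the latter only controls the first moment.

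To obtain the required higher-order bound, I would first upgrade Proposition \ref{prop for first energy} to higher moments. Applying the It\^{o} Formula to $\norm{\tilde{u}^n_t}^{2p}$ and repeating the BDG--Young--Gr\"{o}nwall reasoning that produced (\ref{second result new }), one should arrive at
\begin{equation*}
    \sup_{n \in \N}\tilde{\mathbbm{E}}\left[\sup_{r \in [0,T]}\norm{\tilde{u}^n_r}^{2p}\right] \leq C_p
\end{equation*}
for every $p \geq 1$, where $C_p$ is independent of $n$ and of $\nu \in (0,1)$. The hypothesis driving this is $u_0 \in L^\infty(\Omega;L^2_{\sigma})$, which ensures $\mathbbm{E}\norm{u^n_0}^{2p}\leq \norm{u_0}_{L^\infty(\Omega;L^2_{\sigma})}^{2p}$ uniformly in $n$; item \ref{new item 2} together with Fatou's lemma then transfers the same estimate to $\tilde{u}$.

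Once these moments are available, the crude inequality
\begin{equation*}
    X_n \leq 2T\left(\sup_{s\in[0,T]}\norm{\tilde{u}^n_s}^2 + \sup_{s\in[0,T]}\norm{\tilde{u}_s}^2\right)
\end{equation*}
yields $\sup_n \tilde{\mathbbm{E}}(X_n^2) < \infty$, so the family $(X_n)$ is uniformly integrable. Vitali's theorem combined with the $\tilde{\mathbbm{P}}-a.s.$ convergence then delivers $\tilde{\mathbbm{E}}(X_n) \to 0$, which is precisely the convergence in $L^2(\tilde{\Omega};L^2([0,T];L^2_{\sigma}))$ asserted by the lemma. The only non-routine ingredient is the higher-moment energy estimate, but this is a direct adaptation of Proposition \ref{prop for first energy} exploiting the essential boundedness of the initial data, with no new structural difficulty beyond carrying the exponent $2p$ through the BDG and Young steps.
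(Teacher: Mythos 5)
Your proposal is correct, but it takes a genuinely different (and heavier) route than the paper, which disposes of the lemma in one line: the $\tilde{\mathbbm{P}}-a.s.$ convergence of item \ref{new item 4} together with the uniform bound (\ref{second result new }) and the Dominated Convergence Theorem. Your extra work is not wasted, though: read literally, the paper's invocation is terse, since (\ref{second result new }) is only a uniform \emph{first}-moment bound, and a uniform $L^1$ bound neither exhibits a dominating function nor implies uniform integrability of $X_n = \int_0^T\norm{\tilde{u}^n_s - \tilde{u}_s}^2 ds$; your detour through $2p$-moment estimates — available precisely because $u_0 \in L^\infty(\Omega;L^2_{\sigma})$ makes the Galerkin initial data uniformly bounded — supplies exactly the missing uniform integrability, and Vitali then closes the argument rigorously. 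The price is re-running the It\^{o}--BDG--Young--Gr\"{o}nwall machinery of Proposition \ref{prop for first energy} at exponent $2p$ (with the same localisation by $\tau^{M,T+1}_n$, and with (\ref{assumpty 7}), (\ref{assumpty8}) controlling the drift and quadratic-variation corrections), which is standard but must genuinely be carried out rather than merely asserted. One small repair: to bound $\tilde{\mathbbm{E}}\left(\sup_{s\in[0,T]}\norm{\tilde{u}_s}^4\right)$ you invoke item \ref{new item 2} plus Fatou, but item \ref{new item 2} concerns only the initial data; what you need is item \ref{new item 4} — e.g.\ extract from the $\tilde{\mathbbm{P}}-a.s.$ convergence in $L^2\left([0,T];L^2_{\sigma}\right)$ a subsequence converging for a.e.\ $s$ in $L^2_{\sigma}$ and then apply Fatou, or run the weak-$*$ compactness argument of Proposition \ref{prop for regularity of limit} at the $L^4$ level. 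Net comparison: the paper's route is shorter and leans on the reader to supply the uniform-integrability step, while yours is self-contained, quantitatively stronger, and the higher moment bounds it produces would be reusable elsewhere in the scheme.
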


\begin{proof}
    This is immediate from an application of the Dominated Convergence Theorem, using the convergence in item \ref{new item 4} and the uniform boundedness (\ref{second result new }).
\end{proof}

\begin{proposition} \label{prop for regularity of limit}
    $\tilde{u}$ is progressively measurable in $W^{1,2}_{\sigma}$ and for $\tilde{\mathbbm{P}}-a.e.$ $\omega$, $\tilde{u}_{\cdot}(\omega) \in L^{\infty}\left([0,T];L^2_{\sigma}\right)\cap L^2\left([0,T];W^{1,2}_{\sigma}\right)$.
\end{proposition}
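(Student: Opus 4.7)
The plan is to upgrade the strong $L^2$-convergence of Lemma \ref{the lemma just before} by exploiting the uniform bound (\ref{second result new }) in two different weak topologies. Concretely, (\ref{second result new }) tells us that $(\tilde{u}^n)$ is uniformly bounded in both
\[
\mathscr{B}_1 := L^2\!\left(\tilde{\Omega}; L^\infty([0,T]; L^2_{\sigma})\right) \qquad\text{and}\qquad \mathscr{B}_2 := L^2\!\left(\tilde{\Omega}\times[0,T]; W^{1,2}_{\sigma}\right).
\]
The space $\mathscr{B}_2$ is a Hilbert space, so I would apply Banach--Alaoglu to extract a subsequence (not relabelled) converging weakly in $\mathscr{B}_2$ to some $v$. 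The space $\mathscr{B}_1$ is (isometrically) the dual of $L^2(\tilde{\Omega};L^1([0,T];L^2_{\sigma}))$ via the standard $L^\infty$--$L^1$ duality, so by Banach--Alaoglu again I can further extract a subsequence converging weak-$*$ in $\mathscr{B}_1$ to some $w$.

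Next I would identify both $v$ and $w$ with $\tilde{u}$. By Lemma \ref{the lemma just before}, $\tilde{u}^n \to \tilde{u}$ strongly in $L^2(\tilde{\Omega}\times[0,T]; L^2_{\sigma})$. Any weak or weak-$*$ limit in a topology finer than that of $L^2(\tilde{\Omega}\times[0,T]; L^2_{\sigma})$ must coincide (as an element of $L^2(\tilde{\Omega}\times[0,T]; L^2_{\sigma})$) with this strong limit, since one can test against elements of $L^2(\tilde{\Omega}\times[0,T]; L^2_{\sigma})$ (which embeds into the predual of $\mathscr{B}_1$ and sits in the dual of $\mathscr{B}_2$). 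Hence $v = w = \tilde{u}$ as elements of the respective spaces, which immediately yields $\tilde{u} \in \mathscr{B}_1 \cap \mathscr{B}_2$, and in particular $\tilde{u}_\cdot(\omega) \in L^\infty([0,T]; L^2_{\sigma}) \cap L^2([0,T]; W^{1,2}_{\sigma})$ for $\tilde{\mathbbm{P}}$-a.e. $\omega$.

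For progressive measurability in $W^{1,2}_{\sigma}$, the point is that the subspace of progressively measurable elements of $\mathscr{B}_2$ is a closed linear subspace and therefore weakly closed. Since each $\tilde{u}^n$ is progressively measurable in $W^{1,2}_{\sigma}$ (from item \ref{new item 3}, or alternatively by the Galerkin construction combined with item \ref{new item 4} and the augmentation of the filtration), the weak limit $\tilde{u}$ remains progressively measurable in $W^{1,2}_{\sigma}$, up to modification on a $d\tilde{\mathbbm{P}}\otimes dt$-null set which does not affect any of the pathwise regularity conclusions already obtained. The main subtlety I anticipate is ensuring the limit is a genuine pointwise-in-$t$ progressively measurable representative rather than merely an equivalence class; this is handled by noting that item \ref{new item 4} yields adaptedness of $\tilde{u}$ (in $(W^{1,2}_{\sigma})^*$) at continuity points and then combining with the $L^2([0,T];W^{1,2}_{\sigma})$-regularity to obtain a progressively measurable modification in $W^{1,2}_{\sigma}$ via the standard closure-of-simple-processes argument.
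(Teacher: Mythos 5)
Your proposal is correct and takes essentially the same approach as the paper's proof: uniform bounds from (\ref{second result new }), weak compactness in the Hilbert space $L^2\left(\tilde{\Omega};L^2\left([0,T];W^{1,2}_{\sigma}\right)\right)$ together with weak-$*$ compactness in $L^2\left(\tilde{\Omega};L^\infty\left([0,T];L^2_{\sigma}\right)\right)$ viewed as the dual of $L^2\left(\tilde{\Omega};L^1\left([0,T];L^2_{\sigma}\right)\right)$, and identification of both limits with $\tilde{u}$ through the strong convergence of Lemma \ref{the lemma just before}. Your closed-subspace argument for progressive measurability is also the paper's argument in different words — the paper deduces $\tilde{u}$ as a weak limit in $L^2\left(\tilde{\Omega}\times[0,t];W^{1,2}_{\sigma}\right)$ equipped with the $\tilde{\mathcal{F}}_t \times \mathcal{B}\left([0,t]\right)$ $\sigma$-algebra for each $t$, which is precisely weak closedness of the measurable subspace, and (as the paper does) progressive measurability is understood up to a $\tilde{\mathbbm{P}}\otimes\lambda$-null modification, so your final worry about a pointwise-in-$t$ representative is not needed.
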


\begin{proof}
    From (\ref{second result new }) we have that the sequence $(\tilde{u}^n)$ is uniformly bounded in\\ $L^2\left(\tilde{\Omega};L^2\left([0,T];W^{1,2}_{\sigma}\right)\right)$ and $L^2\left(\tilde{\Omega};L^\infty\left([0,T];L^2_{\sigma}\right)\right)$. Firstly then we can deduce the existence of a subsequence $(\tilde{u}^{n_k})$ which is weakly convergent in the Hilbert Space $L^2\left(\tilde{\Omega};L^2\left([0,T];W^{1,2}_{\sigma}\right)\right)$ to some $\sy$, but we may also identify $L^2\left(\tilde{\Omega};L^\infty\left([0,T];L^2_{\sigma}\right)\right)$ with the dual space of $L^2\left(\tilde{\Omega};L^1\left([0,T];L^2_{\sigma}\right)\right)$ and as such from the Banach-Alaoglu Theorem we can extract a further subsequence $(\tilde{u}^{n_l})$ which is convergent to some $\py$ in the weak* topology. These limits imply that $(\tilde{u}^{n_l})$ is convergent to both $\sy$ and $\py$ in the weak topology of $L^2\left(\tilde{\Omega};L^2\left([0,T];L^2_{\sigma}\right)\right)$, but from Lemma \ref{the lemma just before} then $(\tilde{u}^{n_l})$ converges to $\tilde{u}$ strongly (hence weakly) in this space as well. By uniqueness of limits in the weak topology then $\tilde{u} = \sy = \py$ as elements of $L^2\left(\tilde{\Omega};L^2\left([0,T];L^2_{\sigma}\right)\right)$, so they agree $\tilde{\mathbbm{P}} \times \lambda-a.s.$. Thus for $\tilde{\mathbbm{P}}-a.e.$ $\omega$, $\tilde{u}_{\cdot}(\omega) \in L^{\infty}\left([0,T];L^2_{\sigma}\right)\cap L^2\left([0,T];W^{1,2}_{\sigma}\right)$.\\

    The progressive measurability is justified similarly; for any $t\in [0,T]$, we can use the progressive measurability of $(\tilde{u}^{n_k})$ to instead deduce $\tilde{u}$ as the weak limit in $L^2\left(\tilde{\Omega} \times [0,t];W^{1,2}_{\sigma}\right)$ where $\tilde{\Omega} \times [0,t]$ is equipped with the $\tilde{\mathcal{F}}_t \times \mathcal{B}\left([0,t]\right)$ sigma-algebra. Therefore $\tilde{u}: \tilde{\Omega} \times [0,t] \rightarrow W^{1,2}_{\sigma}$ is measurable with respect to this product sigma-algebra which justifies the progressive measurability. 
\end{proof}

\begin{proposition}
    $\tilde{u}$ satisfies the identity (\ref{newid1}): that is for each $\phi \in W^{1,2}_{\sigma}$ and $t\in[0,T]$,
    \begin{align} \nonumber
     \inner{\tilde{u}_t}{\phi} = \inner{\tilde{u}_0}{\phi} - \int_0^{t}\inner{\mathcal{L}_{\tilde{u}_s}\tilde{u}_s}{\phi}ds &- \nu\int_0^{t} \inner{\tilde{u}_s}{\phi}_1 ds\\ &+ \frac{\nu}{2}\int_0^{t}\sum_{i=1}^\infty \inner{\mathcal{Q}_i\tilde{u}_s}{\mathcal{Q}_i^*\phi} ds - \nu^{\frac{1}{2}}\int_0^{t} \inner{\mathcal{G}\tilde{u}_s}{\phi} d\tilde{\mathcal{W}}_s\nonumber
\end{align}
holds $\tilde{\mathbbm{P}}-a.s.$ in $\R$. Moreover for $\tilde{\mathbbm{P}}-a.e.\omega$, $\tilde{u}_{\cdot}(\omega) \in C_w\left([0,T];L^2_{\sigma}\right)$.
\end{proposition}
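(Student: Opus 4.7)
The plan is to pass to the limit $n \to \infty$ in the Galerkin identity of item \ref{new item 3} tested against a fixed $\phi \in V_m$, and extend by density to $\phi \in W^{1,2}_{\sigma}$. Weak continuity in $L^2_{\sigma}$ then follows by combining the resulting identity with the $L^\infty_t L^2_\sigma$ bound from Proposition \ref{prop for regularity of limit}.

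Fix $m \in \N$ and $\phi \in V_m$. For every $n \geq m$, $\mathcal{P}_n\phi = \phi$, so pairing the Galerkin equation with $\phi$ removes every occurrence of $\mathcal{P}_n$. Term by term: the pairing on the left and the initial data converge thanks to items \ref{new item 2} and \ref{new item 4} and Lemma \ref{the lemma just before}; the viscous term converges from the weak convergence of $\tilde u^n$ to $\tilde u$ in $L^2(\tilde{\Omega} \times [0,T]; W^{1,2}_{\sigma})$ established inside the proof of Proposition \ref{prop for regularity of limit}; the nonlinear term is handled by moving $\mathcal{L}$ onto the smooth test function via (\ref{wloglhs}), writing $\inner{\mathcal{L}_{\tilde u^n_s}\tilde u^n_s}{\phi} = -\inner{\tilde u^n_s}{\mathcal{L}_{\tilde u^n_s}\phi}$, and using that $\phi \in V_m \subset C^\infty(\bar{\mathscr O}; \R^N)$ together with strong convergence of $\tilde u^n$ in $L^2(\tilde{\Omega}\times[0,T]; L^2_{\sigma})$; the $\mathcal{Q}_i^2$-correction is controlled by (\ref{assumpty 3}) summed against $c_i$ together with the weak convergence in $W^{1,2}_{\sigma}$.

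The main obstacle is the stochastic integral, for which I would exploit assumption (\ref{assumpty10}): since $\phi \in V_m \subset W^{2,2}_{\sigma}$, it gives $\inner{\mathcal{G}_i f - \mathcal{G}_i g}{\phi}^2 \leq c_i (1 + \norm{\phi}_2^p)\norm{f-g}^2$. By the It\^o isometry and $\sum c_i < \infty$,
$$\tilde{\mathbbm E}\left\lvert \int_0^t \inner{\mathcal{G}(\tilde u^n_s - \tilde u_s)}{\phi} d\tilde{\mathcal{W}}_s\right\rvert^2 \leq c(\phi)\, \tilde{\mathbbm{E}}\int_0^t \norm{\tilde u^n_s - \tilde u_s}^2 ds \longrightarrow 0$$
by Lemma \ref{the lemma just before}. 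Adaptedness of $\tilde u$ to the filtration of $\tilde{\mathcal{W}}$ (needed to define the limiting stochastic integral) is inherited from the adaptedness of each $\tilde u^n$ and the convergence of item \ref{new item 4}. With (\ref{newid1}) established on $\bigcup_m V_m$, the density of this union in $W^{1,2}_{\sigma}$ and the continuity of each term of (\ref{newid1}) in $\phi \in W^{1,2}_{\sigma}$ (using (\ref{assumpty 1}), (\ref{assumpty 3}), and the BDG inequality for the stochastic term) yield the full identity.

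Finally, for the weak continuity, choose a countable dense set $D \subset W^{1,2}_{\sigma}$. On a $\tilde{\mathbbm{P}}$-full event, (\ref{newid1}) holds simultaneously for every $\phi \in D$, and the right-hand side is continuous in $t$; thus $t \mapsto \inner{\tilde u_t(\omega)}{\phi}$ has a continuous version for each $\phi \in D$. Combined with the essential bound $\mathrm{ess\,sup}_{t \in [0,T]}\norm{\tilde u_t(\omega)} < \infty$ from Proposition \ref{prop for regularity of limit}, an $\varepsilon/3$ density argument extends this continuity to all $\phi \in L^2_{\sigma}$, after possibly redefining $\tilde u$ on a $\lambda$-null set in $t$, producing a version with $\tilde u_\cdot(\omega) \in C_w([0,T]; L^2_{\sigma})$ for $\tilde{\mathbbm{P}}$-a.e. $\omega$.
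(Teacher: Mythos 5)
Your argument is correct and shares the paper's overall architecture --- pass to the limit term-by-term in the Galerkin identity of item \ref{new item 3} against a restricted class of test functions, extend by density in $W^{1,2}_{\sigma}$, and get weak continuity from an $\varepsilon/3$ argument with the uniform bound of Proposition \ref{prop for regularity of limit} --- but your intermediate steps genuinely differ and buy a real simplification. The paper tests against arbitrary $\psi \in W^{2,2}_{\sigma}$, so every term produces a remainder of the form $\inner{\cdot}{(I-\mathcal{P}_n)\psi}$ that must be estimated via (\ref{one over n bound two}) with a $\lambda_n^{-1/2}$ factor; your choice $\phi \in V_m$ makes $\mathcal{P}_n\phi = \phi$ for $n \geq m$ and kills all such remainders, and density of $\bigcup_m V_m$ in $W^{1,2}_{\sigma}$ holds since $(a_k)$ is an orthogonal basis of $\left(W^{1,2}_{\sigma}, \inner{\cdot}{\cdot}_1\right)$. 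Your second deviation is to treat the viscous and corrector terms by weak convergence in $L^2\left(\tilde{\Omega}\times[0,T];W^{1,2}_{\sigma}\right)$ rather than by the paper's strong-$L^2$ estimates against $\norm{\psi}_2$ followed by extraction of $\tilde{\mathbbm{P}}-a.s.$ convergent subsequences. This is legitimate, but you should make the mechanism explicit: weak convergence gives no a.s. subsequence, so the identity passes to the limit because an a.s. equality of random variables is preserved under mixed strong/weak $L^1(\tilde{\Omega})$ convergence of the individual terms (uniform integrability for the a.s.-convergent terms follows from (\ref{second result new })); also, boundedness of the linear corrector functional needs the homogeneous bound $\norm{\mathcal{Q}_iu} \leq c_i^{1/2}\norm{u}_{W^{1,2}}$, which follows from linearity of $\mathcal{Q}_i$ together with (\ref{assumpty 1}) by scaling, combined with (\ref{assumpty 3}). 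The one place you are materially looser than the paper is the left-hand side at fixed $t$: the Skorokhod convergence of item \ref{new item 4} identifies $\tilde{u}_t$ only as an element of $\left(W^{1,2}_{\sigma}\right)^*$, and the paper upgrades this to $\tilde{u}_t \in L^2_{\sigma}$ for \emph{every} $t$ by a weak-compactness argument on the sequence $(\tilde{u}^n_t)$, uniformly bounded in $L^2\left(\tilde{\Omega};L^2_{\sigma}\right)$. Your proposal to ``redefine $\tilde{u}$ on a $\lambda$-null set in $t$'' is both unnecessary and unfaithful to the statement, which asserts (\ref{newid1}) and weak continuity for the constructed $\tilde{u}$ itself at every $t$; replacing that remark with the paper's identification step closes the argument.
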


\begin{proof}
    We fix a $t\in[0,T]$ and $\phi\in W^{1,2}_{\sigma}$ as in the proposition, and consider an arbitrary $\psi \in W^{2,2}_{\sigma}$. Taking the inner product with $\psi$ of both sides in item \ref{new item 3} yields the identity 
       \begin{align} \nonumber
     \inner{\tilde{u}^n_t}{\psi} = \inner{\tilde{u}^n_0}{\psi} - \int_0^{t}\inner{\mathcal{P}_n\mathcal{P}\mathcal{L}_{\tilde{u}^n_s}\tilde{u}^n_s}{\psi}ds &- \nu\int_0^{t} \inner{\mathcal{P}_nA\tilde{u}^n_s}{\psi} ds\\ &+ \frac{\nu}{2}\int_0^{t}\sum_{i=1}^\infty \inner{\mathcal{P}_n\mathcal{P}\mathcal{Q}_i^2\tilde{u}^n_s}{\psi} ds - \nu^{\frac{1}{2}}\int_0^{t} \inner{\mathcal{P}_n\mathcal{P}\mathcal{G}\tilde{u}^n_s}{\psi} d\tilde{\mathcal{W}}_s\nonumber
\end{align}
for every $n \in \N$, $\tilde{\mathbbm{P}}-a.s.$, with the idea to first show (\ref{newid1}) for $\psi$ and then use the density of $W^{2,2}_{\sigma}$ in $W^{1,2}_{\sigma}$. We now take the limit $\tilde{\mathbbm{P}}-a.s.$ in $\R$ and demonstrate the appropriate convergence of each term individually. We note that convergence in the Skorohod Topology implies convergence at each $t$ (see e.g. [\cite{billingsley2013convergence}] pp.124), so
$$ \lim_{n \rightarrow \infty}\inner{\tilde{u}^n_t}{\psi} = \lim_{n \rightarrow \infty}\inner{\tilde{u}^n_t}{\psi}_{\left(W^{1,2}_{\sigma}\right)^* \times W^{1,2}_{\sigma}} = \inner{\tilde{u}_t}{\psi}_{\left(W^{1,2}_{\sigma}\right)^* \times W^{1,2}_{\sigma}}$$
where at this stage $\tilde{u}_t$ may not belong to $L^2_{\sigma}$. We clarify this immediately, using an identical argument to Proposition \ref{prop for regularity of limit} for the sequence $(\tilde{u}^n_t)$ uniformly bounded in $L^2\left(\tilde{\Omega};L^2_{\sigma}\right)$, and deducing the convergence to $\tilde{u}_t$ in $L^2\left(\tilde{\Omega};\left(W^{1,2}_{\sigma}\right)^*\right)$. Thus we can conclude that $\tilde{u}_t\in L^2_{\sigma}$ and therefore $$\lim_{n \rightarrow \infty}\inner{\tilde{u}^n_t}{\psi} = \inner{\tilde{u}_t}{\psi}. $$ is genuinely an inner product of $L^2_{\sigma}$ valued functions. The limit for the initial condition comes out of item \ref{new item 2}, so we move on now to the nonlinear term. At this point we remark that it is only necessary to show the desired limit for a subsequence, which could be extracted $\tilde{\mathbbm{P}}-a.s.$ if we were to instead show a limit in $L^p\left(\tilde{\Omega};\R\right)$, $p=1,2$,  for each term. Firstly observe that \begin{align*}\inner{\mathcal{P}_n\mathcal{P}\mathcal{L}_{\tilde{u}^n_s}\tilde{u}^n_s}{\psi} - \inner{\mathcal{P}\mathcal{L}_{\tilde{u}_s}\tilde{u}_s}{\psi} &= \left\langle \mathcal{P}_n\left[\mathcal{P}\mathcal{L}_{\tilde{u}^n_s}\tilde{u}^n_s - \mathcal{P}\mathcal{L}_{\tilde{u}_s}\tilde{u}_s\right], \psi \right\rangle + \inner{\left(I - \mathcal{P}_n\right)\mathcal{P}\mathcal{L}_{\tilde{u}_s}\tilde{u}_s}{\psi}\\
&= \left\langle \mathcal{L}_{\tilde{u}^n_s}\tilde{u}^n_s - \mathcal{L}_{\tilde{u}_s}\tilde{u}_s, \mathcal{P}_n\psi \right\rangle + \inner{\mathcal{L}_{\tilde{u}_s}\tilde{u}_s}{\left(I - \mathcal{P}_n\right)\psi}\\
&= \left\langle \mathcal{L}_{\tilde{u}^n_s - \tilde{u}_s}\tilde{u}^n_s, \mathcal{P}_n\psi \right\rangle + \left\langle \mathcal{L}_{ \tilde{u}_s}\left(\tilde{u}^n_s - \tilde{u}_s\right), \mathcal{P}_n\psi \right\rangle + \inner{\mathcal{L}_{\tilde{u}_s}\tilde{u}_s}{\left(I - \mathcal{P}_n\right)\psi}
\end{align*}
which we inspect term by term with
\begin{align*}\left\vert \left\langle \mathcal{L}_{\tilde{u}^n_s - \tilde{u}_s}\tilde{u}^n_s, \mathcal{P}_n\psi \right\rangle \right\vert &= \left\vert \left\langle \tilde{u}^n_s, \mathcal{L}_{\tilde{u}^n_s - \tilde{u}_s}(\mathcal{P}_n\psi) \right\rangle \right\vert\\ &\leq c\sum_{k=1}^N\norm{\tilde{u}^n_s}_{L^6}\norm{\tilde{u}^n_s - \tilde{u}_s}\norm{\partial_k \mathcal{P}_n\psi}_{L^3}\\ &\leq c\sum_{k=1}^N\norm{\tilde{u}^n_s}_{1}\norm{\tilde{u}^n_s - \tilde{u}_s}\norm{\partial_k \mathcal{P}_n\psi}_1\\ &\leq c\norm{\tilde{u}^n_s}_{1}\norm{\tilde{u}^n_s - \tilde{u}_s}\norm{\psi}_2
\end{align*}
for the first term (as seen similarly in Lemma \ref{lemma application of gag}),
\begin{align*}
    \left\vert \left\langle \mathcal{L}_{ \tilde{u}_s}\left(\tilde{u}^n_s - \tilde{u}_s\right), \mathcal{P}_n\psi \right\rangle \right\vert &=  \left\vert \left\langle \tilde{u}^n_s - \tilde{u}_s ,  \mathcal{L}_{ \tilde{u}_s}(\mathcal{P}_n\psi) \right\rangle \right\vert\\ &\leq c\norm{\tilde{u}^n_s - \tilde{u}_s}\norm{\mathcal{L}_{ \tilde{u}_s}(\mathcal{P}_n\psi)}\\ &\leq c\sum_{k=1}^N\norm{\tilde{u}^n_s - \tilde{u}_s}\norm{\tilde{u}_s}_{L^4}\norm{\partial_k \mathcal{P}_n\psi}_{L^4}\\ &\leq c\norm{\tilde{u}^n_s - \tilde{u}_s}\norm{\tilde{u}_s}_{1}\norm{\psi}_2
\end{align*}
for the second, and finally
\begin{align*}
    \left\vert\inner{\mathcal{L}_{\tilde{u}_s}\tilde{u}_s}{\left(I - \mathcal{P}_n\right)\psi}\right\vert &=    \left\vert\inner{\tilde{u}_s}{\mathcal{L}_{\tilde{u}_s}\left[\left(I - \mathcal{P}_n\right)\psi\right]}\right\vert\\
    &\leq c\sum_{k=1}^N\norm{\tilde{u}_s}_{L^6}\norm{\tilde{u}_s}_{L^3}\norm{\partial_k\left[\left(I - \mathcal{P}_n\right)\psi\right]}\\
    &\leq c\norm{\tilde{u}_s}_{1}\norm{\tilde{u}_s}_{1}\norm{\left(I - \mathcal{P}_n\right)\psi}_1\\
     &\leq \frac{c}{\sqrt{\lambda_n}}\norm{\tilde{u}_s}_{1}^2\norm{\psi}_2
\end{align*}
coming from (\ref{one over n bound two}). Therefore
$$\left\vert\inner{\mathcal{P}_n\mathcal{P}\mathcal{L}_{\tilde{u}^n_s}\tilde{u}^n_s}{\psi} - \inner{\mathcal{P}\mathcal{L}_{\tilde{u}_s}\tilde{u}_s}{\psi}\right\vert \leq c\big(\norm{\tilde{u}^n_s}_{1}+ \norm{\tilde{u}_s}_{1} \big)\norm{\psi}_2\norm{\tilde{u}^n_s - \tilde{u}_s} + \frac{c}{\sqrt{\lambda_n}}\norm{\tilde{u}_s}_{1}^2\norm{\psi}_2$$
so
\begin{align*}
    &\mathbbm{E}\left\vert \int_0^t\inner{\mathcal{P}_n\mathcal{P}\mathcal{L}_{\tilde{u}^n_s}\tilde{u}^n_s}{\psi} ds - \int_0^t\inner{\mathcal{P}\mathcal{L}_{\tilde{u}_s}\tilde{u}_s}{\psi} ds \right\vert\\ & \qquad \leq c\norm{\psi}_2\mathbbm{E}\int_0^t \big(\norm{\tilde{u}^n_s}_{1}+ \norm{\tilde{u}_s}_{1} \big)\norm{\tilde{u}^n_s - \tilde{u}_s}ds + \frac{c}{\sqrt{\lambda_n}}\norm{\psi}_2\mathbbm{E}\int_0^t\norm{\tilde{u}_s}_{1}^2ds \\ &  \qquad \leq c\norm{\psi}_2\left(\mathbbm{E}\int_0^t \big(\norm{\tilde{u}^n_s}_{1}+ \norm{\tilde{u}_s}_{1} \big)^2ds\right)^{\frac{1}{2}}\left(\mathbbm{E}\int_0^t\norm{\tilde{u}^n_s - \tilde{u}_s}^2ds\right)^{\frac{1}{2}} + \frac{c}{\sqrt{\lambda_n}}\norm{\psi}_2\mathbbm{E}\int_0^t\norm{\tilde{u}_s}_{1}^2ds
\end{align*}
which approaches zero as $n \rightarrow \infty$ from the uniform boundedness (\ref{second result new }), Lemma \ref{the lemma just before} and the fact that $\lambda_n \rightarrow \infty$. The Stokes Operator term is more straightforwards, as \begin{align*}\left\vert\inner{\mathcal{P}_nA\tilde{u}^n_s}{\psi} - \inner{\tilde{u}_s}{\psi}_1\right\vert = \left\vert\inner{A\tilde{u}^n_s}{\psi} - \inner{\tilde{u}_s}{\psi}_1\right\vert = \left\vert\inner{\tilde{u}^n_s-\tilde{u}_s}{\psi}_1\right\vert \leq \norm{\tilde{u}^n_s-\tilde{u}_s}\norm{\psi}_2
\end{align*}
thus
\begin{align*}
    \mathbbm{E}\left\vert \int_0^t\inner{\mathcal{P}_nA\tilde{u}^n_s}{\psi} ds - \int_0^t\inner{\tilde{u}_s}{\psi}_1 ds \right\vert\leq \norm{\psi}_2\mathbbm{E}\int_0^t\norm{\tilde{u}^n_s - \tilde{u}_s}ds
\end{align*}
which approaches zero as $n \rightarrow \infty$, seen explicitly from an application of Cauchy-Schwartz with the constant function. The following is similar: 
\begin{align*}
    \left\vert\inner{\mathcal{P}_n\mathcal{P}\mathcal{Q}_i^2\tilde{u}^n_s}{\psi} - \inner{\mathcal{Q}_i\tilde{u}_s}{\mathcal{Q}_i^*\psi}\right\vert &= \left\vert\inner{\mathcal{Q}_i\tilde{u}^n_s}{\mathcal{Q}_i^*[\mathcal{P}_n\psi]} - \inner{\mathcal{Q}_i\tilde{u}_s}{\mathcal{Q}_i^*\psi}\right\vert\\
    &= \left\vert\inner{\mathcal{Q}_i\left(\tilde{u}^n_s - \tilde{u}_s\right)}{\mathcal{Q}_i^*[\mathcal{P}_n\psi]} - \inner{\mathcal{Q}
_i\tilde{u}_s}{\mathcal{Q}_i^*\left[(I - \mathcal{P}_n)\psi\right]}\right\vert\\
    &= \left\vert\inner{\tilde{u}^n_s - \tilde{u}_s}{(\mathcal{Q}_i^*)^2[\mathcal{P}_n\psi]} - \inner{\mathcal{Q}
_i\tilde{u}_s}{\mathcal{Q}_i^*\left[(I - \mathcal{P}_n)\psi\right]}\right\vert\\
&\leq c_i\norm{\tilde{u}^n_s - \tilde{u}_s}\norm{\psi}_2 + \frac{c_i}{\sqrt{\lambda_n}}\left(1 +\norm{\tilde{u}_s}_1\right)\norm{\psi}_2
\end{align*}
using (\ref{assumpty 1}), (\ref{assumpty 3}), so the convergence follows as with the previous term. It now only remains to consider the stochastic integral, for which we take the same approach and observe that
\begin{align*}
    \left\vert\inner{\mathcal{P}_n\mathcal{P}\mathcal{G}_i\tilde{u}^n_s}{\psi} - \inner{\mathcal{G}_i\tilde{u}_s}{\psi}\right\vert^2 &= \left\vert\inner{\mathcal{P}_n\mathcal{P}\left[\mathcal{G}_i\tilde{u}^n_s - \mathcal{G}_i\tilde{u}_s\right]}{\psi} + \inner{(I -\mathcal{P}_n)\mathcal{G}_i\tilde{u}_s}{\psi}\right\vert^2\\
    &\leq 2\inner{\mathcal{G}_i\tilde{u}^n_s - \mathcal{G}_i\tilde{u}_s}{\mathcal{P}_n\psi}^2 + 2\inner{\mathcal{G}_i\tilde{u}_s}{(I - \mathcal{P}_n)\psi}^2\\
&\leq c_i\left[1 +\norm{\psi}_{2}^p\right]\norm{\tilde{u}^n_s - \tilde{u}_s}^2 + \frac{c_i}{\lambda_n}\left(1 + \norm{\tilde{u}_s}_1^2\right)\norm{\psi}_1^2
\end{align*}
using (\ref{assumpty 1}), (\ref{assumpty10}). Therefore 
\begin{align*}
    &\mathbbm{E}\left\vert \int_0^t\inner{\mathcal{P}_n\mathcal{P}\mathcal{G}\tilde{u}^n_s}{\psi} d\mathcal{W}_s - \int_0^t\inner{\mathcal{G}\tilde{u}_s}{\psi} d\mathcal{W}_s \right\vert^2\\ 
    &\qquad \qquad \qquad  = \mathbbm{E}\int_0^t \sum_{i=1}^\infty\left\vert\inner{\mathcal{P}_n\mathcal{P}\mathcal{G}_i\tilde{u}^n_s}{\psi} - \inner{\mathcal{G}_i\tilde{u}_s}{\psi}\right\vert^2 ds\\
    & \qquad \qquad \qquad \leq c\norm{\psi}_2^p\mathbbm{E}\int_0^t \norm{\tilde{u}^n_s - \tilde{u}_s}^2ds + \frac{c}{\lambda_n}\norm{\psi}_1^2\mathbbm{E}\int_0^t1 +\norm{\tilde{u}_s}_{1}^2ds 
\end{align*}
which evidently approaches zero, so as discussed by taking $\tilde{\mathbbm{P}}-a.s.$ convergent subsequences this is sufficient to conclude that  
\begin{align} \nonumber
     \inner{\tilde{u}_t}{\psi} = \inner{\tilde{u}_0}{\psi} - \int_0^{t}\inner{\mathcal{L}_{\tilde{u}_s}\tilde{u}_s}{\psi}ds &- \nu\int_0^{t} \inner{\tilde{u}_s}{\psi}_1 ds\\ &+ \frac{\nu}{2}\int_0^{t}\sum_{i=1}^\infty \inner{\mathcal{Q}_i\tilde{u}_s}{\mathcal{Q}_i^*\psi} ds - \nu^{\frac{1}{2}}\int_0^{t} \inner{\mathcal{G}\tilde{u}_s}{\psi} d\tilde{\mathcal{W}}_s.\nonumber
\end{align}
To pass to the identity for $\phi \in W^{1,2}_{\sigma}$ we now fix such a $\phi$ and consider a sequence $(\psi^k)$ in $W^{2,2}_{\sigma}$ convergent to $\phi$ in $W^{1,2}_{\sigma}$. As $k \rightarrow \infty$ the limits
\begin{align*}
\inner{\tilde{u}_t}{\psi^k} &\longrightarrow \inner{\tilde{u}_t}{\phi}\\
\inner{\tilde{u}_0}{\psi^k} &\longrightarrow \inner{\tilde{u}_0}{\phi}\\
\int_0^{t} \inner{\tilde{u}_s}{\psi^k}_1 ds = \left\langle \int_0^t \tilde{u}_s ds, \psi^k \right\rangle_1 &\longrightarrow \left\langle \int_0^t \tilde{u}_s ds, \phi \right\rangle_1 = \int_0^{t} \inner{\tilde{u}_s}{\phi}_1 ds\\
\int_0^{t} \inner{\mathcal{G}\tilde{u}_s}{\psi^k} d\tilde{\mathcal{W}}_s = \left\langle\int_0^{t}\mathcal{G}\tilde{u}_sd\tilde{\mathcal{W}}_s , \psi^k \right\rangle &\longrightarrow \left\langle\int_0^{t}\mathcal{G}\tilde{u}_sd\tilde{\mathcal{W}}_s , \phi \right\rangle = \int_0^{t} \inner{\mathcal{G}\tilde{u}_s}{\phi} d\tilde{\mathcal{W}}_s
\end{align*}
$\tilde{\mathbbm{P}}-a.s.$ are trivial, and for the remaining two integrals we may simply apply the Dominated Convergence Theorem with (\ref{assumpty 3}) in mind. This justifies the identity (\ref{newid1}). The final property to prove is that for $\tilde{\mathbbm{P}}-a.e.$ $\omega$, $\tilde{u}_{\cdot}(\omega) \in C_{w}\left([0,T];L^2_{\sigma}\right)$. By the identity just shown it is clear that $\inner{\tilde{u}_{\cdot}(\omega)}{\phi} \in C\left([0,T];\R\right)$ where $\phi \in W^{1,2}_{\sigma}$ was arbitrary, but to conclude the weak continuity we must instead show this for any $\eta \in L^2_{\sigma}$. Furthermore we fix such an $\omega$ and $\eta \in L^2_{\sigma}$, any $t \in [0,T]$ and sequence of times $(t_k)$ in $[0,T]$ such that $t_k \rightarrow t$. To demonstrate the continuity let's fix $\varepsilon >0$, and choose a $\phi \in W^{1,2}_{\sigma}$ such that $$\norm{\eta - \phi} < \frac{\varepsilon}{4}\sup_{s\in[0,T]}\norm{\tilde{u}_s(\omega)}$$ where the right hand side is of course finite from Proposition \ref{prop for regularity of limit}. Note that there exists a $K \in \N$ such that for all $k \geq K$, $$\left\vert\inner{\tilde{u}_{t_k}(\omega) -\tilde{u}_{t}(\omega)}{\phi}\right\vert < \frac{\varepsilon}{2}.$$
Then for all $k \geq K$ we have that
\begin{align*}
   \left\vert\inner{\tilde{u}_{t_k}(\omega) -\tilde{u}_{t}(\omega)}{\eta}\right\vert &\leq \left\vert\inner{\tilde{u}_{t_k}(\omega) -\tilde{u}_{t}(\omega)}{\eta - \phi}\right\vert + \left\vert\inner{\tilde{u}_{t_k}(\omega) -\tilde{u}_{t}(\omega)}{\phi}\right\vert\\
   &<2 \sup_{s\in[0,T]}\norm{\tilde{u}_s(\omega)}\norm{\eta - \phi} + \frac{\varepsilon}{2}\\
   &< \varepsilon
\end{align*}
demonstrating the weak continuity and finishing the proof.

\end{proof}

\subsection{Probabilistically Strong Solutions in 2D} \label{sub 2d}

This subsection is dedicated to proving Theorem \ref{theorem 2D}. We fix $N=2$ as well as an arbitrary $\mathcal{F}_0-$measurable $u_0\in L^\infty(\Omega;L^2_{\sigma})$, and consider a martingale weak solution $\tilde{u}$ known to exist from the now proven Theorem \ref{existence of weak}. To make progress we look to understand $\tilde{u}$ as satisfying an identity in $\left(W^{1,2}_{\sigma}\right)^*$.

\begin{lemma} \label{originally unlabelled}
 $\mathcal{P}\mathcal{L}_{\tilde{u}_{\cdot}}\tilde{u}_{\cdot}, A\tilde{u}_{\cdot}$ and $ \sum_{i=1}^\infty\mathcal{P}\mathcal{Q}_i^2\tilde{u}_{\cdot}$ all belong to $L^2\left( [0,T]; \left(W^{1,2}_{\sigma}\right)^*\right)$ $\tilde{\mathbbm{P}}-a.s.$. Moreover for every $t\in[0,T]$, $\tilde{u}$ satisfies the identity \begin{equation} \label{another another id}
    \tilde{u}_{t} = \tilde{u}_0 - \int_0^t\mathcal{P}\mathcal{L}_{\tilde{u}_s}\tilde{u}_s\ ds - \nu\int_0^t  A \tilde{u}_s\, ds + \frac{\nu}{2}\int_0^t\sum_{i=1}^\infty \mathcal{P}\mathcal{Q}_i^2\tilde{u}_s ds - \nu^{\frac{1}{2}}\int_0^t \mathcal{P}\mathcal{G}\tilde{u}_s d\tilde{\mathcal{W}}_s 
\end{equation}
$\tilde{\mathbb{P}}-a.s.$ in $\left(W^{1,2}_{\sigma}\right)^*$.
\end{lemma}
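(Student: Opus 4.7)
The plan is to prove the two assertions separately: first establish the $L^2([0,T];(W^{1,2}_\sigma)^*)$ integrability of the three drift terms, then lift the scalar weak identity (\ref{newid1}) to a $(W^{1,2}_\sigma)^*$-valued identity via a density and continuity argument.

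First I would verify the integrability term by term, exploiting the regularity $\tilde{u}_\cdot(\omega)\in L^\infty([0,T];L^2_\sigma)\cap L^2([0,T];W^{1,2}_\sigma)$ supplied by the martingale weak solution status. For the Stokes term, the bound $\abs{\inner{A\tilde{u}_s}{\phi}} = \abs{\inner{\tilde{u}_s}{\phi}_1} \leq \norm{\tilde{u}_s}_1\norm{\phi}_1$ gives $\norm{A\tilde{u}_s}_{(W^{1,2}_\sigma)^*}\leq \norm{\tilde{u}_s}_1$, which is square integrable in time. For the Stratonovich correction, combining (\ref{assumpty 1}) and (\ref{assumpty 3}) with the summability $\sum_i c_i < \infty$ yields $\abs{\sum_i \inner{\mathcal{Q}_i\tilde{u}_s}{\mathcal{Q}_i^*\phi}}\leq c(1+\norm{\tilde{u}_s}_1)\norm{\phi}_1$, hence the same $L^2$-in-time control. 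The nonlinear term is the most delicate and is where the two-dimensional setting is essential: using (\ref{wloglhs}) together with the Ladyzhenskaya-type estimate $\norm{f}_{L^4}^2 \leq c\norm{f}\norm{f}_1$ valid for $N=2$, I obtain
\[
\abs{\inner{\mathcal{L}_{\tilde{u}_s}\tilde{u}_s}{\phi}} = \abs{\inner{\tilde{u}_s}{\mathcal{L}_{\tilde{u}_s}\phi}} \leq c\norm{\tilde{u}_s}_{L^4}^2\norm{\phi}_1 \leq c\norm{\tilde{u}_s}\norm{\tilde{u}_s}_1\norm{\phi}_1,
\]
so $\norm{\mathcal{P}\mathcal{L}_{\tilde{u}_s}\tilde{u}_s}_{(W^{1,2}_\sigma)^*}^2 \leq c\norm{\tilde{u}_s}^2\norm{\tilde{u}_s}_1^2$. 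Pulling the essentially bounded factor $\norm{\tilde{u}_s}^2$ out of the time integral then delivers the required integrability.

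With these estimates in hand, the Bochner integrals $\int_0^t\mathcal{P}\mathcal{L}_{\tilde{u}_s}\tilde{u}_s\,ds$, $\int_0^t A\tilde{u}_s\,ds$ and $\int_0^t\sum_i\mathcal{P}\mathcal{Q}_i^2\tilde{u}_s\,ds$ are well defined as $(W^{1,2}_\sigma)^*$-valued quantities $\tilde{\mathbbm{P}}$-a.s. The stochastic integrand $\mathcal{P}\mathcal{G}\tilde{u}_s$ lies in $\mathscr{L}^2(\mathfrak{U};L^2_\sigma)$ by (\ref{assumpty 1}), and through the continuous embedding $L^2_\sigma \hookrightarrow (W^{1,2}_\sigma)^*$ I may regard $\int_0^t\mathcal{P}\mathcal{G}\tilde{u}_s\, d\tilde{\mathcal{W}}_s$ as an $(W^{1,2}_\sigma)^*$-valued continuous local martingale. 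For any fixed $\phi\in W^{1,2}_\sigma$, pairing the right-hand side of (\ref{another another id}) against $\phi$ commutes with each integral---Fubini for the Bochner integrals and the standard passage of a bounded linear functional through an It\^{o} integral for the stochastic one---reproducing exactly (\ref{newid1}). Hence the scalar identity holds $\tilde{\mathbbm{P}}$-a.s.\ for every $t\in[0,T]$ for each fixed $\phi$.

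The last and main delicate step is upgrading this $\phi$-by-$\phi$ identity to a $(W^{1,2}_\sigma)^*$-valued identity valid $\tilde{\mathbbm{P}}$-a.s.\ simultaneously for all $t\in[0,T]$. Crucially, the definition of martingale weak solution already grants a null set independent of $t$ for each fixed $\phi$, so the remaining issue is the dependence on $\phi$. I would resolve this by choosing a countable dense subset $(\phi_k)\subset W^{1,2}_\sigma$ and taking the countable union of exceptional sets over $k$, producing a single $\tilde{\mathbbm{P}}$-null set outside of which the tested identity holds for every $\phi_k$ and every $t\in[0,T]$. Since both sides of (\ref{another another id}), when paired with $\phi$, are bounded linear functionals of $\phi\in W^{1,2}_\sigma$ (by the integrability obtained above together with the embedding $L^2_\sigma\hookrightarrow(W^{1,2}_\sigma)^*$ for the left-hand side), density then forces the two $(W^{1,2}_\sigma)^*$-valued expressions to agree pointwise in $t$, which is exactly the stated identity.
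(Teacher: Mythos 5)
Your proposal is correct and follows essentially the same route as the paper: the same duality-pairing definitions for the three drift terms, the same $N=2$ estimate $\abs{\inner{\mathcal{L}_{\tilde{u}_s}\tilde{u}_s}{\phi}} \leq c\norm{\tilde{u}_s}\norm{\tilde{u}_s}_1\norm{\phi}_1$ via (\ref{wloglhs}) and Ladyzhenskaya (the paper's Gagliardo--Nirenberg application with $p=4$, $\alpha=1/2$), and the same lifting of (\ref{newid1}) to a $\left(W^{1,2}_{\sigma}\right)^*$-valued identity with the stochastic integral defined in $L^2_{\sigma}$ and embedded. Your explicit countable-dense-family null-set bookkeeping merely spells out what the paper compresses into ``taking $\phi$ in this pairing outside of the integral.''
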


\begin{proof}
    We first address how these functions define elements of $\tilde{\mathbb{P}}-a.s.$ in $\left(W^{1,2}_{\sigma}\right)^*$. For every $s \in [0,T]$ and $\tilde{\mathbbm{P}}-a.e.$ $\omega \in \tilde{\Omega}$, $\mathcal{P}\mathcal{L}_{\tilde{u}_s(\omega)}\tilde{u}_s(\omega)$ defines such an element by the duality pairing $$\inner{\mathcal{L}_{\tilde{u}_s(\omega)}\tilde{u}_s(\omega)}{\phi}$$ for $\phi \in W^{1,2}_{\sigma}$ as verified in (\ref{why the star}), noting that if $\mathcal{P}\mathcal{L}_{\tilde{u}_s(\omega)}\tilde{u}_s(\omega) \in L^2_{\sigma}$ then $$\inner{\mathcal{P}\mathcal{L}_{\tilde{u}_s(\omega)}\tilde{u}_s(\omega)}{\phi} = \inner{\mathcal{L}_{\tilde{u}_s(\omega)}\tilde{u}_s(\omega)}{\phi} = \inner{\mathcal{L}_{\tilde{u}_s(\omega)}\tilde{u}_s(\omega)}{\phi}_{L^{6/5} \times L^6}$$ so the representation is consistent. Similarly $A\tilde{u}_{s}(\omega)$ is defined by $\inner{\tilde{u}_s(\omega)}{\phi}_1$ appreciating that if $A\tilde{u}_{s}(\omega)\in L^2_{\sigma}$ (which is only well defined for $\tilde{u}_{s}(\omega) \in W^{2,2}_{\sigma}$) then $$\inner{A\tilde{u}_{s}(\omega)}{\phi} = \inner{\tilde{u}_{s}(\omega)}{\phi}_1.$$ By the same process, $\sum_{i=1}^\infty \mathcal{P}\mathcal{Q}_i^2\tilde{u}_s(\omega)$ is consistently defined by $\sum_{i=1}^\infty\inner{\mathcal{Q}_i\tilde{u}_s(\omega)}{\mathcal{Q}_i^*\phi}$. It is in showing the $L^2\left( [0,T]; \left(W^{1,2}_{\sigma}\right)^*\right)$ regularity that we make use of the special case $N=2$. For the nonlinear term, note that
    \begin{align*}
\norm{\mathcal{P}\mathcal{L}_{\tilde{u}_s}\tilde{u}_s}_{\left(W^{1,2}_{\sigma}\right)^*} = \sup_{\norm{\phi}_1 =1}\left\vert\inner{\mathcal{L}_{\tilde{u}_s}\tilde{u}_s}{\phi}\right\vert = \sup_{\norm{\phi}_1 =1}\left\vert\inner{\tilde{u}_s}{\mathcal{L}_{\tilde{u}_s}\phi}\right\vert
    \end{align*}
   using (\ref{wloglhs}), and from two instances of H\"{o}lder's Inequality as well as Theorem \ref{gagliardonirenberginequality} with $p=4, q=2, \alpha = 1/2$ and $m=1$, 
    \begin{align} \nonumber
        \left\vert\inner{\tilde{u}_s}{\mathcal{L}_{\tilde{u}_s}\phi}\right\vert \leq \norm{\tilde{u}_s}_{L^4}\norm{\mathcal{L}_{\tilde{u}_s}\phi}_{L^{4/3}} &\leq c\sum_{k=1}^2\norm{\tilde{u}_s}_{L^4}\norm{\tilde{u}_s}_{L^4}\norm{\partial_k\phi}\\ &\leq c\norm{\tilde{u}_s}\norm{\tilde{u}_s}_1\norm{\phi}_1. \label{a bound in align}
    \end{align}
    Combining the two we have that
    \begin{align*}
\int_0^T\norm{\mathcal{P}\mathcal{L}_{\tilde{u}_s}\tilde{u}_s}_{\left(W^{1,2}_{\sigma}\right)^*}^2ds \leq c\int_0^T\norm{\tilde{u}_s}^{2}\norm{\tilde{u}_s}^{2}_1ds \leq c\norm{\tilde{u}}^{2}_{L^\infty([0,T];L^2_{\sigma})}\int_0^T\norm{\tilde{u}_s}^{2}_1ds < \infty.\end{align*}
 A simpler argument justifies that $A\tilde{u}, \sum_{i=1}^\infty\mathcal{P}\mathcal{Q}_i^2\tilde{u} \in L^2\left([0,T]; \left(W^{1,2}_{\sigma}\right)^*\right)$, and indeed all are progressively measurable inherited from the progressive measurability of $\tilde{u}$ in $W^{1,2}_{\sigma}$ and the measurablity of the mappings from $W^{1,2}_{\sigma}$ into $\left(W^{1,2}_{\sigma}\right)^*$. As for (\ref{another another id}), this now follows immediately from rewriting the inner products of (\ref{newid1}) in terms of the duality pairings given here, and taking $\phi$ in this pairing outside of the integral. We note that the stochastic integral is well defined in $L^2_{\sigma}$, and is then simply embedded into $(W^{1,2}_{\sigma})^*$.  
\end{proof}

\begin{corollary}
For $\tilde{\mathbbm{P}}-a.e.$ $\omega$, $\tilde{u}_{\cdot}(\omega) \in C\left([0,T];L^2_{\sigma}\right)$.
\end{corollary}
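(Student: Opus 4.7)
The plan is to apply the infinite-dimensional It\^{o} formula (Proposition \ref{rockner prop}) for the Gelfand triple $W^{1,2}_{\sigma} \hookrightarrow L^2_{\sigma} \hookrightarrow \left(W^{1,2}_{\sigma}\right)^{*}$ to the real-valued process $t \mapsto \norm{\tilde{u}_t}^2$, deduce that this process admits a continuous modification, and then combine this with the already established weak continuity to upgrade to strong continuity in $L^2_{\sigma}$. The key point of using $N=2$ has essentially been isolated already: the 2D Gagliardo--Nirenberg bound (\ref{a bound in align}) gave $\norm{\mathcal{P}\mathcal{L}_{\tilde u_s}\tilde u_s}_{(W^{1,2}_\sigma)^*}\le c\norm{\tilde u_s}\norm{\tilde u_s}_1$, which together with $\tilde u \in L^{\infty}([0,T];L^2_\sigma)\cap L^2([0,T];W^{1,2}_\sigma)$ gives the required $L^2\bigl([0,T];(W^{1,2}_\sigma)^*\bigr)$ integrability of the nonlinear drift; the Stokes and correction terms are controlled more easily, as noted in Lemma \ref{originally unlabelled}.

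First, I would check the hypotheses of Proposition \ref{rockner prop} for $\tilde{u}$: that $\tilde{u}_0\in L^2_{\sigma}$ (given), that $\tilde{u}\in L^2([0,T];W^{1,2}_{\sigma})$ $\tilde{\mathbbm P}$--a.s., that the drift $-\mathcal{P}\mathcal{L}_{\tilde u}\tilde u -\nu A\tilde u + \frac{\nu}{2}\sum_i \mathcal{P}\mathcal{Q}_i^2\tilde u$ lies in $L^2([0,T];(W^{1,2}_\sigma)^*)$ $\tilde{\mathbbm P}$--a.s., and that the stochastic integrand $\mathcal{P}\mathcal{G}\tilde u$ defines an $L^2_\sigma$--valued integrator, the latter following from (\ref{assumpty 1}). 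The identity (\ref{another another id}) in $(W^{1,2}_\sigma)^*$ is precisely Lemma \ref{originally unlabelled}, so the proposition applies and yields
\begin{align*}
\norm{\tilde u_t}^2 = \norm{\tilde u_0}^2 &- 2\int_0^t \inner{\mathcal{L}_{\tilde u_s}\tilde u_s}{\tilde u_s}\,ds - 2\nu\int_0^t\norm{\tilde u_s}_1^2\,ds\\
&+ \nu\int_0^t\sum_{i=1}^\infty\left(\inner{\mathcal{Q}_i^2\tilde u_s}{\tilde u_s} + \norm{\mathcal{P}\mathcal{G}_i\tilde u_s}^2\right)ds - 2\nu^{\frac{1}{2}}\int_0^t \inner{\mathcal{G}\tilde u_s}{\tilde u_s}\,d\tilde{\mathcal{W}}_s,
\end{align*}
$\tilde{\mathbbm P}$--a.s.\ for every $t \in [0,T]$. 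Every term on the right is continuous in $t$ $\tilde{\mathbbm P}$--a.s.: the Lebesgue integrals are continuous by the integrability just noted (using (\ref{cancellationproperty'}), (\ref{assumpty 1}), (\ref{assumpty 3})), and the real-valued stochastic integral has continuous sample paths.

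Hence there is a $\tilde{\mathbbm P}$--full set on which $t\mapsto \norm{\tilde u_t}^2$ is continuous. On the intersection with the $\tilde{\mathbbm P}$--full set on which $\tilde u_{\cdot}\in C_w([0,T];L^2_{\sigma})$, fix $t\in[0,T]$ and a sequence $t_k\to t$. Then $\tilde u_{t_k}\rightharpoonup \tilde u_t$ weakly in $L^2_{\sigma}$ and $\norm{\tilde u_{t_k}}^2\to\norm{\tilde u_t}^2$, so
\[
\norm{\tilde u_{t_k}-\tilde u_t}^2 = \norm{\tilde u_{t_k}}^2 + \norm{\tilde u_t}^2 - 2\inner{\tilde u_{t_k}}{\tilde u_t} \longrightarrow 0,
\]
which gives $\tilde u_{\cdot}\in C([0,T];L^2_{\sigma})$ $\tilde{\mathbbm P}$--a.s. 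The main obstacle is the justification of Proposition \ref{rockner prop}, and specifically the $(W^{1,2}_\sigma)^*$-integrability of the nonlinear drift; this is exactly the 2D-specific step that fails in three dimensions, which is why the corresponding continuity statement is unavailable there.
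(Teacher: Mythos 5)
Your proposal is correct and rests on exactly the same machinery as the paper: Lemma \ref{originally unlabelled} supplies the hypotheses of Proposition \ref{rockner prop} (in particular the 2D-specific bound $\norm{\mathcal{P}\mathcal{L}_{\tilde u_s}\tilde u_s}_{(W^{1,2}_{\sigma})^*} \leq c\norm{\tilde u_s}\norm{\tilde u_s}_1$, which is indeed the step that fails in 3D), and Proposition \ref{rockner prop} is then applied. The one divergence is that you use only the energy-equality portion of that proposition and then reconstruct the continuity by hand, via continuity of $t \mapsto \norm{\tilde u_t}^2$ combined with the weak continuity $\tilde u_{\cdot}(\omega) \in C_w([0,T];L^2_{\sigma})$ and the Hilbert-space identity $\norm{\tilde u_{t_k}-\tilde u_t}^2 = \norm{\tilde u_{t_k}}^2 + \norm{\tilde u_t}^2 - 2\inner{\tilde u_{t_k}}{\tilde u_t}$. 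This is unnecessary: the conclusion of Proposition \ref{rockner prop} already asserts that for $\mathbbm{P}$-a.e.\ $\omega$, $\sy_{\cdot}(\omega) \in C([0,T];\mathcal{H}_2)$, so with $\mathcal{H}_2 = L^2_{\sigma}$ the corollary is, as the paper says, an immediate citation. Your reconstruction is essentially the classical argument underlying that continuity clause, and it is sound, with one tacit point worth flagging: your passage from ``the right-hand side is a.s.\ continuous'' to ``there is a full set on which $t \mapsto \norm{\tilde u_t}^2$ is continuous'' needs the energy identity to hold for \emph{all} $t$ on a single full-measure set, not merely for each fixed $t$ off a $t$-dependent null set; this is how Proposition \ref{rockner prop} (via [\cite{prevot2007concise}] Theorem 4.2.5) delivers it, so the step is justified, but quoting the continuity clause directly sidesteps the issue entirely. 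What your route buys is self-containedness — it makes transparent where the weak continuity from the existence proof is consumed — at the cost of redoing work the cited proposition already packages.
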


\begin{proof}
    This is now an immediate application of Proposition \ref{rockner prop}.
\end{proof}

It is the fact that the nonlinear term does not satisfy the regularity of Lemma \ref{originally unlabelled} in 3D that one cannot deduce the continuity in that setting. The same is true for the uniqueness, which we prove now.

\begin{proposition} \label{first uniqueness prop}
    Suppose that $\tilde{w}$ is another martingale weak solution of (\ref{projected Ito}) with respect to the same filtered probability space $\left(\tilde{\Omega},\tilde{\mathcal{F}},(\tilde{\mathcal{F}}_t), \tilde{\mathbbm{P}}\right)$, cylindrical Brownian Motion $\tilde{\mathcal{W}}$ and initial condition $\tilde{w}_0 = \tilde{u}_0$ $\tilde{\mathbbm{P}}-a.s.$. In addition assume that for $\tilde{\mathbbm{P}}-a.e.$ $\omega$, $\tilde{w}_{\cdot}(\omega) \in C\left([0,T];L^2_{\sigma}\right)$. Then
    $$ \tilde{\mathbbm{P}}\left(\left\{\omega \in \tilde{\Omega}: \tilde{u}_t(\omega) = \tilde{w}_t(\omega) \quad \forall t\in[0,T]\right\}\right) = 1.$$
\end{proposition}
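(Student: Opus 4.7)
The plan is to mirror the classical 2D Navier--Stokes uniqueness argument but with careful attention to the stochastic terms, exploiting the regularity established in Lemma \ref{originally unlabelled}. Set $\tilde{v} := \tilde{u} - \tilde{w}$. Subtracting the two instances of the identity (\ref{another another id}) (which is valid here because $\tilde{w}$, by assumption, satisfies the same continuity as $\tilde{u}$, and both admit the strong formulation in $(W^{1,2}_{\sigma})^*$), we obtain an evolution equation for $\tilde{v}$ in $(W^{1,2}_{\sigma})^*$ with $\tilde{v}_0 = 0$, whose drift is $-\mathcal{P}(\mathcal{L}_{\tilde{u}}\tilde{u} - \mathcal{L}_{\tilde{w}}\tilde{w}) - \nu A\tilde{v} + \frac{\nu}{2}\sum_i \mathcal{P}(\mathcal{Q}_i^2 \tilde{u} - \mathcal{Q}_i^2\tilde{w})$ and whose diffusion is $\nu^{1/2}\mathcal{P}(\mathcal{G}\tilde{u} - \mathcal{G}\tilde{w})$.

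The first step is to apply the It\^o Formula of Proposition \ref{rockner prop} to $\norm{\tilde{v}_t}^2$, which is justified precisely because in 2D all terms appearing in the drift lie in $L^2([0,T];(W^{1,2}_{\sigma})^*)$ a.s.\ (this is the analogue for $\tilde{v}$ of the argument of Lemma \ref{originally unlabelled}, using $N=2$ to estimate the nonlinear contribution). This gives
\begin{align*}
    \norm{\tilde{v}_t}^2 = -2\int_0^t \inner{\mathcal{L}_{\tilde{u}_s}\tilde{u}_s - \mathcal{L}_{\tilde{w}_s}\tilde{w}_s}{\tilde{v}_s}ds &- 2\nu\int_0^t \norm{\tilde{v}_s}_1^2 ds \\ &+ \nu\int_0^t \sum_{i=1}^\infty \bigl[\inner{\mathcal{Q}_i \tilde{v}_s}{\mathcal{Q}_i^* \tilde{v}_s} + \norm{\mathcal{G}_i\tilde{u}_s - \mathcal{G}_i\tilde{w}_s}^2\bigr]ds + M_t,
\end{align*}
where $M_t$ is the stochastic integral martingale term.

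The nonlinear term is the decisive one and is handled by writing $\mathcal{L}_{\tilde{u}}\tilde{u} - \mathcal{L}_{\tilde{w}}\tilde{w} = \mathcal{L}_{\tilde{v}}\tilde{u} + \mathcal{L}_{\tilde{w}}\tilde{v}$, noting that $\inner{\mathcal{L}_{\tilde{w}}\tilde{v}}{\tilde{v}} = 0$ by (\ref{cancellationproperty'}) and estimating $\inner{\mathcal{L}_{\tilde{v}}\tilde{u}}{\tilde{v}} = -\inner{\mathcal{L}_{\tilde{v}}\tilde{v}}{\tilde{u}}$ via the 2D Ladyzhenskaya inequality (Theorem \ref{gagliardonirenberginequality} with $p=4$, exactly as in Lemma \ref{lemma application of gag} but tailored to $N=2$) by $c\norm{\tilde{v}}^{1/2}\norm{\tilde{v}}_1^{3/2}\norm{\tilde{u}}^{1/2}\norm{\tilde{u}}_1^{1/2}$, then using Young's inequality with exponents $(4/3,4)$ to absorb the factor $\norm{\tilde{v}}_1^{3/2}$ partly into $\nu\norm{\tilde{v}}_1^2$ and partly into $C_\nu\norm{\tilde{v}}^2\norm{\tilde{u}}^2\norm{\tilde{u}}_1^2$. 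Assumption (\ref{assumpty 5}) handles the $\mathcal{Q}_i,\mathcal{G}_i$ contributions by $c_i K(\tilde{u},\tilde{w})\norm{\tilde{v}}^2 + k_i\norm{\tilde{v}}_{W^{1,2}}^2$; summing, the total $k_i$-contribution is at most $\nu(\norm{\tilde{v}}^2 + \norm{\tilde{v}}_1^2)$, which is absorbed into the remaining viscous dissipation (as $\sum k_i \leq 1 < 2$). The outcome is a pathwise bound of the form
\begin{equation*}
    \norm{\tilde{v}_t}^2 + \tfrac{\nu}{2}\int_0^t \norm{\tilde{v}_s}_1^2\,ds \leq \int_0^t \Psi_s \norm{\tilde{v}_s}^2\,ds + M_t,
\end{equation*}
with $\Psi_s := C(1 + K(\tilde{u}_s,\tilde{w}_s) + \norm{\tilde{u}_s}^2\norm{\tilde{u}_s}_1^2)$, integrable over $[0,T]$ for $\tilde{\mathbbm{P}}$-a.e.\ $\omega$ by the regularity of $\tilde{u},\tilde{w}$.

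To conclude, I would introduce the stopping times $\tau_R := T \wedge \inf\{t \geq 0 : \int_0^t \Psi_s\,ds \geq R\}$, which increase to $T$ $\tilde{\mathbbm{P}}$-a.s. Taking expectation after stopping, using BDG on $M$ together with (\ref{assumpty 6}) to control $\tilde{\mathbbm{E}}\sup_{r \leq t\wedge\tau_R}|M_r|$ by $\frac{1}{2}\tilde{\mathbbm{E}}\sup_{r \leq t \wedge \tau_R}\norm{\tilde{v}_r}^2 + c\tilde{\mathbbm{E}}\int_0^{t\wedge\tau_R}K(\tilde{u}_s,\tilde{w}_s)\norm{\tilde{v}_s}^2 ds$, yields a deterministic Gr\"onwall inequality on $[0,T]$ for the function $t \mapsto \tilde{\mathbbm{E}}\sup_{r \leq t\wedge\tau_R}\norm{\tilde{v}_r}^2$, whose vanishing initial datum forces it to be identically zero. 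Letting $R \to \infty$ and using continuity of $\tilde{v}$ in $L^2_{\sigma}$ (from the corollary following Lemma \ref{originally unlabelled}) gives $\tilde{v} \equiv 0$ on $[0,T]$ $\tilde{\mathbbm{P}}$-a.s., which is the claim. The only subtle point I anticipate is verifying cleanly that Proposition \ref{rockner prop} applies to $\tilde{v}$, since we need $\tilde{v}$ itself (not merely $\tilde{u}$ and $\tilde{w}$ separately) to satisfy the duality framework of that proposition; however the linearity of the equation in the drift/diffusion terms and the $N=2$ regularity of the nonlinear contribution make this routine.
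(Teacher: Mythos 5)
Your argument tracks the paper's proof closely up to the final step --- same energy/It\^{o} set-up via Proposition \ref{rockner prop}, same decomposition $\mathcal{L}_{\tilde{u}}\tilde{u}-\mathcal{L}_{\tilde{w}}\tilde{w} = \mathcal{L}_{\tilde{v}}\tilde{u}+\mathcal{L}_{\tilde{w}}\tilde{v}$ with (\ref{cancellationproperty'}), a 2D Ladyzhenskaya-type bound on the surviving trilinear term, assumptions (\ref{assumpty 5}) and (\ref{assumpty 6}) with BDG and Young for the noise --- but the concluding Gr\"{o}nwall step as you state it would fail. After taking expectation, the drift contribution is $\tilde{\mathbbm{E}}\int_0^{t\wedge\tau_R}\Psi_s\norm{\tilde{v}_s}^2 ds$ with $\Psi$ \emph{random}; your stopping time only bounds $\int_0^{t\wedge\tau_R}\Psi_s\,ds \leq R$ pathwise, not $\Psi_s$ pointwise in time. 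There is therefore no way to pass to $C\int_0^t \tilde{\mathbbm{E}}\left(\sup_{r\leq s\wedge\tau_R}\norm{\tilde{v}_r}^2\right)ds$: you cannot pull $\Psi_s$ out of the expectation, and a Cauchy--Schwarz in $\omega$ would require fourth moments of $\tilde{v}$ and second moments of $\Psi$, neither of which a martingale weak solution supplies (the definition gives only pathwise $L^\infty\left([0,T];L^2_{\sigma}\right)$ regularity, with no moment bounds whatsoever for the arbitrary competitor $\tilde{w}$). This non-interchangeability is precisely why the paper derives its energy estimate between arbitrary pairs of stopping times $\theta_j\leq\theta_k$ and invokes the \emph{stochastic} Gr\"{o}nwall Lemma \ref{gronny}, whose hypothesis (\ref{boundingronny}) is exactly the pathwise time-integral bound your $\tau_R$ provides. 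A second, related defect: absent moment control on $\tilde{w}$, the quantity $\tilde{\mathbbm{E}}\left(\sup_{r\leq t\wedge\tau_R}\norm{\tilde{v}_r}^2\right)$ is not known to be finite under your localisation, so the absorption of the term $\frac{1}{2}\tilde{\mathbbm{E}}\left(\sup_{r\leq t\wedge\tau_R}\norm{\tilde{v}_r}^2\right)$ produced by your BDG--Young step is not licensed.

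Both issues are repaired by the paper's choice of stopping time: its $\alpha_R$ caps the running suprema $\sup_{s\leq r}\norm{\tilde{u}_s}^2$, $\sup_{s\leq r}\norm{\tilde{w}_s}^2$ \emph{and} the integrals $\int_0^r\norm{\tilde{u}_s}_1^2 ds$, $\int_0^r\norm{\tilde{w}_s}_1^2 ds$ --- and the paper remarks explicitly that the pointwise cap $\norm{\tilde{u}^R_s}^2,\norm{\tilde{w}^R_s}^2\leq R$ is only legitimate because both solutions are pathwise continuous in $L^2_{\sigma}$, which is where the extra continuity hypothesis on $\tilde{w}$ enters. With this, $K(\tilde{u}^R_s,\tilde{w}^R_s)$ reduces to $c(R)\left(1+\norm{\tilde{u}^R_s}_1^2+\norm{\tilde{w}^R_s}_1^2\right)$, all expectations are finite, and Lemma \ref{gronny} applies with $\boldsymbol{\eta}_s = 1+\norm{\tilde{u}^R_s}_1^2+\norm{\tilde{w}^R_s}_1^2$ and $c' = T + 2R$. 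Your proof becomes correct if you enlarge $\tau_R$ accordingly and replace the deterministic Gr\"{o}nwall by Lemma \ref{gronny}, deriving your inequality between stopping-time pairs rather than from $0$; the final monotone-convergence passage $R\to\infty$ is then exactly as you and the paper both have it.
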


\begin{proof}
    We make our argument by considering the expectation of the difference of the solutions $\tilde{u},\tilde{w}$, and to do so we need to manufacture an increased regularity through stopping times once more. To this end let's define the stopping times $(\alpha_R)$ by $$\alpha_R := T \wedge \inf \left\{r \geq 0: \sup_{s\in[0,r]}\norm{\tilde{u}_s}^2 + \int_0^r \norm{\tilde{u}_s}_1^2ds \geq R \right\} \wedge \inf \left\{r \geq 0: \sup_{s\in[0,r]}\norm{\tilde{w}_s}^2 + \int_0^r \norm{\tilde{w}_s}_1^2ds \geq R \right\}$$ and subsequent processes
    $$\tilde{u}^R_{\cdot}:=\tilde{u}_{\cdot}\mathbbm{1}_{\cdot \leq \alpha_R}, \qquad \tilde{w}^R_{\cdot}:=\tilde{w}_{\cdot}\mathbbm{1}_{\cdot \leq \alpha_R}, \qquad \sy_{\cdot} = \tilde{u}^R - \tilde{w}^R.$$ Moreover the difference process satisfies
\begin{align*}
    \tilde{u}_{t\wedge \alpha_R} - \tilde{w}_{t\wedge \alpha_R} &= -\int_0^t\mathcal{P}\mathcal{L}_{\tilde{u}^R_s}\tilde{u}^R_s - \mathcal{P}\mathcal{L}_{\tilde{w}^R_s}\tilde{w}^R_s\ ds - \nu\int_0^t  A\sy_s\, ds\\ &+ \frac{\nu}{2}\int_0^t\sum_{i=1}^\infty \mathcal{P}\mathcal{Q}_i^2\sy_s ds - \nu^{\frac{1}{2}}\int_0^t\mathbbm{1}_{s \leq \alpha_R} \left(\mathcal{P}\mathcal{G}\tilde{u}^R_s - \mathcal{P}\mathcal{G}\tilde{w}^R_s \right)d\tilde{\mathcal{W}}_s
\end{align*}
and we can apply the Energy Equality of Proposition \ref{rockner prop} to see that
\begin{align*}
    \norm{\tilde{u}_{t\wedge \alpha_R} - \tilde{w}_{t\wedge \alpha_R}}^2 &= -2\int_0^t\inner{\mathcal{P}\mathcal{L}_{\tilde{u}^R_s}\tilde{u}^R_s - \mathcal{P}\mathcal{L}_{\tilde{w}^R_s}\tilde{w}^R_s}{\sy_s}_{(W^{1,2}_{\sigma})^* \times W^{1,2}_{\sigma}} ds\\ &- 2\nu\int_0^t  \inner{A\sy_s}{\sy_s}_{(W^{1,2}_{\sigma})^* \times W^{1,2}_{\sigma}} ds + \nu\int_0^t\sum_{i=1}^\infty \inner{\mathcal{P}\mathcal{Q}_i^2\sy_s}{\sy_s}_{(W^{1,2}_{\sigma})^* \times W^{1,2}_{\sigma}} ds\\ &+\nu\int_0^t\mathbbm{1}_{s \leq \alpha_R}\sum_{i=1}^\infty\norm{\mathcal{P}\mathcal{G}_i\tilde{u}^R_s - \mathcal{P}\mathcal{G}_i\tilde{w}^R_s}^2ds - 2\nu^{\frac{1}{2}}\int_0^t \inner{\mathcal{P}\mathcal{G}\tilde{u}^R_s - \mathcal{P}\mathcal{G}\tilde{w}^R_s }{\sy_s}d\tilde{\mathcal{W}}_s.
\end{align*}
Motivated by the use of Lemma \ref{gronny}, we consider arbitrary stopping times $0 \leq \theta_j \leq \theta_k \leq T$ and substitute $\theta_j$ into the above, then subtract this from the identity for any $\theta_j \leq r \leq T$, to give that 
\begin{align*}
    \norm{\tilde{u}_{r\wedge \alpha_R} - \tilde{w}_{r\wedge \alpha_R}}^2 &= \norm{\tilde{u}_{\theta_j\wedge \alpha_R} - \tilde{w}_{\theta_j\wedge \alpha_R}}^2-2\int_{\theta_j}^r\inner{\mathcal{P}\mathcal{L}_{\tilde{u}^R_s}\tilde{u}^R_s - \mathcal{P}\mathcal{L}_{\tilde{w}^R_s}\tilde{w}^R_s}{\sy_s}_{(W^{1,2}_{\sigma})^* \times W^{1,2}_{\sigma}} ds\\ &- 2\nu\int_{\theta_j}^r  \inner{A\sy_s}{\sy_s}_{(W^{1,2}_{\sigma})^* \times W^{1,2}_{\sigma}} ds + \nu \int_{\theta_j}^r\sum_{i=1}^\infty \inner{\mathcal{P}\mathcal{Q}_i^2\sy_s}{\sy_s}_{(W^{1,2}_{\sigma})^* \times W^{1,2}_{\sigma}} ds\\ &+\nu\int_{\theta_j}^r\mathbbm{1}_{s \leq \alpha_R}\sum_{i=1}^\infty\norm{\mathcal{P}\mathcal{G}_i\tilde{u}^R_s - \mathcal{P}\mathcal{G}_i\tilde{w}^R_s}^2ds - 2\nu^{\frac{1}{2}}\int_{\theta_j}^r \inner{\mathcal{P}\mathcal{G}\tilde{u}^R_s - \mathcal{P}\mathcal{G}\tilde{w}^R_s }{\sy_s}d\tilde{\mathcal{W}}_s.
\end{align*}
We break this down term by term, starting with the nonlinear term which is by definition
$$\inner{\mathcal{P}\mathcal{L}_{\tilde{u}^R_s}\tilde{u}^R_s - \mathcal{P}\mathcal{L}_{\tilde{w}^R_s}\tilde{w}^R_s}{\sy_s}_{(W^{1,2}_{\sigma})^* \times W^{1,2}_{\sigma}} = \inner{\mathcal{L}_{\tilde{u}^R_s}\tilde{u}^R_s - \mathcal{L}_{\tilde{w}^R_s}\tilde{w}^R_s}{\sy_s}.$$
Using (\ref{cancellationproperty'}) and then (\ref{a bound in align}),
\begin{align}\nonumber \left\vert\inner{\mathcal{L}_{\tilde{u}^R_s}\tilde{u}^R_s - \mathcal{L}_{\tilde{w}^R_s}\tilde{w}^R_s}{\sy_s}\right\vert &=\left\vert\inner{\mathcal{L}_{\sy_s }\tilde{u}^R_s + \mathcal{L}_{\tilde{w}^R_s}\sy_s}{\sy_s}\right\vert\\ \nonumber &= \left\vert\inner{\mathcal{L}_{\sy_s }\tilde{u}^R_s}{\sy_s}\right\vert\\\nonumber &\leq c\norm{\sy_s}\norm{\sy_s}_1\norm{\tilde{u}^R_s}_1\\ \label{little star}
&\leq c\norm{\tilde{u}^R_s}_1^2\norm{\sy_s}^2 + \nu\norm{\sy_s}_1^2
\end{align} 
where $c$ now depends on $\nu$, which is not meaningful here. In addition observe that \begin{equation} \label{another forgot}
    \inner{A\sy_s}{\sy_s}_{(W^{1,2}_{\sigma})^* \times W^{1,2}_{\sigma}} = \norm{\sy_s}^2_1
\end{equation}
and we combine the next two integrals as
\begin{align*}&\nu\int_{\theta_j}^r\mathbbm{1}_{s \leq \alpha_R}\sum_{i=1}^\infty\left(\inner{\mathcal{P}\mathcal{Q}_i^2\sy_s}{\sy_s}_{(W^{1,2}_{\sigma})^* \times W^{1,2}_{\sigma}} + \norm{\mathcal{P}\mathcal{G}_i\tilde{u}^R_s - \mathcal{P}\mathcal{G}_i\tilde{w}^R_s}^2\right)ds\\ & \qquad \qquad \qquad \leq \nu\int_{\theta_j}^r\mathbbm{1}_{s \leq \alpha_R}\sum_{i=1}^\infty\left(\inner{\mathcal{Q}_i\sy_s}{\mathcal{Q}_i^*\sy_s} + \norm{\mathcal{G}_i\tilde{u}^R_s - \mathcal{G}_i\tilde{w}^R_s}^2\right)ds\end{align*}
using the definition of the duality pairing and that $\mathcal{P}$ is an orthogonal projection in $L^2(\mathscr{O};\R^2)$. We apply (\ref{assumpty 5}) to bound this again by \begin{equation} \label{i forgot}c\int_{\theta_j}^r\left(1 + \norm{\tilde{u}^R_s}_1^2 + \norm{\tilde{w}^R_s}_1^2\right)\norm{\sy_s}^2 + \nu\norm{\sy_s}_1^2 ds\end{equation}
where the constant $c$ now also depends on $R$ having used that $\norm{\tilde{u}^R_s}^2, \norm{\tilde{w}^R_s}^2 \leq R$ (which we remark is only true as $\tilde{u}, \tilde{w}$ are pathwise continuous in $L^2_{\sigma}$). Combining (\ref{little star}), (\ref{another forgot}) and (\ref{i forgot}) we see that 
\begin{align*}
    \norm{\tilde{u}_{r\wedge \alpha_R} - \tilde{w}_{r\wedge \alpha_R}}^2 &\leq \norm{\tilde{u}_{\theta_j\wedge \alpha_R} - \tilde{w}_{\theta_j\wedge \alpha_R}}^2 + c\int_{\theta_j}^r\left(1 + \norm{\tilde{u}^R_s}_1^2 + \norm{\tilde{w}^R_s}_1^2\right)\norm{\sy_s}^2ds\\ &- 2\nu^{\frac{1}{2}}\int_{\theta_j}^r \inner{\mathcal{G}\tilde{u}^R_s - \mathcal{G}\tilde{w}^R_s }{\sy_s}d\tilde{\mathcal{W}}_s.
\end{align*}
We now take the absolute value on the right hand side, followed by the supremum over $r \in [\theta_j,\theta_k]$, then the expectation and immediately apply the Burkholder-Davis-Gundy Inequality to achieve that
\begin{align*}
    &\tilde{\mathbbm{E}}\left(\sup_{r\in[\theta_j,\theta_k]}\norm{\tilde{u}_{r\wedge \alpha_R} - \tilde{w}_{r\wedge \alpha_R}}^2\right) \leq \tilde{\mathbbm{E}}\left(\norm{\tilde{u}_{\theta_j\wedge \alpha_R} - \tilde{w}_{\theta_j\wedge \alpha_R}}^2\right)\\ & \qquad \qquad \qquad + c\tilde{\mathbbm{E}}\int_{\theta_j}^{\theta_k}\left(1 + \norm{\tilde{u}^R_s}_1^2 + \norm{\tilde{w}^R_s}_1^2\right)\norm{\sy_s}^2ds + c\tilde{\mathbbm{E}}\left(\int_{\theta_j}^{\theta_k} \sum_{i=1}^\infty \inner{\mathcal{G}_i\tilde{u}^R_s - \mathcal{G}_i\tilde{w}^R_s }{\sy_s}^2ds\right)^{\frac{1}{2}}.
\end{align*}
We now use (\ref{assumpty 6}) and follow the same process as (\ref{the same process}) to obtain that
\begin{align*}
    &c\tilde{\mathbbm{E}}\left(\int_{\theta_j}^{\theta_k} \sum_{i=1}^\infty \inner{\mathcal{G}_i\tilde{u}^R_s - \mathcal{G}_i\tilde{w}^R_s }{\sy_s}^2ds\right)^{\frac{1}{2}}\\ & \qquad \qquad \qquad \leq c\tilde{\mathbbm{E}}\left(\int_{\theta_j}^{\theta_k} \left(1 + \norm{\tilde{u}^R_s}_1^2 +\norm{\tilde{w}^R_s}_1^2\right)\norm{\sy_s}^4 ds\right)^{\frac{1}{2}}\\&\qquad \qquad \qquad \leq \frac{1}{2}\tilde{\mathbbm{E}}\left(\sup_{r\in[\theta_j,\theta_k]}\norm{\sy_r}^2\right) + c\tilde{\mathbbm{E}}\int_{\theta_j}^{\theta_k} \left(1 + \norm{\tilde{u}^R_s}_1^2 +\norm{\tilde{w}^R_s}_1^2\right)\norm{\sy_s}^2ds.
\end{align*}
We now use that $\norm{\sy_r}^2 \leq \norm{\tilde{u}_{r\wedge \alpha_R} - \tilde{w}_{r\wedge \alpha_R}}^2$ and rearrange to give
\begin{align*}
    \tilde{\mathbbm{E}}\left(\sup_{r\in[\theta_j,\theta_k]}\norm{\tilde{u}_{r\wedge \alpha_R} - \tilde{w}_{r\wedge \alpha_R}}^2\right) &\leq 2\tilde{\mathbbm{E}}\left(\norm{\tilde{u}_{\theta_j\wedge \alpha_R} - \tilde{w}_{\theta_j\wedge \alpha_R}}^2\right)\\ &+ c\tilde{\mathbbm{E}}\int_{\theta_j}^{\theta_k}\left(1 + \norm{\tilde{u}^R_s}_1^2 + \norm{\tilde{w}^R_s}_1^2\right)\norm{\tilde{u}_{s\wedge \alpha_R} - \tilde{w}_{s\wedge \alpha_R}}^2ds.
\end{align*}
We can now apply Lemma \ref{gronny} to deduce that $$\tilde{\mathbbm{E}}\left(\sup_{r\in[0,T]}\norm{\tilde{u}_{r\wedge \alpha_R} - \tilde{w}_{r\wedge \alpha_R}}^2\right) = 0$$ as of course $\tilde{u}_0 = \tilde{w}_0$ $\tilde{\mathbbm{P}}-a.s.$. We note that $\left(\sup_{r\in[0,T]}\norm{\tilde{u}_{r\wedge \alpha_R} - \tilde{w}_{r\wedge \alpha_R}}^2\right)$ is a monotone increasing sequence in $R$, hence we take the limit as $R \rightarrow \infty$ and apply the Monotone Convergence Theorem to obtain $$\tilde{\mathbbm{E}}\left(\sup_{r\in[0,T]}\norm{\tilde{u}_{r} - \tilde{w}_{r}}^2\right) = 0$$ which gives the result.
\end{proof}

It is now immediate that Theorem \ref{theorem 2D} holds in this case of the bounded initial condition.

\begin{corollary}
    There exists a unique weak solution $u$ of the equation (\ref{projected Ito}) with the property that for $\mathbbm{P}-a.e.$ $\omega$, $u_{\cdot}(\omega) \in C\left([0,T];L^2_{\sigma}\right)$.
\end{corollary}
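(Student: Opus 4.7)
The plan is to combine the existence of a martingale weak solution, the 2D continuity property, and the pathwise uniqueness proved in this subsection via a Yamada--Watanabe type argument. First, I would invoke Theorem \ref{existence of weak} to produce a martingale weak solution $\tilde u$ on some auxiliary filtered probability space $(\tilde\Omega,\tilde{\mathcal F},(\tilde{\mathcal F}_t),\tilde{\mathbb P})$ with cylindrical Brownian motion $\tilde{\mathcal W}$, whose initial datum $\tilde u_0$ has the same law as the given $u_0 \in L^\infty(\Omega;L^2_\sigma)$. In the 2D setting Lemma \ref{originally unlabelled} and its corollary apply, upgrading $\tilde u$ to a process with $\tilde{\mathbb P}$-a.s.\ continuous paths in $L^2_\sigma$.

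Next I would invoke Proposition \ref{first uniqueness prop}: any two continuous-in-$L^2_\sigma$ martingale weak solutions of (\ref{projected Ito}) driven by the same Brownian motion with the same initial datum are $\tilde{\mathbb P}$-indistinguishable. Pathwise uniqueness among continuous solutions therefore holds for (\ref{projected Ito}).

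To produce a solution on the original space $(\Omega,\mathcal F,(\mathcal F_t),\mathbb P)$ adapted to the given cylindrical Brownian motion $\mathcal W$, I would apply a Yamada--Watanabe theorem for SPDEs, which asserts that the combination of (i) existence of a martingale weak solution for every initial law and (ii) pathwise uniqueness forces the existence of a probabilistically strong solution and equality in law of all weak solutions. The natural reference in this functional setting is the Yamada--Watanabe result of Kurtz (or R\"ockner--Schmuland--Zhang) formulated in the path space $C([0,T];L^2_\sigma)$, which is precisely the space of regularity we have for $\tilde u$. Applying this with the measurable solution map thereby extracted gives a progressively measurable $u$ on $(\Omega,\mathcal F,(\mathcal F_t),\mathbb P)$ which, driven by the given $\mathcal W$, satisfies (\ref{identityindefinitionofspatiallyweak}) for this $u_0$ and has $\mathbb P$-a.s.\ continuous paths in $L^2_\sigma$. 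Uniqueness among weak solutions with this continuity property is then immediate from Proposition \ref{first uniqueness prop} transposed to $(\Omega,\mathcal F,(\mathcal F_t),\mathbb P)$.

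The main obstacle is the clean invocation of the Yamada--Watanabe principle: one must check that the setting of Definitions \ref{definitionofspatiallyweak} and \ref{definitionofspacetimeweakmartingale} fits the abstract framework, in particular that the solution map on the canonical space of continuous $L^2_\sigma$-valued paths is measurable, and that pathwise uniqueness in the class of continuous solutions (rather than in the broader class where Definition \ref{definitionofspatiallyweakmartingale} lives) is what that theorem requires. Both requirements hold because the 2D regularity of Lemma \ref{originally unlabelled} forces \emph{every} martingale weak solution to be continuous, so pathwise uniqueness in the continuous class is equivalent to pathwise uniqueness in the full class of martingale weak solutions. The remaining steps are routine.
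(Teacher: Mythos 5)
Your proposal is correct and follows essentially the same route as the paper, whose proof consists of invoking the Yamada--Watanabe theorem for stochastic evolution equations (the R\"ockner--Schmuland--Zhang version) to combine the existence of martingale weak solutions from Theorem \ref{existence of weak} with the pathwise uniqueness of Proposition \ref{first uniqueness prop}. Your additional observation --- that in 2D every martingale weak solution is automatically continuous in $L^2_\sigma$ by Lemma \ref{originally unlabelled}, so pathwise uniqueness in the continuous class suffices --- is a correct and worthwhile clarification of a point the paper leaves implicit.
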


\begin{proof}
    This follows from a classical Yamada-Watanabe type result, proven rigorously in this setting in [\cite{rockner2008yamada}].
\end{proof}

To prove Theorem \ref{theorem 2D} it thus only remains to extend the result to an arbitrary $\mathcal{F}_0-$measurable $u_0: \Omega \rightarrow L^2_{\sigma}$, which we now fix.

\begin{proof}[Proof of Theorem \ref{theorem 2D}:]
    We first show the existence of such a solution. The idea is as in [\cite{goodair2022existence1}] Theorem 3.40 where we use the fact that for each $k \in \N \cup \{0\}$ there exists a weak solution $u^k$ of the equation (\ref{projected Ito}) for the initial condition $u_0\mathbbm{1}_{k \leq \norm{u_0} < k+1}$. We argue that the process $u$ defined by $$u_t(\omega):= \sum_{k=1}^\infty u^k_t(\omega)\mathbbm{1}_{k \leq \norm{u_0(\omega)} < k+1}$$ is a weak solution. Appreciating that the infinite sum is merely formal and that for each $\omega$ $u(\omega):=u^k(\omega)$ for some $k$, then clearly $u$ inherits the pathwise regularity of the weak solutions $(u^k)$. As for the identity (\ref{identityindefinitionofspatiallyweak}), we introduce the more compact notation $$A_k:=\left\{\omega \in \Omega: k \leq \norm{u_0(\omega)} < k+1 \right\}$$ and as the $(A_k)$ partition $\Omega$, it is sufficient to show that 
\begin{align} \nonumber
     \mathbbm{1}_{A_k}\inner{u_t}{\phi} = \mathbbm{1}_{A_k}\inner{u_0}{\phi} & - \mathbbm{1}_{A_k}\int_0^{t}\inner{\mathcal{L}_{u_s}u_s}{\phi}ds - \nu\mathbbm{1}_{A_k}\int_0^{t} \inner{u_s}{\phi}_1 ds\\ &+ \frac{1}{2}\mathbbm{1}_{A_k}\int_0^{t}\sum_{i=1}^\infty \inner{\mathcal{Q}_iu_s}{\mathcal{Q}_i^*\phi} ds - \mathbbm{1}_{A_k}\int_0^{t} \inner{\mathcal{G}u_s}{\phi } d\mathcal{W}_s\nonumber
\end{align}
or equivalently
  \begin{align} \nonumber
     \mathbbm{1}_{A_k}\inner{u^k_t}{\phi} = \mathbbm{1}_{A_k}\inner{u_0\mathbbm{1}_{A_k}}{\phi} & - \mathbbm{1}_{A_k}\int_0^{t}\inner{\mathcal{L}_{u^k_s}u^k_s}{\phi}ds - \nu\mathbbm{1}_{A_k}\int_0^{t} \inner{u^k_s}{\phi}_1 ds\\ &+ \frac{\nu}{2}\mathbbm{1}_{A_k}\int_0^{t}\sum_{i=1}^\infty \inner{\mathcal{Q}_iu^k_s}{\mathcal{Q}_i^*\phi} ds - \nu^{\frac{1}{2}}\mathbbm{1}_{A_k}\int_0^{t} \inner{\mathcal{G}u_s}{\phi } d\mathcal{W}_s.\nonumber
\end{align}
We are a little more precise for the stochastic integral as we cannot simply take any random function through the integral, however $A_k$ is $\mathcal{F}_0-$measurable so it is justified here (see e.g. [\cite{goodair2022stochastic}] Proposition 1.6.14) hence $$\mathbbm{1}_{A_k}\int_0^{t} \inner{\mathcal{G}u_s}{\phi } d\mathcal{W}_s = \int_0^{t} \mathbbm{1}_{A_k}\inner{\mathcal{G}u_s}{\phi } d\mathcal{W}_s = \int_0^{t} \mathbbm{1}_{A_k}\inner{\mathcal{G}u^k_s}{\phi } d\mathcal{W}_s =  \mathbbm{1}_{A_k}\int_0^{t}\inner{\mathcal{G}u^k_s}{\phi } d\mathcal{W}_s.$$ This identity is granted from $u^k$ being a weak solution for the initial condition $u_0\mathbbm{1}_{A_k}$. To conclude the existence we only need to verify the progressive measurability, for which we understand $u$ as the pointwise almost everywhere limit of the sequence $\left(\sum_{k=1}^n u^k\mathbbm{1}_{k \leq \norm{u_0} < k+1} \right)$ over the product space $\Omega \times [0,t]$ equipped with the product sigma algebra $\mathcal{F}_t \times \mathcal{B}([0,t])$ in $W^{1,2}_{\sigma}$. Each $u^k$ is progressively measurable hence so too is $u^k\mathbbm{1}_{k \leq \norm{u_0} < k+1}$, thus measurable with respect to $\mathcal{F}_t \times \mathcal{B}([0,t])$, and the pointwise almost everywhere limit preserves the measurability which provides the result. This concludes the proof that $u$ is a weak solution of (\ref{projected Ito}) with the property that for $\mathbbm{P}-a.e.$ $\omega$, $u_{\cdot}(\omega) \in C\left([0,T];L^2_{\sigma}\right)$, and one can show it is the unique such solution identically to Proposition \ref{first uniqueness prop}.
\end{proof}

\subsection{Energy Estimates for the Constructed Solution} \label{sub energy}

We prove Proposition \ref{ex theorem}.

\begin{proof}[Proof of Proposition \ref{ex theorem}:]
    Of course the existence of a martingale weak solution comes from Theorem \ref{existence of weak}, so it is only the estimates (\ref{hello1}) and (\ref{hello3}) which must be justified and can be done so for the $\tilde{u}$ constructed in the proof of Theorem \ref{existence of weak}. We start with (\ref{hello1}): recall in Proposition \ref{prop for regularity of limit} how the regularity of the limit process was obtained from the uniform bounds (\ref{second result}). In the same manner it is sufficient to show that 
    \begin{equation} \label{hello2}
\mathbbm{E}\left(\sup_{r\in[0,T]}\norm{u^{n}_{r}}^2 \right) \leq \left(1 + o_{\nu}\right) \norm{u^n_0}^2 + o_{\nu}
    \end{equation}
    for every $n\in \N$, where $u^n$ is the strong solution of
    \begin{equation} \nonumber
    u^n_t = u^n_0 - \int_0^t\mathcal{P}_n\mathcal{P}\mathcal{L}_{u^n_s}u^n_s\ ds - \nu\int_0^t \mathcal{P}_n A u^n_s\, ds + \frac{\nu}{2}\int_0^t\sum_{i=1}^\infty \mathcal{P}_n \mathcal{P}\mathcal{Q}_i^2u^n_s ds - \nu^{\frac{1}{2}}\int_0^t \mathcal{P}_n\mathcal{P}\mathcal{G}u^n_s d\mathcal{W}_s
\end{equation}
in analogy with (\ref{projected Ito galerkin}), and $o_{\nu}$ is independent of $n$. Identically to Proposition \ref{prop for first energy} but simply ignoring the contribution from the Stokes Operator in the inequality and using that the initial condition is deterministic, we arrive at 
  \begin{align*}
\mathbbm{E}\left(\sup_{r\in[0,T]}\norm{u^{n}_{r}}^2\right) \leq \norm{u^n_0}^2 + c\nu + c\nu\int_0^{T} \mathbbm{E}\left(\norm{u^{n}_s}^2\right) ds + c\nu^{\frac{1}{2}}\mathbbm{E}\left(\int_0^{T}\sum_{i=1}^\infty \inner{\mathcal{G}_i\hat{u}^{n}_s}{\hat{u}^{n}_s}^2 ds\right)^\frac{1}{2}.
\end{align*}
The final term is controlled similarly again, just a little more precisely:
\begin{align*}
        c\nu^{\frac{1}{2}}\mathbbm{E}\left(\int_0^{T}\sum_{i=1}^\infty \inner{\mathcal{G}_i\hat{u}^{n}_s}{\hat{u}^{n}_s}^2 ds\right)^\frac{1}{2}
        &\leq c\nu^{\frac{1}{2}} + c\nu^{\frac{1}{2}}\mathbbm{E}\left(\sup_{r\in[0,T]}\norm{\hat{u}^{n}_r}^2\int_0^{T} \norm{\hat{u}^{n}_s}^2ds\right)^\frac{1}{2}\\
        &\leq c\nu^{\frac{1}{2}} + \nu^{\frac{1}{2}}\mathbbm{E}\left(\sup_{r\in[0,T]}\norm{\hat{u}^{n}_r}^2\right) + c\nu^{\frac{1}{2}}\mathbbm{E}\int_0^{T} \norm{\hat{u}^{n}_s}^2ds
    \end{align*}
    so that 
 \begin{align*}
\left(1 - \nu^{\frac{1}{2}}\right)\mathbbm{E}\left(\sup_{r\in[0,T]}\norm{u^{n}_{r}}^2\right) \leq \norm{u^n_0}^2 + c\nu + c\left(\nu + \nu^{\frac{1}{2}}\right)\int_0^{T} \mathbbm{E}\left(\norm{u^{n}_s}^2\right) ds
\end{align*}
and furthermore through dividing by $\left(1 - \nu^{\frac{1}{2}}\right)$ and rewriting with the $o_{\nu}$ notation,
 \begin{align*}
\mathbbm{E}\left(\sup_{r\in[0,T]}\norm{u^{n}_{r}}^2\right)  \leq \left(1 + o_{\nu}\right) \norm{u^n_0}^2 + o_{\nu} + o_{\nu}\int_0^{T} \mathbbm{E}\left(\norm{u^{n}_s}^2\right) ds
\end{align*}
having rewritten $\frac{1}{1-\nu^{1/2}}$ as $1 + \frac{\nu^{1/2}}{1-\nu^{1/2}}$. We now apply the standard Gr\"{o}nwall Inequality to deduce that
 \begin{align*}
\mathbbm{E}\left(\sup_{r\in[0,T]}\norm{u^{n}_{r}}^2\right) \leq e^{o_{\nu}}\left[\left(1 + o_{\nu}\right) \norm{u^n_0}^2 + o_{\nu} \right] \leq \left(1 + o_{\nu}\right) \norm{u^n_0}^2 + o_{\nu} 
\end{align*}
using the convergence $\lim_{\nu \rightarrow 0}e^{o_{\nu}} = 1$, demonstrating (\ref{hello2}) to conclude the justification of (\ref{hello1}). Similarly for (\ref{hello3}) it is sufficient to demonstrate that \begin{equation} \label{hello4}
        \mathbbm{E}\left[\norm{u^n_{t}}^2 + \nu\int_0^{t} \norm{u^n_s}^2_1 ds\right] \leq \left(1 + o_{\nu}\right) \norm{u^n_0}^2 + o_{\nu}.
         \end{equation}
The proof is near identical to (\ref{hello2}), though as we do not take the supremum then we take the expectation of the stochastic integral directly so this term is null, and we maintain the term coming from the Stokes Operator. Thus we arrive directly at
    \begin{align*}
\mathbbm{E}\left(\norm{u^{n}_{t}}^2\right) + \nu\mathbbm{E}\int_0^{t} \norm{u^{n}_s}^2_1 ds \leq \norm{u^n_0}^2 + c\nu + c\nu\int_0^{T} \mathbbm{E}\left(\norm{u^{n}_s}^2\right) ds
\end{align*}
from which the remainder of the proof follows as above, noting that $$\int_0^{T} \mathbbm{E}\left(\norm{u^{n}_s}^2\right) ds \leq \int_0^{T}\left[ \mathbbm{E}\left(\norm{u^{n}_s}^2\right) + \nu\mathbbm{E}\int_0^s\norm{u^n_r}_1^2dr\right] ds$$
so we can apply the Gr\"{o}nwall Inequality bounding the entirety of the left hand side as is required.

\end{proof}

\section{Appendix} \label{section appendix}

We present some supplementary results used in the paper. Lemma \ref{Lemma 5.2} guides the proof of Proposition \ref{prop for tightness}, and similarly so for Lemma \ref{lemma for D tight} serving Proposition \ref{prop for tightness two}. Theorem \ref{gagliardonirenberginequality} is classical and facilitates estimates on the nonlinear term, for example in Lemma \ref{lemma application of gag}. Lemma \ref{gronny} and Proposition \ref{rockner prop} are fundamental techniques in SPDE theory used throughout the paper. 

\begin{lemma} \label{Lemma 5.2}
    Let $\mathcal{H}_1, \mathcal{H}_2$ be Hilbert Spaces such that $\mathcal{H}_1$ is compactly embedded into $\mathcal{H}_2$, and for some fixed $T>0$ let $\sy^n: \Omega \times [0,T] \rightarrow \mathcal{H}_1$ be a sequence of measurable processes such that \begin{equation} \label{first condition} \sup_{n\in \N}\mathbbm{E}\int_0^T\norm{\sy^n_s}^2_{\mathcal{H}_1}ds < \infty\end{equation} and for any $\varepsilon > 0$, 
    \begin{equation}\label{second condition} \lim_{\delta \rightarrow 0^+}\sup_{n \in \N}\mathbbm{P}\left(\left\{\omega \in \Omega:\int_0^{T-\delta}\norm{\sy^n_{s + \delta}(\omega) - \sy^n_s(\omega)}^2_{\mathcal{H}_2}ds > \varepsilon\right\} \right) =0.\end{equation}
    Then the sequence of the laws of $(\sy^n)$ is tight in the space of probability measures over $L^2\left([0,T];\mathcal{H}_2\right)$.
\end{lemma}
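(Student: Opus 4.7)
\textbf{Proof plan for Lemma \ref{Lemma 5.2}.} The strategy is a standard combination of Prokhorov's theorem with a Kolmogorov-Riesz / Simon type compactness criterion in $L^2\left([0,T];\mathcal{H}_2\right)$. By Prokhorov, it suffices to show that for every $\eta > 0$ there exists a relatively compact set $K_\eta \subset L^2\left([0,T];\mathcal{H}_2\right)$ such that $\mathbbm{P}\left(\sy^n \in K_\eta\right) \geq 1 - \eta$ for all $n \in \N$. I will construct such $K_\eta$ from the two hypotheses: (\ref{first condition}) will control the spatial regularity and (\ref{second condition}) will produce the equicontinuity in time translations that is needed for compactness in the $\mathcal{H}_2$-valued $L^2$ space.

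\emph{First step: probabilistic bounds.} Fix $\eta > 0$. Using (\ref{first condition}) and Chebyshev's inequality, pick $M_\eta > 0$ with
\[
\sup_{n \in \N} \mathbbm{P}\left(\int_0^T \norm{\sy^n_s}^2_{\mathcal{H}_1}\,ds > M_\eta\right) \leq \frac{\eta}{2}.
\]
Next, for each $k \in \N$, apply (\ref{second condition}) with $\varepsilon = 1/k$ to produce a $\delta_k > 0$, which WLOG may be chosen so that the sequence $(\delta_k)$ is strictly decreasing to $0$, satisfying
\[
\sup_{n \in \N}\mathbbm{P}\left(\left\{\int_0^{T-\delta}\norm{\sy^n_{s+\delta}-\sy^n_s}^2_{\mathcal{H}_2}\,ds > \tfrac{1}{k}\right\}\right) \leq \frac{\eta}{2^{k+1}} \quad \text{for all } \delta \in (0,\delta_k].
\]

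\emph{Second step: construction of $K_\eta$.} Define
\[
K_\eta := \left\{f \in L^2\left([0,T];\mathcal{H}_1\right) : \int_0^T \norm{f(s)}^2_{\mathcal{H}_1}\,ds \leq M_\eta,\ \int_0^{T-\delta}\norm{f(s+\delta)-f(s)}^2_{\mathcal{H}_2}\,ds \leq \tfrac{1}{k} \ \forall k \in \N,\ \forall \delta \in (0,\delta_k]\right\}.
\]
A union bound on the exceptional events yields $\mathbbm{P}\left(\sy^n \notin K_\eta\right) \leq \eta/2 + \sum_k \eta/2^{k+1} = \eta$, so $\mathbbm{P}(\sy^n \in K_\eta) \geq 1 - \eta$ for all $n$.

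\emph{Third step: relative compactness of $K_\eta$.} I will verify the hypotheses of a Kolmogorov-Riesz / Simon compactness criterion for $L^2\left([0,T];\mathcal{H}_2\right)$:
\begin{enumerate}
    \item[(i)] For every $0 \leq t_1 < t_2 \leq T$, the set $\bigl\{\int_{t_1}^{t_2} f(s)\,ds : f \in K_\eta\bigr\}$ is relatively compact in $\mathcal{H}_2$. This is immediate from Cauchy–Schwarz, since $\bigl\|\int_{t_1}^{t_2} f\,ds\bigr\|_{\mathcal{H}_1} \leq (t_2-t_1)^{1/2} M_\eta^{1/2}$, so the set is bounded in $\mathcal{H}_1$, and the compact embedding $\mathcal{H}_1 \hookrightarrow \mathcal{H}_2$ makes it relatively compact in $\mathcal{H}_2$.
    \item[(ii)] The uniform modulus $\omega_{K_\eta}(\delta) := \sup_{f \in K_\eta}\int_0^{T-\delta}\norm{f(s+\delta)-f(s)}^2_{\mathcal{H}_2}\,ds$ tends to $0$ as $\delta \to 0^+$. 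Indeed, for $\delta \in (0,\delta_k]$ the construction of $K_\eta$ gives $\omega_{K_\eta}(\delta) \leq 1/k$, so as $\delta \to 0^+$ we may take $k \to \infty$ and conclude $\omega_{K_\eta}(\delta) \to 0$.
\end{enumerate}
Both conditions being met, $K_\eta$ is relatively compact in $L^2\left([0,T];\mathcal{H}_2\right)$, and therefore its closure is compact. Since $\{\sy^n \in K_\eta\}\subset \{\sy^n \in \overline{K_\eta}\}$, we conclude $\mathbbm{P}(\sy^n \in \overline{K_\eta}) \geq 1 - \eta$ for all $n$, yielding tightness by Prokhorov.

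The mildest obstacle is making sure the modulus of continuity deduced from the discrete collection of bounds at $(\delta_k)$ interpolates correctly to a genuine uniform modulus for all small $\delta > 0$; the monotone choice $\delta_k \downarrow 0$ with the bound $1/k$ holding throughout $(0,\delta_k]$ handles this cleanly. Beyond that, every step is a direct application of Chebyshev, the union bound, and a standard compactness criterion, so no substantial new estimate is required.
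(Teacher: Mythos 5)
The paper does not prove this lemma itself — it simply cites [\cite{rockner2022well}] Lemma 5.2 — so your attempt stands or falls on its own, and it contains a genuine gap at the union bound in your second step. Write $Z_n(\delta) := \int_0^{T-\delta}\norm{\sy^n_{s+\delta}-\sy^n_s}^2_{\mathcal{H}_2}\,ds$. What you correctly extract from (\ref{second condition}) is that for each \emph{fixed} $\delta \in (0,\delta_k]$ one has $\sup_n \mathbbm{P}\left(Z_n(\delta) > 1/k\right) \leq \eta 2^{-(k+1)}$. But membership in your $K_\eta$ demands $Z_n(\delta) \leq 1/k$ \emph{simultaneously} for all $\delta \in (0,\delta_k]$, so the exceptional event at level $k$ is $E_{n,k} = \bigcup_{\delta \in (0,\delta_k]}\left\{Z_n(\delta) > 1/k\right\}$ — a union over a continuum of $\delta$'s (a countable dense set after using continuity of $\delta \mapsto Z_n(\delta)$, which holds since $\sy^n(\omega) \in L^2([0,T];\mathcal{H}_2)$ a.s.), each member of which has probability at most $\eta 2^{-(k+1)}$. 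Nothing in the hypotheses bounds $\mathbbm{P}(E_{n,k})$: smallness of $\mathbbm{P}(Z_n(\delta)>\varepsilon)$ pointwise in $\delta$ does not control the probability that the supremum over $\delta$ exceeds $\varepsilon$, and your claimed estimate $\mathbbm{P}(\sy^n \notin K_\eta) \leq \eta/2 + \sum_k \eta 2^{-(k+1)}$ silently equates the two. This is precisely the hard point of the lemma, not a routine detail.

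The gap is also not repairable by the obvious weakening, namely imposing the translation bound in $K_\eta$ only at the countably many scales $\delta_k$ (which would restore the union bound): a family bounded in $L^2([0,T];\mathcal{H}_1)$ whose translation modulus vanishes uniformly along one fixed sequence $\delta_k \downarrow 0$ need not be relatively compact in $L^2([0,T];\mathcal{H}_2)$. Indeed, take $T=1$, $\delta_k = 1/k!$, fix $x \in \mathcal{H}_1$ and set $f_j(t) := \sin(2\pi\, j!\, t)\,x$: then $\int_0^{1-\delta_k}\norm{f_j(s+\delta_k)-f_j(s)}^2_{\mathcal{H}_2}ds = 0$ for $j \geq k$ (since $j!/k! \in \Z$) and is $O(k^{-2})$ for $j < k$, the family is bounded in $L^2([0,1];\mathcal{H}_1)$, yet $(f_j)$ converges weakly to $0$ with $\norm{f_j}_{L^2([0,1];\mathcal{H}_2)}^2 = \norm{x}_{\mathcal{H}_2}^2/2$, so it has no convergent subsequence; Simon's condition (ii) genuinely requires control for \emph{all} small $\delta$, which is exactly what your probability estimate fails to deliver. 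A correct route — essentially that of [\cite{rockner2022well}] — replaces the pointwise modulus by an integrated one: from (\ref{second condition}) one gets $\sup_n \mathbbm{E}\left[Z_n(\delta)\wedge 1\right] \rightarrow 0$ as $\delta \rightarrow 0^+$, hence $\sup_n \mathbbm{E}\left[\rho^{-1}\int_0^\rho \left(Z_n(\delta)\wedge 1\right) d\delta\right] \rightarrow 0$ as $\rho \rightarrow 0^+$, and Markov's inequality then produces only countably many exceptional events, one per averaging scale $\rho_k$, to which the union bound legitimately applies; relative compactness of the resulting set follows because the averaged modulus (together with the $L^2([0,T];\mathcal{H}_1)$ bound, which undoes the truncation) controls the $L^2$-distance from $f$ to its Steklov means $t \mapsto \rho^{-1}\int_t^{t+\rho} f(r)\,dr$, and for fixed $\rho$ these means form a relatively compact family by Arzel\`{a}--Ascoli and the compact embedding $\mathcal{H}_1 \xhookrightarrow{} \mathcal{H}_2$. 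Your remaining steps — Chebyshev for the $\mathcal{H}_1$ bound and the verification of Simon's condition (i) — are fine; the failure is localized to, but fatal at, the union bound.
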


\begin{proof}
    See [\cite{rockner2022well}] Lemma 5.2.
\end{proof}

\begin{lemma} \label{lemma for D tight}
    Let $\mathcal{Y}$ be a reflexive separable Banach Space and $\mathcal{H}$ a separable Hilbert Space such that $\mathcal{Y}$ is compactly embedded into $\mathcal{\mathcal{H}}$, and consider the induced Gelfand Triple
    $$\mathcal{Y} \xhookrightarrow{} \mathcal{H} \xhookrightarrow{} \mathcal{Y}^*. $$ For some fixed $T>0$ let $\sy^n: \Omega \rightarrow C\left([0,T];\mathcal{H}\right)$ be a sequence of measurable processes such that for every $t\in[0,T]$, \begin{equation} \label{first condition primed}
        \sup_{n \in \N}\mathbbm{E}\left(\sup_{t\in[0,T]}\norm{\sy^n_t}_{\mathcal{H}}\right) < \infty
    \end{equation}
    and for any sequence of stopping times $(\gamma_n)$ with $\gamma_n: \Omega \rightarrow [0,T]$, and any $\varepsilon > 0$, $y \in \mathcal{Y}$,
    \begin{equation} \label{second condition primed}
        \lim_{\delta \rightarrow 0^+}\sup_{n \in \N}\mathbbm{P}\left(\left\{
    \omega \in \Omega: \left\vert \left\langle \sy^n_{(\gamma_n + \delta) \wedge T} -\sy^n_{\gamma_n }   , y     \right\rangle_{\mathcal{H}} \right\vert > \varepsilon \right\}\right)  = 0.
    \end{equation}
    Then the sequence of the laws of $(\sy^n)$ is tight in the space of probability measures over $\mathcal{D}\left([0,T];\mathcal{Y}^*\right)$.
\end{lemma}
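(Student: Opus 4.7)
The plan is to apply an Aldous-type tightness criterion in the Skorohod space $\mathcal{D}([0,T]; \mathcal{Y}^*)$. Such a criterion requires two ingredients: uniform-in-$n$ compact containment of the marginals $\{\sy^n_t\}$ in $\mathcal{Y}^*$, and control of the increments of $\sy^n$ at arbitrary stopping times in the $\mathcal{Y}^*$ norm. The hypothesis (\ref{second condition primed}) only supplies control of the pairing against a single test element $y \in \mathcal{Y}$, so the main task is to upgrade this pointwise-in-$y$ control to $\mathcal{Y}^*$-norm control by exploiting the compact embedding.

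First I would establish compact containment. The reflexivity of $\mathcal{Y}$ and $\mathcal{H}$, together with the compactness of $\mathcal{Y} \hookrightarrow \mathcal{H}$, yields by Schauder's theorem that the dual embedding $\mathcal{H} \cong \mathcal{H}^* \hookrightarrow \mathcal{Y}^*$ is also compact. Hence closed $\mathcal{H}$-balls are norm-compact in $\mathcal{Y}^*$. Markov's inequality applied to the bound (\ref{first condition primed}) then gives, for each $\varepsilon > 0$, a radius $R > 0$ with
$$ \sup_{n \in \N} \mathbbm{P}\left( \sup_{t \in [0,T]} \norm{\sy^n_t}_{\mathcal{H}} > R \right) < \varepsilon, $$
which is the required uniform compact containment, simultaneously in all $t \in [0,T]$.

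The heart of the argument is the second step. Let $K_R$ denote the closed $\mathcal{H}$-ball of radius $2R$, regarded as a compact subset of $\mathcal{Y}^*$. Since $\mathcal{Y}$ is separable, the weak-$\ast$ topology is metrizable on norm-bounded subsets of $\mathcal{Y}^*$; restricted to $K_R$ the norm topology is compact Hausdorff while the weak-$\ast$ topology is Hausdorff and coarser, so the identity is a continuous bijection from a compact space to a Hausdorff space, hence a homeomorphism. Consequently, for any $\eta > 0$ one can find finitely many $y_1, \ldots, y_k$ drawn from a countable dense subset of $\mathcal{Y}$ together with $\eta' > 0$ such that
$$ \left\{ z \in K_R : \max_{1 \leq j \leq k} \left|\inner{z}{y_j}_{\mathcal{H}}\right| < \eta' \right\} \subset \left\{ z \in K_R : \norm{z}_{\mathcal{Y}^*} < \eta \right\}. $$
Setting $Z^n_\delta := \sy^n_{(\gamma_n + \delta) \wedge T} - \sy^n_{\gamma_n}$, on the event $\{\sup_{t} \norm{\sy^n_t}_{\mathcal{H}} \leq R\}$ one has $Z^n_\delta \in K_R$, and the two steps combine to give
$$ \mathbbm{P}\left( \norm{Z^n_\delta}_{\mathcal{Y}^*} > \eta \right) \leq \varepsilon + \sum_{j=1}^k \mathbbm{P}\left( \left|\inner{Z^n_\delta}{y_j}_{\mathcal{H}}\right| \geq \eta' \right). $$
Hypothesis (\ref{second condition primed}) applied to each fixed $y_j$ shows that every term in the finite sum tends to zero uniformly in $n$ as $\delta \to 0^+$, so the right-hand side drops below $2\varepsilon$ for $\delta$ small.

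With compact containment and Aldous modulus control both secured in $\mathcal{D}([0,T]; \mathcal{Y}^*)$, tightness of the laws of $(\sy^n)$ in this Skorohod space follows from a standard Aldous-type theorem. The delicate point is the topological identification of the $\mathcal{Y}^*$-norm and weak-$\ast$ topologies on $K_R$: this is where the compactness of $\mathcal{Y} \hookrightarrow \mathcal{H}$ is genuinely used, and it is precisely the mechanism that converts the test-function hypothesis (\ref{second condition primed}) into the norm modulus demanded by Aldous' criterion.
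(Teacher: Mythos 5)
Your proof is correct, but it follows a genuinely different route from the paper's. The paper combines Jakubowski's tightness criterion ([\cite{jakubowski1986skorokhod}] Theorem 3.1) with Aldous's theorem ([\cite{aldous1978stopping}] Theorem 1): after verifying the same compact containment you establish (Markov's inequality plus compactness of bounded $\mathcal{H}$-sets in $\mathcal{Y}^*$), Jakubowski's criterion reduces tightness in $\mathcal{D}\left([0,T];\mathcal{Y}^*\right)$ to tightness of the scalar processes $\inner{\sy^n}{\psi}$ in $\mathcal{D}\left([0,T];\R\right)$ for $\psi$ ranging over the point-separating family $\left(\mathcal{Y}^*\right)^*$, which by reflexivity of $\mathcal{Y}$ are exactly the pairings $\inner{\sy^n}{y}_{\mathcal{H}}$ with $y \in \mathcal{Y}$; the one-dimensional Aldous criterion then consumes (\ref{second condition primed}) directly, test element by test element, with no need ever to upgrade to norm control. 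You instead apply the classical Aldous criterion in the Polish space $\mathcal{Y}^*$ itself, which forces you to convert the test-function hypothesis into a $\mathcal{Y}^*$-norm modulus; your mechanism for this is sound: off an event of probability at most $\varepsilon$ the increment $Z^n_\delta$ lies in the fixed $\mathcal{H}$-ball $K_R$, which is genuinely norm-compact in $\mathcal{Y}^*$ (the compact embedding maps the weakly compact ball to a closed set, since weakly convergent sequences go to norm-convergent ones), the norm and weak-$*$ topologies coincide on $K_R$ by your compact-to-Hausdorff bijection argument, and a finite subcover of the compact set $\left\{z \in K_R : \norm{z}_{\mathcal{Y}^*} \geq \eta\right\}$ by sets of the form $\left\{z : \left\vert\inner{z}{y}_{\mathcal{H}}\right\vert > \eta'\right\}$ produces your $y_1,\dots,y_k$ and $\eta'$, uniformly in $n$ and $\delta$ since they depend only on $R$ and $\eta$. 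What each approach buys: the paper keeps all probabilistic estimates one-dimensional and outsources the infinite-dimensional topology to Jakubowski's projective criterion, whereas your argument is self-contained modulo the standard Aldous theorem and actually proves the stronger statement that the Aldous modulus holds in the $\mathcal{Y}^*$ norm, using the compact embedding twice (containment and the topology identification) where the paper uses it once. Two small points you should make explicit to complete the write-up: $\mathcal{Y}^*$ is separable (as the dual of a separable reflexive space) and complete, hence Polish, so the Aldous criterion applies in it; and the finite-cover extraction just described is the concrete content of your homeomorphism step, which as written only asserts the inclusion rather than deriving it.
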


\begin{proof}
    We essentially combine the tightness criteria of [\cite{jakubowski1986skorokhod}] Theorem 3.1 and [\cite{aldous1978stopping}] Theorem 1, in the specific case outlined here. Firstly in reference to [\cite{jakubowski1986skorokhod}] Theorem 3.1 we may take $E$ to be $\mathcal{Y}^*$ (which is separable from the reflexivity and separability of $\mathcal{Y})$ and $\mathbbm{F}$ to be $\left(\mathcal{Y}^*\right)^*$, which is well known to separate points in $\mathcal{Y}^*$ from a corollary of the Hahn-Banach Theorem which asserts that for every $\phi \in \mathcal{Y}^*$ there exists a $\psi \in \left(\mathcal{Y}^*\right)^*$ such that $\inner{\phi}{\psi}_{\mathcal{Y}^* \times \left(\mathcal{Y}^*\right)^*} = \norm{\phi}_{\mathcal{Y}^*}$. We also note that condition $(3.3)$ in [\cite{jakubowski1986skorokhod}] is satisfied for $(\mu_n)$ taken to be the sequence of laws of $(\sy^n)$ over $\mathcal{D}\left([0,T];\mathcal{Y}^*\right)$, owing to the property (\ref{first condition primed}). Indeed as $\mathcal{Y}$ is compactly embedded into $\mathcal{H}$ then $\mathcal{H}$ is compactly embedded into $\mathcal{Y}^*$, so one only needs to take a bounded subset of $\mathcal{H}$ for this property (3.3). Considering the closed ball of radius $M$ in $\mathcal{H}$, $\tilde{B}_M$, we have that 
\begin{align*}
    \mathbbm{P}\left(\left\{\omega \in \Omega: \sy^n(\omega) \notin D\left([0,T] ;\tilde{B}_M\right) \right\} \right) &\leq \mathbbm{P}\left(\left\{\omega \in \Omega: \sy^n(\omega) \notin C\left([0,T] ;\tilde{B}_M\right) \right\} \right)\\ &\leq 
    \mathbbm{P}\left(\left\{\omega \in \Omega: \sup_{t\in[0,T]}\norm{\sy^n_t(\omega)}_{\mathcal{H}} > M \right\} \right)\\
    &\leq \frac{1}{M}\mathbbm{E}(\sup_{t\in[0,T]}\norm{\sy^n_t}_{\mathcal{H}})\\
    &\leq  \frac{1}{M}\sup_{n \in \N}\mathbbm{E}(\sup_{t\in[0,T]}\norm{\sy^n_t}_{\mathcal{H}})
\end{align*}
from which we see an arbitrarily large choice of $M$ will justify (3.3). Therefore by Theorem 3.1 it only remains to show that for every $\psi \in \left(\mathcal{Y}^*\right)^*$ the sequence of the laws of $\inner{\sy^n}{\psi}_{\mathcal{Y}^* \times \left(\mathcal{Y}^*\right)^*}$ is tight in the space of probability measures over $\mathcal{D}\left([0,T];\R\right)$. By the reflexivity of $\mathcal{Y}$ for every $\psi \in \left(\mathcal{Y}^*\right)^*$ there exists a $y \in \mathcal{Y}$ such that $\inner{\sy^n}{\psi}_{\mathcal{Y}^* \times \left(\mathcal{Y}^*\right)^*} = \inner{\sy^n}{y}_{\mathcal{Y}^* \times \mathcal{Y}}$ and as $\sy^n_t \in \mathcal{H}$ $\mathbbm{P}-a.s.$, then this is furthermore just $\inner{\sy^n}{y}_{\mathcal{H}}$. The problem is now reduced to showing tightness in $\mathcal{D}\left([0,T];\R\right)$, which by Theorem 1 of [\cite{aldous1978stopping}] is satisfied if we can show that for for any sequence of stopping times $(\gamma_n)$, $\gamma_n: \Omega \rightarrow [0,T]$, and constants $(\delta_n)$, $\delta_n \geq 0$ and $\delta_n \rightarrow 0$ as $n \rightarrow \infty$:
    \begin{enumerate}
        \item For every $t \in [0,T]$, the sequence of the laws of $\inner{\sy^n_t}{y}_{\mathcal{H}}$ is tight in the space of probability measures over $\R$, \label{item 1}

        \item For every $\varepsilon > 0$, $\lim_{n \rightarrow \infty}\mathbbm{P}\left( \left\{ \omega \in \Omega: \left\vert \left\langle \sy^n_{(\gamma_n + \delta_n) \wedge T} -\sy^n_{\gamma_n }   , y     \right\rangle_{\mathcal{H}} \right\vert > \varepsilon \right\} \right) = 0.$ \label{item 2}
    \end{enumerate}
We address each item in turn: as for \ref{item 1}, we are required to show that for every $\varepsilon > 0$ and $t\in[0,T]$, there exists a compact $K_{\varepsilon} \subset \R$ such that for every $n \in \N$, $$\mathbbm{P}\left(\left\{\omega \in \Omega: \inner{\sy^n_t(\omega)}{y}_{\mathcal{H}} \notin K_{\varepsilon} \right\} \right) < \varepsilon.$$ To this end define $B_M$ as the closed ball of radius $M$ in $\R$, then
\begin{align*}
    \mathbbm{P}\left(\left\{\omega \in \Omega: \inner{\sy^n_t(\omega)}{y}_{\mathcal{H}} \notin B_M \right\} \right) &= \mathbbm{P}\left(\left\{\omega \in \Omega: \left\vert\inner{\sy^n_t(\omega)}{y}_{\mathcal{H}}\right\vert > M \right\} \right)\\
    &\leq \frac{1}{M}\mathbbm{E}(\left\vert\inner{\sy^n_t}{y}_{\mathcal{H}}\right\vert)\\
    &\leq \frac{\norm{y}_{\mathcal{H}}}{M}\sup_{n \in \N}\mathbbm{E}\left(\norm{\sy^n_t}_{\mathcal{H}}\right)
\end{align*}
so setting $$M:= \frac{\varepsilon}{2\norm{y}_{\mathcal{H}}\sup_{n \in \N}\mathbbm{E}\left(\norm{\sy^n_t}_{\mathcal{H}}\right)} $$ justifies item \ref{item 1}. As for \ref{item 2}, note that for each fixed $j \in \N$ we have that $$\left\vert \left\langle \sy^j_{(\gamma_j + \delta_j) \wedge T} -\sy^j_{\gamma_j }   , y     \right\rangle_{\mathcal{H}} \right\vert  \leq \sup_{n \in \N}\left\vert \left\langle \sy^n_{(\gamma_n + \delta_j) \wedge T} -\sy^n_{\gamma_n }   , y     \right\rangle_{\mathcal{H}} \right\vert $$ so in particular $$\lim_{j \rightarrow \infty}\mathbbm{P}\left(\left\{\left\vert \left\langle \sy^j_{(\gamma_j + \delta_j) \wedge T} -\sy^j_{\gamma_j }   , y     \right\rangle_{\mathcal{H}} \right\vert > \varepsilon\right\}\right)  \leq \lim_{j \rightarrow \infty}\sup_{n \in \N}\mathbbm{P}\left(\left\{\left\vert \left\langle \sy^n_{(\gamma_n + \delta_j) \wedge T} -\sy^n_{\gamma_n }   , y     \right\rangle_{\mathcal{H}} \right\vert > \varepsilon \right\}\right).$$ As $(\delta_j)$ was an arbitrary sequence of non-negative constants approaching zero, we can generically take $\delta \rightarrow 0^+$ and \ref{item 2} is implied by (\ref{second condition primed}). The proof is complete.

\end{proof}

\begin{theorem}[Gagliardo-Nirenberg Inequality] \label{gagliardonirenberginequality}
    Let $p,q,\alpha \in \R$, $m \in \N$ be such that $p > q \geq 1$, $m > N(\frac{1}{2} - \frac{1}{p})$ and $\frac{1}{p} = \frac{\alpha}{q} + (1-\alpha)(\frac{1}{2} - \frac{m}{N})$. Then there exists a constant $c$ (dependent on the given parameters) such that for any $f \in L^p(\mathscr{O};\R) \cap W^{m,2}(\mathscr{O};\R)$, we have \begin{equation}\label{gag bounded domain}\norm{f}_{L^p(\mathscr{O};\R)} \leq c\norm{f}^{\alpha}_{L^q(\mathscr{O};\R)}\norm{f}^{1-\alpha}_{W^{m,2}(\mathscr{O};\R)}.\end{equation}
\end{theorem}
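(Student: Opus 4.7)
\medskip

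The plan is to reduce the statement on the bounded domain $\mathscr{O}$ to the corresponding inequality on all of $\R^N$ and then derive the $\R^N$ version by Sobolev embedding and H\"older interpolation. Since $\mathscr{O}$ is smooth and bounded, there exists a bounded linear extension operator $E: W^{m,2}(\mathscr{O};\R) \to W^{m,2}(\R^N;\R)$ (for instance, Stein's universal extension) whose construction also yields bounded operators $E: L^{s}(\mathscr{O};\R) \to L^{s}(\R^N;\R)$ for every $s\in[1,\infty]$, with $Ef = f$ almost everywhere on $\mathscr{O}$. Therefore $\norm{f}_{L^p(\mathscr{O};\R)} \leq \norm{Ef}_{L^p(\R^N;\R)}$, $\norm{Ef}_{L^q(\R^N;\R)}\leq c\norm{f}_{L^q(\mathscr{O};\R)}$, and $\norm{Ef}_{W^{m,2}(\R^N;\R)} \leq c \norm{f}_{W^{m,2}(\mathscr{O};\R)}$, so it will be enough to prove (\ref{gag bounded domain}) for functions on the whole space.

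On $\R^N$, define $p_0$ by $\frac{1}{p_0} = \frac{1}{2} - \frac{m}{N}$ when $m < N/2$ and take $p_0 = \infty$ otherwise. The classical Sobolev embedding gives $W^{m,2}(\R^N;\R) \xhookrightarrow{} L^{p_0}(\R^N;\R)$ in the first case, and $W^{m,2}(\R^N;\R) \xhookrightarrow{} L^{r}(\R^N;\R)$ for every finite $r$ when $m \geq N/2$. The hypothesis $m > N(\tfrac{1}{2} - \tfrac{1}{p})$ precisely says $p < p_0$, so $p$ lies strictly between $q$ and $p_0$ (when $p > q$). The algebraic identity in the statement can then be rewritten as $\frac{1}{p} = \frac{\alpha}{q} + \frac{1-\alpha}{p_0}$, which is exactly the exponent condition needed to apply H\"older's Inequality with exponents $q/\alpha$ and $p_0/(1-\alpha)$, yielding
\begin{equation*}
\norm{Ef}_{L^p(\R^N;\R)} \leq \norm{Ef}^{\alpha}_{L^q(\R^N;\R)} \norm{Ef}^{1-\alpha}_{L^{p_0}(\R^N;\R)} \leq c\norm{Ef}^{\alpha}_{L^q(\R^N;\R)} \norm{Ef}^{1-\alpha}_{W^{m,2}(\R^N;\R)}.
\end{equation*}
Combining this with the extension bounds produces (\ref{gag bounded domain}).

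The main obstacle, and really the only non-trivial input, is the availability of an extension operator that is simultaneously bounded on $W^{m,2}$ and on the relevant Lebesgue spaces; for smooth $\mathscr{O}$ this is standard but requires care in the case $m \geq 1$. The borderline case $m = N/2$ (where $p_0$ must be replaced by any large finite $r$) causes no difficulty because the hypothesis is a strict inequality, leaving some room to choose $r$ slightly larger than $p$. As the result is classical we would in practice simply cite a textbook treatment, for instance the exposition in Nirenberg's original paper or Brezis--Mironescu, rather than reproducing the argument in full.
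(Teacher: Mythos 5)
Your extension-plus-H\"older scheme proves the theorem only in the subcritical regime $m < N/2$; in the remaining cases the argument as written contains a genuine gap. The rewriting $\frac{1}{p} = \frac{\alpha}{q} + \frac{1-\alpha}{p_0}$ is legitimate only when $\frac{1}{p_0} = \frac{1}{2} - \frac{m}{N} \geq 0$ defines a true Lebesgue exponent. When $m = N/2$ the equality constraint pins $\alpha = q/p$, and since $W^{N/2,2}$ does not embed into $L^{\infty}$ you replace $p_0$ by a finite $r$; but then H\"older yields the exponent $\beta = (\frac{1}{p} - \frac{1}{r})/(\frac{1}{q} - \frac{1}{r})$ on the $L^q$ factor, and a direct cross-multiplication shows $\beta < q/p = \alpha$ strictly for \emph{every} finite $r$. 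The slack in the strict hypothesis $m > N(\frac{1}{2} - \frac{1}{p})$ lives in the relation between $p$ and $p_0$, not in $\alpha$: $\alpha$ is fixed by the equality constraint, so choosing $r$ ``slightly larger than $p$'' never recovers it. Nor can you upgrade afterwards: passing from $\norm{f}_{L^q}^{\beta}\norm{f}_{W^{m,2}}^{1-\beta}$ to $\norm{f}_{L^q}^{\alpha}\norm{f}_{W^{m,2}}^{1-\alpha}$ with $\alpha > \beta$ would require $\norm{f}_{W^{m,2}} \leq c\norm{f}_{L^q}$, which is false on any domain. The same defect occurs for $m > N/2$: there $\frac{1}{2} - \frac{m}{N} < 0$ forces $\alpha > q/p$ in the constraint, while interpolation against $L^{\infty}$ (which is now available) can only ever produce the exponent $q/p$. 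So on the whole range $m \geq N/2$ your sketch establishes a strictly weaker inequality than the one stated.

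This is not a fringe regime for this paper: the theorem is invoked precisely at criticality, with $N=2$, $m=1$, $p=4$, $q=2$, $\alpha = \frac{1}{2}$ (Ladyzhenskaya) in Lemma \ref{lemma application of gag} and in (\ref{a bound in align}), and the sharp exponent $\alpha = \frac{1}{2}$ is exactly what produces the product $\norm{\cdot}\norm{\cdot}_1$ that lets the Gr\"onwall argument for 2D uniqueness close with only $\int_0^T \norm{\tilde{u}_s}_1^2\,ds$ finite; your weaker exponent would demand an unavailable higher integrability of $\norm{\tilde{u}}_1$. In the critical and supercritical cases the genuinely first-order mechanism of Gagliardo--Nirenberg, namely interpolation against the derivative norm $\norm{D^m f}_{L^2}$ itself via Nirenberg's translation and averaging argument, is unavoidable and cannot be replaced by Lebesgue-space convexity. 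For comparison, the paper offers no proof at all: it cites [\cite{nirenberg2011elliptic}] pp.\ 125--126, adding only the remark that the additive lower-order term $\norm{f}_{L^r}$ appearing there on bounded domains is absorbed by using the full $W^{m,2}$ norm. Your closing instinct to cite a classical treatment is therefore the right one, and your reduction to $\R^N$ by a Stein extension is correct and worth keeping; but the interpolation step must then be the bona fide Gagliardo--Nirenberg inequality on $\R^N$, not the embedding-plus-H\"older shortcut, unless you restrict explicitly to $m < N/2$.
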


\begin{proof}
See [\cite{nirenberg2011elliptic}] pp.125-126. 
\end{proof}

\begin{remark}
    In the original paper [\cite{nirenberg2011elliptic}], the inequality is stated for only the $m^{\textnormal{th}}$ order derivative and with an additional $\norm{f}_{L^r}$ term on the bounded domain, for any $r > 0$. By considering the full $W^{m,2}(\mathscr{O};\R^N)$ norm, one can remove this additional term through interpolation. 
\end{remark}

\begin{lemma}[Stochastic Gr\"{o}nwall] \label{gronny}
Fix $t>0$ and suppose that $\boldsymbol{\phi},\boldsymbol{\psi}, \boldsymbol{\eta}$ are real-valued, non-negative stochastic processes. Assume, moreover, that there exists constants $c',\hat{c}, \tilde{c}$ (allowed to depend on $t$) such that for $\mathbbm{P}-a.e.$ $\omega$, \begin{equation} \label{boundingronny} \int_0^t\boldsymbol{\eta}_s(\omega) ds \leq c'\end{equation} and for all stopping times $0 \leq \theta_j < \theta_k \leq t$,
$$\mathbbm{E}\left(\sup_{r \in [\theta_j,\theta_k]}\boldsymbol{\phi}_r\right) + \mathbbm{E}\int_{\theta_j}^{\theta_k}\boldsymbol{\psi}_sds \leq \hat{c}\mathbbm{E}\left(\left[\boldsymbol{\phi}_{\theta_j} + \tilde{c} \right] + \int_{\theta_j}^{\theta_k} \boldsymbol{\eta}_s\boldsymbol{\phi}_sds\right) < \infty. $$Then there exists a constant $C$ dependent only on $c',\hat{c},\tilde{c},t$ such that $$\mathbbm{E}\sup_{r \in [0,t]}\boldsymbol{\phi}_r + \mathbbm{E}\int_{0}^{t}\boldsymbol{\psi}_sds \leq C\left[\mathbbm{E}(\boldsymbol{\phi}_{0}) + \tilde{c}\right].$$
\end{lemma}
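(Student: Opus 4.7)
The plan is to localise the inequality to short random intervals on which the weight $\boldsymbol{\eta}$ integrates to a small quantity, absorb the resulting $\boldsymbol{\eta}\boldsymbol{\phi}$ term into the left-hand side, and then iterate over finitely many such intervals. Concretely, set $\epsilon := (2\hat c)^{-1}$ and define recursively $\theta_0 := 0$ and
\[
\theta_{j+1} := \inf\Bigl\{ s \in [\theta_j,t] : \int_{\theta_j}^{s} \boldsymbol{\eta}_r\, dr \geq \epsilon \Bigr\} \wedge t,
\]
with the convention $\inf \emptyset = t$. These are stopping times (by progressive measurability of $\boldsymbol{\eta}$), and the a.s.\ bound $\int_0^t\boldsymbol{\eta}_s\, ds\le c'$ forces $\theta_N = t$ deterministically for $N := \lceil 2\hat c c'\rceil + 1$, so only finitely many intervals appear.

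The engine of the argument is applied on one interval $[\theta_j,\theta_{j+1}]$. By hypothesis
\[
\mathbbm{E}\!\left(\sup_{r\in[\theta_j,\theta_{j+1}]} \boldsymbol{\phi}_r\right) + \mathbbm{E}\!\int_{\theta_j}^{\theta_{j+1}} \boldsymbol{\psi}_s\,ds \leq \hat c\,\mathbbm{E}\!\left(\boldsymbol{\phi}_{\theta_j} + \tilde c\right) + \hat c\,\mathbbm{E}\!\int_{\theta_j}^{\theta_{j+1}} \boldsymbol{\eta}_s \boldsymbol{\phi}_s\,ds.
\]
For the last integral I bound $\boldsymbol{\phi}_s$ by $\sup_{r \in [\theta_j,\theta_{j+1}]} \boldsymbol{\phi}_r$, pull this supremum outside and use the defining property of $\theta_{j+1}$, namely $\int_{\theta_j}^{\theta_{j+1}} \boldsymbol{\eta}_s\,ds \leq \epsilon = (2\hat c)^{-1}$, to estimate the product by $(2\hat c)^{-1} \sup_{r \in [\theta_j,\theta_{j+1}]} \boldsymbol{\phi}_r$. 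Multiplying by $\hat c$ gives a factor $1/2$, which is absorbed into the left-hand side (this absorption is legitimate because the upper bound ``$<\infty$'' in the hypothesis ensures the supremum in expectation is finite). The resulting one-step estimate is
\[
\mathbbm{E}\!\left(\sup_{r\in[\theta_j,\theta_{j+1}]} \boldsymbol{\phi}_r\right) + 2\,\mathbbm{E}\!\int_{\theta_j}^{\theta_{j+1}} \boldsymbol{\psi}_s\,ds \leq 2\hat c\,\mathbbm{E}(\boldsymbol{\phi}_{\theta_j}) + 2\hat c\,\tilde c.
\]

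Setting $a_j := \mathbbm{E}(\boldsymbol{\phi}_{\theta_j})$, the pointwise bound $\boldsymbol{\phi}_{\theta_{j+1}} \leq \sup_{r \in [\theta_j,\theta_{j+1}]} \boldsymbol{\phi}_r$ yields the scalar recursion $a_{j+1} \leq 2\hat c\, a_j + 2\hat c\,\tilde c$. A discrete Gr\"onwall/geometric-series solution gives $a_j \leq K_j(\mathbbm{E}\boldsymbol{\phi}_0 + \tilde c)$ with $K_j$ depending only on $\hat c$ and $j \leq N$. Finally, writing $\sup_{r\in[0,t]}\boldsymbol{\phi}_r \leq \sum_{j=0}^{N-1}\sup_{r\in[\theta_j,\theta_{j+1}]}\boldsymbol{\phi}_r$ and $\int_0^t\boldsymbol{\psi}_s\,ds = \sum_{j=0}^{N-1}\int_{\theta_j}^{\theta_{j+1}}\boldsymbol{\psi}_s\,ds$, summing the one-step estimates and substituting the bound on $a_j$ produces the desired constant $C = C(c',\hat c,\tilde c, t)$, since $N$ depends only on $c'$ and $\hat c$.

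The main obstacle is the absorption step in the one-step estimate: it is only justifiable because the right-hand side of the hypothesised inequality is assumed finite, allowing us to subtract $\tfrac12\mathbbm{E}(\sup \boldsymbol{\phi})$ from both sides without producing a meaningless $\infty - \infty$. A secondary point of care is the measurability of $\theta_{j+1}$, which uses that $s \mapsto \int_0^s \boldsymbol{\eta}_r\,dr$ is continuous and adapted, so first-hitting times of the closed set $[\epsilon,\infty)$ are stopping times; this in turn rests on the implicit (and standard in the intended application) assumption that $\boldsymbol{\eta}$ is progressively measurable.
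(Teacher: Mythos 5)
Your proof is correct. The paper itself offers no argument for this lemma---it simply defers to [\cite{glatt2009strong}] Lemma 5.3---and your localisation-and-absorption scheme (stopping times at which the accumulated mass of $\boldsymbol{\eta}$ reaches $(2\hat{c})^{-1}$, absorption of the $\boldsymbol{\eta}\boldsymbol{\phi}$ term into the left-hand side using the hypothesised finiteness, then a finite iteration of length $N \leq \lceil 2\hat{c}c'\rceil + 1$ followed by a discrete Gr\"{o}nwall recursion) is essentially the proof given in that reference, so in substance you have reproduced the paper's cited argument. Two cosmetic points only: the hypothesis is stated for strictly ordered stopping times $0 \leq \theta_j < \theta_k \leq t$, so on the event $\theta_j = \theta_{j+1} = t$ you should note that the one-step estimate holds trivially (for instance after replacing $\hat{c}$ by $\max(\hat{c},1)$, which only weakens the assumption); and your observation that the hitting-time construction needs $\boldsymbol{\eta}$ progressively measurable, with $s \mapsto \int_0^s \boldsymbol{\eta}_r\,dr$ continuous and adapted, correctly identifies the implicit standing assumption under which the lemma is applied in the paper.
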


\begin{proof}
See [\cite{glatt2009strong}] Lemma 5.3.
\end{proof}

\begin{proposition} \label{rockner prop}
Let $\mathcal{H}_1 \subset \mathcal{H}_2 \subset \mathcal{H}_3$ be a triplet of embedded Hilbert Spaces where $\mathcal{H}_1$ is dense in $\mathcal{H}_2$, with the property that there exists a continuous nondegenerate bilinear form $\inner{\cdot}{\cdot}_{\mathcal{H}_3 \times \mathcal{H}_1}: \mathcal{H}_3 \times \mathcal{H}_1 \rightarrow \R$ such that for $\phi \in \mathcal{H}_2$ and $\psi \in \mathcal{H}_1$, $$\inner{\phi}{\psi}_{\mathcal{H}_3 \times \mathcal{H}_1} = \inner{\phi}{\psi}_{\mathcal{H}_2}.$$ Suppose that for some $T > 0$ and stopping time $\tau$,
\begin{enumerate}
        \item $\sy_0:\Omega \rightarrow \mathcal{H}_2$ is $\mathcal{F}_0-$measurable;
        \item $\eta:\Omega \times [0,T] \rightarrow \mathcal{H}_3$ is such that for $\mathbbm{P}-a.e.$ $\omega$, $\eta(\omega) \in L^2([0,T];\mathcal{H}_3)$;
        \item $B:\Omega \times [0,T] \rightarrow \mathscr{L}^2(\mathfrak{U};\mathcal{H}_2)$ is progressively measurable and such that for $\mathbbm{P}-a.e.$ $\omega$, $B(\omega) \in L^2\left([0,T];\mathscr{L}^2(\mathfrak{U};\mathcal{H}_2)\right)$;
        \item  \label{4*} $\sy:\Omega \times [0,T] \rightarrow \mathcal{H}_1$ is such that for $\mathbbm{P}-a.e.$ $\omega$, $\sy_{\cdot}(\omega)\mathbbm{1}_{\cdot \leq \tau(\omega)} \in L^2([0,T];\mathcal{H}_1)$ and $\sy_{\cdot}\mathbbm{1}_{\cdot \leq \tau}$ is progressively measurable in $\mathcal{H}_1$;
        \item \label{item 5 again*} The identity
        \begin{equation} \label{newest identity*}
            \sy_t = \sy_0 + \int_0^{t \wedge \tau}\eta_sds + \int_0^{t \wedge \tau}B_s d\mathcal{W}_s
        \end{equation}
        holds $\mathbbm{P}-a.s.$ in $\mathcal{H}_3$ for all $t \in [0,T]$.
    \end{enumerate}
The the equality 
  \begin{align} \label{ito big dog*}\norm{\sy_t}^2_{\mathcal{H}_2} = \norm{\sy_0}^2_{\mathcal{H}_2} + \int_0^{t\wedge \tau} \bigg( 2\inner{\eta_s}{\sy_s}_{\mathcal{H}_3 \times \mathcal{H}_1} + \norm{B_s}^2_{\mathscr{L}^2(\mathfrak{U};\mathcal{H}_2)}\bigg)ds + 2\int_0^{t \wedge \tau}\inner{B_s}{\sy_s}_{\mathcal{H}_2}d\mathcal{W}_s\end{align}
  holds for any $t \in [0,T]$, $\mathbbm{P}-a.s.$ in $\R$. Moreover for $\mathbbm{P}-a.e.$ $\omega$, $\sy_{\cdot}(\omega) \in C([0,T];\mathcal{H}_2)$. 
\end{proposition}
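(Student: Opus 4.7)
The plan is to identify this statement with the classical variational It\^{o} formula of Krylov--Rozovskii / Pardoux type (essentially Theorem 4.2.5 of [\cite{prevot2007concise}]), interpreted in a Gelfand triple built from the hypotheses. The first step is to construct the triple: the continuity and nondegeneracy of $\inner{\cdot}{\cdot}_{\mathcal{H}_3 \times \mathcal{H}_1}$, together with its agreement with $\inner{\cdot}{\cdot}_{\mathcal{H}_2}$ on $\mathcal{H}_2 \times \mathcal{H}_1$, lets one embed $\mathcal{H}_3$ continuously into $\mathcal{H}_1^*$ via $\phi \mapsto \inner{\phi}{\cdot}_{\mathcal{H}_3 \times \mathcal{H}_1}$, producing the dense continuous embeddings $\mathcal{H}_1 \hookrightarrow \mathcal{H}_2 \hookrightarrow \mathcal{H}_1^*$ in which (\ref{newest identity*}) becomes a genuine variational evolution equation.

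Next I would localise. Replacing $\eta$ by $\eta \mathbbm{1}_{\cdot \leq \tau}$ and $B$ by $B \mathbbm{1}_{\cdot \leq \tau}$ removes the stopping time from the integrands, and an increasing sequence of stopping times $\tau_n = T \wedge \inf\{ t: \int_0^t ( \norm{\sy_s}^2_{\mathcal{H}_1} + \norm{\eta_s}^2_{\mathcal{H}_3} + \norm{B_s}^2_{\mathscr{L}^2(\mathfrak{U};\mathcal{H}_2)} ) ds \geq n\}$ upgrades the almost-sure $L^2$ integrability assumptions to integrability in expectation, while also turning the stochastic integral into a genuine square-integrable martingale. Under these reductions the hypotheses of the classical result are met, delivering both a $\mathcal{H}_2$-continuous modification and the identity (\ref{ito big dog*}); the standard uniqueness of such representatives among processes satisfying the SPDE in $\mathcal{H}_1^*$ identifies the modification with $\sy$ itself. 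Removing the localisation by sending $\tau_n \to \tau$ is then routine, since each term in (\ref{ito big dog*}) is continuous in the stopping level and the stochastic integral is a local martingale.

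Should a direct proof be preferred over the black-box citation, the classical statement is proved by time-mollification: setting $\sy^\varepsilon_t := \varepsilon^{-1}\int_{(t-\varepsilon)\vee 0}^t \sy_s\, ds$ produces a process with $\sy^\varepsilon \in W^{1,2}([0,T]; \mathcal{H}_1)$ almost surely and time-derivative obtainable from $\eta$ and a mollification of the stochastic integral (the latter via Fubini and the commutation of bounded linear maps with the It\^{o} integral). The classical Hilbert-space chain rule applied to $\norm{\sy^\varepsilon_t}^2_{\mathcal{H}_2}$ then yields an approximate version of (\ref{ito big dog*}), and one lets $\varepsilon \to 0$. The $\mathcal{H}_2$-continuity of $\sy$ then drops out of the energy identity, since its right-hand side is continuous in $t$.

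The main obstacle is the passage to the limit in the duality pairing $\int_0^{t \wedge \tau} \inner{\eta_s}{\sy^\varepsilon_s}_{\mathcal{H}_3 \times \mathcal{H}_1}\, ds$, because $\sy$ only enjoys $L^2$-in-time $\mathcal{H}_1$ regularity and $\eta$ only $L^2$-in-time $\mathcal{H}_3$ regularity, so naive strong convergence is unavailable. The resolution exploits the symmetric/self-adjoint character of the mollifier: by a Fubini argument one transfers the mollification onto $\eta$ to rewrite the integrand as $\inner{\eta^\varepsilon_s}{\sy_s}_{\mathcal{H}_3 \times \mathcal{H}_1}$, and then uses the strong $L^2([0,T];\mathcal{H}_3)$ convergence $\eta^\varepsilon \to \eta$ together with the dominating bound $\int_0^t \norm{\eta_s}_{\mathcal{H}_3}\norm{\sy_s}_{\mathcal{H}_1} ds < \infty$ coming from the continuity of the bilinear form. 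The remaining stochastic and Hilbert--Schmidt terms are controlled with the BDG inequality and standard $\mathscr{L}^2(\mathfrak{U};\mathcal{H}_2)$ estimates on the mollified stochastic integral, which together with progressive measurability (Condition \ref{4*}) legitimise the interchange of mollification and stochastic integration.
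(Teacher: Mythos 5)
Your proposal is correct and follows essentially the same route as the paper: the paper's proof is precisely a reduction to Theorem 4.2.5 of [\cite{prevot2007concise}] together with a ``minor extension\ldots to avoid the need for moment estimates'' (stated as Proposition 2.5.5 in [\cite{goodair2022stochastic}]), which is exactly your localisation by the stopping times $\tau_n$ after embedding the hypotheses into the Gelfand triple $\mathcal{H}_1 \xhookrightarrow{} \mathcal{H}_2 \xhookrightarrow{} \mathcal{H}_1^*$. Your additional mollification sketch of the classical theorem goes beyond what the paper records (which is a pure citation), but it is a standard and sound account of how that underlying result is established.
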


\begin{proof}
This is a minor extension of [\cite{prevot2007concise}] Theorem 4.2.5, which is stated and justified as Proposition 2.5.5. in [\cite{goodair2022stochastic}]. The extension here is necessary for our purposes to avoid the need for moment estimates. 
\end{proof}

\textbf{Acknowledgements:} Daniel Goodair was supported by the Engineering and Physical Sciences Research Council (EPSCR) Project 2478902. Dan Crisan was partially supported by the European Research Council (ERC) under the European Union's Horizon 2020 Research and Innovation Programme (ERC, Grant Agreement No 856408).

\bibliographystyle{spmpsci}
\bibliography{myBibliography2}

\end{document}